\newcommand{\simplex}{\sigma}
\DeclareMathOperator{\aff}{aff}
\newcommand{\thickdiag}{\Delta}
\newcommand{\thin}{\delta}
\newcommand{\thindiag}[2]{\ensuremath{\thin_{#1}(#2)}}
\newcommand{\calZ}{\mathcal{Z}}
\newcommand{\delprod}[2]{\ensuremath{{#1}^{ #2}_{\thickdiag}}}
\newcommand{\join}[2]{\ensuremath{{#1}^{\ast #2}}}
\renewcommand{\t}{\ensuremath{\widetilde}}
\newcommand{\gauss}[1]{\widetilde{#1}}
\newcommand{\scap}{\raisebox{-0.5ex}{\scalebox{1.8}{$\cdot$}}\,}
\DeclareMathOperator{\sign}{sgn}
\newcommand{\isign}[3]{\ensuremath{\sign_{#1}(#2,\dots,#3)}}
\newcommand{\linking}{\ell}
\newcommand{\primaticsign}{\epsilon^{\textup{PRIS}}_{r,k}}
\newcommand{\obs}{\mathfrak{o}}
\newcommand{\vko}[2]{\ensuremath{\obs(\delprod{#1}{#2})}}
\newcommand{\cell}{\sigma}
\newcommand{\I}{\mathbbm{1}}
\renewcommand{\phi}{\varphi}
\newcommand{\cocyc}{\varphi}
\newcommand{\R}{\ensuremath{\mathbbm{R}}}
\newcommand{\Z}{\ensuremath{\mathbbm{Z}}}
\newcommand{\boundary}{\ensuremath{\partial}}
\newcommand{\interior}[1]{\ensuremath{\mathring{#1}}}
\renewcommand{\epsilon}{\ensuremath{\varepsilon}}
\newcommand{\Matrix}[1]{\left(\vcenter{\xymatrix@=0pt{ #1} }\right)}
\newcommand{\ie}{{i.e.},\ }
\DeclareMathOperator{\supp}{supp}
\newcommand{\singset}{S}
\newcommand{\iso}{\ensuremath{\cong}}
\newcommand{\skel}[2]{\ensuremath{\textup{skel}_{#1}(#2 )}}
\newcommand{\define}[1]{\textbf{\boldmath{#1}}}
\newcommand{\sym}{\ensuremath{\mathfrak{S}}}
\newcommand{\boldpi}{\boldsymbol{\pi}}
\newcommand{\boldx}{\boldsymbol{x}}
\newtheorem*{theorem*}{Theorem}
\newtheorem{theorem}{Theorem}
\newtheorem{proposition}[theorem]{Proposition}
\newtheorem{lemma}[theorem]{Lemma}
\newtheorem{corollary}[theorem]{Corollary}
\newtheorem{conjecture}[theorem]{Conjecture}
\theoremstyle{definition}
\newtheorem{definition}[theorem]{Definition}
\newtheorem{question}[theorem]{Question}
\newtheorem{problem}[theorem]{Problem}
\newtheorem{remark}[theorem]{Remark}
\newtheorem{observation}[theorem]{Observation}
\newtheorem{example}[theorem]{Example}
\newtheorem{remarks}[theorem]{Remarks}
\newtheorem*{strategy*}{Strategy}
\begin{document}


\begin{center}

\renewcommand{\thefootnote}{\fnsymbol{footnote}}

{\Large Eliminating Higher-Multiplicity Intersections, I.}
\vskip 0.1in
{\Large A Whitney Trick for Tverberg-Type Problems}\footnotemark[1]

\vskip 0.15in

\begin{minipage}{0.25\textwidth}
\centering
{\large \sc Isaac Mabillard}
\\
\texttt{\href{mailto:imabillard@ist.ac.at}{imabillard@ist.ac.at}}
\end{minipage}
\hskip 0.1in
and
\hskip 0.1in
\begin{minipage}{0.2\textwidth} 
\centering
{\large \sc Uli Wagner}
\\ \texttt{\href{mailto:uli@ist.ac.at}{uli@ist.ac.at}}
\end{minipage}
\begin{minipage}{0.05\textwidth} 
\mbox{ }
\end{minipage}
\footnotetext[1]{Research supported by the Swiss National Science Foundation (Project SNSF-PP00P2-138948). 
An extended abstract of this paper appeared in Proc. 30th Annual Symposium on Computational Geometry (SoCG 2014) \cite{MabillardWagner:TverbergWhitney-2014}.}

\vskip 0.15in

{\it IST Austria, Am Campus 1, 3400 Klosterneuburg, Austria \hskip 0.4in}

\vskip 0.1in

{\today}

\end{center}

\begin{abstract}
Motivated by \emph{topological Tverberg-type problems} in topological combinatorics and by classical results about embeddings (maps without double points), we study the question whether a finite 
simplicial complex $K$ can be mapped into $\R^d$ without  triple, quadruple, or, more generally, \emph{$r$-fold points} (image points with at least $r$ distinct preimages), for a given multiplicity $r\geq 2$. In particular, we are interested in maps $f\colon K\to \R^d$ that have no \emph{$r$-Tverberg points}, i.e., no $r$-fold points 
with preimages in $r$ \emph{pairwise disjoint} simplices of $K$, and we seek necessary and sufficient conditions for the existence of such maps.

We present higher-multiplicity analogues of several classical results for embeddings, in particular of the completeness of the \emph{Van Kampen obstruction} for embeddability of $k$-dimensional complexes into $\R^{2k}$, $k\geq 3$. Specifically, we show that under suitable restrictions on the dimensions (viz., if $\dim K=(r-1)k$ and $d=rk$ for some $k\geq 3$), a well-known \emph{deleted product criterion} (\emph{DPC}) is not only necessary but also sufficient for the existence of maps without $r$-Tverberg points. Our main technical tool is a higher-multiplicity version of the classical \emph{Whitney trick}, by which pairs of isolated $r$-fold points of \emph{opposite sign} can be eliminated by local modifications of the map, assuming codimension $d-\dim K\geq 3$.

An important guiding idea for our work was that sufficiency of the DPC, together with an old result of \"Ozaydin on the existence of equivariant maps, might yield an approach to disproving the remaining open cases of the long-standing \emph{topological Tverberg conjecture}, i.e., to construct maps from the $N$-simplex $\simplex^N$ to $\R^d$ without $r$-Tverberg points when $r$ \emph{not a prime power} 
and $N=(d+1)(r-1)$. Unfortunately, our proof of the sufficiency of the DPC requires codimension $d-\dim K\geq 3$, which is not satisfied for $K=\simplex^N$.

In a recent breakthrough, Frick found an extremely elegant way to overcome this ``codimension $3$ obstacle'' and to construct the first counterexamples to the topological Tverberg conjecture for all parameters $(d,r)$ with $d\geq 3r+1$ and $r$ not a prime power,
by a clever reduction (using the \emph{constraints method} of Blagojevi\'c--Frick--Ziegler) to a suitable lower-dimensional skeleton
, for which the codimension $3$ restriction is satisfied and maps without $r$-Tverberg points exist by \"Ozaydin's result and sufficiency of the DPC.

Here, we present a different construction (which does not use the constraint method) that
yields counterexamples for $d\geq 3r$, $r$ not a prime power. 
\end{abstract}

\clearpage

\setcounter{tocdepth}{2}
\tableofcontents

\section{Introduction}
\label{sec:introduction}

Let $K$ be a finite simplicial complex\footnote{Throughout this paper, we will (ab)use the same notation for a simplicial complex $K$ (a collection of simplices) and its \emph{underlying polyhedron}, relying on context to distinguish between the two when necessary.} and 
let $f\colon K\to \R^d$ be a continuous map. Given an integer parameter $r\geq 2$, we say that $y\in \R^d$ is an \define{$r$-fold point} or \define{$r$-intersection point} of $f$ if $|f^{-1}(y)|\geq r$, i.e., if there are $r$ pairwise distinct points $x_1,\ldots,x_r\in K$ such that $f(x_1)=\ldots=f(x_r)=y$.

Motivated by \emph{topological Tverberg-type problems} (see below), an important topic in topological combinatorics, we are particularly interested in the following special type of $r$-fold points. For a point $x$ in $K$, we define its \define{support} $\supp(x)$ 
as the smallest simplex\footnote{All simplices are considered closed, unless indicated otherwise.} of $K$ that contains $x$ in its relative interior. 
We say that $y\in \R^d$ is a \define{non-local $r$-fold point} or \define{$r$-Tverberg point} of a map $f\colon K\to \R^d$ if it has $r$ preimages with \emph{pairwise disjoint supports}, i.e., $y\in f(\sigma_1)\cap \ldots \cap f(\sigma_r)$ for pairwise disjoint simplices 
$\sigma_1,\ldots,\sigma_r$. Thus, when focussing on $r$-Tverberg points, we ignore \define{local $r$-fold points} that occur between images of simplices some of which share some vertices; we stress that being an $r$-Tverberg point depends on the actual simplicial complex $K$ (the triangulation), not just on the underlying polyhedron.


The most basic case is that of (topological) \emph{embeddings}, i.e., maps without double points.\footnote{Since $K$ is compact and $\R^d$ is Hausdorff, a continuous map $f\colon K\rightarrow \R^d$ is an embedding iff it is injective.} Finding conditions for a simplicial complex $K$ to be embeddable into $\R^d$ --- a higher-dimensional generalization of graph planarity --- is a classical problem in geometric topology (see, e.g., \cite{RepovsSkopenkov:NewResultsEmbeddingsManifoldsPolyhedra-1999,Skopenkov:EmbeddingKnottingManifoldsEuclideanSpaces-2008} for surveys) and has recently also become the subject of systematic study from a viewpoint of algorithms and computational complexity (see, e.g., \cite{MatousekTancerWagner:HardnessEmbeddings-2011,MatousekSedgwickTancerWagner:EmbeddabilityS3Decidable-2013,Cadek:Algorithmic-solvability-of-the-lifting-extension-problem-2013}).

Generalizing classical results about embeddings, we are interested in necessary and sufficient conditions for the existence of maps without $r$-Tverberg points, and in techniques that allow us to eliminate $r$-fold points by local modifications of the map. In the present paper, we establish such results in the ``critical case'' $d=\frac{r-1}{r}\dim K$ (the smallest dimension $d$ for which a map in general position can have $r$-fold points), assuming codimension $d-\dim K\geq 3$; see Theorems~\ref{thm:vKcomplete} and~\ref{thm_whitney_trick_extended} below.

\subsection{Topological Tverberg-Type Problems}
The classical geometric \emph{Tverberg theorem}~\cite{Tverberg:A-generalization-of-Radons-theorem-1966}, a cornerstone of convex geometry, can be rephrased as saying that if $N=(d+1)(r-1)$ then any \emph{affine} map from the $N$-dimensional simplex $\simplex^N$ to $\R^d$ has an $r$-Tverberg point.
Bajmoczy and B\'ar\'any~\cite{Bajmoczy:On-a-common-generalization-of-Borsuks-and-Radons-1979} and Tverberg~\cite[Problem~84]{Gruber:Problems-in-geometric-convexity-1979} raised the question whether this remains true for arbitrary continuous maps:

\begin{conjecture}[\textbf{Topological Tverberg Conjecture}]
\label{conj:Tverberg}
Let $r\geq 2$, $d \geq 1$, and $N=(d+1)(r-1)$. Then every continuous map $f\colon \simplex^N\to \R^d$ has an $r$-Tverberg point.
\end{conjecture}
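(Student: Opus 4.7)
The plan is to follow the standard \emph{configuration space/test map} (CS/TM) paradigm. Assume for contradiction that $f\colon\simplex^N\to\R^d$ has no $r$-Tverberg point, where $N=(d+1)(r-1)$. First I would pass to the $r$-fold \emph{deleted join} $\delprod{(\simplex^N)}{*r}$, the subcomplex of the $r$-fold join consisting of formal convex combinations $t_1x_1+\ldots+t_rx_r$ whose supports $\supp(x_i)\subseteq\simplex^N$ are pairwise disjoint. The symmetric group $\sym_r$ acts freely on this space by permuting join coordinates, and one checks directly that $\delprod{(\simplex^N)}{*r}\cong\join{(\delprod{\{1,\ldots,r\}}{*r})}{(N+1)}\cong\join{(S^{r-2})}{(N+1)}$ is $(N-1)$-connected. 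Next I would cook up the test map: the hypothesis ``no $r$-Tverberg point'' means that the join-extension $f^{*r}\colon\delprod{(\simplex^N)}{*r}\to(\R^d)^{*r}$ avoids the thin diagonal $\thickdiag=\{(t,\ldots,t)(y,\ldots,y)\}$. Projecting $(\R^d)^{*r}\setminus\thickdiag$ equivariantly onto the unit sphere in the orthogonal complement of the diagonal produces an $\sym_r$-equivariant map
\[
F\colon\delprod{(\simplex^N)}{*r}\longrightarrow S(W_r^{\oplus d})\cong S^{(r-1)(d+1)-1}=S^{N-1},
\]
where $W_r$ is the standard $(r-1)$-dimensional real representation of $\sym_r$ (the quotient of the permutation representation by the diagonal).

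The core step is then to prove that no such $\sym_r$-equivariant map $F$ can exist. Both spaces have dimension/connectivity $N-1$, so this is a sharp equivariant obstruction problem. For $r$ a prime, I would restrict the action to a cyclic subgroup $\Z/r\subset\sym_r$, which still acts freely on the source, and invoke a Dold/Borsuk--Ulam-type theorem: the Fadell--Husseini ideal-valued index (equivalently, the Euler class of the bundle $E\Z/r\times_{\Z/r}W_r^{\oplus d}\to B\Z/r$) of the target in $H^*(B\Z/r;\mathbbm{F}_r)$ is nonzero in degree $N$, while the index of any $(N-1)$-connected free $\Z/r$-space vanishes in degrees $\leq N-1$ and must contain the pullback from the target; an equivariant map would collapse the index of the source, yielding a contradiction. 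For $r=p^k$ a prime power, I would upgrade this argument to an elementary abelian subgroup $(\Z/p)^k\subset\sym_{p^k}$ (embedded via the regular representation) and use Serre's theorem on products of Bocksteins to conclude that the relevant mod-$p$ characteristic class of $W_r^{\oplus d}$ is still nonzero, so the same index argument goes through.

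The main obstacle will be the case when $r$ is \emph{not} a prime power. Here the elementary abelian subgroup trick collapses: every subgroup of prime-power order in $\sym_r$ fails to act freely on $\delprod{(\simplex^N)}{*r}$ in the appropriate way, and in fact the CS/TM obstruction vanishes outright. Concretely, \"Ozaydin proved that for such $r$ an $\sym_r$-equivariant map $\delprod{(\simplex^N)}{*r}\to S(W_r^{\oplus d})$ \emph{does} exist, so the plan above cannot succeed without a genuinely new ingredient beyond the deleted-join obstruction --- one would need either a finer, non-equivariant invariant or a direct geometric argument ruling out $r$-Tverberg-free maps on $\simplex^N$ in codimension $d-\dim K=d-N<0$. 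In light of the sufficiency of the deleted product criterion stated in the paper (Theorems~\ref{thm:vKcomplete}/\ref{thm_whitney_trick_extended}), which in the codimension-$\geq 3$ regime turns the vanishing of the equivariant obstruction into an actual map without $r$-Tverberg points, it is precisely this missing ``codimension 3'' margin on $\simplex^N$ that prevents us from converting \"Ozaydin's equivariant map into a counterexample, and conversely prevents any straightforward completion of the proof above for non-prime-power $r$. This is where the argument will, honestly, get stuck.
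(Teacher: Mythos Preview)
The statement you are trying to prove is a \emph{conjecture}, not a theorem, and the paper does not prove it---quite the opposite. One of the main results of the paper (Theorem~\ref{thm:counterexamples}, together with Frick's earlier construction discussed in the introduction) is that the Topological Tverberg Conjecture is \emph{false} whenever $r$ is not a prime power and $d\geq 3r$: there exist maps $\simplex^N\to\R^d$ with no $r$-Tverberg point. So there is no ``paper's own proof'' of this statement to compare against.

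Your outline for prime $r$ and prime-power $r$ is essentially the classical argument (B\'ar\'any--Shlosman--Sz\H{u}cs for primes, \"Ozaydin/Volovikov for prime powers), and that part is fine. But your concluding paragraph misdiagnoses the situation for non-prime-power $r$. You write that one would need ``a genuinely new ingredient\ldots ruling out $r$-Tverberg-free maps,'' as if the conjecture were still expected to hold and only the proof method is lacking. In fact the obstruction vanishes because the conjecture is genuinely false in this range: \"Ozaydin's equivariant map, combined with the sufficiency of the deleted product criterion (Theorem~\ref{thm:VK-Tverberg-complete}) and Frick's constraint-method reduction (or the paper's prismatic-map construction in Section~\ref{sec:counterexamples}), \emph{does} get converted into an actual counterexample. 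The ``codimension $3$ obstacle'' you mention was overcome precisely by passing to a lower-dimensional skeleton (or to the colorful subcomplex $C$), not by finding a sharper obstruction. So the honest place where your argument ``gets stuck'' is not a gap to be filled but the point at which the statement itself ceases to be true.
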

This was proved by Bajmoczy and B\'ar\'any~\cite{Bajmoczy:On-a-common-generalization-of-Borsuks-and-Radons-1979} for $r=2$, by B\'ar\'any, Shlosman, and Sz\H{u}cs~\cite{Barany:On-a-topological-generalization-of-a-theorem-of-Tverberg-1981} for all primes $r$, and by \"Ozaydin~\cite{Ozaydin:Equivariant-maps-for-the-symmetric-group-1987} for prime powers $r$,\footnote{Further proofs of the prime power case were given by Volovikov~\cite{Volovikov:On-a-topological-generalization-of-Tverbergs-theorem-1996}, \v{Z}ivaljevi\'c~\cite{Zivaljevic:UserGuideEquivariantTopologyCombinatorics2-98}, and Sarkaria~\cite{Sarkaria:Tverberg-partitions-and-Borsuk-Ulam-theorems-2000}.} 
 but the case of arbitrary $r$ has been a long-standing open problem, considered to be one of the most challenging in the area \cite[p.~154]{Matousek:BorsukUlam-2003}.

There are numerous close relatives and other variants of (topological) Tverberg-type problems and results, e.g., the \emph{Colored Tverberg Problem} \cite{Barany:On-the-number-of-halving-planes-1990,Barany:A-colored-version-of-Tverbergs-theorem-1992,Zivaljevic:The-colored-Tverbergs-problem-and-complexes-of-injective-functions-1992,Zivaljevic:UserGuideEquivariantTopologyCombinatorics2-98,Blagojevic:Optimal-bounds-for-the-colored-Tverberg-problem-2009} and generalized \emph{Van Kampen--Flores-type results} \cite{Sarkaria:A-generalized-van-Kampen-Flores-theorem-1991,Volovikov:On-the-van-Kampen-Flores-theorem-1996}.
\medskip

Here, we consider the following general problem:

\begin{problem}
\label{prob:Tverberg}
Given a finite simplicial complex $K$ and parameters $r$ and $d$, decide whether there exists a map $f\colon K\to \R^d$ without $r$-Tverberg points. 
\end{problem}
In particular, we are interested in methods for proving the existence of such maps, (i.e., for showing that $K$ does \emph{not} satisfy a topological Tverberg-type theorem with parameters $r$ and $d$).
\begin{remark}\label{rem:wlog-PL-gen-pos}
For Problem~\ref{prob:Tverberg}, it suffices to consider maps $f\colon K\to \R^d$ that are \emph{piecewise-linear}\footnote{Recall that $f$ is PL if there is some subdivision $K'$ of $K$ such that $f|_\sigma$ is affine for each simplex $\sigma$ of $K'$.} (\emph{PL}), since every continuous map $g\colon K\to \R^d$ can be approximated arbitrarily closely by a PL map, and if $g$ has no $r$-Tverberg points, then the same holds for any map sufficiently close to $g$. 

Moreover, if $\dim K<\frac{r-1}{r}d$ or, more generally, if the \emph{deleted product} of $K$ (see below) satisfies
$\dim \delprod{K}{r}<d(r-1)$, then a simple codimension count shows that a PL map $f\colon K\to \R^d$ in general position has no 
$r$-Tverberg points, so the problem is trivial. In the present paper, we focus on the first nontrivial case $\dim \delprod{K}{r}=d(r-1)$, for which a PL map $f\colon K\to \R^d$ in general position has a finite number of $r$-Tverberg points. 
\end{remark}

\paragraph{\boldmath{The Deleted Product Criterion.}}
There is a well-known \emph{necessary condition} for the existence of maps without Tverberg points, formulated in terms of the
(combinatorial)  \define{deleted $r$-fold product}\footnote{Some authors prefer to work with \emph{deleted joins} (which are again simplicial complexes) instead of deleted products as configuration spaces for Tverberg-type problems. However, it is known that deleted products provide necessary conditions that are at least as strong as those provided by deleted joins; see, e.g.,  \cite[Sec.~3.3]{Matschke:Equivariant-topology-methods-in-discrete-geometry-2011}. For further background on the broader \emph{configuration space/test map} framework, see, e.g., \cite[Ch.~6]{Matousek:BorsukUlam-2003} or \cite{Zivaljevic:UserGuideEquivariantTopologyCombinatorics-96,Zivaljevic:UserGuideEquivariantTopologyCombinatorics2-98}.} 
of a complex $K$, which is defined as
 $$
\delprod Kr := \{(x_1,\ldots,x_r)\in K^r \mid  \supp(x_i)\cap \supp(x_j)=\emptyset \textup{ for } 1\leq i < j \leq r\}.
$$ 
The deleted product $\delprod{K}{r}$ is a regular polyhedral cell complex (a subcomplex of the cartesian product $K^r$), whose cells are products $\sigma_1\times \dots\times \sigma_r$ of pairwise disjoint simplices of $K$.

\begin{lemma}[\textbf{Necessity of the Deleted Product Criterion}]
\label{lem:delprod-necessary} 
Let $K$ be a finite simplicial complex, and let $d\geq 1$ and $r\geq 2$ be integers. If there exists a map $f:K \rightarrow \R^d$ 
without $r$-Tverberg point then there exists an equivariant map\footnote{Here and in what follows, if $X$ and $Y$ are spaces on which a finite group $G$ acts (all group actions will be from the right) then we will use the notation $F\colon X\to_G Y$ for maps that are \define{equivariant}, i.e., that commute with the group actions, $F(x \cdot g)= F(x)\cdot g$ for all $x\in X$ and $g\in G$).} 
\begin{equation*}
\gauss{f} \colon \delprod Kr \rightarrow_{\sym_r} S^{d(r-1)-1},
\end{equation*}
where $S^{d(r-1)-1}=\big\{(y_1,\ldots,y_r)\in (\R^d)^r\mid \textstyle \sum_{i=1}^r y_i=0, \sum_{i=1}^r \|y_i\|_2^2=1\big\}$,
and the symmetric group $\sym_r$ acts on both spaces by permuting components.\footnote{We remark that the action of $\sym_r$ is free on $\delprod{K}{r}$ for all $r$, but not free on $S^{d(r-1)-1}$ unless $r$ is a prime.}
\end{lemma}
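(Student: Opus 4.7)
The plan is to turn $f$ into an equivariant map $\gauss{f}$ by the standard configuration space / test map construction, in three steps: componentwise evaluation, projection away from the thin diagonal, and normalization.

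First, I would define the tautological componentwise map
\begin{equation*}
F \colon \delprod{K}{r} \to (\R^d)^r, \qquad F(x_1,\ldots,x_r) := (f(x_1),\ldots,f(x_r)).
\end{equation*}
This is continuous, and it is $\sym_r$-equivariant because both sides carry the action by permutation of coordinates, and $F$ visibly commutes with relabelling the $x_i$.

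The key observation is that the non-existence of $r$-Tverberg points for $f$ is equivalent to saying that $F$ avoids the \emph{thin diagonal}
\begin{equation*}
\thickdiag := \{(y,\ldots,y)\in (\R^d)^r\}.
\end{equation*}
Indeed, if $(x_1,\ldots,x_r)\in\delprod{K}{r}$ lies in $F^{-1}(\thickdiag)$, then $f(x_1)=\cdots=f(x_r)$ is a common image point whose preimages have pairwise disjoint supports $\supp(x_i)$, hence an $r$-Tverberg point; conversely, every $r$-Tverberg point $y\in f(\sigma_1)\cap\cdots\cap f(\sigma_r)$ (for pairwise disjoint simplices) yields such an $(x_1,\ldots,x_r)$ by picking $x_i\in\sigma_i$ with $f(x_i)=y$ and noting that the supports are contained in the disjoint $\sigma_i$.

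Next, I would compose with the orthogonal projection onto the linear complement
\begin{equation*}
\thickdiag^\perp = \bigl\{(y_1,\ldots,y_r)\in(\R^d)^r \,\big|\, \textstyle\sum_{i=1}^r y_i = 0\bigr\},
\end{equation*}
namely $(y_1,\ldots,y_r)\mapsto (y_1-\bar y,\ldots,y_r-\bar y)$ where $\bar y = \frac{1}{r}\sum_i y_i$. This projection is $\sym_r$-equivariant (permuting coordinates commutes with averaging), and its kernel is precisely $\thickdiag$. Combined with the previous step, the composition $\pi\circ F \colon \delprod{K}{r}\to\thickdiag^\perp$ never vanishes, since $F$ avoids $\thickdiag$. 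Finally, I would normalize by dividing by the Euclidean norm on $\thickdiag^\perp$, producing
\begin{equation*}
\gauss{f}(x_1,\ldots,x_r) := \frac{(f(x_1)-\bar{f},\ldots,f(x_r)-\bar{f})}{\bigl(\sum_{i=1}^r\|f(x_i)-\bar{f}\|_2^2\bigr)^{1/2}} \;\in\; S^{d(r-1)-1}.
\end{equation*}
Normalization is $\sym_r$-equivariant because the norm is invariant under coordinate permutations, so $\gauss{f}$ is the desired equivariant map.

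There is essentially no serious obstacle here; the only point requiring care is the equivalence between ``no $r$-Tverberg point'' and ``$F$ misses $\thickdiag$'', which uses the definition of the deleted product via supports rather than simplices. Everything else is formal: continuity of each step is clear, and equivariance is preserved at every stage because the $\sym_r$-action on $(\R^d)^r$ preserves $\thickdiag$, the inner product, and hence the projection $\pi$ and the norm.
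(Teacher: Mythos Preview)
Your proof is correct and follows essentially the same approach as the paper: define the componentwise map into $(\R^d)^r$, observe that the absence of $r$-Tverberg points is equivalent to avoiding the thin diagonal, then project to the orthogonal complement and normalize, checking equivariance at each step. The only differences are cosmetic (you divide by $r$ before normalizing, and you spell out the equivariance of each stage a bit more explicitly).
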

We briefly recall the standard proof, which uses several notions that we will need later.
\begin{proof}
Given $f \colon K \rightarrow \R^d$, one gets a map $f^r\colon \delprod Kr \to (\R^d)^{r}$ by 
$f^r(x_1, \ldots, x_r):= (f(x_1), \ldots f(x_r))$. The map $f$ has no $r$-Tverberg point iff $f^r$ avoids the \define{thin diagonal}
\begin{equation}
\label{eq:thindiag}
\thindiag{r}{\R^d} := \{ (y, \ldots, y) \mid  y \in \R^d \} \subset (\R^d)^r.
\end{equation}
Moreover, $S^{d(r-1)-1}$ is the unit sphere in the orthogonal complement  $\thindiag{r}{\R^d}^\bot \cong \R^{d(r-1)}$, and
there is a straightforward homotopy equivalence%
\footnote{First orthogonally project $(\R^d)^r\setminus \thindiag{r}{\R^d}$ onto $\thindiag{r}{\R^d}^\bot \setminus \{0\}$, and then radially retract the latter to $S^{d(r-1)-1}$. Concretely, 
$\rho=\mu\circ \nu$, given by $\nu(y_1,\ldots,y_r)=(\bar{y}_1,\ldots,\bar{y}_r)$, where $\bar{y}_j=y_j-\sum_{i=1}^r y_i$, $1\leq j\leq r$, and $\mu(\bar{y}_1,\ldots,\bar{y}_r)=(\bar{y}_1,\ldots,\bar{y}_r)/(\sum_{i=1}^r \|\bar{y}_i\|_2^2)$.}
$\rho\colon (\R^d)^r \setminus \thindiag{r}{\R^d} \simeq S^{d(r-1)-1}.$ 
Both $f^r$ and $\rho$ are equivariant hence so is their composition
\begin{equation}
\label{eq:ftilde}
\gauss{f}:=\rho\circ f^r\colon \delprod{K}{r}\to_{\sym_r} S^{d(r-1)-1}.\qedhere
\end{equation}
\end{proof}

\paragraph{The $\boldsymbol{r}$-fold Van Kampen Obstruction.} Lemma~\ref{lem:delprod-necessary}  is an important tool for proving topological Tverberg-type results.
Moreover, in many interesting cases, the existence of an equivariant map 
\begin{equation}
\label{eq:equimap}
F\colon \delprod Kr \rightarrow_{\sym_r} S^{d(r-1)-1}
\end{equation}
can be decided using \emph{equivariant obstruction theory} (for which the standard reference is \cite[Sec.~II.3]{Dieck:Transformation-Groups-1987}). In particular, in the 
case $\dim \delprod{K}{r} = d(r-1)$ there is a single $d(r-1)$-dimensional equivariant cohomology class $\vko{K}{r}$ defined on $\delprod{K}{r}$ that yields a complete criterion (see Sec.~\ref{sec_equivariant_obs_theory}): 
\begin{lemma}
\label{lem:gen_VKobstruction}
Suppose $\dim \delprod{K}{r} \leq d(r-1)$. Then there exists an equivariant map 
$F\colon \delprod{K}{r}\to_{\sym_r} S^{d(r-1)}$ if and only if $\vko{K}{r}=0$.
\end{lemma}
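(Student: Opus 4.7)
The plan is to apply equivariant obstruction theory in the form developed in \cite[Sec.~II.3]{Dieck:Transformation-Groups-1987}, exploiting two structural features: the $\sym_r$-action on $\delprod{K}{r}$ is free, and the sphere $S^{d(r-1)-1}$ is $(d(r-1)-2)$-connected with $\pi_{d(r-1)-1}(S^{d(r-1)-1}) \cong \Z$. The coordinate-permutation action of $\sym_r$ on $(\R^d)^r$ descends to an action on this top homotopy group, yielding a twisted coefficient system $\Z_\tau$, and the obstruction class $\vko{K}{r}$ is to be interpreted as an element of the equivariant cohomology group $H^{d(r-1)}_{\sym_r}(\delprod{K}{r}; \Z_\tau)$.

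The first step is to build an equivariant map on the $(d(r-1)-1)$-skeleton of $\delprod{K}{r}$ by induction on the dimension of skeleta. One picks a representative for each $\sym_r$-orbit of cells; freeness of the action lets one define the map arbitrarily on the representative and propagate it equivariantly to the rest of the orbit. At each inductive stage $k+1 \leq d(r-1)-1$, the obstruction to extending from the $k$- to the $(k+1)$-skeleton lies in an equivariant cohomology group with coefficients in $\pi_k(S^{d(r-1)-1})$, and this group vanishes since the sphere is $(d(r-1)-2)$-connected. By the same connectivity argument, any two such extensions to the $(d(r-1)-1)$-skeleton are equivariantly homotopic.

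Next, I would identify the primary obstruction. Given any equivariant map $F^{(d(r-1)-1)}$ on the $(d(r-1)-1)$-skeleton, the restriction to the boundary of each top-dimensional cell $\sigma$ of $\delprod{K}{r}$ determines a homotopy class in $\pi_{d(r-1)-1}(S^{d(r-1)-1}) \cong \Z$; assembling these across cells yields an equivariant cellular cochain $c(F^{(d(r-1)-1)}) \in C^{d(r-1)}_{\sym_r}(\delprod{K}{r}; \Z_\tau)$, which is easily checked to be a cocycle. Because any two choices of $F^{(d(r-1)-1)}$ are equivariantly homotopic on the next-to-top skeleton, the corresponding cocycles differ by an equivariant coboundary, so the class $\vko{K}{r} := [c(F^{(d(r-1)-1)})]$ is independent of all choices.

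Finally, the equivalence is the usual formal consequence of the setup: $F^{(d(r-1)-1)}$ extends equivariantly to all of $\delprod{K}{r}$ iff its obstruction cocycle vanishes, and if $\vko{K}{r} = 0$ one can absorb any nonzero cocycle representative $c(F^{(d(r-1)-1)}) = \coboundary \gamma$ by modifying $F^{(d(r-1)-1)}$ on the top skeleton according to $\gamma$, so that the new obstruction cocycle is identically zero. The hypothesis $\dim \delprod{K}{r} \leq d(r-1)$ guarantees no higher obstructions arise. The main care required is the bookkeeping with the twisted coefficients --- concretely, the $\sym_r$-action on $\Z$ is through the degree of the permutation action on the sphere, which acts by the $d$-th power of the sign character --- and the verification that this abstract class agrees with the concrete cochain-level definition of $\vko{K}{r}$ to be given in Section~\ref{sec_equivariant_obs_theory}.
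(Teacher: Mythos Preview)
Your proposal is correct and follows exactly the approach the paper takes: the paper does not give a self-contained proof of this lemma but instead states it as a direct instance of the primary equivariant obstruction (Theorem~\ref{thm:primary-obs}), citing \cite[Sec.~II.3]{Dieck:Transformation-Groups-1987} for the details. Your sketch is a faithful outline of precisely that standard argument---skeleton-by-skeleton extension using freeness of the action and connectivity of the target, identification of the obstruction cocycle on top cells, well-definedness up to coboundary, and absorption of a coboundary by modifying the map on codimension-one cells---so there is nothing to add.
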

If $r=2$, $\dim K=m$, and $d=2m$ then $\vko{K}{2}$ is the classical \define{Van Kampen obstruction} to embeddability of $K$ into $\R^{2m}$
(\cite{vanKampen:KomplexeInEuklidischenRaeumen-1932,Shapiro:FirstObstruction-1957,Wu:TheoryImbeddingImmersionIsotopy-1965}; see also \cite{Melikhov} for a recent in-depth treatment and further references). Correspondingly, we call $\vko{K}{r}$ the \define{$r$-fold Van Kampen obstruction}.

However, there is a caveat: Vanishing of the $r$-fold Van Kampen obstruction implies the existence of an equivariant map $F$ as in \eqref{eq:equimap}, 
but it does not imply that $F$ is of the form $\gauss{f}$ as in \eqref{eq:ftilde}, i.e., induced by a map $f\colon K\to \R^d$ without $r$-Tverberg points; thus, if $\vko{K}{r}=0$ then it is unclear whether the deleted product criterion is incomplete and one needs more refined arguments to show that such a map $f$ does not exist, or whether $f$ does exist and a Tverberg-type theorem for $K$ is simply not true. A particularly pertinent example of this kind is a result of \"Ozaydin~\cite[Theorem~4.2]{Ozaydin:Equivariant-maps-for-the-symmetric-group-1987} (see Theorem~\ref{thm:ozaydin} below), which was a major inspiration for our work.

\paragraph{Sufficiency of the deleted product criterion.} This raises the question whether there exists a converse to Lemma~\ref{lem:delprod-necessary}, at least under some suitable additional hypotheses.

For the classical case $r=2$, this is known to be the case, under suitable restrictions on the dimensions. A fundamental result of this type
was first stated by Van Kampen~\cite{vanKampen:KomplexeInEuklidischenRaeumen-1932} (albeit with a lacuna in the proof \cite{van-Kampen:Berichtung-zu:Komplexe-in-euklidischen-Raumen.-1932}), and complete proofs were later provided by Shapiro~\cite{Shapiro:FirstObstruction-1957} and by Wu \cite{Wu:TheoryImbeddingImmersionIsotopy-1965}. It is convenient for us to separate the statement in two parts: a first one concerning maps without $2$-Tverberg points (also called \define{almost-embeddings}), and a second one concerning embeddings.
 
\begin{theorem}[\textbf{Van Kampen--Shapiro--Wu}]
\label{thm:vKcomplete}
Let $K$ be a simplicial complex, $m:=\dim K\geq 3$.
\begin{enumerate}[label=\textup{(VK\arabic*)}]
\item \label{VK1}
There exists an almost-embedding $f\colon K\to \R^{2m}$ if and only if there exists an equivariant map
 $\delprod{K}{2}\to_{\sym_2} S^{2m-1}$.
\item \label{VK2}
If there an almost-embedding $f\colon K\to \R^{2m}$ then there exists an embedding $g\colon K\hookrightarrow \R^{2m}$;
moreover, $g$ can be taken to be piecewise-linear.
\end{enumerate}
\end{theorem}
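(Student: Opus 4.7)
The ``only if'' direction of~\ref{VK1} is the $r=2$ case of Lemma~\ref{lem:delprod-necessary}, so what remains is the converse of~\ref{VK1} and then~\ref{VK2}. For the converse of~\ref{VK1}, I would first invoke Lemma~\ref{lem:gen_VKobstruction} (with $d=2m$, $r=2$, and $\dim\delprod{K}{2}=2m$) to rephrase the hypothesis as the vanishing of the Van Kampen obstruction $\vko{K}{2}$. The plan is then to begin with a PL map $f_0\colon K\to\R^{2m}$ in general position. Since the top cells of $\delprod{K}{2}$ are products $\sigma\times\tau$ of disjoint $m$-simplices and have dimension exactly $2m$, such an $f_0$ has only finitely many $2$-Tverberg points, each isolated in the relative interior of some top cell and carrying a local intersection sign. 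Counting these signed Tverberg points gives an equivariant top-dimensional cocycle $\cocyc_{f_0}$ on $\delprod{K}{2}$ whose class equals $\vko{K}{2}$; the hypothesis thus provides an equivariant $(2m-1)$-cochain $\psi$ with $\cocyc_{f_0}=\coboundary\psi$.

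The next step is to realize this coboundary geometrically: each value of $\psi$ on an equivariant $(2m-1)$-cell is implemented by a local \emph{finger move} which introduces a canceling pair of oppositely signed Tverberg points in two adjacent top cells. After performing all these finger moves one obtains a map $f_1$ homotopic to $f_0$ whose Tverberg points now come in pairs of opposite signs, with both preimages of each pair lying in the same product cell $\sigma\times\tau$. For each such pair I would then apply the \emph{Whitney trick}: construct an embedded Whitney disk in $\R^{2m}$ bounded by two arcs, one on $f_1(\sigma)$ and one on $f_1(\tau)$, and ambient-isotope one sheet across this disk to cancel the pair. The codimension $2m-m=m\geq 3$ ensures that a generic Whitney disk is embedded and can be arranged disjoint from the rest of the image; this is the technical crux (an $r=2$ instance of Theorem~\ref{thm_whitney_trick_extended}). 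Iterating over all pairs yields an almost-embedding.

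For~\ref{VK2}, I would upgrade an almost-embedding $f$ to an embedding by induction on skeleta. By general position, $f|_{\skel{m-1}{K}}$ is already injective (since $2(m-1)<2m$), so the only remaining double points are \emph{local}, arising between top simplices sharing a face. For each pair of $m$-simplices $\sigma,\tau$ meeting in a face $\rho$, I would use a relative general-position argument to modify $f$ on $\sigma$, fixing $\partial\sigma$ together with a collar of the already-embedded $(m-1)$-skeleton, so that $f|_\sigma$ becomes injective and its image is disjoint from $f(\tau)\setminus f(\rho)$; codimension $m\geq 3$ once more provides enough room to perform this local separation. Doing this over all pairs of top simplices produces a PL embedding $g\colon K\hookrightarrow\R^{2m}$.

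The principal obstacle in both parts is precisely the codimension $\geq 3$ step: embedding the Whitney disk disjointly from the rest of the map in~\ref{VK1}, and separating locally intersecting sheets in~\ref{VK2}. Everything else --- general position, the obstruction-theoretic identification of $\cocyc_{f_0}$ with $\vko{K}{2}$, and the realization of coboundaries by finger moves --- is standard once the dimension hypothesis ensures sufficient ambient room.
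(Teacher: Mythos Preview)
The paper does not give its own proof of Theorem~\ref{thm:vKcomplete}: it is stated as a classical result with references to Van Kampen, Shapiro, and Wu (and to \cite{Freedman:van-Kampens-embedding-obstruction-is-incomplete-for-2-complexes-in-bf-R4-1994} for an accessible exposition). What the paper proves is the generalization, Theorem~\ref{thm:VK-Tverberg-complete}, and it explicitly says that its proof is ``structured along the lines of the classical proof of~\ref{VK1}.'' Your outline for~\ref{VK1} is precisely that classical line, and it matches step-for-step the $r=2$ specialization of the paper's Section~\ref{sec:DeletedProductCriterionTverberg}: represent the obstruction by the intersection-number cocycle of a generic PL map (Lemma~\ref{lem:r-intersection-cycle}), realize elementary coboundaries by finger moves (Lemma~\ref{lem:single-finger-move} and Corollary~\ref{lem_finger_moves_I}), and then eliminate cancelling pairs by the Whitney trick (Theorem~\ref{thm_whitney_trick_extended} with $r=2$). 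So for~\ref{VK1} your proposal is correct and coincides with the paper's (implicit) approach.

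For~\ref{VK2}, note that the paper does not treat this part at all; the footnote after Theorem~\ref{thm:VK-Tverberg-complete} says that generalizing~\ref{VK2} to local $r$-fold points ``turns out to be more subtle'' and is deferred to a follow-up paper. Your sketch for~\ref{VK2} is in the right spirit but is thinner than the rest: simply ``using a relative general-position argument'' to separate $f(\sigma)$ from $f(\tau)\setminus f(\rho)$ glosses over the real work. The local double points between adjacent $m$-simplices need not cancel algebraically a priori, so a pure general-position perturbation is not enough; the classical argument (Shapiro, Wu) either reduces to Zeeman--Irwin type embedding/unknotting theorems in codimension $\geq 3$ (cf.\ Theorems~\ref{thm_Irwin} and~\ref{thm_Zeeman_Homotopy} in this paper), or applies a further round of finger moves plus the Whitney trick relative to the shared face. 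If you want a self-contained proof, that step needs to be filled in.
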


Our main result is a generalization of \ref{VK1} to $r$-Tverberg points.\footnote{Generalizing  \ref{VK2} to $r$-fold points that may be \define{local}, i.e., whose preimages are not pairwise disjoint, turns out to be more subtle; we plan to treat this in a follow-up paper.}

\begin{theorem}[\textbf{Sufficiency of the Deleted Product Criterion for Tverberg Points
}]
\label{thm:VK-Tverberg-complete} 
Suppose $r\geq 2$,  $(r-1)d=rm$, and $d-m\geq 3$. If $K$ is a finite $m$-dimensional simplicial complex, then there exists a map $f:K \to \R^d$ without $r$-Tverberg point iff  there exists an equivariant map $F\colon \delprod{K}{r} \to_{\sym_{r}} S^{d(r-1)-1}$ (equivalently, iff $\vko{K}{r}=0$).
\end{theorem}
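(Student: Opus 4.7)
The plan is to prove the sufficiency direction (the necessity direction is Lemma~\ref{lem:delprod-necessary}). By Lemma~\ref{lem:gen_VKobstruction}, the existence of the equivariant map $F$ is equivalent to $\vko{K}{r}=0$, which I therefore take as my hypothesis and use to construct a map $f\colon K\to\R^d$ without $r$-Tverberg points. Using Remark~\ref{rem:wlog-PL-gen-pos}, start from an arbitrary PL map $f_0\colon K\to\R^d$ in general position. Since $\dim K=m$ and $(r-1)d=rm$, the top cells of $\delprod{K}{r}$ are precisely the products $\sigma_1\times\cdots\times\sigma_r$ of pairwise disjoint $m$-simplices, with total dimension $d(r-1)$, so $f_0$ has only finitely many $r$-Tverberg points; each occurs at a transverse intersection of $r$ affine $m$-planes in $\R^d$ and therefore carries a well-defined local intersection sign $\pm 1$.

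The next step is to reinterpret these signed counts cohomologically. Assigning to each top cell $\sigma_1\times\cdots\times\sigma_r$ of $\delprod{K}{r}$ the signed number of Tverberg points of $f_0$ lying in $f_0(\sigma_1)\cap\cdots\cap f_0(\sigma_r)$ defines an $\sym_r$-equivariant top cocycle whose cohomology class is precisely $\vko{K}{r}$ (cf.\ Section~\ref{sec_equivariant_obs_theory}). Since $\vko{K}{r}=0$, this cocycle is an equivariant coboundary $\coboundary\psi$. The auxiliary tool here is the local \emph{finger move}: pushing a small disk in the interior of one $m$-simplex across the image of a disjoint $m$-simplex creates a cancelling pair of $r$-Tverberg points and changes the cocycle by a prescribed elementary equivariant coboundary. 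Applying finger moves in an $\sym_r$-invariant fashion to compensate for $\psi$, I obtain a new PL general-position map $f_1\colon K\to\R^d$ whose signed count of $r$-Tverberg points vanishes on every $\sym_r$-orbit of top cells of $\delprod{K}{r}$.

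Finally, I would invoke the $r$-fold Whitney trick (Theorem~\ref{thm_whitney_trick_extended}): given two $r$-Tverberg points of $f_1$ arising from the same $r$-tuple of pairwise disjoint simplices and carrying opposite signs, a local modification of $f_1$ inside a small ball cancels both without creating new $r$-Tverberg points. Since the signed count on each orbit is zero, the Tverberg points pair up, and iterating exhausts them to yield the desired $f$. The main obstacle, and the technical heart of the paper, is the Whitney trick itself, which requires the codimension hypothesis $d-m\geq 3$ in an essential way: one first finds an embedded Whitney $(m+1)$-disk attached to one of the $r$ sheets, whose interior is disjoint (by general position) from the images of the remaining $r-1$ sheets, and then uses it to guide an isotopy that cancels the chosen pair. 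Codimension at least $3$ is what simultaneously allows the disk to be made embedded, disjoint from the other sheets, and unknotted/unlinked from them. This is exactly the ``codimension~$3$ obstacle'' highlighted in the abstract, and is what prevents the argument from being applied directly to $K=\simplex^N$ in the topological Tverberg conjecture.
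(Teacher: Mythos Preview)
Your high-level strategy is correct and matches the paper's proof in Section~\ref{sec:DeletedProductCriterionTverberg} step for step: represent the obstruction by an intersection-number cocycle, use finger moves to kill the cocycle (not just its class), then apply the $r$-fold Whitney trick to cancel the remaining Tverberg points in pairs of opposite sign.

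However, your informal descriptions of both key tools are the $r=2$ versions and do not work as stated for $r\geq 3$. The finger move is \emph{not} ``pushing a small disk in the interior of one $m$-simplex across the image of a disjoint $m$-simplex''---that produces double points, not $r$-fold points. In the paper's Lemma~\ref{lem:single-finger-move} one must modify $r-1$ of the simplices $\sigma_2,\ldots,\sigma_r$ simultaneously: near a point on the image of an $(m-1)$-face $\mu_1$ one places $r-1$ small $m$-spheres $S_2,\ldots,S_r$ arranged so that $S_2\cap\cdots\cap S_r$ is a $(d-m)$-sphere linking $f(\mu_1)$, and then pipes each $S_i$ to $f(\sigma_i)$. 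Similarly, the $r$-fold Whitney trick does not proceed via a single ``Whitney $(m+1)$-disk'' (even for $r=2$ the Whitney disk is $2$-dimensional, not $(m+1)$-dimensional). The proof of Theorem~\ref{thm_whitney_trick_extended} is instead by induction on $r$: after localizing to a standard situation, one restricts to the ball $\sigma_1$, where $x$ and $y$ become $(r-1)$-fold intersection points of the pairwise intersections $\sigma_1\cap\sigma_i$ ($2\leq i\leq r$); a piping/unpiping step makes these connected, and then the $(r-1)$-fold trick is applied inside $\sigma_1$ and extended to $B^d$ via unknotting. Since you cite the paper's results rather than re-proving them, your argument is formally complete; but the mechanisms you sketch would not actually carry it out.
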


The proof of Theorem~\ref{thm:VK-Tverberg-complete} will be presented in Section~\ref{sec:DeletedProductCriterionTverberg} 
(see the beginning of that section for an overview).
The proof is structured along the lines of the classical proof of \ref{VK1} (see \cite{Freedman:van-Kampens-embedding-obstruction-is-incomplete-for-2-complexes-in-bf-R4-1994} for a very accessible account of the latter) and based on appropriate higher-multiplicity generalizations of the corresponding tools, in particular \emph{$r$-fold Van Kampen finger moves} (Section~\ref{sec_van_kampen_fingers_move}) and an \emph{$r$-fold Whitney trick} (Theorem~\ref{thm_whitney_trick_extended}). 

\begin{remarks}
\begin{enumerate}
\item The assumption that the map $F$ is equivariant with respect to the action of the full symmetric group $\sym_r$ (and not just some subgroup $H\leq \sym_r$) will be important when applying the $r$-fold Van Kampen finger moves; see Section~\ref{sec_van_kampen_fingers_move} (Remark~\ref{rem:equivariance-sym_r-finger-moves}).
\item The \emph{codimension restriction} $d-m\geq 3$ is crucial for many steps of the proof of Theorem~\ref{thm:VK-Tverberg-complete}. In the classical case of embeddings, it is known that Theorem~\ref{thm:vKcomplete} fails for $m=2$ (see \cite{Freedman:van-Kampens-embedding-obstruction-is-incomplete-for-2-complexes-in-bf-R4-1994}) but holds for $m=1$ (embeddings of graphs in the plane), even under slightly weaker assumptions; the latter fact is equivalent to the \emph{Hanani--Tutte Theorem} \cite{Hanani:UnplattbareKurven-1934,Tutte:TowardATheoryOfCrossingNumbers-1970}. It would be interesting to know if either of these facts generalize to higher multiplicities; see Section~\ref{sec:open_problems} for a more detailed discussion of these and related open questions.
\item For embeddings, there is a far-reaching generalization of Theorem~\ref{thm:vKcomplete}: The \emph{Haefliger--Weber Theorem}~\cite{Haefliger:Plongements-de-varietes-dans-le-domaine-stable-1964,Weber:Plongements-de-polyhedres-dans-le-domaine-metastable-1967} (see also \cite{Skopenkov:EmbeddingKnottingManifoldsEuclideanSpaces-2008} for a modern survey and extensions) guarantees that in the so-called \emph{metastable range} $d\geq 3(m+1)/2$, an $m$-dimensional complex $K$ embeds (piecewise-linearly) into $\R^d$ if and only if there is an equivariant map $\delprod{K}{2}\to_{\sym_2} S^{d-1}$. In a subsequent paper, we plan to present a generalization of this to $r$-Tverberg points, which works in a corresponding \emph{$r$-metastable range} $rd \geq (r+1)m+3$.
\end{enumerate}
 \end{remarks}

Vanishing of the generalized Van Kampen obstruction amounts to the solvability of a certain system of inhomogeneous linear equations over the integers (see Section~\ref{sec_van_kampen_fingers_move}).
As a consequence, we have the following:
\begin{corollary}
\label{cor:VKO-computable}
There is an algorithm which, under the assumptions of Theorem~\ref{thm:VK-Tverberg-complete} , decides whether a given input complex $K$ admits a map into $\R^d$ without $r$-Tverberg points. If the parameters $r$ and $m$ are fixed, the algorithm runs in polynomial time in the size (number of simplices) of $K$.
\end{corollary}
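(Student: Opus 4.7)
The plan is to combine Theorem~\ref{thm:VK-Tverberg-complete} with the linear-algebraic description of $\vko{K}{r}$ promised in Section~\ref{sec_van_kampen_fingers_move}. By Theorem~\ref{thm:VK-Tverberg-complete}, the existence of a map $f\colon K\to\R^d$ without $r$-Tverberg points is equivalent to the vanishing of the single cohomology class $\vko{K}{r}$; so the algorithm needs only to compute a representative cocycle and then decide whether it bounds in the equivariant cellular cochain complex of $\delprod{K}{r}$.

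First, I would build $\delprod{K}{r}$ explicitly as a regular CW complex: its cells are the ordered $r$-tuples $\sigma_1\times\cdots\times\sigma_r$ of pairwise disjoint simplices of $K$, together with their natural face incidences and the $\sym_r$-action. For $r$ and $m=\dim K$ fixed, there are at most $O(|K|^r)$ such cells and all combinatorial data can be tabulated in polynomial time; the quotient $\delprod{K}{r}/\sym_r$ is constructed in the same bound. I then set up the equivariant cellular cochain complex with coefficients in the orientation $\Z[\sym_r]$-module $\calZ$ corresponding to a generator of $\pi_{d(r-1)-1}(S^{d(r-1)-1})$ (the sign character of $\sym_r$ acting on the orthogonal complement of the thin diagonal). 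To obtain an explicit cocycle representing $\vko{K}{r}$, I pick a generic PL map $f\colon K\to\R^d$ — concretely, an affine map sending vertices to integer points in general position, of bit-size polynomial in $|K|$ — and, on each top-dimensional cell $[\sigma_1\times\cdots\times\sigma_r]$ of $\delprod{K}{r}/\sym_r$, record the signed count of $r$-Tverberg points of $f$ in $f(\sigma_1)\cap\cdots\cap f(\sigma_r)$. For fixed $r,m$ each such count is computable in constant time.

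Deciding whether this cocycle is an equivariant coboundary then amounts to asking whether the integer vector $c$ encoding the cocycle lies in the image of the top equivariant coboundary matrix $\coboundary$; this inhomogeneous linear system over $\Z$ has entries of polynomial bit-size and dimensions polynomial in $|K|$, so solvability is decided in polynomial time via Smith normal form. The main technical nuisance — not a deep obstacle but the real place where care is needed — is the bookkeeping: one has to fix orientations on the cells of $\delprod{K}{r}/\sym_r$, sign conventions for the equivariant coboundary $\coboundary$, and the combinatorial formula for the obstruction cocycle in a mutually consistent way, so that what the algorithm outputs is genuinely a representative of $\vko{K}{r}$. Once the definitions of Section~\ref{sec_van_kampen_fingers_move} are pinned down, all signs come from orientations of products of simplices and are computable cell-by-cell in constant time, and the polynomial-time bound claimed in the corollary follows.
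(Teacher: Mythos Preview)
Your proposal is correct and follows essentially the same route as the paper: compute the intersection number cocycle $\cocyc_f$ of a generic simplexwise-linear map, set up the equivariant coboundary matrix over $\Z$ (indexed by $\sym_r$-orbits of top- and codimension-one cells of $\delprod{K}{r}$), and test solvability of $Ax=v$ via Smith normal form. The paper additionally notes that for a linear $f$ in general position the cocycle takes values in $\{-1,0,+1\}$, which slightly simplifies the bit-size analysis, but otherwise the argument matches.
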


\paragraph{\"Ozaydin's and Frick's work: counterexamples to the topological Tverberg conjecture.}
As mentioned above (see also the discussion in \cite
{MabillardWagner:TverbergWhitney-2014}), an important motivation for our work was the following result by \"Ozaydin~\cite[Theorem~4.2]{Ozaydin:Equivariant-maps-for-the-symmetric-group-1987}. For every $n \geq 0$, let $E^n_{\sym_{r}}$ denote an $n$-dimensional, $(n-1)$-connected free $\sym_r$-cell complex. Such complexes exist for all $n\geq 0$: e.g., one can take the $(n+1)$-fold join $E^n_{\sym_{r}}=(\sym_{r})^{\ast (n+1)}$, where $\sym_r$ is considered as a $0$-dimensional complex and acts on itself by right multiplication. They have the universal property that every free $\sym_r$-cell complex $X$ of dimension $\dim X\leq n$ maps equivariantly  into $E^n_{\sym_{r}}$ (see \cite[Sec.~6.2]{Matousek:BorsukUlam-2003}).
\begin{theorem}[\textbf{\"Ozaydin}]
\label{thm:ozaydin} Let $d\geq 1$ and $r\geq 2$. There exists an equivariant map 
$$F \colon E^{d(r-1)}_{\sym_r} \to_{\sym_r} S^{d(r-1)-1}$$ 
if and only if $r$ is not a prime power.
\end{theorem}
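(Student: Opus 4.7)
The plan is to translate \"Ozaydin's theorem into a single equivariant obstruction and then analyze this obstruction prime-by-prime via Sylow subgroups of $\sym_r$. Set $n := d(r-1)$. Since $E^n_{\sym_r}$ is an $n$-dimensional, $(n-1)$-connected, free $\sym_r$-CW complex and $S^{n-1}$ is $(n-2)$-connected with $\pi_{n-1}(S^{n-1})\cong\Z$, equivariant obstruction theory (as used for Lemma~\ref{lem:gen_VKobstruction} and reviewed in Section~\ref{sec_equivariant_obs_theory}) identifies the existence of the equivariant map with the vanishing of a single primary obstruction class
\[
\omega\in H^n_{\sym_r}\bigl(E^n_{\sym_r};\,\pi_{n-1}(S^{n-1})\bigr)\cong H^n\bigl(\sym_r;\,\Z_{\mathrm{sgn}^d}\bigr),
\]
where the coefficients are $\Z$ twisted by the $d$-th power of the sign representation (each transposition swaps two copies of $\R^d$ inside $\thindiag{r}{\R^d}^\perp$, contributing $(-1)^d$ to the determinant). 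This group is finite, so a standard transfer argument applied to a Sylow $p$-subgroup $P\leq\sym_r$ shows that its $p$-primary part injects via restriction into $H^n(P;\Z_{\mathrm{sgn}^d})$, which is itself the primary $P$-equivariant obstruction; hence $\omega=0$ if and only if the $P$-obstruction vanishes for every Sylow subgroup $P$.

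\textbf{Sufficiency} (if $r$ is not a prime power). Fix a prime $p$ and let $P\leq\sym_r$ be a Sylow $p$-subgroup. Since $|P|$ is a power of $p$ while $r$ is not, $P$ cannot act transitively on $\{1,\dots,r\}$; let $O_1,\dots,O_s$ with $s\geq 2$ be its orbits, of sizes $n_1,\dots,n_s$. Choose $c_1,\dots,c_s\in\R^d$ not all zero with $\sum_j n_j c_j=0$ (possible because $s\geq 2$), and set $y_i:=c_j$ for $i\in O_j$; this yields a nonzero $P$-fixed vector in $\thindiag{r}{\R^d}^\perp$ and hence, after normalizing, a $P$-fixed point $v_0\in S^{n-1}$. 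The constant map at $v_0$ is then a $P$-equivariant map $E^n_{\sym_r}\to S^{n-1}$, so the $P$-equivariant obstruction vanishes; applying this for every prime $p$ gives $\omega=0$ and produces the desired map.

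\textbf{Necessity} (if $r=p^k$). I would embed $G:=(\Z/p)^k\hookrightarrow\sym_r$ via the regular representation (identify $\{1,\dots,r\}$ with $G$, which acts on itself by translation), so $G$ acts transitively and freely on $\{1,\dots,r\}$; consequently the only $G$-fixed vector in $V:=\thindiag{r}{\R^d}^\perp$ is $0$, and $G$ acts freely on $S^{n-1}=S(V)$. Any $\sym_r$-equivariant map $F$ would restrict to a $G$-equivariant map from an $(n-1)$-connected, $n$-dimensional free $G$-complex into the free $G$-sphere $S(V)$, and hence force the mod-$p$ Euler class $e(V)\in H^n(BG;\mathbb{F}_p)$ of the vector bundle $EG\times_G V\to BG$ to vanish. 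But this Euler class is nonzero --- a classical fact going back to Borel and Serre, since $V\cong U^{\oplus d}$ with $U$ the reduced regular representation, and $e(U)$ can be identified with a nonzero product of first Chern (respectively Stiefel--Whitney) classes of the nontrivial characters of $G$ lying in the polynomial part of $H^*(BG;\mathbb{F}_p)$, so $e(V)=e(U)^d\neq 0$ --- a contradiction.

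\textbf{Main obstacle.} The delicate step is the necessity direction for $k\geq 2$, because $(\Z/p)^k$ with $k\geq 2$ admits no free action on any sphere; hence one cannot reduce to a classical Borsuk--Ulam statement for a sphere-with-free-action and must instead work cohomologically at the level of Borel constructions and Euler classes of representations. Identifying the obstruction with the Euler class of $V$ and verifying its non-vanishing in $H^n(BG;\mathbb{F}_p)$ is the technical heart of the argument, and this is precisely where the prime-power hypothesis $r=p^k$ is used in an essential way --- it guarantees that an elementary abelian $p$-subgroup acting regularly (hence transitively and freely) on $\{1,\dots,r\}$ embeds into $\sym_r$ in the first place.
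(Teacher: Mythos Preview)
The paper does not supply its own proof of this theorem; it is quoted as a result of \"Ozaydin and used as a black box input to the rest of the paper. So there is nothing in the paper to compare against.

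That said, your sketch is essentially \"Ozaydin's original argument: translate the problem into the vanishing of a single primary obstruction in $H^{n}(\sym_r;\calZ)$, localize at each prime via transfer to Sylow subgroups, exhibit a $P$-fixed point on $S^{n-1}$ for every Sylow $p$-subgroup when $r$ is not a prime power (since no $p$-group can act transitively on $r$ letters in that case), and for $r=p^k$ restrict to the regular copy of $(\Z/p)^k$ in $\sym_r$ and identify the restricted obstruction with the mod-$p$ Euler class of the reduced regular representation, which is a nonzero product of Chern (respectively top Stiefel--Whitney) classes of the nontrivial characters in $H^*\!\bigl(B(\Z/p)^k;\mathbb{F}_p\bigr)$. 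The outline is sound.

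One slip to fix: in your necessity paragraph you refer to $S(V)$ as a ``free $G$-sphere'', but two sentences later you correctly point out that for $k\ge 2$ the $(\Z/p)^k$-action on $S(V)$ is \emph{not} free. This does not damage the argument --- the identification of the primary obstruction with the Euler class only uses that the fiber $S(V)$ is $(n-2)$-connected and that $\pi_{n-1}(S(V))\cong\Z$ carries the orientation action --- but the wording should be corrected. You might also make explicit the (standard) fact that the primary obstruction is natural under restriction of the structure group, so that the $\sym_r$-obstruction restricts to the $P$-obstruction; this is what makes the Sylow/transfer reduction legitimate.
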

Hence, by the universal property of $E^{d(r-1)}_{\sym_r}$, there exists an equivariant map
\begin{equation}
\label{eq:cor_ozaydin}
F\colon \delprod{K}{r} \to_{\sym_r} S^{d(r-1)-1}
\end{equation}
whenever $r$ is not a prime power and $K$ is a simplicial complex such that $\dim \delprod{K}{r} \leq d(r-1)$; in particular, this applies if $\dim K\leq \frac{r-1}{r}d$ or if $K=\simplex^N$.\footnote{On the other hand, B\'ar\'any et al.~\cite[Lemma~1]{Barany:On-a-topological-generalization-of-a-theorem-of-Tverberg-1981} showed that $\delprod{(\simplex^n)}{r}$ is $(n-r)$-connected for $1\leq r \leq n-1$, hence $\delprod{(\simplex^N)}{r}$ is of the type $E^{d(r-1)}_{\sym_r}$. Thus, for prime powers $r$, there is no equivariant map $F\colon \delprod{(\simplex^N)}{r}\to_{\sym_r} S^{d(r-1)-1}$, by Theorem~\ref{thm:ozaydin} (and hence that the topological Tverberg conjecture holds in this case).}

Inspired by this and by the analogy with the classical theorems on embeddability, one of the guiding ideas for our work was that combining \"Ozaydin's result and sufficiency of the deleted product criterion for $r$-Tverberg points might yield an approach to constructing counterexamples to the topological Tverberg conjecture if $r$ is not a prime power. 

Unfortunately, our proof of  Theorem~\ref{thm:VK-Tverberg-complete} requires codimension $d-\dim K\geq 3$, which is not satisfied for $K=\simplex^N$ (one can replace $\simplex^N$ by its $d$-skeleton $\smash{\skel{d}{\simplex^N}}$ without loss of generality, but the problem persists). 

In a recent breakthrough, following the announcement of our work in the extended abstract~\cite{MabillardWagner:TverbergWhitney-2014}
, Frick~\cite{Frick:Counterexamples-to-the-topological-Tverberg-conjecture-2015} found a very elegant way to overcome this {\emph{codimension $3$ obstacle} and to construct the first counterexamples to the topological Tverberg conjecture. Specifically, Frick proves that for every $r\geq 6$ that is not a prime power, there exists a map $f\colon \simplex^M \to \R^{3r+1}$ without $r$-Tverberg points, where $M=(3r+2)(r-1)$; in particular, there exists a map $\sigma^{100} \rightarrow \R^{19}$ without $6$-Tverberg point. It is known that this implies that there are counterexamples for all $d\geq 3r+1$, see~\cite[Proposition~2.5]{Longueville:Notes-on-the-topological-Tverberg-theorem-2002}. 

Frick's argument exemplifies the \emph{constraint method} of Blagojevi\'{c}--Frick--Ziegler~\cite{Blagojevic:Tverberg-plus-constraints-2014} and builds the counterexample $f\colon \simplex^M\to \R^{3r+1}$ from a map $\smash{g\colon \skel{3(r-1)}{\simplex^M} \to \R^{3r}}$ without $r$-Tverberg points, where the existence of $g$ follows from \"Ozaydin's result (Theorem~\ref{thm:ozaydin}) and ours (Theorem~\ref{thm:VK-Tverberg-complete}).

Here, we present a different construction (which does not use the constraint method) that  yields counterexamples in dimension $d=3r$; this seems to be the natural limit for counterexamples constructed using the $r$-fold Whitney trick, due to the codimension 3 requirement for the latter.

\begin{theorem}
\label{thm:counterexamples}
Suppose $r\geq 6$ is not a prime power and let $N=(3r+1)(r-1)$. Then there exists a map $f\colon \simplex^{N}\to \R^{3r}$ without $r$-Tverberg points.
\end{theorem}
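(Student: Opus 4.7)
The plan is to reduce to the $3(r-1)$-skeleton of $\sigma^N$, apply Özaydin's theorem together with Theorem~\ref{thm:VK-Tverberg-complete} to produce a map there, and then extend to all of $\sigma^N$ using a variant of the $r$-fold Whitney trick. The construction is tight: at $d=3r$, every unbalanced Tverberg partition still contains a simplex of codimension $\geq 4$ in $\R^{3r}$, which is just enough room to push a Whitney disk through.

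Set $K_0 = \skel{3(r-1)}{\sigma^N}$. Using $N+1 = (3r+1)(r-1)+1 = r(3r-2)$, any partition of the $N+1$ vertices of $\sigma^N$ into $r$ blocks of size $3r-2$ yields a top cell of $\delprod{K_0}{r}$ of dimension $r\cdot 3(r-1) = 3r(r-1)$, so $\dim \delprod{K_0}{r} = d(r-1)$ for $d=3r$. Since $r$ is not a prime power, Özaydin's theorem (Theorem~\ref{thm:ozaydin}) provides an equivariant map $\delprod{K_0}{r} \to_{\sym_r} S^{d(r-1)-1}$, equivalently $\vko{K_0}{r}=0$. Because the codimension $d-\dim K_0 = 3$ meets the threshold of Theorem~\ref{thm:VK-Tverberg-complete}, that theorem yields a PL map $g\colon K_0 \to \R^{3r}$ without $r$-Tverberg points.

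Next, extend $g$ arbitrarily to a PL map $f\colon \sigma^N \to \R^{3r}$ in general position, and analyze its $r$-Tverberg points. The usual codimension count, combined with $N+1 = r(3r-2)$, forces every $r$-tuple of pairwise-disjoint faces $\tau_1,\dots,\tau_r$ carrying an $r$-Tverberg point of $f$ to satisfy $\sum_i \dim \tau_i = 3r(r-1)$ and to partition the entire vertex set of $\sigma^N$. In the \emph{balanced} case (every $\dim \tau_i = 3(r-1)$), all faces lie in $K_0$, so $g$ has already disposed of such points. Every remaining Tverberg point of $f$ must therefore come from an \emph{unbalanced} partition, which automatically contains at least one face of dim $\geq 3r-2$ and at least one face of dim $\leq 3r-4$, the latter of codim $\geq 4$ in $\R^{3r}$.

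Finally, eliminate the unbalanced Tverberg points. For each $\sym_r$-orbit $\mathcal{O}$ of unbalanced partitions, the equivariant signed intersection count of the Tverberg points of $f$ supported on $\mathcal{O}$ is the component of $\vko{\sigma^N}{r}$ on $\mathcal{O}$. Since $\dim \delprod{\sigma^N}{r} = d(r-1)$ for our choice of $N$, Özaydin's theorem applied directly to $\sigma^N$ gives $\vko{\sigma^N}{r}=0$, so these counts all vanish and the Tverberg points on $\mathcal{O}$ group into opposite-sign pairs. Each pair can then be canceled by an equivariant isotopy of $f$ supported near the lowest-dimensional face $\tau_j$ of the partition, via a suitable version of the $r$-fold Whitney trick (Theorem~\ref{thm_whitney_trick_extended}). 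The hard part is that Theorem~\ref{thm_whitney_trick_extended} nominally requires \emph{every} participating simplex to have codim $\geq 3$, while the "large" face of an unbalanced partition may have codim $\leq 2$. The resolution I would pursue is that the Whitney disk only has to be disjoint from $f|_{\tau_j}$ and the other $r-2$ small faces, and its general-position construction is governed by the codim of $\tau_j$ alone, which is $\geq 3$; fleshing out this one-sided Whitney trick is the technical crux of the proof, and it is precisely what lets the construction succeed at $d=3r$, one dimension below Frick's bound.
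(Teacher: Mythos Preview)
Your approach diverges from the paper's at the crucial step, and the divergence contains two genuine gaps.

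\textbf{Gap 1: cocycle versus cohomology class.} You write that the signed count of $r$-Tverberg points on an orbit $\mathcal{O}$ ``is the component of $\vko{\sigma^N}{r}$ on $\mathcal{O}$'', and hence vanishes because $\vko{\sigma^N}{r}=0$. But $\vko{\sigma^N}{r}$ is a cohomology class, not a cocycle; its vanishing only says that the intersection cocycle $\cocyc_f$ is an equivariant \emph{coboundary}, not that its values on any particular cell are zero. To force $\cocyc_f=0$ on the unbalanced cells you would need finger moves (Lemma~\ref{lem:single-finger-move}/Corollary~\ref{lem_finger_moves_I}), but those are developed only under the hypothesis $\dim K=m$, and in any case a finger move indexed by a codimension-one cell of $\delprod{(\sigma^N)}{r}$ generally alters $\cocyc_f$ on \emph{balanced} top cells as well, destroying the carefully arranged vanishing there. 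You give no argument that $\cocyc_f$, already zero on balanced cells, can be written as $\delta\psi$ with $\psi$ supported away from the balanced part.

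\textbf{Gap 2: the one-sided Whitney trick.} You acknowledge this is the crux and leave it unproved. The difficulty is real: if some $\tau_i$ in the Tverberg partition has codimension $\leq 2$, the $2$-disks $D_{i(i+1)}$ in Lemma~\ref{lem:localization} generically meet $f(\tau_i)$, so one cannot construct the local ball $B^d$ as in the proof. Reducing to the classical trick by moving only the small face $\tau_j$ against the manifold $\bigcap_{i\neq j} f(\tau_i)$ does not help either: that intersection has codimension $m_j$ in $\R^d$, and $m_j$ can be $0$, $1$, or $2$; moreover it need not be connected, so $x$ and $y$ may lie in different components.

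The paper sidesteps both problems by a different mechanism: it introduces \emph{prismatic} maps $\sigma^N\to\sigma^m\times\interior{\sigma}^k$ (Section~\ref{sec:counterexamples}), whose structure forces every $r$-Tverberg point to lie on a tuple of colorful $m$-simplices, all of balanced type $\{m\}^r$ (Lemma~\ref{lem_prismatic_map}). Thus unbalanced partitions never carry Tverberg points, and no one-sided trick is needed. The relevant obstruction then lives on the space $X\cong_{\sym_r}(\sym_r)^{\ast(m+1)}$, to which \"Ozaydin's theorem applies directly, and the paper develops prismatic finger moves (Lemma~\ref{lem:prismatic_finger_moves}) and applies the ordinary $r$-fold Whitney trick in codimension exactly $3$ to finish (Theorem~\ref{thm_prismatic}).
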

The smallest counterexample obtained in this way is a map $f\colon \simplex^{95}\to \R^{18}$ without any $6$-Tverberg point.

The proof of Theorem~\ref{thm:counterexamples} will be given in Section~\ref{sec:counterexamples}.
 It is based on three ingredients:
\"Ozaydin's result (Theorem~\ref{thm:ozaydin}), our higher-multiplicity Whitney trick  (Theorem~\ref{thm_whitney_trick_extended} below), and a particular kind of PL map $\simplex^N\to \R^{3r}$ that we will call \define{prismatic} (see Definition~\ref{def:prismatic}).

\begin{remark}
In principle, the proofs of Theorems~\ref{thm:VK-Tverberg-complete} and \ref{thm:counterexamples} are constructive and do not require explicit knowledge of \"Ozaydin's equivariant map \eqref{eq:cor_ozaydin}; the existence of this map enters only in terms of the equivalent condition that the relevant obstruction vanishes. In each case, we start with an arbitrary map  (respectively, with a prismatic map) that may have $r$-Tverberg points and then construct the desired map through a finite sequence of \emph{$r$-fold Finger moves}, followed by a finite number of applications of the $r$-fold Whitney trick. It is an interesting question how complicated the final PL map $f$ in Theorem~\ref{thm:counterexamples} needs to be; see the discussion in Section~\ref{sec:open_problems}~(3).
\end{remark}

The key property of prismatic maps is that we will be able to ensure that all their Tverberg points are of the same \emph{type} $\{(r-1)k\}^r$, in the following sense:

\begin{definition}[\textbf{Tverberg Partitions and Type}]
\label{def:type}
Let $r\geq 2$, $d\geq 1$, $N=(d+1)(r-1)$, and let $f\colon \simplex^N\to \R^d$ be a PL map in general position. Suppose $y\in f(\tau_1)\cap \dots \cap f(\tau_r)$ is an $r$-Tverberg point of $f$ and $\dim \tau_i=m_i$, $1\leq i\leq r$. The vertex sets of the simplices $\tau_i$ form a partition of the vertex set of $\sigma^N$, hence $\sum_{i=1}^r m_i =d(r-1)$ and (by general position) $m_i\leq d$ for $1\leq i\leq r$. Somewhat abusing terminology, we say that $\tau_1,\ldots,\tau_r$ form a \define{Tverberg partition} for $f$, and we call the multiset of dimensions $\{m_1,m_2,\ldots,m_r\}$ the \define{type} of this Tverberg partition and of the Tverberg point $y$.
\end{definition}

As a byproduct of the proof of Theorem~\ref{thm:counterexamples}, we obtain the following result (where $\{m\}^r$ denotes the multiset containing the element $m$ with multiplicity $r$):
\begin{corollary}
\label{cor:type-m}
Suppose $r\geq 2$, $k\geq 1$, and $N=(rk+1)(r-1)$. Then there exists an affine map $f\colon \simplex^N\to \R^{rk}$ such that all $r$-Tverberg points of $f$ are of the same type $\{m\}^r$, where $m=(r-1)k$.
\end{corollary}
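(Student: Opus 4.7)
The plan is to construct $f$ explicitly as a \emph{prismatic} affine map and verify the type property directly. Label the $N+1 = r(m+1)$ vertices of $\simplex^N$ as $v_{i,j}$ for $1 \leq i \leq r$ and $0 \leq j \leq m$, and group them into $r$ faces $V_i := \{v_{i,0},\ldots,v_{i,m}\}$ of dimension $m$ each. Choose $r$ linear subspaces $U_1,\ldots,U_r \subset \R^{rk}$, each of dimension $m = (r-1)k$, in generic position so that $\dim \bigcap_{i \in I} U_i = (r - |I|)k$ for every $I \subset [r]$; the standard model is $U_i := \ker(\pi_i)$ where $\pi_i\colon (\R^k)^r \to \R^k$ is the $i$-th block projection. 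Place the images $f(v_{i,j}) \in U_i$ in generic position subject to $\sum_j f(v_{i,j}) = 0$, which puts $0$ in the relative interior of $\operatorname{conv}(f(V_i))$, and define $f$ by affine extension. By construction, the partition $(V_1,\ldots,V_r)$ is a Tverberg partition of type $\{m\}^r$ with Tverberg point $0$.

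For any other Tverberg partition $(S_1,\ldots,S_r)$ of $f$ with Tverberg point $y$, I introduce the support matrix $a_{ji} := |S_j \cap V_i|$ (with column sums $m+1$ and row sums $|S_j| = m'_j + 1$) and the index sets $I_j := \{i : a_{ji} > 0\}$. Since $f(V_i) \subset U_i$ and the $U_i$'s are linear, $\aff f(S_j)$ lies in $\sum_{i \in I_j} U_i$, which by generic position equals $\R^{rk}$ when $|I_j| \geq 2$ and equals $U_{i_0(j)}$ when $I_j = \{i_0(j)\}$. Let $J_1 := \{j : |I_j| = 1\}$. A short combinatorial estimate using $|S_j| \leq d+1 = rk+1$, $\sum_j |S_j| = N+1$, and $k \geq 1$ shows that $j \mapsto i_0(j)$ is injective on $J_1$: otherwise two indices $j',j'' \in J_1$ would satisfy $|S_{j'}|+|S_{j''}| \leq m+1$, forcing the remaining rows to exceed the bound $(r-2)(rk+1)$.

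A dimension count then handles the concentrated indices: $y$ lies in $\bigcap_{j \in J_1} U_{i_0(j)}$, of dimension $(r - |J_1|)k$ by generic position, and in each $\aff f(S_j)$ of codimension $rk - m'_j$ for $j \notin J_1$. Setting the expected dimension of this intersection to $0$ yields $\sum_{j \notin J_1} m'_j = (r - |J_1|)m$, which combined with $\sum_j m'_j = rm$ forces $\sum_{j \in J_1} m'_j = |J_1|m$; since $m'_j \leq m$ for $j \in J_1$, equality gives $m'_j = m$, i.e., $S_j = V_{i_0(j)}$, for every concentrated index.

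The remaining task --- and the main obstacle --- is showing $m'_j = m$ also for each $j \notin J_1$, not merely on average. I handle this via transversality: for each non-uniform support pattern $A$, the square linear system $\sum_v \mu_{jv} f(v) = y$, $\sum_v \mu_{jv} = 1$ defining the Tverberg point has a generic unique solution $(\mu, y)$ whose coefficients $\mu_{jv}$ may take arbitrary signs, and the condition $\mu_{jv} \geq 0$ (required for a genuine Tverberg partition) cuts out a proper open semialgebraic subset of the parameter space of prismatic maps. It suffices therefore to exhibit a single prismatic map at which this non-negativity fails for every non-uniform support pattern simultaneously; by openness the same then holds in a full-dimensional neighborhood, producing the desired $f$. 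An explicit candidate is the symmetric model above with $U_i = \ker\pi_i$ and $f(v_{i,j})$ placed at the vertices of a regular $m$-simplex centered at $0$ in $U_i$, where the non-negativity conditions can be checked by direct computation exploiting the $\sym_r$-symmetry.
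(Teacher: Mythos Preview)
Your construction is entirely different from the paper's, and the proof is incomplete at the decisive step.

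The paper's argument is a two-line application of the machinery already developed: one takes an affine prismatic map $f\colon\simplex^N\to\simplex^m\times\interior\simplex^k\subset\R^{rk}$ as in Example~\ref{ex_prismatic_map} (partitioning the vertices into $m+1$ color classes $C_0,\dots,C_m$ of size $r$, mapping each $C_j$ generically into the fiber $\{u_j\}\times\interior\simplex^k$), and then invokes Lemma~\ref{lem_prismatic_map}, which shows by a short pigeonhole-plus-dimension argument that every $r$-Tverberg point of any prismatic map is supported on $r$ colorful $m$-simplices. That is the whole proof. Note that the paper's partition is into $m+1$ classes of size $r$, transverse to your partition into $r$ classes $V_i$ of size $m+1$; what you call ``prismatic'' is not the paper's notion.

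Your route, by contrast, attempts to control Tverberg partitions via the incidence matrix $a_{ji}=|S_j\cap V_i|$. Two concrete problems:
\begin{itemize}
\item You use $|S_j|\le d+1$ in the injectivity argument, but that bound comes from general position, and your map is deliberately \emph{not} in general position (each $V_i$ lands in an $m$-dimensional subspace). Relatedly, you silently assume the supports $\tau_j$ of the preimages already partition the full vertex set; for a non-generic affine map this is not automatic and must be argued separately before any counting.
\item Most seriously, your final paragraph is not a proof but a programme: you assert that for the symmetric model the non-negativity constraints fail for \emph{every} non-uniform support pattern, and that this ``can be checked by direct computation exploiting the $\sym_r$-symmetry.'' That computation is exactly the content of the corollary and cannot be deferred. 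The number of support patterns (and of actual partitions realizing each pattern) is large, and nothing you have written indicates why the barycentric/symmetric placement forces at least one negative coefficient in every case.
\end{itemize}

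If you want to salvage your approach, you would have to supply that last verification in full; but the paper's prismatic construction sidesteps all of this with a one-paragraph combinatorial argument, and I recommend you study Lemma~\ref{lem_prismatic_map} instead.
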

It is also well-known that for every $r$ and $d$, there are affine maps\footnote{
Specifically, such an affine map is given by the point configuration in $\R^d$ (the images of the vertices) consisting of $(d+1)$ small clusters 
of $(r-1)$ points centered at the vertices of a $(d+1)$-simplex, plus one point at the barycenter of the simplex.} 
 all of whose Tverberg points are of type $\{1\}\cup\{d\}^{r-1}$.This raises the question whether we can generally construct (affine) maps all of whose Tverberg points are of a specified type:
\begin{question}
Let $r\geq 2$ and $d\geq 1$. Suppose we are given integers $m_1,\ldots, m_r \in \{0,1,\dots,d\}$ such that $\sum_{i=1}^r m_i=d(r-1)$.
Does there exist an affine map $f\colon \sigma^N\to \R^d$ such that all $r$-Tverberg points of $f$ are of the same type $\{m_1,m_2,\dots,m_r\}$? 
\end{question}

\subsection{A Higher-Multiplicity Whitney Trick}
\label{sec:intro-Whitney}

Our main tool to deal with intersections of higher multiplicity is a \emph{Whitney trick for $r$-fold points}
(Theorem~\ref{thm_whitney_trick_extended} below).

The classical Whitney trick (more precisely, its piecewise-linear version, see, e.g., \cite[p.~179]{Weber} or \cite[Lemma~5.12]{Rourke:Introduction-to-piecewise-linear-topology-1982}) allows one to eliminate a pair of isolated double points of \emph{opposite sign} (see Section~\ref{sec:intersection_signs} for the definition of intersection signs) of a PL map by an ambient PL isotopy fixed outside a small ball, provided the codimension is at least $3$.

Here and in what follows, an \define{ambient PL isotopy} of $\R^d$ is a PL homeomorphism $H\colon \R^d\times [0,1]\to \R^d\times [0,1]$ that preserves the $[0,1]$-component and thus gives rise to a family of PL homeomorphisms $H_t\colon \R^d\to \R^d$, $0\leq t\leq 1$ (see Section~\ref{sec:PLbackground} for more background on isotopies).

\begin{theorem}[\textbf{Whitney Trick}]
\label{thm:Whitney}
Suppose that $M_1$ and $M_2$ are connected, orientable PL manifolds, possibly with boundary, of respective dimensions 
of respective dimensions $m_1$ and $m_2$, $m_1+m_2=d$, and that 
$$f\colon M_1\sqcup M_2\to \R^d$$ 
is a PL map in general position defined on their disjoint union. 

If $x,y\in f(M_1)\cap f(M_2)$ are two double points of opposite sign\footnote{We remark that the sign of a double point depends on the choice of orientations of the $M_i$ and of $\R^d$, but if the $M_i$ are connected then the condition of having opposite signs is independent of such a choice.}
and if $d-m_i\geq 3$, $i=1,2$, then there exists a PL ambient isotopy
$H$ of $\R^d$ such that 
$$f(M_1)\cap H_1(f(M_2)) = \big(f(M_1)\cap f(M_2) \big) \setminus \{x,y\}.$$
Moreover, the isotopy can be chosen to be local, in the following sense: Given any closed polyhedron $L\subset \R^d$ of dimension $\ell\leq d-3$ and with $x,y\not\in L$, there exists a PL ball $B^d\subset \R^d$ 
disjoint from $L$ 
such that $H$ is fixed outside of $\interior B^d$.
\end{theorem}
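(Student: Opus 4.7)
The plan is to carry out the standard PL Whitney trick: connect the two bad double points by arcs in the two manifolds, span the resulting loop by a Whitney 2-disk $D$ in $\R^d$, and then use this disk to guide an ambient isotopy that pushes $f(M_2)$ across $D$, eliminating $x$ and $y$ simultaneously. The opposite-sign hypothesis is exactly what guarantees that the normal data along $D$ is compatible with such a push.

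More concretely, I would first use connectedness of $M_1$ and $M_2$ to choose PL arcs $\beta_i \subset M_i$ joining the preimages of $x$ and $y$, and set $\alpha_i := f(\beta_i) \subset f(M_i)$. By a general-position perturbation of the $\beta_i$ (relative to their endpoints), we can arrange that the $\alpha_i$ are embedded, disjoint from all other double points of $f(M_1) \cap f(M_2)$, and meet only at $x$ and $y$. The concatenation $\gamma := \alpha_1 \cup \alpha_2$ is then a PL loop in $\R^d$. Because $d \geq 4$ (a consequence of $d - m_i \geq 3$ together with $m_i \geq 1$, with the degenerate $m_i = 0$ case trivial) and $\R^d$ is simply connected, $\gamma$ bounds a PL singular disk $D^2 \to \R^d$; a further general-position argument (using that a generic $2$-disk meets $f(M_i)$ in a set of dimension $2 + m_i - d \leq -1$, i.e., only along the boundary arc, by the codimension-$3$ hypothesis) lets us assume $D$ is embedded with $D \cap f(M_i) = \alpha_i$ and with $\interior D$ disjoint from both $f(M_1)$ and $f(M_2)$.

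Next I would verify the framing condition. Along $\alpha_1$, the tangent space to $f(M_1)$ together with the tangent to $D$ spans a $(m_1+1)$-plane in $\R^d$; the hypothesis that $x$ and $y$ have opposite signs translates exactly into the assertion that the induced trivialization of the normal bundle of $D$ (as a $(d-2)$-plane bundle split as a line bundle plus an $(m_2-1)$-plane bundle compatible with $f(M_2)$ along $\alpha_2$) extends across $D$. Once this is checked, one builds a small PL regular neighborhood $N$ of $D$ in $\R^d$, which (by codimension $3$) is a PL $d$-ball, and inside $N$ constructs an explicit ambient isotopy $H$ supported in $\interior N$ that slides $f(M_2) \cap N$ along $D$ and off $f(M_1)$; outside of $N$ the isotopy is the identity. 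The effect on intersections is to remove exactly $\{x,y\}$ from $f(M_1) \cap f(M_2)$.

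For locality, given the auxiliary polyhedron $L$ with $\dim L \leq d-3$ and $x,y \notin L$, I would first choose the arcs $\beta_i$ so that $\alpha_i \cap L = \emptyset$ (possible because $\dim \alpha_i + \dim L = 1 + (d-3) < d$), then select the spanning disk $D$ to also be disjoint from $L$ (by general position, $\dim D + \dim L = 2 + (d-3) < d$), and finally shrink the regular neighborhood $N$ so that $N \cap L = \emptyset$; then $B^d := N$ is the desired ball. The main technical obstacle is the framing/sign verification: matching the algebraic sign convention of Section~\ref{sec:intersection_signs} with the topological condition that the normal splitting extends across $D$ is the real content of the theorem, everything else being standard general-position juggling that is made available by the codimension-$3$ hypothesis.
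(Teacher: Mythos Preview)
Your outline is the standard PL Whitney trick and is correct in substance. However, note that the paper does not actually give its own proof of Theorem~\ref{thm:Whitney}: it is stated as the classical result, with references to Weber and to \cite[Lemma~5.12]{Rourke:Introduction-to-piecewise-linear-topology-1982}, and in Section~\ref{sec_whitney_trick} it serves as the base case $r=2$ of the induction proving Theorem~\ref{thm_whitney_trick_extended}, where again it is simply cited (``The base case $r=2$ is the PL version of the Whitney Trick (see, e.g., Weber~\cite{Weber})'').

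The one part the paper does carry out in detail for general $r$, and hence in particular for $r=2$, is precisely your localization step: Lemma~\ref{lem:localization} constructs the arcs $\lambda_i \subset f(M_i)$ joining $x$ and $y$, fills the circle $\lambda_1 \cup \lambda_2$ with an embedded $2$-disk $D_{12}$ avoiding the obstacle $L$ (using the codimension-$3$ hypothesis exactly as you do), and takes $B^d$ to be a small regular neighborhood of this disk. Your construction of $\alpha_1,\alpha_2,D$ and the choice $B^d=N$ match this essentially verbatim. The subsequent framing verification and the actual push across the disk, which you sketch, are the content of the cited classical sources and are not reproduced in the paper. So your proposal is correct and agrees with what the paper does where the paper does anything; for the rest you are supplying what the paper chooses to cite.
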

Figure~\ref{fig_exp_Whitney} illustrates this in a low-dimensional situation. The idea of the trick is to ``push'' $f(M_2)$ upwards until the two intersections points $x$ and $y$ disappear, while keeping the boundary of $f(M_2)$ fixed. In low codimensions, doing this might require passing over some obstacles and/or introducing new double points, but if $d-m_i\geq 3$, $i=1,2$ then these problems can be avoided.\footnote{The hypotheses for of the Whitney trick can be weakened, e.g., one of the $\sigma_i$ can 
be allowed to have dimension $m_i=d-2$, but then one needs to impose additional technical conditions like local flatness and simple connectivity of the complement $\R^d\setminus f(\sigma_i)$; see, e.g., \cite[Lemma~5.12]{Rourke:Introduction-to-piecewise-linear-topology-1982}.} 
\begin{figure}[ht]
\begin{center}
\includegraphics[scale=1]{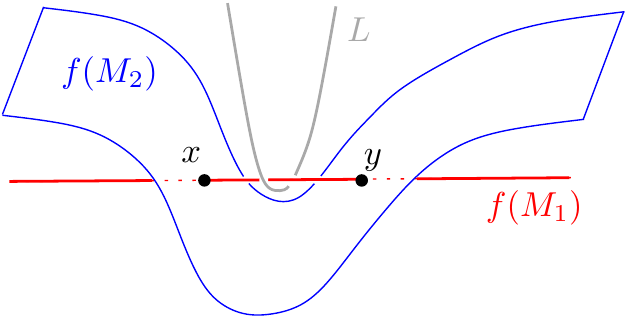}
\caption{$f(M_1)$ and $f(M_2)$ intersecting in two double points $x,y$ of opposite signs, and a potential obstactle $L$. 
}
\label{fig_exp_Whitney}
\end{center}
\end{figure}


In the present paper, we prove the following analogue of Theorem~\ref{thm:Whitney} for $r$-fold points:
\begin{theorem}[\textbf{Higher-Multiplicity Whitney Trick}] 
\label{thm_whitney_trick_extended}

Let $r\geq 2$, and let $M_1, \dots, M_r$ be connected, orientable PL manifolds\footnote{We are mostly interested in the case that each $M_i\cong\simplex^{m_i}$ is a simplex, but the proof of the more general case comes at no extra cost.}, of respective dimensions $\dim M_i=m_i$, such that
\begin{equation}
\label{eq:dim-critical}
\sum_{i=1}^r m_i =d(r-1)
\end{equation}
and 
\begin{equation}
\label{eq:codimension-3}
d-m_i\geq 3, \qquad 1\leq i\leq r. 
\end{equation}
Let 
\[
f : M_1 \sqcup \cdots \sqcup M_r \rightarrow \R^d
\]
be a PL map in general position defined on their disjoint union, and suppose that 
\[
x,y\in f(M_1) \cap f(M_2) \cap \cdots \cap f(M_r)
\]
are two $r$-fold points of opposite intersection signs (see Section~\ref{sec:intersection_signs}).

Then there exist $r-1$ PL ambient isotopies $H^2,\ldots,H^r$ of $\R^d$
such that
\[
f(M_1) \cap H^2_1( f (M_2)) \cap \cdots \cap H^r_1( f (M_r)) = \big(f(M_1) \cap f(M_2) \cap \cdots \cap f(M_r) \big)\setminus \{x,y\}
\]
Moreover, these isotopies can be chosen to be \emph{local}, in the following sense: Given any closed polyhedron $L\subset \R^d$ of dimension $\ell\leq d-3$ and with $x,y\not\in L$, there exists a PL ball $B^d\subset \R^d$ 
disjoint from $L$ 
such that $H^i$ is fixed outside of $\interior B^d$, $2\leq i\leq r$.
\end{theorem}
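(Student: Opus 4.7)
The plan is to generalize the classical Whitney construction to the $r$-fold setting by using $r-1$ two-dimensional Whitney disks that all share a common connecting arc on the distinguished sheet $M_1$; this accounts for the asymmetry in the conclusion, where $M_1$ is kept fixed while $M_2,\ldots,M_r$ are each moved by its own ambient isotopy $H^i$. The first step is, for each $i\in\{1,\ldots,r\}$, to select a PL arc $\gamma_i\subset M_i$ connecting the preimages $x_i,y_i\in M_i$ of $x$ and $y$; these preimages are single points by general position, and such arcs exist by connectedness of $M_i$. After a small perturbation exploiting the codimension hypothesis $d-m_j\geq 3$, we arrange that the interior of each $f(\gamma_i)$ avoids $f(M_j)$ for every $j\neq i$, that distinct $f(\gamma_i)$ and $f(\gamma_j)$ meet only at the endpoints $x,y$, and that all arcs also avoid the obstacle $L$.

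For each $i\in\{2,\ldots,r\}$, the concatenation $f(\gamma_1)\cdot f(\gamma_i)^{-1}$ is a PL loop $C^i$ in $\R^d$; since $d\geq 3$, it bounds a PL $2$-disk $D^i$. By a general-position argument, these disks can be chosen so that (a) each $\interior{D^i}$ meets each $f(M_j)$ only in its boundary $C^i$ (justified by the dimension estimate $\dim D^i+\dim f(M_j)\leq 2+(d-3)<d$), and similarly avoids $L$, and (b) distinct disks $D^i,D^j$ meet only along the common arc $f(\gamma_1)$. From each such disk $D^i$ we construct, via the standard regular-neighborhood recipe for Whitney moves, an ambient PL isotopy $H^i$ of $\R^d$ supported in a small neighborhood of $D^i$, pushing $f(M_i)$ across $D^i$ so as to separate it from $f(M_1)$ near $x$ and $y$ while leaving the other sheets untouched. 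The $r-1$ pushes together eliminate both $r$-fold points $x$ and $y$; no new $r$-fold points are created because codimension $3$ keeps the disk interiors away from the other sheets. The locality statement follows by taking $B^d$ to be a small regular neighborhood of $\bigcup_i(\gamma_i\cup D^i)$, which can be arranged disjoint from $L$ by the same dimension bound.

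The main technical obstacle is justifying that the hypothesis of \emph{opposite} $r$-fold intersection signs at $x$ and $y$ is precisely what permits the Whitney disks $D^i$ to be chosen with the \emph{correct normal framing} so that pushing $M_i$ across $D^i$ eliminates, rather than duplicates, the intersections. In the classical case $r=2$, opposite sign is exactly equivalent to the Whitney disk admitting such a framing, and a single disk suffices. In the $r$-fold case, the sign involves a joint orientation of $r$ transverse linear subspaces in $\R^d$ and cannot be reduced to a product of pairwise signs. The crux of the argument is therefore to analyze a local linear model near $x$ and $y$ and to show that the single condition of opposite joint signs is enough to guarantee compatible framings on all $r-1$ disks $D^2,\ldots,D^r$ simultaneously. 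This orientation bookkeeping, combined with verification that the $r-1$ individually valid pushes interact correctly when performed in sequence (which in turn relies crucially on the disjointness properties (a) and (b)), is where the proof substantively generalizes the classical one rather than merely reproducing it in higher multiplicity.
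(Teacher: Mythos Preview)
Your direct approach—building $r-1$ Whitney $2$-disks $D^2,\ldots,D^r$ over a common arc $f(\gamma_1)\subset f(M_1)$ and pushing each $f(M_i)$ across its own disk—is genuinely different from the paper's proof, which proceeds by induction on $r$: one first reduces to a standard local situation in a ball $B$, restricts to the sub-ball $\sigma_1=B\cap f(M_1)$, performs ambient surgery (piping and a complementary unpiping) on each $\sigma_i$ so that the pairwise intersections $\sigma_1\cap\sigma_i$ become connected, applies the $(r-1)$-fold trick inside $\sigma_1$, and finally extends the resulting isotopies of $\sigma_1$ to $B$ via unknottedness of $\sigma_1\subset B$.

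Your proposal has a real gap precisely at the step you identify as the crux. The classical Whitney push across a $2$-disk $D$ with $\partial D=\gamma_1\cup\gamma_2$, $\gamma_j\subset f(M_j)$, is defined only when $m_1+m_2=d$: the ``standard regular-neighborhood recipe'' rests on a framing of the $(d-2)$-dimensional normal bundle of $D$ that splits into sub-bundles of ranks $m_1-1$ and $m_2-1$ extending the normal bundles of the $\gamma_j$ inside their respective sheets, and the opposite-sign hypothesis is exactly what lets this splitting extend over $D$. For $r\geq 3$ the pair $(f(M_1),f(M_i))$ does not have complementary dimension—indeed $f(M_1)\cap f(M_i)$ is positive-dimensional, so ``separating $f(M_i)$ from $f(M_1)$ near $x$ and $y$'' has no meaning—and no such framing exists. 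What is actually required is a Whitney move of $f(M_i)$ against $P_i:=\bigcap_{j\neq i}f(M_j)$, which does have complementary dimension $d-m_i$; by associativity of the intersection product the opposite $r$-fold signs do translate into opposite pairwise signs of $f(M_i)$ and $P_i$ at $x$ and $y$. But then the Whitney arc must lie in $P_i$, not merely in $f(M_1)$, and since $P_i$ varies with $i$ a single common arc $f(\gamma_1)$ cannot serve. Worse, $P_i$ need not be connected, so there may be no arc in $P_i$ from $x$ to $y$ at all. This connectivity obstruction is precisely what forces the paper's piping/unpiping construction, and your sketch does not confront it.
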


As another application of these ideas, we also have the following generalization of the classical result of Whitney that $k$-dimensional manifolds embed into $\R^{2k}$:
\begin{proposition} \label{prop_r_intersection_manifold}
Let $r\geq 2$, $k\geq 3$, and let $M$ a PL manifold of dimension $m=(r-1)k$. 
Then there exists a PL map $M\to \R^{rk}$ without $r$-fold points.
\end{proposition}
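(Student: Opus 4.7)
The plan parallels Whitney's classical embedding theorem (the case $r=2$): start from a generic PL map with isolated $r$-fold points, show that these can be grouped into pairs of opposite intersection sign, and then eliminate each pair via the higher-multiplicity Whitney trick of Theorem~\ref{thm_whitney_trick_extended}. We may assume $M$ is compact; the noncompact case follows by a standard exhaustion by compact PL submanifolds.

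First I would take any PL map $f_0\colon M\to\R^{rk}$ in general position with respect to a sufficiently fine triangulation $K$ of $M$. The set of $r$-fold points has expected dimension $rm-(r-1)d = r(r-1)k-(r-1)rk = 0$, so it is a finite set of isolated points; after refining $K$ further (e.g.\ by iterated barycentric subdivision), every $r$-fold point has its $r$ preimages in pairwise disjoint closed simplices of $K$, i.e.\ it is an $r$-Tverberg point. The central step is to arrange that these isolated $r$-fold points occur in pairs of opposite intersection sign (in the sense of Section~\ref{sec:intersection_signs}). This mirrors the argument for Theorem~\ref{thm:VK-Tverberg-complete}: one applies the $r$-fold van Kampen finger moves of Section~\ref{sec_van_kampen_fingers_move} to introduce cancelling pairs of $r$-fold points along equivariant $1$-cochains in $\delprod{K}{r}$, subject to the vanishing of the van Kampen obstruction $\vko{M}{r}$. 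The latter is where the PL manifold hypothesis enters essentially: I would establish $\vko{M}{r}=0$ by constructing an explicit $\sym_r$-equivariant map $\delprod{M}{r}\to_{\sym_r} S^{d(r-1)-1}$ built from a PL embedding $g\colon M\hookrightarrow\R^{2m+1}$ (available by classical PL embedding theorems) composed with a generic linear projection down to the target dimension; the manifold structure then makes the resulting Gauss-type map $(x_1,\dots,x_r)\mapsto$ (projection of $(g(x_1),\dots,g(x_r))$ onto the orthogonal complement of the thin diagonal) nowhere-vanishing on $\delprod{M}{r}$.

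Once the $r$-fold points come in opposite-sign pairs, I would iteratively apply the higher-multiplicity Whitney trick (Theorem~\ref{thm_whitney_trick_extended}), using its locality option to keep the previous cancellations intact. The codimension hypothesis $d-m_i\geq 3$ of the Whitney trick reduces to $k\geq 3$, exactly our assumption, so finitely many applications remove all $r$-fold points of $f_0$, yielding the desired PL map $M\to\R^{rk}$. The main obstacle I expect is establishing $\vko{M}{r}=0$: for $r=2$, the classical Gauss map from an embedding in $\R^{2m+1}$ lands in $S^{2m}$ and gives what is needed with one dimension to spare, but for $r\geq 3$ the combinatorics of partial diagonals in $(\R^d)^r$ make the nonvanishing of the projected Gauss-type map a genuinely delicate general-position statement, and the $\sym_r$-equivariant bookkeeping must be carried out carefully. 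If this step proves too direct, a handle-by-handle construction based on the PL handle decomposition of $M$, again using the Whitney trick to clear $r$-fold points created during each handle attachment, provides an alternative route.
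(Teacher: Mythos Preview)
Your approach has a genuine gap at the step you yourself flag: establishing $\vko{M}{r}=0$. The Gauss-type construction you sketch does not work. An embedding $g\colon M\hookrightarrow \R^{2m+1}$ yields an equivariant map $\delprod{M}{r}\to_{\sym_r} S^{(2m+1)(r-1)-1}$, but this target sphere has dimension strictly larger than the required $rm-1$, and there is no reason an equivariant projection down to $S^{rm-1}$ should miss zero: the domain $\delprod{M}{r}$ has top dimension $rm$, so after a generic linear projection to $\R^{rm}$ the image is expected to hit the origin. In effect, asking that the projected Gauss map be nowhere-vanishing is equivalent to asking that $\pi\circ g\colon M\to\R^{rk}$ already have no $r$-Tverberg points, which is the statement you are trying to prove. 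Even for $r=2$ the map only lands in $S^{2m}$, not the needed $S^{2m-1}$, so ``one dimension to spare'' goes the wrong way. Your fallback handle-by-handle idea is not developed enough to assess.

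The paper's proof takes a completely different and much more elementary route, bypassing obstruction theory, intersection signs, and the $r$-fold Whitney trick entirely. It removes the $r$-fold points \emph{one at a time}, not in cancelling pairs. Given a single $r$-fold image point $y$ with preimages $x_1,\dots,x_r$, one connects them by an arc $\lambda$ in $M$ (this is where the manifold hypothesis enters), cones over the $1$-polyhedron $f(\lambda)\subset\R^{rk}$ from a generic point to obtain a collapsible $2$-polyhedron meeting $f(M)$ only in $f(\lambda)$ (using codimension $k\geq 3$), and takes a regular neighborhood $N$, which is a $d$-ball. Then $f^{-1}(N)$ is a regular neighborhood of the arc $\lambda$ in the manifold $M$, hence an $m$-ball $B$, and redefining $f$ on $B$ by coning over $f|_{\partial B}$ eliminates $y$ without creating new $r$-fold points. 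This is the direct generalization of the classical Rourke--Sanderson proof that $m$-manifolds embed in $\R^{2m}$; the obstruction-theoretic machinery you invoke is simply not needed here.
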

The proofs
of Theorem~\ref{thm_whitney_trick_extended} and Proposition~\ref{prop_r_intersection_manifold} 
will be given in Section~\ref{sec_whitney_trick}.

\subsection{Future Work and Open Problems}
\label{sec:open_problems}

\begin{enumerate}[label=\textup{(\arabic*)}]
\item \textbf{Codimension $\boldsymbol{2}$.} Theorem~\ref{thm:vKcomplete} fails for $m=2$: Freedman, Krushkal and Teichner~\cite{Freedman:van-Kampens-embedding-obstruction-is-incomplete-for-2-complexes-in-bf-R4-1994} constructed examples of finite $2$-dimensional complexes whose Van Kampen obstruction vanishes but which are not embeddable into $\R^4$. (More generally, for every pair $(m,d)$ with $3\leq d<3(m+1)/2$, there are counterexamples \cite{Mardesic:varepsilon-Mappings-and-generalized-manifolds-1967,Segal:Quasi-embeddings-and-embeddings-of-polyhedra-in-mathbb-Rsp-m-1992,Segal:Embeddings-of-polyhedra-in-mathbb-Rm-and-the-deleted-product-obstruction-1998,GoncalvesSkopenkov:EmbeddingsHomologyEquivalentManifolds-2006} that show that the deleted product criterion is insufficient for embeddabbility of $m$-complexes into $\R^d$.)
We suspect that similar counterexamples to Theorem~\ref{thm:VK-Tverberg-complete} exist for $m=2$.

\item \textbf{The Planar Case and Hanani--Tutte.}
On the other hand, Theorem~\ref{thm:vKcomplete} remains true for $m=1$ (embeddings of graphs in the plane), even under the slightly weaker assumption that the Van Kampen obstruction vanishes modulo $2$. This is essentially the \emph{Hanani--Tutte Theorem} \cite{Hanani:UnplattbareKurven-1934,Tutte:TowardATheoryOfCrossingNumbers-1970}, which guarantees that a graph is planar iff it can be drawn in the plane such that any pair of vertex-disjoint edges cross an even number of times. 
The classical proofs of that theorem rely on \emph{Kuratowski's Theorem}, but more recently \cite{Pelsmajer:Removing-even-crossings-2007,Pelsmajer:Removing-independently-even-crossings-2010}, more direct proofs have been found that do not use forbidden minors (an earlier attempt at a Whitney-trick for graphs in the plane \cite{Sarkaria:A-one-dimensional-Whitney-trick-and-Kuratowskis-graph-planarity-1991} contained an error; see \cite[p.~17]{Skopenkov:On-approximability-by-embeddings-of-cycles-in-the-plane-2003}). It would be very interesting to know whether there is an analogue of the Hanani--Tutte theorem for Tverberg-type problems in $\R^2$. In particular, in light of \"Ozaydin's result, this would be an approach to completely settling the non-prime power case of the topological Tverberg conjecture by constructing counterexamples for $d=2$. We plan to investigate this in a future paper.

\item \textbf{Complexity of Maps without Tverberg Points.} It is an interesting question how complicated the counterexamples to the topological Tverberg conjecture need to be. For $r=2$ and $m\geq 3$, Freedman and Krushkal have constructed examples of $m$-dimensional complexes $K$ with $n$ simplices such that $K$ admits a PL embedding into $\R^{2m}$ (equivalently, $\obs(\delprod{K}{2})=0$), but any subdivision $K'$ of $K$ that supports such a PL embedding requires at least $C^n$ simplices, where $C=C_m>1$ is a constant depending on $m$. Complementing this, they also showed that there is always a suitable subdivision with at most $O(e^{n^{4+\varepsilon}})$ simplices, for any $\varepsilon>0$. It would be interesting to know whether there are similar bounds for maps $K\to \R^d$ without $r$-Tverberg points, $\dim K=m=(r-1)k$, $d=mk$, $k\geq 3$.

\end{enumerate}

\paragraph{Acknowledgements.}  We would like to thank the anonymous referees of the extended abstract \cite{MabillardWagner:TverbergWhitney-2014} for helpful comments and remarks. 

Moreover, we would like to thank Florian Frick, Gil Kalai, Arkadiy Skopenkov, and G\"unter Ziegler for detailed comments 
on a preliminary draft of this paper, and Pavle Blagojevi\'c for asking us to clarify the role of the full symmetric group 
(Remark~\ref{rem:equivariance-sym_r-finger-moves}).

Furthermore, U.W. would like to express his gratitude to Ji\v{r}\'{\i} Matou\v{s}ek, Eran Nevo, and Martin Tancer for years of fruitful collaboration on algorithmic and combinatorial aspects of the embeddability problem and many discussions on the classical Van Kampen obstruction; without this background, the work presented here would not have been undertaken.

\section{Preliminaries}
\label{sec:preliminaries}

\subsection{Tools from Piecewise-Linear Topology}
\label{sec:PLbackground}
In this subsection (which readers may want to skip or just skim through at first reading), we collect, for ease of reference, a number of basic notions and results from piecewise-linear (PL) topology that we will use repeatedly throughout the paper

For a very readable and compact introduction to the area, see the survey article \cite{Bryant:Piecewise-linear-topology-2002}. For more details see, e.g., the textbook \cite{Rourke:Introduction-to-piecewise-linear-topology-1982} or the lecture notes \cite{Zeeman:Seminar-on-combinatorial-topology-1966}.
We refer the reader to any of these sources for much of the basic terminology, such as \define{PL manifolds} and \define{regular neighborhoods}. 
A \define{polyhedron} will always mean the underlying polyhedron of some geometric simplicial complex in some $\R^d$.


\subsubsection{Isotopies, Ambient Isotopies, and Unknotting}
\label{sec:prelim-isotopies}

One of the facts that make working in codimension at least $3$ easier is that \emph{isotopic} embeddings are also \emph{ambient isotopic}, see below. This fails in codimension $2$; for instance, any two PL knots (embeddings of $S^1$) in $S^3$ are isotopic, but not necessarily ambient isotopic.


Let $X$ be a polyhedron, and let $Q$ be a PL manifold. 
A \define{(PL)} \define{isotopy} of $X$ in $Q$ is a PL embedding $F\colon  X \times [0,1] \to  Q \times [0,1]$ 
that is \define{level-preserving}, i.e., such that $F(X \times t) \subseteq Q \times t$ for all $t\in [0,1]$. 
An isotopy determines embeddings $F_t\colon X\hookrightarrow Q$ by $F(x, t) = (F_t(x), t)$ for $x\in X$ and $t\in [0,1]$.

An isotopy $F$ is \define{fixed} on a subspace $Y\subseteq X$ if $F(y,t)=(F_0(y),t)$ for all $t\in [0,1]$ and $y\in Y$.
An isotopy $F$ is \define{allowable} if $F^{-1}(\partial Q \times [0,1])= X_0 \times [0,1]$ for some closed 
subpolyhedron $X_0\subseteq X$. 

Two embeddings $f,g\colon X\hookrightarrow Q$ are \define{(allowably) 
isotopic (keeping $Y$ fixed)} if there is an (allowable) 
isotopy (fixed on $Y$) $F$ of $X$ in $Q$ such that $F_0=f$ and $F_1=g$.

An \define{ambient PL isotopy} of $H$ of $Q$ is a level-preserving PL homeomorphism $H\colon Q \times [0,1] \to Q\times [0,1]$ such that $H_0$ is the identity on $Q$. Two PL embeddings $f,g\colon X\hookrightarrow Q$ are \define{ambient isotopic (keeping $Y\subseteq Q$ fixed)} if there is an ambient isotopy $H$ of $Q$, fixed on $Y$, with $g=H_1\circ f$. 
An ambient isotopy $H$ of $Q$ \define{extends} an isotopy $F$ of $X$ in $Q$ if $F_t=H_t\circ F_0$ for all $t\in [0,1]$.

Let $M$ and $Q$ be PL manifolds, possibly with boundary. A PL embedding $f:M\rightarrow Q$ is \define{proper} if $f^{-1} (\boundary Q) = \boundary M$. An isotopy is proper if it is proper as an embedding.

\paragraph{From isotopy to ambient isotopy.}

\begin{theorem}[{\textbf{Hudson}
~\cite[Thm 1]{Hudson:Extending-piecewise-linear-isotopies-1966}}] \label{thm_Hudson_Isotopy_Extension}
Let $M$ and $Q$ be PL manifolds, $M$ compact, and let $F:M \times [0,1] \rightarrow Q \times [0,1]$ be a proper isotopy of $M$ 
in $Q$, fixed on $\boundary M$. If $\dim Q-\dim M \geq 3$, then there is an ambient isotopy of $Q$, fixed on $\boundary Q$, that extends $F$.
\end{theorem}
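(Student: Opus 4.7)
The plan is to reduce the extension problem to repeated applications of codimension-$3$ PL unknotting, working time-step by time-step and simplex by simplex. The two classical tools I would invoke are the Regular Neighborhood Theorem (existence and uniqueness up to ambient isotopy) and Zeeman's PL unknotting of balls in codimension $\geq 3$, both available under the hypothesis $\dim Q-\dim M\geq 3$.

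First, I would set up the combinatorics and reduce to short time intervals. Since $F$ is PL and $M$ is compact, choose a triangulation $K$ of $M$ and a partition $0=t_0<\cdots<t_n=1$ of $[0,1]$ such that $F$ is simplicial on the induced cell structure of $M\times[0,1]$, and such that for each $i$ the partial track $F(M\times[t_i,t_{i+1}])$ lies in an arbitrarily small regular neighborhood of $F_{t_i}(M)$ in $Q$; this is possible by uniform continuity of $F$. Ambient isotopies of $Q$ can be concatenated and reparametrised, so it suffices to extend each short piece $F|_{M\times[t_i,t_{i+1}]}$ separately.

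Second, within one short time interval $[t_i,t_{i+1}]$, I would construct the ambient extension by induction on the skeleta of $K$. For a $k$-simplex $\sigma\in K$, the prism $\sigma\times[t_i,t_{i+1}]$ maps under $F$ to a PL $(k+1)$-ball of codimension $\dim Q-\dim M\geq 3$ inside $Q\times[t_i,t_{i+1}]$. On the boundary of this prism the required ambient motion is already prescribed: by the inductive hypothesis on lower-dimensional faces of $\sigma$, by the identity at time $t_i$, and by $F$ itself at time $t_{i+1}$. Extending across the interior of the prism amounts to unknotting a PL ball rel boundary in codimension $\geq 3$, which is exactly Zeeman's theorem. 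The resulting extension can be supported in a regular neighborhood of the prism and made the identity elsewhere. Simplices lying in $\partial M$ cause no trouble since $F$ is fixed there and, by properness, their prisms lie inside $\partial Q\times[t_i,t_{i+1}]$, a region that the ambient isotopy is designed to leave fixed.

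The main obstacle is the inductive step itself: extending an ambient isotopy given on the boundary of a prism across its interior, while remaining compatible with the already-built extensions on neighboring prisms. This is precisely where the codimension hypothesis $\dim Q-\dim M\geq 3$ is indispensable; in codimension $2$ the relevant unknotting statement fails (non-trivial PL knots in $S^3$ are isotopic but not ambient isotopic to the unknot), and the whole inductive scheme breaks down. Under the codimension hypothesis, however, Zeeman's theorem supplies the local extensions, they patch together by construction because adjacent prisms agree on shared faces, and concatenating the short ambient isotopies yields the desired global ambient isotopy of $Q$, fixed on $\partial Q$, that extends $F$.
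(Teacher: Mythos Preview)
The paper does not give a proof of this statement; it is simply quoted from Hudson's 1966 paper and used as a black box. So there is no ``paper's own proof'' to compare against.

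As for your sketch: the overall architecture (subdivide time, induct over skeleta, use codimension-$3$ unknotting to fill in prisms) is indeed the philosophy behind Hudson's argument. However, the inductive step you describe is looser than it first appears. You write that extending across the interior of a prism ``amounts to unknotting a PL ball rel boundary in codimension $\geq 3$,'' but this is not quite the correct reduction. What you need at each stage is not to unknot a single ball, but to find an ambient isotopy of $Q$ (level-preserving in the time coordinate, fixed on $\partial Q$, and agreeing with the already-constructed ambient isotopy on a neighbourhood of the lower skeleton) that carries $F_{t_i}(\sigma)$ to $F_{t_{i+1}}(\sigma)$ through the prescribed track. Zeeman's ball-unknotting gives you a PL homeomorphism of a ball pair, not an ambient isotopy of $Q$ compatible with all previous choices and with the level structure; bridging that gap requires the Alexander trick together with careful control of supports and collars, and in Hudson's actual proof this is handled via his ``sunny collapsing'' machinery rather than a direct appeal to unknotting. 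Your sketch also does not address level-preservation of the extension, which is part of what ``ambient isotopy'' means here. So the plan is morally right but would need substantial technical work to become a proof.
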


We will also need the following result concerning embeddings of compact polyhedra:\footnote{In \cite{Hudson:Concordance-isotopy-and-diffeotopy-1970}, the result is stated in a stronger form: The conclusion remains true under the weaker hypothesis that $f$ and $g$ are \define{allowably concordant} keeping $Y$ fixed. (The notion of an allowable concordance $F$ between $f=F_0$ and $g=F_1$ fixing $Y$ is a generalization of an allowable isotopy fixing $Y$, where the requirement that $F$ preserve levels is relaxed to the conditions $F(X\times t) \subseteq Q\times t$ for $t=0,1$ and $F(X\times t) \subseteq Q\times (0,1)$ for $t\in (0,1)$, see \cite[Section~1]{Hudson:Concordance-isotopy-and-diffeotopy-1970}.)}

\begin{proposition}[{\textbf{Hudson}~\cite[Corollary~1.3]{Hudson:Concordance-isotopy-and-diffeotopy-1970}}]
\label{prop:Hudson-polyhedra}
Let $X$ be a compact polyhedron and let $Q$ be a PL manifold. Let $f, g\colon X \to Q$ be allowably isotopic embeddings keeping $Y\subseteq X$ fixed, with $X_0=f^{-1}(\partial Q) \subseteq Y$. If $\dim X \leq \dim Q - 3$, then $f$ and $g$ are ambient 
isotopic keeping $f(Y)\cup \partial Q$ fixed.
\end{proposition}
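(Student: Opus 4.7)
The plan is to thicken the polyhedral isotopy to an isotopy of regular neighborhoods (which are PL manifolds) and then patch in the identity on the complement. The key technical input is regular neighborhood theory in the PL manifold $Q\times I$ in codimension $\geq 3$; Theorem~\ref{thm_Hudson_Isotopy_Extension} remains available as a backup if one prefers to spread an isotopy from the interior of the thickening out to all of $Q$.

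First, view the allowable isotopy as a proper PL embedding $\hat F\colon X\times I \hookrightarrow Q\times I$ whose image $P := \hat F(X\times I)$ is a compact polyhedron of dimension $\dim X + 1 \leq (\dim Q + 1) - 3$ in the PL manifold $Q\times I$. The three distinguished pieces $f(X)\subset Q\times\{0\}$, $g(X)\subset Q\times\{1\}$, and $\hat F(X_0\times I)\subset \partial Q\times I$ lie properly in $\partial(Q\times I)$. Since $F$ is fixed on $Y$ and $X_0\subseteq Y$, we have $\hat F(Y\times I)=f(Y)\times I$, so $P$ contains $f(Y)\times I$ as a standard product sub-polyhedron, and in particular the part of $P$ on $\partial Q\times I$ is the product $f(X_0)\times I$.

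Next, I would choose a regular neighborhood $N$ of $P$ in $Q\times I$ adapted to these pieces: the slices $N\cap (Q\times\{t\})$ should be regular neighborhoods $N_t$ of $F_t(X)$ in $Q$ for $t=0,1$, and a sufficiently small neighborhood of $f(Y)\times I$ in $Q\times I$ should sit inside $N$ as a standard product $N(f(Y))\times I$ (which automatically contains the slice of $N$ in $\partial Q\times I$). This can be arranged by picking a triangulation of $Q\times I$ in which all the relevant polyhedra are full subcomplexes and taking simplicial neighborhoods. The heart of the argument is then to exhibit a level-preserving product structure: a PL homeomorphism $\Phi\colon N \to N_0\times I$ that sends $N_t$ to $N_0\times\{t\}$ for $t=0,1$, restricts to the identity on $N(f(Y))\times I$, and (after a mild shrinking of $N$) restricts to the identity on $\partial N \setminus \partial(Q\times I)$. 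Reinterpreting $\Phi$ level by level yields an ambient PL isotopy of $N$, viewed as a subset of $Q$ via projection, that carries $f(X)$ to $g(X)$ while fixing $f(Y)$ and $\partial Q$; extending by the identity on $Q\setminus N$ finishes the argument.

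The main obstacle is the construction of $\Phi$. This is a relative regular-neighborhood uniqueness problem, and it is precisely here that the codimension $\geq 3$ hypothesis is indispensable: since $P\cong X\times I$ collapses to $X\times\{0\}$, one expects $N$ to collapse to $N_0$ and to admit a product structure over $I$, but pinning down a $\Phi$ that simultaneously respects the two top/bottom slices and restricts to the identity on the pre-chosen product neighborhood of $f(Y)\times I$ requires the unknottedness of regular neighborhoods that holds in codimension $\geq 3$ and fails in codimensions $1$ and $2$.
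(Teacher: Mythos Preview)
The paper does not prove this proposition; it simply cites it as Corollary~1.3 of Hudson's paper on concordance and isotopy, so there is no ``paper's own proof'' to compare against. Your outline is in the right spirit---thickening the track of the isotopy to a regular neighborhood in $Q\times I$ and seeking a level-preserving product structure is indeed the standard approach, and is essentially how Hudson proceeds.

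That said, what you have written is a plan rather than a proof, and you yourself flag the crucial step as ``the main obstacle'' without resolving it. The construction of the level-preserving PL homeomorphism $\Phi\colon N\to N_0\times I$ that simultaneously respects both end-slices, restricts to the identity on the product neighborhood of $f(Y)\times I$, and is the identity near the frontier of $N$ is exactly where all the work lies; appealing vaguely to ``unknottedness of regular neighborhoods in codimension $\geq 3$'' is not enough, since the relative constraints (two ends plus the $Y$-product plus the frontier) must be handled together. Hudson's actual argument uses his concordance-implies-isotopy machinery and engulfing/sunny-collapsing techniques to straighten the track; your sketch would need to either reproduce that or give an independent relative regular-neighborhood argument, neither of which is present. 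Also note that Theorem~\ref{thm_Hudson_Isotopy_Extension} is for manifolds $M$, not polyhedra $X$, so it is not available as a ``backup'' in the generality claimed here.
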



%

\paragraph{Unknotting of balls and spheres.}
A \define{(PL) $(q,m)$-manifold pair} $(Q,M)$ is a pair of PL manifolds $M$ and $Q$ of dimensions $m$ and $q$, respectively 
such that $M\subseteq Q$ properly. 

A pair $(B^q,B^m)$ of PL balls (respectively, a pair $(S^q,S^m)$ of PL spheres), $m\leq q$, is \define{unknotted} if it is PL homeomorphic to the \define{standard ball pair} $([-1,1]^q, [-1,1]^m \times 0)$ (respectively, to the \define{standard sphere pair} 
$(\partial [-1,1]^{q+1}, \partial([-1,1]^m \times 0))$.)

\begin{theorem}[{\textbf{Zeeman}~\cite[Ch.~IV, Theorem~9]{Zeeman:Seminar-on-combinatorial-topology-1966}}]
If $q-m\geq 3$ then every PL ball pair $(B^q,B^m)$ and every PL sphere pair $(S^q,S^m)$ are unknotted.
\end{theorem}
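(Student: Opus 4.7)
The plan is to reduce both statements to unknotting of PL ball pairs, and then to establish ball pair unknotting by induction on $m$ via regular neighborhood theory, with Zeeman's engulfing/sunny collapsing supplying the crucial codimension-$3$ input.

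First I would argue that the ball pair and sphere pair versions are equivalent. Given a PL sphere pair $(S^q, S^m)$, triangulate it so that $S^m$ is a subcomplex and pick a principal $q$-simplex $\Delta$ whose intersection with $S^m$ is a face; a small regular neighborhood pair $(B^q_-, B^m_-) \subset (S^q, S^m)$ of the barycenter of $\Delta \cap S^m$ is then a standard ball pair, and its closed complement $(B^q_+, B^m_+)$ in $(S^q, S^m)$ is again a PL ball pair. If ball pair unknotting is known, both $(B^q_\pm, B^m_\pm)$ are standard, and gluing them along their common standard boundary sphere pair recovers the standard $(S^q, S^m)$. Conversely, sphere pair unknotting yields ball pair unknotting by a cone construction (Alexander's trick on the complementary ball after standardizing the boundary sphere pair).

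Next, for ball pair unknotting, I would induct on $m$. The base case $m=0$ is the assertion that any PL embedding of a point into $\interior B^q$ can be carried to the origin by a PL homeomorphism of $B^q$ fixing $\partial B^q$, which follows from the cone structure on $B^q$. For the inductive step, the key lemma would be that, under the hypothesis $q - m \geq 3$, the total ball PL collapses to the subball: $B^q \searrow B^m$. Once this collapse is in hand, $B^q$ is a regular neighborhood of $B^m$ in itself (adjusting at the boundary so that $\partial B^q$ meets $B^m$ in $\partial B^m$, which is the standard behavior of a proper subball), and the uniqueness part of the regular neighborhood theorem—any two regular neighborhoods of a fixed compact subpolyhedron in a PL manifold agree up to ambient PL isotopy fixing the polyhedron—lets me match $(B^q, B^m)$ against the standard pair $([-1,1]^q, [-1,1]^m \times 0)$, which obviously collapses onto its subball. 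The inductive hypothesis is used to standardize the boundary sphere pair $(\partial B^q, \partial B^m)$ first, and then one extends the resulting ambient PL homeomorphism of $\partial B^q$ to all of $B^q$ via coning.

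The main obstacle is proving the collapse $B^q \searrow B^m$; this is precisely where codimension $3$ enters, and why the theorem fails in codimension $2$ (classical knots such as a trefoil $S^1 \subset S^3$ obstruct collapsibility). The standard remedy is Zeeman's \emph{sunny collapsing} technique: triangulate $(B^q, B^m)$ so that $B^m$ is a subcomplex, and process the simplices of $B^q \setminus B^m$ in order of decreasing dimension, collapsing each by identifying a free face. The obstruction to finding free faces comes from ``shadows'' cast by higher-dimensional simplices, and the content of the sunny collapsing lemma is that, when $q - m \geq 3$, a general-position perturbation together with engulfing lets one remove these shadows without creating new linkages. Modulo this one ingredient, the rest of the argument is a clean application of regular neighborhood uniqueness and the inductive setup.
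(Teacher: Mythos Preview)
The paper does not give a proof of this statement at all: it is quoted in the preliminaries (Section~\ref{sec:prelim-isotopies}) as a classical result from Zeeman's seminar notes, with a citation and no further argument, and is then used as a black box (notably via its relative version, Corollary~\ref{thm_Zeeman_proper_embeddings}, in the proof of the Unpiping Lemma). So there is no ``paper's own proof'' to compare your attempt against.

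That said, your sketch is broadly faithful to Zeeman's original argument. The reduction between the ball pair and sphere pair statements, the key collapse $B^q \searrow B^m$ obtained by sunny collapsing in codimension $\geq 3$, and the appeal to uniqueness of regular neighborhoods to conclude unknotting are exactly the ingredients in \cite[Ch.~IV]{Zeeman:Seminar-on-combinatorial-topology-1966}. A couple of minor points: your description of the inductive structure is slightly off --- in Zeeman's treatment the induction on $m$ is primarily organized around the sphere pair statement (one standardizes the boundary sphere pair by induction, then cones), and the ball pair collapse is proved once and for all rather than inductively; also, the phrase ``pick a principal $q$-simplex $\Delta$ whose intersection with $S^m$ is a face'' needs a bit more care (one wants a standard ball pair neighborhood of a point of $S^m$, which exists by local flatness of PL submanifolds). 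These are expository refinements rather than gaps; the essential content is correct.
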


We will also need the following relative version:

\begin{corollary}[{\textbf{Zeeman}~\cite[Ch.~IV, Corollary~1, p.~16]{Zeeman:Seminar-on-combinatorial-topology-1966}}] \label{thm_Zeeman_proper_embeddings}
If $q-m \ge 3$, then any two proper embeddings $B^m \subseteq B^q$ that agree on $\boundary B^m$ are ambient isotopic, 
keeping $\boundary B^q$ fixed.
\end{corollary}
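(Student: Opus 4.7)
The plan is to reduce this relative unknotting statement to the (absolute) Zeeman unknotting of PL sphere pairs, stated immediately above, via a doubling construction, and then to convert the resulting PL homeomorphism into an ambient PL isotopy via the Alexander trick.

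First, I would double everything. Form $S^q := B^q_+ \cup_{\partial B^q} B^q_-$ as the double of $B^q$ along its boundary, and likewise $S^m := B^m_+ \cup_{\partial B^m} B^m_-$. Because the two proper PL embeddings $f_0,f_1 \colon B^m \to B^q$ agree on $\partial B^m$, they glue across the equator into a single proper PL embedding $g\colon S^m \hookrightarrow S^q$ with $g|_{B^m_+} = f_1$ and $g|_{B^m_-} = f_0$. The codimension is preserved, $q-m \geq 3$, so Zeeman's unknotting theorem for PL sphere pairs supplies a PL homeomorphism of $(S^q,g(S^m))$ onto the standard sphere pair.

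Next, I would exploit the reflection symmetry of the standard sphere pair across its equator to arrange that the unknotting homeomorphism can be chosen to exchange the two hemispheres $B^q_\pm$ while fixing the equator $\partial B^q$ pointwise. Restricting to the upper hemisphere then yields a PL homeomorphism $h\colon B^q \to B^q$ fixing $\partial B^q$ and satisfying $h\circ f_1 = f_0$. Finally, I would invoke the Alexander trick: any PL self-homeomorphism of a PL ball that restricts to the identity on the boundary is ambient isotopic to the identity keeping the boundary fixed. Applied to $h$, this produces the desired ambient PL isotopy of $B^q$, fixed on $\partial B^q$, carrying $f_0$ to $f_1$.

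The main obstacle is the intermediate step: arranging that the unknotting homeomorphism respects the hemisphere decomposition and acts as the identity on the equator. This requires either extracting from Zeeman's proof a version that is equivariant with respect to the reflection exchanging the hemispheres, or, more concretely, first applying Zeeman's ball-pair unknotting to the equatorial pair $(\partial B^q, f_0(\partial B^m))$ (for which the codimension condition $q-m \geq 3$ still holds), then extending the resulting identification across a collar of the equator inside $B^q$ using Hudson's isotopy-extension theorem (Theorem~\ref{thm_Hudson_Isotopy_Extension}), and only afterwards globalizing the unknotting on the remainder of each hemisphere. The codimension-three hypothesis is essential at every step of this argument, both to apply Zeeman's theorem and to invoke Hudson's extension result.
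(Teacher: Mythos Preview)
The paper does not supply a proof of this corollary; it is simply quoted from Zeeman's notes as the ``relative version'' of the preceding unknotting theorem for ball and sphere pairs. So the intended derivation is directly from the ball-pair case of that theorem, not via doubling to sphere pairs.

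Your detour through sphere pairs creates exactly the difficulty you flag and do not close. After doubling, the unknotting homeomorphism of $(S^q,g(S^m))$ has no reason to respect the equatorial decomposition, and ``exploiting the reflection symmetry of the standard pair'' is not an argument. Your proposed fix---first unknot the equatorial pair $(\partial B^q,f_0(\partial B^m))$ and then extend---is in fact the direct ball-pair argument in disguise, so the doubling buys nothing. Moreover, invoking Theorem~\ref{thm_Hudson_Isotopy_Extension} here is misplaced: Hudson's theorem extends an \emph{isotopy of a submanifold} to an ambient isotopy, whereas what you need is to extend a \emph{PL homeomorphism of the boundary sphere} over the ball, which is just coning (the Alexander trick), not Hudson.

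The clean route is: by the ball-pair case of Zeeman's theorem, each pair $(B^q,f_i(B^m))$ is PL homeomorphic to the standard ball pair, hence to the cone on its boundary sphere pair. Since $f_0|_{\partial B^m}=f_1|_{\partial B^m}$, the two boundary sphere pairs coincide, so comparing the two cone identifications (and coning the resulting self-homeomorphism of the standard boundary sphere pair) yields a PL homeomorphism $h\colon B^q\to B^q$ that is the identity on $\partial B^q$ and carries $f_0(B^m)$ to $f_1(B^m)$. Now apply the Alexander trick to $h$ to obtain the required ambient isotopy fixing $\partial B^q$.
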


\paragraph{From homotopy to ambient isotopy.}

\begin{theorem}[{\textbf{Zeeman}~\cite[Ch X, p 198, Thm 10.1]{Zeeman:Seminar-on-combinatorial-topology-1966}}] 
\label{thm_Zeeman_Homotopy}
\label{thm:Zeeman-general-unknotting}
Let $M$ and $Q$ be compact manifolds of dimensions $q$ and $m$, respectively, and let $f,g : M \rightarrow Q$ be two proper embeddings. Suppose that $f$ is homotopic to $g$ relative to $\boundary M$. Then if $q-m \ge 3$, $M$ is $(2m-q+1)$-connected, and $Q$ is $(2m-q+2)$-connected, then $f$ and $g$ are ambient isotopic keeping $\boundary Q$ fixed.
\end{theorem}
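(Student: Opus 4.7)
The plan is to reduce the statement to Hudson's isotopy extension theorem (Theorem~\ref{thm_Hudson_Isotopy_Extension}) by first producing a proper level-preserving PL embedding (i.e., a PL isotopy) $M \times [0,1] \hookrightarrow Q \times [0,1]$ between $f$ and $g$, fixed on $\partial M$, and then extending it to an ambient isotopy. The technical heart is turning the given homotopy into such an isotopy.

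First I would convert the homotopy $h_t$ relative to $\partial M$ into a PL map $F\colon M \times [0,1] \to Q \times [0,1]$ defined by $F(x,t) = (h_t(x), t)$. By construction $F$ is level-preserving, it is proper (since $f$ and $g$ are proper and $h_t$ fixes $\partial M$), $F_0 = f$, $F_1 = g$, and $F$ restricts to an embedding on the boundary $M \times \{0,1\} \cup \partial M \times [0,1]$. After a PL approximation fixing this boundary, I may assume $F$ is in general position, so the singular set $S \subseteq M \times [0,1]$ (points with $|F^{-1}(F(\cdot))| \geq 2$) is a subpolyhedron of dimension at most $2(m+1) - (q+1) = 2m - q + 1$, which is below $m$ thanks to the codimension hypothesis $q - m \geq 3$.

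Next I would remove the singularities of $F$ via PL engulfing. Because $M$ is $(2m-q+1)$-connected, each component of $S$ is null-homotopic in $M \times [0,1]$; because $Q$ is $(2m-q+2)$-connected, $Q \times [0,1]$ is at least $(2m-q+2)$-connected, and Stallings--Zeeman engulfing produces a PL ball $B^{q+1} \subseteq \textup{int}(Q \times [0,1])$ containing $F(S)$. The codimension-3 hypothesis lets me view $F^{-1}(B) \cap (M \times [0,1])$ and $B$ as an unknotted proper ball pair (via Theorem~\ref{thm_Zeeman_proper_embeddings}), and by comparing with the standard ball pair I can replace $F$ inside $B$ by an embedding that agrees with the original on $\partial B$ but has no singularities. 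This yields a proper level-preserving PL embedding $\widetilde{F}\colon M \times [0,1] \hookrightarrow Q \times [0,1]$ with $\widetilde{F}_0 = f$, $\widetilde{F}_1 = g$, fixed on $\partial M \times [0,1]$.

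Finally, I would invoke Hudson's isotopy extension theorem (Theorem~\ref{thm_Hudson_Isotopy_Extension}) to extend $\widetilde{F}$ to an ambient PL isotopy of $Q$ fixed on $\partial Q$; this produces the desired ambient isotopy carrying $f$ to $g$. The main obstacle is the middle step: the engulfing/singularity-removal argument is the part that genuinely uses both connectivity hypotheses (to guarantee the engulfing ball exists and surrounds all of the singular set) together with codimension 3 (to guarantee the resulting ball pair is unknotted and can be replaced without creating new intersections). All later steps are standard once that isotopy has been constructed.
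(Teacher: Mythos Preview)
The paper does not give its own proof of this theorem: it is stated in the preliminaries section as a background tool, with a citation to Zeeman's lecture notes \cite[Ch.~X, Thm.~10.1]{Zeeman:Seminar-on-combinatorial-topology-1966}, and no argument is supplied. So there is nothing in the paper to compare your proposal against.

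That said, your sketch has a genuine gap worth flagging. You start with the level-preserving map $F(x,t)=(h_t(x),t)$, perturb it to general position, and then ``replace $F$ inside $B$ by an embedding''. But a generic perturbation, and certainly the replacement inside an engulfing ball $B\subset Q\times[0,1]$, will destroy the level-preserving property; what you end up with is at best a \emph{concordance} between $f$ and $g$, not an isotopy, and Hudson's Theorem~\ref{thm_Hudson_Isotopy_Extension} as stated requires a genuine level-preserving isotopy. Bridging concordance to isotopy in codimension $\geq 3$ is exactly the substantial content of Hudson's concordance-implies-isotopy theorem (cf.\ Proposition~\ref{prop:Hudson-polyhedra} and \cite{Hudson:Concordance-isotopy-and-diffeotopy-1970}), which is itself nontrivial. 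A second, smaller issue: you assert that $F^{-1}(B)\cap(M\times[0,1])$ and $B$ form an unknotted ball pair, but $F$ is not yet an embedding, so $F^{-1}(B)$ need not be a ball at all; the usual route (as in Zeeman's proof) is rather to engulf the singular set $S$ from one end $M\times\{0\}$ inside $M\times[0,1]$ using the connectivity of $M$, and then use the connectivity of $Q$ to control the image side.
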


\begin{theorem}[{\textbf{Irwin}~\cite[Ch. VIII, p. 4, Thm. 23]{Zeeman:Seminar-on-combinatorial-topology-1966}}] 
\label{thm_Irwin}
Assume $M$ is compact and let $f:M \rightarrow Q$ be a continuous map such that $f|_{\boundary M}$ is a piecewise-linear embedding of $\boundary M$ in $\boundary Q$. Then $f$ is homotopic to a proper embedding keeping $\boundary M$ fixed provided
\[
q-m \ge 3, \quad 
M \text{ is $(2m-q)$-connected}, \quad 
Q \text{ is $(2m-q +1)$-connected}.
\]
\end{theorem}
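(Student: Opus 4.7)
The plan is to follow Irwin's original strategy, which is based on the Stallings--Zeeman engulfing technique together with general position.

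First, I would replace $f$ by a PL map $f_1\colon M\to Q$ that is homotopic to $f$ keeping $\partial M$ fixed and is in general position. Simplicial approximation produces a PL map homotopic to $f$, and the hypothesis that $f|_{\partial M}$ is already a PL embedding into $\partial Q$ lets us keep it unchanged; a further small PL perturbation makes $f_1$ generic. After general position, the singular set
\[
\Sigma := \overline{\{x\in M : f_1^{-1}(f_1(x))\neq\{x\}\}}
\]
is a compact subpolyhedron of $M$ of dimension at most $2m-q$, and $\Sigma\cap\partial M=\emptyset$ because $f_1|_{\partial M}$ is an embedding.

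The second step is to enlarge $\Sigma$ to a \emph{collapsible} subpolyhedron $C\subset M$ of dimension at most $2m-q+1$. Using that $M$ is $(2m-q)$-connected, one inductively cones off each cell of $\Sigma$ inside $M$: start at a basepoint and extend over the skeleta of $\Sigma$, using null-homotopies (which exist by the connectivity hypothesis) of the attaching maps. Choosing a regular neighborhood and a spine, one obtains a polyhedron $C\supseteq\Sigma$ with $C\collapse \{\mathrm{pt}\}$ and $\dim C\leq 2m-q+1\leq q-2$ (the last inequality uses $q-m\geq 3$). Now apply the Stallings--Zeeman engulfing theorem in $Q$: since $Q$ is $(2m-q+1)$-connected, $C$ is collapsible, and $\dim f_1(C)\leq q-2$, there is a PL ambient isotopy $G$ of $Q$, fixed on $\partial Q$, with $G_1(f_1(C))$ contained in the interior of a PL ball $B\subset Q$.

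After replacing $f_1$ by $G_1\circ f_1$, the singular set of the new PL map is contained in $B$. A regular neighborhood $N$ of $C$ in $M$ is collapsible and maps into $B$. Using the codimension-$3$ hypothesis together with Zeeman's unknotting of ball pairs (Corollary~\ref{thm_Zeeman_proper_embeddings}) and Hudson's isotopy extension (Theorem~\ref{thm_Hudson_Isotopy_Extension}), I would deform $f_1|_N$ inside $B$ to a PL embedding agreeing with $f_1$ on $\partial N$, thereby eliminating $\Sigma$. All of this takes place in the interior of $Q$ and away from $\partial M$, so the resulting embedding is homotopic to $f$ keeping $\partial M$ fixed, and properness follows from $f_1^{-1}(\partial Q)=\partial M$.

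The main obstacle I expect is engineering the collapsible polyhedron $C$ with the correct dimension bound while keeping it inside $M$: this requires a delicate combination of the connectivity of $M$, a choice of PL triangulation, and careful use of regular neighborhoods. A secondary technical difficulty is ensuring that the final untangling step inside the ball $B$ can be performed relative to $\partial N$ and $\partial Q$; this is precisely where the codimension-$3$ assumption and Zeeman's unknotting results enter, since in lower codimension knotting phenomena would prevent the required straightening.
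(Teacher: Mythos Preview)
The paper does not prove this theorem. It is stated in Section~\ref{sec:prelim-isotopies} as a background result quoted from Zeeman's lecture notes, alongside Hudson's isotopy extension theorem and Zeeman's unknotting results, and is used as a black box; no argument for it appears anywhere in the paper. So there is nothing in the paper to compare your proposal against.

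That said, your outline is a reasonable sketch of the classical proof as it appears in Zeeman's notes: put the map in general position so the singular set has dimension at most $2m-q$, use the connectivity of $M$ to build a collapsible polyhedron of dimension at most $2m-q+1$ containing the singular set, engulf its image into a ball using the connectivity of $Q$, and then redefine the map over a regular-neighborhood ball using unknotting in codimension $\geq 3$. The obstacles you flag (controlling the dimension of the collapsible cover inside $M$, and performing the final straightening relative to the boundary) are indeed where the work lies in Zeeman's treatment. If your goal is the present paper, however, you can simply cite the theorem and move on.
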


\subsubsection{General Position and Transversality}
\label{sec:GP}

There are many variants of general position. For the purposes of studying $r$-fold points and $r$-Tverberg points, the following definitions are convenient.

\paragraph{General position in $\boldsymbol{\R^d}$.}
A collection $\mathcal{A}$ of affine subspaces of $\R^d$ is \define{in general position} 
if for every $r\geq 2$ and pairwise distinct $A_1,\ldots, A_r \in \mathcal{A}$,
\begin{equation}
\label{eq:transversality-subspaces}
\dim\big({\textstyle \bigcap_{i=1}^r A_i} \big)=\max \big\{-1, \big({\textstyle \sum_{i=1}^r} \dim (A_i)\big) - d(r-1)\big\}.
\end{equation}

A set $S$ of points in $\R^d$ is \define{in general position} if, for every $r\geq 2$ and pairwise disjoint subsets $S_1,\ldots,S_r\subseteq S$, the affine hulls $\aff(S_i)$, $1\leq i\leq r$, are in general position.\footnote{Note that this is stronger than requiring that every 
subset of at most $d+1$ points in $S$ is affinely independent;  e.g. the vertices of a regular hexagon are not in general position 
in the stronger sense.} 

A collection $\mathcal{P}=\{P_1,\ldots,P_r\}$ of convex polyhedra in $\R^d$ is in general position if $\aff(F_1),\ldots\aff(F_r)$ are in general position for every choice of nonempty faces $F_i \subseteq P_i$, $1\leq i\leq r$.

If $K$ is a simplicial complex and $f \colon K \to \R^d$ is a simplexwise-linear map, then we say that $f$ is in general position if the images 
of the vertices of $K$ are pairwise distinct and in general position. A PL map $f\colon K\to \R^d$ is in general position if there is some subdivision $K'$ of $K$ such that $f$ is simplexwise-linear and in general position as a map $K'\to \R^d$. 

If $K$ is a finite simplicial complex and $f\colon K\to \R^d$ is a continuous map then, by a simple compactness and perturbation argument, 
for every $\varepsilon>0$, there exists a PL map $g\colon K\to \R^d$ in general position such that $\|f-g\|_\infty \leq \varepsilon$.

\paragraph{General position in PL manifolds.}
Defining general position without reference to a particular triangulation and, more generally, for maps into PL manifolds $M$ other than $\R^d$, is more involved. 
We follow the presentation \cite[Ch. VI]{Zeeman:Seminar-on-combinatorial-topology-1966}, which is very suitable for dealing with $r$-fold points.

Let $f\colon X \to Q$ be a PL map from a polyhedron to a PL manifold. For $r\geq 2$, let us say that a point $x\in X$ is \define{$r$-singular} if it is the preimage of an $r$-fold image point $y$ of $f$, i.e., if $|f^{-1}(f(x))|\geq r$. The \define{(closed) $r$-singular set} $S_r(f) \subseteq X$ is defined as the closure of the set of $r$-singular points of $f$. Each $S_r(f)$ is a subpolyhedron of $X$
(\cite[Ch. VI, Lemma~31, p.~19]{Zeeman:Seminar-on-combinatorial-topology-1966}). The set $S_2(f)$ is also sometimes simply called the \define{singular set} of $f$ and denoted $S(f)$. 

Suppose $\dim X=m$ and $\dim Q=q$. Then a PL map $f\colon X\to Q$ is said to be \define{in general position} if
$\dim S_r(f)\leq m-(r-1)(q-m)$ for every $r\geq 2$. If $X_0\subseteq X$ is a subpolyhedron then $f$ is said to be in
general position for the pair $(X,X_0)$ if $f$ and $f|_{X_0}$ are both in general position and, if $\dim X_0<\dim X$ then
$\dim(S_r(f)\cap X_0)<m-(r-1)(q-m)$ for every $r$.

\begin{theorem}[{\cite[Ch. VI, Theorem~18, p.~27]{Zeeman:Seminar-on-combinatorial-topology-1966}}]
Let $f\colon X \to \interior Q$ be a PL map, $\dim X<\dim Q$, and let $X_0\subseteq X$ be a subpolyhedron.
If $f|_{X_0}$ is in general position then for every $\varepsilon>0$ there exists a map $g\colon X\to Q$ that is in general position for the pair
$(X,X_0)$, and $f\simeq g$ are homotopic through an $\varepsilon$-small homotopy that keeps $X_0$ fixed.
\end{theorem}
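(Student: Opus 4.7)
My plan is to reduce the theorem to a finite perturbation problem in local Euclidean charts of $Q$. First, I choose compatible triangulations $K$ of $X$ and $L$ of $Q$ so that $f \colon K \to L$ is simplicial and $X_0$ is triangulated by a subcomplex $K_0 \subseteq K$; after passing to sufficiently fine subdivisions, I may assume that every simplex of $K$ is mapped into the open star $\mathrm{st}(v,L)$ of some vertex $v$ of $L$. Each open star is PL homeomorphic to an open subset of $\R^q$ and furnishes a local Euclidean chart in which the affine general-position conditions from Section~\ref{sec:GP} apply.

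Working in these charts, I fix $g(w) := f(w)$ for each vertex $w \in K_0$ and perturb the image $f(w)$ of each $w \in K\setminus K_0$ by an arbitrarily small amount (small enough that it remains inside its containing chart). For every finite tuple of pairwise disjoint simplices of $K$ whose images share a common chart, the set of perturbations for which the affine hulls of the images violate the transversality equation \eqref{eq:transversality-subspaces} is a proper algebraic subvariety of the (finite-dimensional) parameter space of vertex positions; since $K$ is finite, only finitely many such tuples arise, and a generic small perturbation avoids all the resulting bad subvarieties. Extending linearly across simplices of $K$ yields a simplexwise linear map $g \colon X \to \interior Q$ that agrees with $f$ on $X_0$, and the straight-line homotopy within each chart produces an $\varepsilon$-small homotopy $f \simeq g$ fixed on $X_0$.

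Finally, I verify the dimension bound on $S_r(g)$. Any $r$-fold image point $y = g(x_1) = \cdots = g(x_r)$, with the $x_i$ pairwise distinct, lies in some chart; after replacing each $x_i$ by the smallest simplex of $K$ containing it, I obtain pairwise disjoint simplices $\sigma_1,\ldots,\sigma_r$ whose image affine hulls meet transversally in that chart. Transversality then gives
\[
\dim\bigl(g(\sigma_1)\cap\cdots\cap g(\sigma_r)\bigr) \;\leq\; \sum_{i=1}^r \dim\sigma_i - (r-1)q \;\leq\; rm-(r-1)q \;=\; m-(r-1)(q-m),
\]
and, since $g$ is injective on each $\sigma_i$, the corresponding portion of $S_r(g)$ in $X$ inherits this bound; taking the union over the finitely many tuples yields $\dim S_r(g) \leq m-(r-1)(q-m)$. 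When $\dim X_0 < \dim X$ and at least one $\sigma_i$ lies in $K_0$, the sum $\sum_i\dim\sigma_i$ is strictly less than $rm$, yielding the strict inequality required for $\dim(S_r(g)\cap X_0)$. The main technical obstacle is arranging the perturbations so that all the finitely many transversality conditions, including those coupling free vertices with the prescribed vertex images on $K_0$, are satisfied simultaneously by a single small perturbation; this is handled by regarding all constraints as open-dense conditions in the joint parameter space of free vertex positions, and the hypothesis $\dim X<\dim Q$ guarantees that no vertex of $L$ need lie in the image of the perturbed $g$.
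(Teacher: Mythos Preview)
The paper does not give a proof of this theorem; it is quoted in Section~\ref{sec:GP} as a background fact from Zeeman's lecture notes, with a citation but no argument. So there is no proof in the paper against which to compare your attempt.

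Your sketch is along standard lines but contains a genuine gap in the final verification. You claim that the supports $\sigma_i=\supp(x_i)$ of the $r$ distinct preimages of an $r$-fold image point are \emph{pairwise disjoint} simplices of $K$; this is false in general. Two distinct preimages may lie in the relative interiors of simplices that share a common face---for instance, two triangles of $K$ sharing a single vertex, mapped generically into $\R^3$, meet along a segment through the image of that vertex, producing double points whose supports are not disjoint. When $\sigma_1$ and $\sigma_2$ share a face $\tau$, the affine hulls of $g(\sigma_1)$ and $g(\sigma_2)$ both contain $\aff g(\tau)$ and therefore cannot satisfy~\eqref{eq:transversality-subspaces}, so your dimension count does not apply as written. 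The required bound on $\dim S_r(g)$ still holds, but one must treat tuples of simplices with prescribed shared faces separately (or, as Zeeman does, induct over skeleta and shift singularities down in dimension); this is the missing idea. A smaller issue: ``extending linearly across simplices'' presupposes a linear structure on the target, which $Q$ does not globally have---you should say why the local linear extensions in the various charts agree on shared faces and glue to a single PL map.
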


We will also need the following version of being in general position with respect to a given polyhedron: 

\begin{theorem}[{\cite[Ch. VI, Theorem~15, p.~7]{Zeeman:Seminar-on-combinatorial-topology-1966}}]
Let $Q$ be a PL manifold of dimension $m$, and let $X_0\subseteq X$ and $Y\subseteq Q$ be polyhedra. Given an embedding
$f\colon X\to Q$ such that $f(X\setminus X_0)\subseteq \interior Q$, for every $\varepsilon >0$ there is an embedding $g\colon X\to Q$ such that $g|_{X\setminus X_0}$
is \define{in general position with respect to $Y$}, in the sense that
$$\dim(g(X\setminus X_0) \cap Y) \leq \dim(X\setminus X_0)+\dim Y -\dim Q,$$ 
and $f$ and $g$ are ambient isotopic through an $\varepsilon$-small ambient isotopy fixing $\partial Q$ and $f(X_0)$. 
\end{theorem}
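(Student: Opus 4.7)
The plan is to approximate $f$ simplex by simplex on $X\setminus X_0$, by induction on dimension, realizing each local modification as a small PL ambient isotopy of $Q$ supported in a tiny PL ball disjoint from $f(X_0)\cup\partial Q$. First I would pass to a common triangulation on which $f$ is simplicial, with $X_0$ a subcomplex of $X$ and $Y$, $f(X_0)$ subcomplexes of $Q$, refined enough so that every simplex $\sigma$ of $X\setminus X_0$ has image contained in an open PL ball $U_\sigma\subseteq\interior{Q}\setminus f(X_0)$. Then I would order the simplices $\sigma_1,\ldots,\sigma_N$ of $X\setminus X_0$ by increasing dimension and process them one at a time, each time perturbing $f$ on $\interior{\sigma_i}$ while keeping $X_0$ and all previously processed simplices fixed.

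For the local step on $\sigma_i$, identify $U_{\sigma_i}$ with an open subset of $\R^m$ via a PL chart. Then $f|_{\sigma_i}$ becomes a PL embedding of a polyhedral disc whose boundary (already in general position with $Y$ by induction) must remain fixed. The classical transversality argument for affine pieces in $\R^m$ recalled in Section~\ref{sec:GP} shows that arbitrarily small PL perturbations of $f|_{\sigma_i}$ relative to $\partial\sigma_i$ generically put $f(\interior{\sigma_i})$ in general position with $Y$, in the sense that $\dim(f(\interior{\sigma_i})\cap Y)\leq \dim\sigma_i + \dim Y - \dim Q$; I pick one such perturbation $f'_i$ of sup-norm smaller than $\varepsilon/N$.

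To turn this perturbation into an ambient isotopy of $Q$, I would extend the straight-line PL isotopy $(1-t)f|_{\sigma_i}+t f'_i$ to an ambient PL isotopy $H^i$ supported in a regular neighborhood $B_i$ of $f(\sigma_i)$ in $U_{\sigma_i}$; the extension is standard inside a PL ball since the two discs are PL isotopic rel boundary and fill only a small portion of $B_i$. By construction $B_i$ avoids $f(X_0)$ and $\partial Q$, and by shrinking $B_i$ further I can arrange that $B_i\cap f(\sigma_j)\subseteq \interior{\sigma_i}$ for all $j<i$. The desired global ambient isotopy is then the composition $H^N\circ\cdots\circ H^1$, which is $\varepsilon$-small because each $H^i$ displaces points by at most $\varepsilon/N$.

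The main obstacle is the bookkeeping required to guarantee that none of the later isotopies $H^j$ destroys the general-position property already achieved on earlier simplices. Processing in order of increasing dimension guarantees that $\partial\sigma_j$ is fixed by the time $H^j$ is applied; insisting that $B_j$ be a thin PL ball around $\interior{\sigma_j}$ disjoint from $f(X_0)\cup\bigcup_{i<j}f(\sigma_i)$ outside $\interior{\sigma_j}$ itself ensures that the interiors of earlier simplices are also preserved, so the inductive general-position property for $g=H^N_1\circ\cdots\circ H^1_1\circ f$ closes.
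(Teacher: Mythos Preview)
The paper does not give its own proof of this statement; it is quoted from Zeeman's lecture notes as a background tool in Section~\ref{sec:GP}, so there is nothing in the paper to compare your argument against directly.

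Your overall strategy---triangulate, perturb the simplices of $X\setminus X_0$ in order of increasing dimension to achieve general position with $Y$, and realize each perturbation by a small ambient isotopy supported in a tiny ball---is the right shape, but the bookkeeping you propose for the ambient isotopies does not close. The condition you impose, that $B_j$ be a thin ball around $f(\interior{\sigma_j})$ with $B_j\cap\bigcup_{i<j}f(\sigma_i)\subseteq f(\interior{\sigma_j})$, is impossible to satisfy once $\sigma_j$ has proper faces: those faces are among the $\sigma_i$ with $i<j$, their images lie in $f(\partial\sigma_j)$ and are disjoint from $f(\interior{\sigma_j})$, yet any ball supporting an isotopy that carries $f|_{\sigma_j}$ to a perturbation $f'_j$ differing from $f$ arbitrarily close to $\partial\sigma_j$ must be a neighborhood of points of $f(\partial\sigma_j)$, and hence will contain open pieces of every simplex adjacent to $\sigma_j$. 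Your $H^j$ will then move those neighboring simplices and can destroy the general position already achieved on them.

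Two standard repairs are available. The first, which is essentially Zeeman's argument, is to work vertex by vertex in a fine subdivision rather than simplex by simplex: displace each vertex of $f(X\setminus X_0)$ by a small generic vector and extend linearly over its star; the associated ambient isotopy is supported in a ball containing the whole star and moves it coherently, so nothing already fixed is later disturbed. The second is to keep your simplex-by-simplex scheme but require the perturbation $f'_j$ to agree with $f$ on a collar of $\partial\sigma_j$ inside $\sigma_j$; this is legitimate because, by the inductive hypothesis, $f(\partial\sigma_j)$ is already in general position with $Y$, so no change is needed near the boundary. The isotopy can then be supported in a ball whose closure misses $f(\partial\sigma_j)$ and hence all neighboring simplices. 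Either fix requires an argument that your write-up currently does not supply.
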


\paragraph{Transversality.}


Suppose that $M_1,\ldots, M_r$ are properly embedded PL submanifolds of a PL manifold $Q$, $\dim M_i=m_i$, $1\leq i\leq r$, and $\dim Q=q$. We say that the $M_i$ are \define{mutually transverse} (or that they \define{intersect transversely}) if they locally intersect
like $r$ affine subspaces in general position. 

More precisely, the $M_i$ intersect transversely at a point $y\in \interior Q$ [respectively, $y\in \partial Q$] if there is a neighborhood $N$ of $y$ in $Q$ and a PL homeomorphism $h\colon \interior N \cong \R^q$ [respectively, $h\colon N\cong \R^{q-1}\times \R_+$]
such that the images $h(M_i\cap \interior N)$, $1\leq i\leq r$, are affine subspaces in general position [respectively, intersections of such subspaces with the upper halfspace $\R^{q-1}\times \R_+$]. The $M_i$ are mutually transverse if they intersect transversely at every $y\in  \bigcap_{i=1}^r M_i$. (In particular, if $\bigcap_{i=1}^r M_i \neq \emptyset$, then this implies that $\sum_i m_i \geq d(r-1)$. )

In general, transversality for PL manifolds is much more subtle than the corresponding theory in the smooth case, see
e.g., the discussion in \cite{Armstrong:Transversality-for-piecewise-linear-manifolds-1967}.\footnote{A particularly striking fact is the failure of relative PL transversality: Hudson~\cite{Hudson:On-transversality-1969} showed that for every $m,n,q$ with $m+n?q=4k$, $m,n \geq 8k+2$, there are transverse PL spheres $S^m,S^n \subseteq S^q$ which can not be extended to transverse embeddings 
of balls $B^{m+1},B^{n+1}\subseteq B^{q+1}$.}

In the present paper, we will only use the following simple fact: If $M_1,\ldots,M_r$ are pairwise disjoint PL manifolds, $\dim M_i=m_i$, $\sum_i m_i=d(r-1)$, and if $f\colon M_1\sqcup \ldots \sqcup M_r \to \R^d$ is a PL map in general position, then the images $f(\sigma_i)$ are mutually transverse at every $r$-fold point (necessarily an $r$-Tverberg point) $y$  of $f$; indeed, for suitable subdivisions of the $M_i$ on which $f$ is simplexwise linear, there are simplices $\sigma_i'$ of the subdivisions, $1\leq i\leq r$, such that the images $f(\sigma_i')$ are linear $m_i$-simplices in general position whose relative interiors intersect exactly at $y$.
All operations that we will perform will preserve transversality of the intersections.

\subsection{Oriented Intersections and Intersection Signs}
\label{sec:intersection_signs}

In this subsection, we review the induced orientation on the intersection of oriented simplices in general position in $\R^d$ and
the resulting intersection product on piecewise-linear chains (this is a particular case of  Lefschetz intersection theory \cite{Lefschetz:Intersections-and-Transformations-of-Complexes-and-Manifolds-1926}). We first fix the notation and state the 
basic properties that we will need later (Lemmas~~\ref{lem:prop-inters-prod} and \ref{lem:intersection-bd}). The definition and the 
proofs of the two lemmas, which boil down to elementary linear algebra, are included here for the sake of completeness but are deferred until the end of this subsection, and the reader may wish to skip them at first reading.

Let $\sigma_1,\ldots,\sigma_r$ be oriented simplices or, more generally, convex polyhedra
in general position in $\R^d$, $\dim \sigma_i=m_i$, $1\leq i\leq r$ (see Figure~\ref{fig_three_intersecting_triangles}
for an illustration in the case $r=d=3$, $m_1=m_2=m_3=2$).
\begin{figure}[h]
\begin{center}
\includegraphics[scale=1]{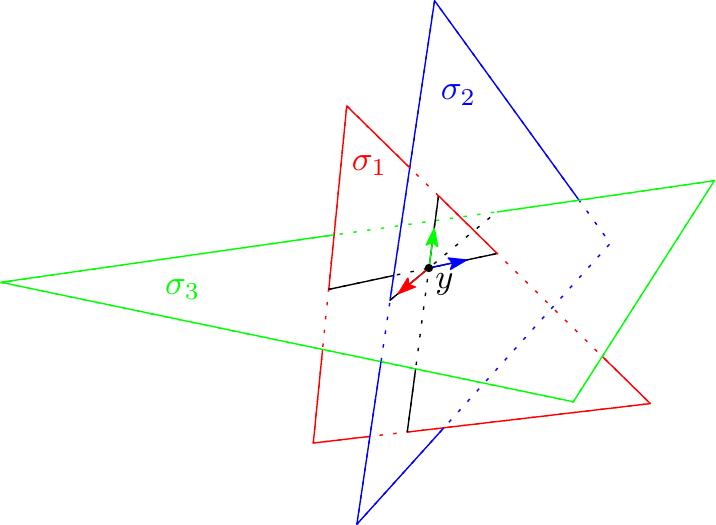}
\caption{Three triangles in general position intersecting at $y$.}
\label{fig_three_intersecting_triangles}
\end{center}
\end{figure}

Then the intersection $\bigcap_i \sigma_i$ is either empty or a convex polyhedron of dimension $\left(\sum_{i=1}^r m_i\right) -d(r-1)$. In the latter case, given orientations of the ambient space $\R^d$ and of each $\sigma_i$, we can define  (see Definition~\ref{def:induced_orientation_polytopes} below) an \define{induced orientation} on 
$$\sigma_1\cap\ldots\cap \sigma_r,$$ 
which depends on the order of the $\sigma_i$ and on the choices of the orientations. We will also speak of the 
\define{oriented intersection} of the $\sigma_i$ in $\R^d$, and occasionally write 
$(\sigma_1\cap \ldots \cap \sigma_r)_{\R^d}$ to stress dependence of the orientation on that of the ambient space. If the dimensions satisfy
\begin{equation}
\label{eq:intersection_dim_zero}
\sum_{i=1}^r m_i=d(r-1),
\end{equation}
then the intersection is either empty, or it consists of a single point $y$ that lies in the relative interior of each $\sigma_i$, and the induced orientation amounts to associating an \define{($r$-fold) intersection sign} in $\{-1,+1\}$ to $y$, denoted by 
$$\isign{y}{\sigma_1}{\sigma_r},$$
or by $\sign_y^{\R^d}(\sigma_1,\ldots,\sigma_r)$, if we want to stress the ambient space.

The following lemma summarizes several properties that we will need in this paper.
\begin{lemma} 
\label{lem:prop-inters-prod}
Suppose we have chosen an orientation of $\R^d$, and let $\sigma_1,\ldots,\sigma_r$ be oriented simplices in general position in 
$\R^d$, $\dim \sigma_i=m_i$, $1\leq i\leq r$.
\begin{enumerate}[label=\textup{(\alph*)}]
\item \textbf{Orientation reversal}:\label{orientation-reversal}  
Reversing the orientation of one  $\sigma_i$ (denoted by $-\sigma_i$) also reverses the orientation of the intersection,
$$ \sigma_1\cap \ldots \cap \sigma_{i-1}\cap (-\sigma_i)\cap \sigma_{i+1}\ldots \cap \sigma_r = -(\sigma_1\cap \ldots \cap \sigma_r).$$
If we reverse the orientation of $\R^d$ (denoted by $-\R^d$) then the orientation of the intersection changes by a factor of $(-1)^{r-1}$, 
$$(\sigma_1\cap \ldots \cap \sigma_r)_{-\R^d} = (-1)^{r-1}(\sigma_1\cap \ldots \cap \sigma_r)_{\R^d}.$$
\item \textbf{Skew commutativity}: \label{skew-commutativity}
For pairwise oriented intersections, 
$$\sigma_2\cap \sigma_1=(-1)^{(d-m_1)(d-m_2)} \sigma_1\cap \sigma_2.$$ 
Thus, in general, if $\pi\in \sym_r$ then 
$$\sigma_{\pi(1)}\cap \ldots \cap \sigma_{\pi(r)}=(-1)^{\sum_{(i,j)\in\textup{Inv}(\pi)}(d-m_i)(d-m_j)} \sigma_1\cap \ldots \cap \sigma_r,$$
where $\textup{Inv}(\pi):=\{(i,j)\in [r]^2\mid i<j, \pi(i)>\pi(j)\}$ is the set of \emph{inversions} of $\pi$.
\item \textbf{Restriction:} Consider the oriented pairwise intersections $\sigma_1\cap \sigma_2, \ldots, \sigma_1\cap \sigma_r$ as oriented convex subpolytopes of (the affine hull of) $\sigma_1$. If we compute the $(r-1)$-fold oriented intersection of these within $\sigma_1$, the result is the same as the $r$-fold oriented intersection of $\sigma_1,\ldots ,\sigma_r$ inside $\R^d$,
$$(\sigma_1\cap \ldots \cap \sigma_r)_{\R^d}=\big((\sigma_1\cap \sigma_2)_{\R^d} \cap \ldots \cap (\sigma_1\cap \sigma_r)_{\R^d}\big)_{\sigma_1}.$$
\item Suppose the dimensions satisfy \eqref{eq:intersection_dim_zero}, i.e., that  $\sigma_1\cap \ldots \cap \sigma_r$ consists of a single point $y$. Then the product $P:=\sigma_1\times \ldots \times \sigma_r$ is a convex polytope of dimension $d(r-1)$ 
that intersects the thin diagonal $\thindiag{r}{\R^d}$  transversely at the point $(y,\ldots,y)\in (\R^d)^r$. Moreover, 
the orientations of the $\sigma_i$ determine an orientation of $P$, and the orientation of $\R^d$ determines orientations of both $(\R^d)^r$ and of $\thindiag{r}{\R^d}$ (see Equation~\eqref{eq:orientations-product-diagonal} below), and with respect to these orientations,%
\footnote{For $r=2$, this is well-known, and can be found in \cite[§3]{Shapiro:FirstObstruction-1957}.}
\begin{equation}
\label{eq:r-sign-vs-product-diagonal}
\sign_{y}^{\R^d}(\sigma_1,\ldots,\sigma_r)=\varepsilon_{d,m_1,\ldots,m_r}\cdot \sign_{(y,\ldots,y)}^{(\R^d)^r}(\sigma_1\times \ldots \times \sigma_r,\thindiag{r}{\R^d}),
\end{equation}
where $\varepsilon_{d,m_1,\ldots,m_r}\in \{-1,+1\}$ is a sign that depends only on the dimensions. In the special case that $d=rk$ and 
all $m_i=(r-1)k$, $r\geq 2$ and $k\geq 1$, we abbreviate the notation for the sign to $\varepsilon_{r,k}$, and it is given by
\begin{equation}
\label{eq_epsilon}
\epsilon_{r,k} =
\begin{cases}
-1 & \mbox{if $k$ is odd and $r$ is $2$ mod $4$},\\
1  & \mbox{otherwise}.
\end{cases}
\end{equation}
\end{enumerate}
\end{lemma}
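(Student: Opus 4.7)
The plan is to carry out all four parts using oriented bases of tangent spaces at the intersection point $y$, since every assertion is local. After unwinding the definition of the induced orientation (Definition~\ref{def:induced_orientation_polytopes}) into the following working form --- pick an oriented basis $w$ of $W := \bigcap_i \sigma_i$, extend to an oriented basis $(w, u^i)$ of each $\sigma_i$ by a complementary block $u^i$, and declare $W$ positively oriented iff the concatenation $(w, u^1, u^2, \ldots, u^r)$ is a positive basis of $\R^d$ (consistent because $|w| + \sum_i |u^i| = p + \sum_i (m_i - p) = d$) --- parts \ref{orientation-reversal} and \ref{skew-commutativity} become pure bookkeeping. For \ref{orientation-reversal}, flipping the orientation of $\sigma_i$ flips the block $u^i$, hence flips the induced orientation of $W$; flipping the orientation of $\R^d$ forces a redefinition of each $u^i$, and combining the $r$ block-flips with the single flip of the reference orientation yields the factor $(-1)^{r-1}$. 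For \ref{skew-commutativity}, it suffices to treat a transposition of adjacent factors: swapping the complementary blocks $u^i$ and $u^{i+1}$ of sizes $(d - m_i)$ and $(d - m_{i+1})$ costs a sign of $(-1)^{(d-m_i)(d-m_{i+1})}$, and the general $\pi \in \sym_r$ formula follows by composing transpositions while tracking only inversions.

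For the restriction identity in part (c), I would compute both sides from a common choice of bases. Start with an oriented basis of $W$, extend it to an oriented basis of each $\sigma_1 \cap \sigma_i$ inside $\aff(\sigma_1)$, then extend further to oriented bases of $\sigma_1, \sigma_2, \ldots, \sigma_r$. The right-hand side orients $W$ by applying the working form of the definition inside the oriented ambient space $\sigma_1$ to the tuple $(\sigma_1 \cap \sigma_2, \ldots, \sigma_1 \cap \sigma_r)$, while the left-hand side uses it directly in $\R^d$ applied to $(\sigma_1, \ldots, \sigma_r)$. Since the complementary directions to $\sigma_1 \cap \sigma_i$ within $\sigma_1$ are identified canonically with the directions in $\sigma_i$ transverse to $\sigma_1$ in $\R^d$, tracing through the two concatenations yields the same positive basis, so both recipes orient $W$ the same way.

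Part (d) carries the real content. Work at $(y, \ldots, y) \in (\R^d)^r$ and pick, for each $i$, a positive basis $(f^i, n^i)$ of $\R^d$ with $f^i$ spanning $\sigma_i$ and $n^i$ spanning the normal complement $N_i$. The product orientation on $\sigma_1 \times \ldots \times \sigma_r$ has canonical basis $((f^1,0,\ldots,0), (0,f^2,0,\ldots,0), \ldots, (0,\ldots,0,f^r))$, while the thin diagonal $\thindiag{r}{\R^d}$ is spanned by the diagonal lifts $(v, v, \ldots, v)$ with $v$ ranging over a positive basis of $\R^d$. Choosing that positive basis of $\R^d$ to be $(n^1, \ldots, n^r)$ --- whose sign relative to the standard orientation of $\R^d$ is exactly $\isign{y}{\sigma_1}{\sigma_r}$, by our working definition applied to part (a) --- the combined basis of $(\R^d)^r$ obtained from the product followed by the diagonal lifts is a reordering of the standard positive basis of $(\R^d)^r$ (grouped first by copy of $\R^d$, then internally). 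This reordering is a block permutation whose signature depends only on the dimensions $d, m_1, \ldots, m_r$; that signature is the universal sign $\varepsilon_{d, m_1, \ldots, m_r}$.

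The main obstacle I expect is evaluating that block permutation in the symmetric case $d = rk$, $m_i = (r-1)k$ to get the explicit formula \eqref{eq_epsilon}. The plan there is to decompose the reshuffle into interleaving $r$ blocks of size $k$ (the $n^i$ lifts along the diagonal) with $r$ blocks of size $(r-1)k$ (the $f^i$ in separate copies), count inversions modulo $2$, and observe that the parity reduces to the parity of $k \cdot \binom{r}{2}$, which vanishes unless $k$ is odd and $\binom{r}{2} = r(r-1)/2$ is odd --- equivalently, $r \equiv 2 \pmod 4$ --- yielding precisely the stated sign.
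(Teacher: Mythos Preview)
Your overall strategy mirrors the paper's---reduce (a)--(c) to exterior-algebra bookkeeping and (d) to a block-determinant computation---but there is a genuine gap in your ``working form'' of the induced orientation. The dimension count $|w|+\sum_i |u^i| = p + \sum_i(m_i-p) = d$ is false for $r\geq 3$: since $p = \sum_i m_i - d(r-1)$, one has $p+\sum_i(m_i-p) = \sum_i m_i - (r-1)p$, which equals $d$ only when $r=2$. Concretely, for three $2$-planes in general position in $\R^3$ (so $r=3$, $p=0$), your concatenation $(u^1,u^2,u^3)$ has six vectors, not three. Thus the criterion ``$(w,u^1,\ldots,u^r)$ is a positive basis of $\R^d$'' is ill-posed, and the arguments for (a)--(c) built on it do not go through as written. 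The paper avoids this via the inductive Definition~\ref{def:induced_orientation_polytopes}(ii) and its unraveling in Remark~\ref{rem:unravel}, which uses blocks $\gamma_i$ of size $d-m_i$ (complementary to $W$ inside $\bigcap_{j\neq i}\sigma_j$, equivalently complementary to $\sigma_i$ in $\R^d$), so that $\alpha\wedge\gamma_r\wedge\ldots\wedge\gamma_1$ does have total dimension $d$. Your part (d) in fact silently switches to such complements $n^i$ of size $d-m_i$, but then invokes ``our working definition'' to identify the sign of $(n^1,\ldots,n^r)$ with the $r$-fold intersection sign; that link is precisely what Remark~\ref{rem:unravel} proves and is not a consequence of the form you stated.

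There is also an arithmetic slip at the end: $\binom{r}{2}=r(r-1)/2$ is odd for $r\equiv 2$ \emph{or} $3\pmod 4$, not only for $r\equiv 2\pmod 4$ (e.g.\ $\binom{3}{2}=3$). The paper's transposition count gives $t_{r,k}=\tfrac{(r-1)r(r+1)k^2}{2}$, whose parity is governed by $(r-1)r(r+1)/2$ rather than $r(r-1)/2$; the extra factor $(r+1)$ is exactly what kills the case $r\equiv 3\pmod 4$. So even granting your block-permutation setup, the inversion count you propose does not reduce to the parity you claim, and the two errors happen to cancel only by accident in your final sentence.
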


\paragraph{Intersections of chains.}
We will also need to consider oriented intersections and intersection signs for more general geometric objects,
in particular for PL submanifolds of $\R^d$ and for images of such manifolds under PL maps in general position.

A convenient framework is the following. An $m$-dimensional \define{PL chain} in $\R^d$ is a formal linear combination
$c=\sum_j a_j \sigma_j$, where the $a_j$ are integers (only finitely many nonzero) and each $\sigma_j$ is an $m$-dimensional
convex polyhedron, modulo the relation that $(-a)\sigma=a(-\sigma)$ for integers $a$ and convex polyhedra $\sigma$. 

Suppose now that $c_1,\ldots, c_r$ are PL chains in $\R^d$, $\dim c_i=m_i$ and $c_i=\sum_{i,j} a_{ij} \sigma_{ij}$,
$1\leq i\leq r$, and that the chains are in general position, i.e., $\sigma_{1j_1}, \ldots,\sigma_{r j_r}$ are in general position for any 
choice of $\sigma_{ij_i}$ in $c_i$. Then, by multilinearity, we can define the \define{oriented intersection} of the chains as the chain
$$
c_1\cap \ldots \cap c_r:=\sum_{j_1,\ldots,j_r} \left(\prod_{i=1}^r a_{ij_i}\right) \sigma_{1j_1}\cap  \ldots \cap\sigma_{r j_r},
$$
with the understanding that $\sigma_{1j_1}\cap  \ldots \cap \sigma_{r j_r}=0$ if the intersection is empty.

As indicated above, we are mostly interested in the case where $c_i=f(\sigma_i)$ is the image\footnote{More precisely we mean the image chain, i.e., we slightly abuse notation here and use $f(\sigma_i)$ to denote the formal linear combination $\sum_\tau f(\tau)$, where $\tau$ ranges over all the $m_i$-simplices in a subdivision of $\sigma_i$ on which $f$ is simplexwise-linear, and each $\tau$ carries the orientation inherited from that of $\sigma_i$; a more precise but more cumbersome notation for this image chain would be $f_{\#}(\sigma_i)$.} of an $m_i$-simplex or, more generally, of an $m_i$-dimensional PL manifold $\sigma_i$ under a a PL map $f$ in general position (this includes the case that $\sigma_i$ is a submanifold of $\R^d$, we take $f$ to be the inclusion map).

Note that the dimension of $c_1\cap\ldots\cap c_r$ equals $\ell:=\sum_i m_i -d(r-1)$. In particular, if the dimensions satisfy \eqref{eq:intersection_dim_zero}, then $\ell=0$, and the intersection chain is a formal linear combination $\sum_y a_y y$ of points. 
In this case, we define the \define{algebraic intersection number} of the chains as the sum 
$$c_1\scap \ldots \scap c_r :=\sum_y a_y  \in \Z,$$
where the sum ranges over all $r$-fold intersection points $y$ in $c_1\cap\ldots \cap c_r$. 

In particular, if all (nonzero) coefficients in the chains $c_i$ are $\pm 1$ (for instance, this happens if each $c=f(\sigma_i)$ is the image of an oriented $m_i$-dimensional PL manifold, $m_i<d$) then for each point $y$ in the intersection, its coefficient $a_y$ is $\pm 1$ as well, and we call $a_y$ the \define{($r$-fold) intersection sign} of the chains at $y$, denoted
$$\isign{y}{c_1}{c_r} \in \{-1,+1\}.$$
Thus, in this case, $c_1\scap \ldots \scap c_r =\sum_y \isign{y}{c_1}{c_r}$.

Even more generally, the intersection product could be defined inside an ambient oriented PL manifold $M$ (possibly with boundary) instead of $\R^d$; however, we will only need this in the special case that $M=\sigma_1$ is itself a simplex in $\R^d$ (as in Lemma~\ref{lem:prop-inters-prod}~(c)), in which case we understand the intersection in $\sigma$ to mean the intersection in the oriented affine subspace spanned by 
$\sigma_1$.

By multilinearity, the properties in Lemma~\ref{lem:prop-inters-prod} carry over to chains in a straightforward way.\footnote{In Part~(d) the product of the chains is $c_1\times \ldots \times c_r:=\sum_{j_1,\ldots,j_r} \left(\prod_{i=1}^r a_{ij_i}\right) \sigma_{1j_1} \times \ldots \times \sigma_{r j_r}$.}

We will also need the following well-known fact about intersection numbers and boundaries:
\begin{lemma}
\label{lem:intersection-bd}
Suppose $c_1$ and $c_2$ are PL chains in general position in $\R^d$, $dim (c_i)=m_i$, $i=1,2$, and that $m_1+m_2=d+1$. 
Then 
$\partial c_1\scap c_2 = (-1)^{m_1} c_1 \scap \partial c_2.$ 
\end{lemma}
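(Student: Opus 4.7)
The plan is to reduce to the case of two single oriented simplices by bilinearity, and then to prove the identity by viewing $\sigma_1 \cap \sigma_2$ as an oriented $1$-chain $\gamma$ whose boundary vanishes under augmentation. By multilinearity of the intersection product (as defined just before the lemma), it suffices to treat $c_1 = \sigma_1$ and $c_2 = \sigma_2$, where $\sigma_1, \sigma_2$ are oriented convex polyhedra in general position with $\dim \sigma_i = m_i$. If $\sigma_1 \cap \sigma_2 = \emptyset$, then since $\partial \sigma_i \subseteq \sigma_i$, both sides vanish trivially. Otherwise, $\gamma := \sigma_1 \cap \sigma_2$ is a single oriented line segment, because the intersection of two convex polyhedra of complementary codimension (here $m_1 + m_2 - d = 1$) is a convex polytope of dimension $1$.

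Next, I would use general position to locate the two endpoints of $\gamma$. Each endpoint $p$ must lie either on $\partial \sigma_1 \cap \interior{\sigma_2}$ or on $\interior{\sigma_1} \cap \partial \sigma_2$, and not both: an endpoint lying simultaneously on $\partial \sigma_1$ and $\partial \sigma_2$ would amount to an intersection of a face of $\sigma_1$ of dimension at most $m_1 - 1$ with a face of $\sigma_2$ of dimension at most $m_2 - 1$, which by \eqref{eq:transversality-subspaces} has expected dimension at most $(m_1 - 1) + (m_2 - 1) - d = -1$, i.e., is empty.

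The key technical step is a local computation of induced orientations yielding a Leibniz-type identity
\[
\partial \gamma \;=\; \partial \sigma_1 \cap \sigma_2 \,+\, (-1)^{m_1 - 1}\, \sigma_1 \cap \partial \sigma_2,
\]
where the right-hand side is a $0$-chain supported on the two endpoints of $\gamma$. Concretely, at each endpoint $p$ one compares the ordered frame that orients $\gamma$ (via the induced orientation of $\sigma_1 \cap \sigma_2$, see Definition~\ref{def:induced_orientation_polytopes}) with the frame that orients whichever of $\partial \sigma_1$ or $\partial \sigma_2$ contains $p$ (via the outward-normal convention). One reads off that the contribution of $p$ to $\partial \gamma$ equals $\sign_p(\partial \sigma_1, \sigma_2)$ when $p \in \partial \sigma_1$, and $(-1)^{m_1 - 1}\sign_p(\sigma_1, \partial \sigma_2)$ when $p \in \partial \sigma_2$; the sign $(-1)^{m_1 - 1}$ arises because in the second case the $(m_2-1)$ defining covectors of the boundary face of $\sigma_2$ must be commuted past the $m_1$ defining covectors of $\sigma_1$ in the orientation frame of the intersection.

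Finally, since $\gamma$ is a compact oriented $1$-chain, $\partial \gamma$ consists of two points with opposite signs, so its augmentation (algebraic sum of coefficients) is zero. Combining this with the displayed Leibniz identity gives $\partial \sigma_1 \scap \sigma_2 + (-1)^{m_1 - 1} \sigma_1 \scap \partial \sigma_2 = 0$, equivalently $\partial \sigma_1 \scap \sigma_2 = (-1)^{m_1}\, \sigma_1 \scap \partial \sigma_2$; bilinearity then extends this to arbitrary chains. The main obstacle is the sign bookkeeping in the displayed Leibniz identity — this is essentially elementary linear algebra, but it must be carried out consistently with the orientation conventions introduced in Definition~\ref{def:induced_orientation_polytopes}, which is where the precise exponent $m_1$ is generated rather than, say, the codimension $d - m_1$.
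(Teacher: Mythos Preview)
Your proposal is correct and follows essentially the same route as the paper: reduce by multilinearity to a pair of convex simplices, observe that their intersection is a $1$-segment whose endpoints lie (by general position) on $\partial\sigma_1\cap\interior\sigma_2$ or $\interior\sigma_1\cap\partial\sigma_2$, and then compare the induced orientation of $\gamma=\sigma_1\cap\sigma_2$ at each endpoint with the corresponding $0$-dimensional intersection sign via an exterior-algebra computation. The paper carries out exactly that exterior-algebra computation explicitly (writing $\beta_i\sim v\wedge\alpha_i$ with $v=q-p$ and concluding $\beta_1\wedge\alpha_2\sim(-1)^{m_1}\alpha_1\wedge\beta_2$), whereas you outline it and defer the bookkeeping; your packaging via the Leibniz identity $\partial\gamma=\partial\sigma_1\cap\sigma_2+(-1)^{m_1-1}\sigma_1\cap\partial\sigma_2$ together with the vanishing augmentation of $\partial\gamma$ has the minor advantage that it transparently covers the case where both endpoints of $\gamma$ land on the same $\partial\sigma_i$, a case the paper's proof tacitly assumes does not occur.
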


We now proceed to review the definition of oriented intersections and prove the two lemmas.
\paragraph{Orientations.} 
Specifying an orientation of an $m$-dimensional convex polyhedron $\sigma$ in $\R^d$, $m>0$, amounts to choosing an ordered basis\footnote{Here, we think of an ordered basis $B$ as a $(d\times m)$-matrix, whose columns are the basis vectors.} 
$B=[b_1|\ldots|b_m]\in \R^{d\times m}$ of the $m$-dimensional linear subspace $L(\sigma)$ parallel to $\sigma$. Given two such bases $B$ and $B'$, there is a unique invertible matrix $R\in \R^{m\times m}$ with $B'=BR$, and we say  that $B'$ and $B$ define the same or the opposite orientation of $\sigma$, denoted by $B'\sim B$ or $B'\sim \textup{op}(B)$, respectively, depending on whether $\det (R)$ is positive or negative. Equivalently, we can view orientations in terms of exterior algebra. Given a basis $B$, consider the decomposable nonzero vector $\beta=b_1\wedge \ldots \wedge b_m \in \bigwedge^m\R^d$. For two bases $B$ and $B'$, the corresponding exterior products satisfy 
$\beta'=\det(R) \cdot \beta$, and we will write $\beta'\sim \beta$ or $\beta'\sim -\beta$ depending on whether $\beta'$ and $\beta$ differ by a positive or negative factor. 

If $m=0$, i.e., if $\sigma$ is a point, then an orientation is given by a sign in $\{-1,+1\}$ assigned to that point,
and $\beta \in \bigwedge^0\R^d\cong \R$ is just a nonzero scalar. 

Note also that if $\tau \subseteq \sigma$ is a convex subpolyhedron of dimension $\ell$, then for any orientation $\alpha \in \bigwedge^\ell \R^d$ of $\tau$, we can choose\footnote{Write $\alpha=a_1\wedge\ldots\wedge a_\ell$ for some basis $A=[a_1|\ldots|a_\ell]\in \R^{d\times \ell}$ of $L(\tau)$, choose $C=[c_1|\ldots|c_{m-\ell}]$  such that $[A|C]$ is a basis of $L(\sigma)$, and set $\gamma=c_1\wedge\ldots \wedge c_{m-\ell}$.}  $\gamma\in \bigwedge^{m-\ell}\R^d$ such that $\alpha\wedge \gamma$ is an orientation of $\sigma$.

Moreover, the orientation of the boundary $\partial \sigma$ is given as follows: Let $\tau$ be a facet of $\sigma$, let $v=q-p \in \R^d$ be a vector connecting a point $p$ in the relative interior of $\sigma$ to a point $q\in \tau$ (we can think of $v$ as pointing ``outwards'' from $\sigma$ at $\tau$), and let $\alpha\in \bigwedge^{m-1}\R^d$ be any orientation of $\tau$. Then the orientation of $\tau$ in $\partial \sigma$ is given by $\pm \alpha$ depending on whether $v\wedge \alpha$ determines the chosen orientation of $\sigma$ or its opposite.

\begin{definition}
\label{def:induced_orientation_polytopes}
Let $r\geq 2$, and let $\sigma_1,\ldots,\sigma_r$ be convex polyhedra in general position in $\R^d$, $m_i:=\dim L_i$, $1\leq i\leq r$. 
Suppose we have also chosen an orientation of $\R^d$.

If $\sigma_1\cap \ldots \cap \sigma_r=\emptyset$, we consider the oriented intersection to be formally zero.

Else, $\sigma_1\cap \ldots \cap \sigma_r$ is a convex polyhedron of dimension $\ell:=\left(\sum_{i=1}^r m_i\right) - d(r-1)\geq 0$, by general position, and we proceed as follows:
\begin{enumerate}[label=\textup{(\roman*)}]
\item In the case $r=2$ of pairwise intersections, choose an arbitrary 
orientation $\alpha\in \bigwedge^\ell \R^d$ of $\sigma_1\cap \sigma_2$,
and choose $\beta_i \in \bigwedge^{m_i-\ell} \R^d$ such that $\alpha\wedge \beta_i$ determines the chosen orientation of $\sigma_i$, $i=1,2$.
Then the  \define{induced orientation} on $\sigma_1\cap \sigma_2$ is given by $\alpha$ or $-\alpha$, respectively, depending on whether 
$\alpha\wedge \beta_1\wedge \beta_2 \in \bigwedge^d \R^{d}$ determines the chosen orientation of $\R^d$ or the opposite one.\footnote{It is routine to check that this does not depend on the choice of $\alpha$ or of the   $\beta_i$. Indeed, if we chose a different orientation $\alpha' \sim \varepsilon \alpha $ for $\sigma_1\cap \sigma_2$, $\varepsilon \in \{-1,+1\}$ then for any choice of corresponding ``complementary''  $\beta_i'$, we have $\beta_i'\sim \varepsilon \beta_i$ and hence $\alpha'\wedge \beta_1'\beta_2' \sim \varepsilon \alpha\beta_1\beta_2$.} The convex polyhedron $\sigma_1\cap \sigma_2$ with this induced orientation is called the \define{oriented intersection} of $\sigma_1$ and $\sigma_2$. 
\item In general, the \define{oriented intersection} of $\sigma_1,\ldots,\sigma_r$ is defined inductively by
\begin{equation}
\label{eq:oriented_intersection_inductive}
\sigma_1\cap \ldots \cap \sigma_r:= (\sigma_1\cap \ldots \cap \sigma_{r-1}) \cap \sigma_r.
\end{equation}
(By Lemma~\ref{lem:associativity} below, we can ignore the parentheses and take the intersections in any order.) 
\end{enumerate}
\end{definition}


\begin{remark} 
\label{rem:unravel}
One can unravel the inductive definition \eqref{eq:oriented_intersection_inductive} as follows: Choose an orientation $\alpha \in \bigwedge^\ell \R^d$ for $\sigma_1\cap\ldots \cap \sigma_r$, and extend it by $\gamma_i\in \bigwedge^{d-m_i} \R^{d}$ to \emph{some} orientation $\alpha\wedge \gamma_i$ of $\bigcap_{j\neq i} \sigma_j$, $1\leq i\leq r$ (not necessarily the induced orientation). By general position, this determines signs $\varepsilon, \varepsilon_1, \ldots,\varepsilon_r \in \{-1,+1\}$ such that $\varepsilon \alpha\wedge \gamma_r\wedge \ldots \wedge \gamma_1 \in \bigwedge^d \R^d$ yields the chosen orientation of $\R^d$, and $\varepsilon_i \alpha\wedge \gamma_r \wedge \ldots \wedge \widehat{\gamma}_i\wedge \ldots \wedge \gamma_1\in \bigwedge^{m_i}\R^d$ yields the chosen orientation of $\sigma_i$, where the notation ``$\widehat{\gamma}_i$''  means that the factor $\gamma_i$ is omitted. Then the induced orientation of $\sigma_1\cap \ldots \cap \sigma_r$ is given by $\varepsilon^{r-1} \left(\prod_{i=1}^r\varepsilon_i\right) \alpha$.
\end{remark}
\begin{proof} For $r=2$, this follows immediately from Definition~\ref{def:induced_orientation_polytopes} ~(i). For $r\geq 3$, let $\alpha'=\alpha\wedge \gamma_r$. Then, by assumption, $\varepsilon_i \alpha'\wedge \gamma_{r-1}\wedge \ldots\wedge \widehat{\gamma}_i\wedge \ldots \wedge \gamma_1$ yields the chosen orientation of $\sigma_i$, $1\leq i<r$, and $\varepsilon \alpha'\wedge \gamma_{r-1}\wedge \ldots \wedge \gamma_1$ yields that of $\R^d$. Thus, by induction, $\sigma_1\cap \ldots \cap \sigma_{r-1}$ is oriented by $\varepsilon'\alpha'=\varepsilon'\alpha\wedge \gamma_r$, where $\varepsilon'=\varepsilon^{r-2}\left(\prod_{i=1}^{r-1}\varepsilon_i\right)$. Moreover, $\sigma_r$ is oriented by $\varepsilon_r \alpha\wedge \gamma_{r-1}\wedge\ldots\wedge \gamma_r$, so $(\sigma_1\cap\ldots\cap \sigma_{r-1})\cap \sigma_r$ is oriented by $\varepsilon \varepsilon' \varepsilon_r \alpha=\varepsilon^{r-1} \left(\prod_{i=1}^r\varepsilon_i\right) \alpha$.
\end{proof}

\begin{lemma}[\textbf{Associativity}]
\label{lem:associativity}
If $\sigma_1,\sigma_2,\sigma_3$ are oriented simplices in general position in $\R^d$ then we can take oriented pairwise intersections in any order and get the same induced orientation,
$$(\sigma_1\cap \sigma_2)\cap \sigma_3=\sigma_1\cap (\sigma_2\cap \sigma_3).$$
\end{lemma}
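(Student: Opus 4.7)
My plan is to compute both sides of the claimed equality by applying the pairwise definition (Definition~\ref{def:induced_orientation_polytopes}(i)) twice in the appropriate order, and then to verify that after careful sign bookkeeping the two induced orientations on $\sigma_1\cap\sigma_2\cap\sigma_3$ coincide.

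First I dispose of the trivial case. If the triple intersection is empty, then the iterated intersections $(\sigma_1\cap\sigma_2)\cap\sigma_3$ and $\sigma_1\cap(\sigma_2\cap\sigma_3)$ are both empty as sets, so both sides are formally zero by convention. Otherwise, let $\ell = m_1+m_2+m_3-2d\geq 0$ and fix any nonzero $\alpha\in\bigwedge^\ell\R^d$ orienting $\sigma_1\cap\sigma_2\cap\sigma_3$. I will then work inside the ``symmetric'' framework of Remark~\ref{rem:unravel}: pick $\gamma_i\in\bigwedge^{d-m_i}\R^d$ together with signs $\varepsilon,\varepsilon_1,\varepsilon_2,\varepsilon_3\in\{\pm1\}$ such that $\varepsilon\cdot\alpha\wedge\gamma_3\wedge\gamma_2\wedge\gamma_1$ yields the chosen orientation of $\R^d$ and $\varepsilon_i\cdot\alpha\wedge\gamma_3\wedge\cdots\widehat{\gamma_i}\cdots\wedge\gamma_1$ yields the chosen orientation of $\sigma_i$. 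Such $\gamma_i$ exist by general position.

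The calculation in the proof of Remark~\ref{rem:unravel}, specialized to $r=3$, is exactly the two-step pairwise computation for the \emph{left-associative} product $(\sigma_1\cap\sigma_2)\cap\sigma_3$: extending $\alpha'=\alpha\wedge\gamma_3$ by $\beta_1\sim\varepsilon_1\gamma_2$ and $\beta_2\sim\varepsilon_2\gamma_1$ orients $\sigma_1\cap\sigma_2$ by $\varepsilon\varepsilon_1\varepsilon_2\,\alpha\wedge\gamma_3$, and a second pairwise step with candidate $\alpha$ gives the final induced orientation $\varepsilon^2\varepsilon_1\varepsilon_2\varepsilon_3\,\alpha = \varepsilon_1\varepsilon_2\varepsilon_3\,\alpha$. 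For $\sigma_1\cap(\sigma_2\cap\sigma_3)$ I run the analogous two-step procedure, this time starting with the candidate orientation $\mu=\alpha\wedge\gamma_1$ on $\sigma_2\cap\sigma_3$. Rewriting the chosen orientations of $\sigma_2$ and $\sigma_3$ in the form $\mu\wedge\nu_2$ and $\mu\wedge\nu_3$ requires permuting $\gamma_1$ past $\gamma_3$ and $\gamma_2$ respectively, which forces $\nu_j\sim\varepsilon_j(-1)^{(d-m_1)(d-m_i)}\gamma_i$ with Koszul signs. The crucial observation is that these Koszul signs appear a \emph{second} time when I reduce $\mu\wedge\nu_2\wedge\nu_3$ to the canonical order $\alpha\wedge\gamma_3\wedge\gamma_2\wedge\gamma_1$, and they cancel in pairs; consequently $\sigma_2\cap\sigma_3$ is oriented by $\varepsilon\varepsilon_2\varepsilon_3\,\alpha\wedge\gamma_1$. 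A second pairwise step with $\sigma_1$, applied \emph{with $\sigma_1$ in the first slot} of Definition~\ref{def:induced_orientation_polytopes}(i) to match the outer parenthesization, then reproduces the very same sign $\varepsilon^2\varepsilon_1\varepsilon_2\varepsilon_3 = \varepsilon_1\varepsilon_2\varepsilon_3$.

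The main obstacle is the double Koszul bookkeeping together with the subtle but essential convention of slot-order in the final pairwise step: if one accidentally placed $\sigma_2\cap\sigma_3$ in the first slot of the pairwise definition, a spurious factor $(-1)^{(d-m_1)((d-m_2)+(d-m_3))}$ would appear and the two sides would disagree. An attractive alternative would be to upgrade Remark~\ref{rem:unravel} to a manifestly $\sym_r$-symmetric formula for arbitrary $r$, which would yield associativity as an immediate corollary; but for the $r=3$ statement needed here, the direct double computation is the shortest route.
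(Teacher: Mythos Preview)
Your proposal is correct and follows essentially the same approach as the paper: both set up the data $\alpha,\gamma_1,\gamma_2,\gamma_3$ from Remark~\ref{rem:unravel} and then compute the two parenthesizations directly from the pairwise Definition~\ref{def:induced_orientation_polytopes}(i), verifying that the Koszul signs arising when $\gamma_1$ is moved past $\gamma_2,\gamma_3$ cancel in pairs. The only difference is cosmetic: the paper first invokes orientation-reversal invariance (Lemma~\ref{lem:prop-inters-prod}(a)) to normalize $\varepsilon=\varepsilon_1=\varepsilon_2=\varepsilon_3=+1$ before computing, whereas you carry the general signs through to the end and observe $\varepsilon^2=1$.
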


\begin{proof}[Proof of Lemma~\ref{lem:associativity} and of Lemma~\ref{lem:prop-inters-prod}~\textup{(a)--(c)}]
We may assume that $\sigma_1\cap \ldots \cap \sigma_r\neq \emptyset$, else all properties are trivially satisfied.
Moreover, Lemma~\ref{lem:prop-inters-prod}~\textup{(a)} and \textup{(b)} follow directly from the definition.

We proceed to prove Lemma~\ref{lem:associativity} and Lemma~\ref{lem:prop-inters-prod}~\textup{(c)} at the same time.
We use the notation from Remark~\ref{rem:unravel} (applied with $r=3$). By Lemma~\ref{lem:prop-inters-prod}~\textup{(a)}, 
both equations we want to establish are invariant under reversing the orientations of some $\sigma_i$ or of $\R^d$, so we 
may assume that the signs $\varepsilon$ and $\varepsilon_i$, $1\leq i\leq 3$, are all equal to $+1$. That is, we may assume
that $\R^d$ is oriented by $\alpha\wedge \gamma_3\wedge\gamma_2\wedge\gamma_1$, and that 
$\alpha\wedge \gamma_3\wedge \gamma_2$, $\alpha\wedge\gamma_3\wedge \gamma_1$, and
$\alpha\wedge\gamma_2\wedge \gamma_1$ determine the chosen orientations of $\sigma_1$, $\sigma_2$, and $\sigma_3$, respectively.

It follows directly from the definition that the induced orientation of $\sigma_1\cap \sigma_2$ is given by $\alpha\wedge\gamma_3$, and that of $(\sigma_1\cap \sigma_2)\cap \sigma_3$ is given by $\alpha$.

Moreover, $\alpha\wedge  \gamma_3\wedge  \gamma_1 \sim\alpha\wedge  \gamma_1\wedge(-1)^{(d-m_1)(d-m_3)} \gamma_3$, $\alpha\wedge \gamma_2\wedge \gamma_1 \sim \alpha\wedge \gamma_1\wedge(-1)^{(d-m_1)(d-m_2)} \gamma_2$, and $\alpha \wedge \gamma_3 \wedge \gamma_2 \wedge \gamma_1 \sim \alpha\wedge \gamma_1\wedge(-1)^{(d-m_1)(d-m_3)} \gamma_3\wedge(-1)^{(d-m_1)(d-m_2)} \gamma_2$. Thus, again applying the definition, 
the orientation of $\sigma_2\cap \sigma_3$ is given by $\alpha\wedge \gamma_1$, and hence that of $\sigma_1\cap (\sigma_2\cap \sigma_3)$ by $A$, which proves Lemma~\ref{lem:associativity}.

Similarly, the orientation of $\sigma_1\cap \sigma_3$ is given by $\alpha\wedge \gamma_2$ since $\alpha\wedge \gamma_3\wedge \gamma_2 \sim\alpha\wedge \gamma_2\wedge(-1)^{(d-m_2)(d-m_3)} \gamma_3$ and $\alpha\wedge \gamma_3\wedge \gamma_2\wedge \gamma_1 \sim \alpha\wedge \gamma_2\wedge(-1)^{(d-m_3)(d-m_2)} \gamma_3\wedge \gamma_1$. Therefore, the orientation of
$$\big((\sigma_1 \cap \sigma_2)_{\R^d} \cap (\sigma_1\cap \sigma_3)_{\R^d}\big)_{\sigma_1}$$ 
is given by $A$ as well, which proves Lemma~\ref{lem:prop-inters-prod}~\textup{(c)}.\qedhere
\end{proof}

\begin{proof}[Proof of Lemma~\ref{lem:prop-inters-prod}~\textup{(d)}]
Suppose the orientation of $\sigma_i$ is given by $B_i\in \R^{d\times m_i}$, $1\leq i\leq r$, and that of $\R^d$ by $B \in \R^{d\times d}$.
Then the orientations of $P:=\sigma_1\times\ldots\times \sigma_r$, of the thin diagonal $\thindiag{r}{\R^d}$, and of $(\R^d)^r$, respectively, are given by matrices $M_P\in \R^{dr \times d(r-1)}$, $M_\delta \in \R^{dr \times d}$, and $M\in \R^{dr \times dr}$, where
\begin{equation}
\label{eq:orientations-product-diagonal}
M_P=\left[
\begin{array}{cccc}
B_{1} & 0    &\cdots & 0  \\
0   & B_{2}     & \cdots     & 0  \\
\vdots  &  & \ddots &  \\
0   & 0  & \cdots & B_{_r} 
\end{array}
\right], 
\quad 
M_\thin=\left[
\begin{array}{c}
B\\
B\\
\vdots \\
B
\end{array}
\right], \quad \textrm{and}\quad
M=\left[
\begin{array}{cccc}
B & 0    &\cdots & 0  \\
0   & B     & \cdots     & 0  \\
\vdots  &  & \ddots &  \\
0   & 0  & \cdots & B 
\end{array}
\right].
\end{equation}
The pairwise intersection sign $\sign_{(y,\ldots,y)}(P,\thindiag{r}{\R^d})$ equals $\pm 1$ depending on whether the determinants of 
$[M_P|M_\thin]$ and of $M$ have the same or the opposite sign, i.e.,
$$\sign \det [M_P|M_\thin] = \sign_{(y,\ldots,y)}(P,\thindiag{r}{\R^d}) \cdot \sign\det M.$$
Note that reversing the orientation of one $\sigma_i$ reverses the orientation of $P$, and reversing the orientation of $\R^d$
reverses the orientation of $\thindiag{r}{\R^d}$ and changes the orientation of $(\R^d)^r$ by a factor of $(-1)^r$. Therefore, by 
Lemma~\ref{lem:prop-inters-prod}~\textup{(a)}, Equation~\eqref{eq:r-sign-vs-product-diagonal} is invariant under such orientation reversals.
Thus, we can proceed similarly to Remark~\ref{rem:unravel}, choose bases $C_i \in \R^{d\times (d-m_i)}$ of $L(\bigcap_{j\neq i}\sigma_j)$, $1\leq i\leq r$, and we may assume that  $B=[C_r|\ldots|C_1]$ and $B_i=[C_r|\ldots|\widehat{C}_i|\ldots|C_1]$. Hence, 
$$
\sign_y(\sigma_1,\ldots,\sigma_r)=+1.
$$
Moreover,
$$
[M_P|M_\thin] = 
\left[
\begin{array}{ccccc}
[C_r |\ldots|C_2] & 0    &\cdots & 0 &  [C_r|\ldots|C_1] \\
0   & [C_r|\ldots|{C}_3|C_1]      & \cdots     & 0 &  [C_r|\ldots|C_1] \\
\vdots  &  & \ddots & & \vdots \\
0   & 0  & \cdots & [C_{r-1}|\ldots|C_1] & [C_r|\ldots|C_1]
\end{array}
\right]
$$
By subtracting columns from one another (which does not change the orientation class), we can bring $[M_P|M_{\thin}]$ into the form
\begin{equation*}
\left[
\begin{array}{ccccc}
[C_r |\ldots|C_2] & 0    &\cdots & 0 &  [\; 0\;| \ldots|\;0\;|C_1] \\
0   & [C_r|\ldots|{C}_3|C_1]      & \cdots     & 0 &  [\;0\;|\ldots|C_2|\;0\;] \\
\vdots  &  & \ddots & & \vdots \\
0   & 0  & \cdots & [C_{r-1}|\ldots|C_1] & [C_r|\;0\;|\ldots|\;0\;]
\end{array}
\right],
\end{equation*}
and this matrix can be transformed into  
$$
\left[
\begin{array}{ccccc}
[C_r |\ldots|C_1] & 0    &\cdots & 0\\
0   & [C_r|\ldots |C_1]      & \cdots     & 0 \\
\vdots  &  & \ddots & \vdots \\
0   & 0  & \cdots & [C_{r}|\ldots|C_1]
\end{array}
\right] =M;
$$
by a sequence of $t_{d,m_1,\ldots,m_r}:=\sum_{i=1}^r (r-i)d(d-m_i)\; + \sum_{1\leq i<j\leq r} (d-m_i)(d-m_j)$
column transpositions, which proves \eqref{eq:r-sign-vs-product-diagonal} if we set
\begin{equation}
\label{eq:number-transpositions}
\varepsilon_{d,m_1,\ldots,m_r}:=(-1)^{t_{d,m_1,\ldots,m_r}}.
\end{equation}
In the special case that $m_i=m=(r-1)k$ and $d=rk$, $k\geq 1$, the number of transpositions equals
$$
t_{r,k}:=d(d-m)\binom{r}{2}+(d-m)^2\binom{r}{2}=\frac{(r-1)r(r+1)k^2}{2},
$$
and it is easy to verify that setting $\varepsilon_{r,k}:=(-1)^{t_{r,k}}$ yields \eqref{eq_epsilon}.
\end{proof}

\begin{proof}[Proof of Lemma~\ref{lem:intersection-bd}]
By multilinearity, it suffices to prove the formula for simplices $\sigma_1,\sigma_2$ in general position in $\R^d$, $\dim(\sigma_i)=m_i$, $m_1+m_2=d+1$, $\sigma_1\cap \sigma_2\neq \emptyset$. By general position, $\sigma_1\cap \sigma_2$ is a line segment with endpoints $p \in \partial \sigma_1 \cap \sigma_2$ and $q\in \sigma_1 \cap \partial \sigma_2$, where $p$ lies in the relative interior of $\sigma_1$ and of some facet $\tau_2$ of $\sigma$, and $q$ lies in the relative interiors of $\sigma_2$ and some facet $\tau_1$ of $\sigma_1$, see Figure~\ref{fig_two_triangles}. We need to show that $\sign_q(\tau_1,\sigma_2) = (-1)^{m_1}\sign_p(\sigma_1,\tau_2)$.

\begin{figure}[h]
\begin{center}
\includegraphics[scale=1]{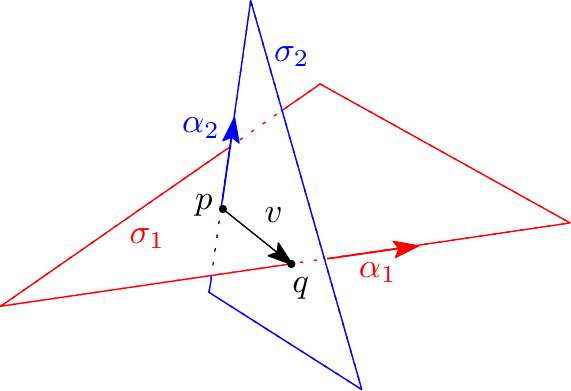}
\caption{Two triangles in general position in $\R^3$.}
\label{fig_two_triangles}
\end{center}
\end{figure}
Suppose the orientation of $\sigma_i$ is given by $\beta_i \in \bigwedge^{m_i}\R^d$ and the orientation of $\tau_i$ in $\partial\sigma_i$ is given by $\alpha_i \in \bigwedge^{m_i-1}\R^d$, $i=1,2$, and that $\R^d$ is oriented by $\beta \in \bigwedge^d\R^d$. Then, by definition,
the intersection signs 
$\sign_q(\tau_1,\sigma_2)$ and $\sign_p(\sigma_1,\tau_2)$ are determined by
\begin{equation}
\label{eq:eq-intersection-boundary}
\beta \sim \sign_q(\tau_1,\sigma_2)\cdot \alpha_1\wedge \beta_2 \sim \sign_p(\sigma_1,\tau_2)\cdot \beta_1\wedge \alpha_2
\end{equation}
Let $v:=q-p$. Then, by definition of the orientation of the boundary, $\beta_1\sim v\wedge \alpha_1$ and $\beta_2\sim (-v)\wedge \alpha_2$.
It follows that $\beta_1\wedge \alpha_2\sim  v\wedge \alpha_1\wedge \alpha_2 \sim (-1)^{m_1} \alpha_1\wedge (-v)\wedge \alpha_2 \sim (-1)^{m_1} \alpha_1\wedge \beta_2$.\end{proof}

\section{The Higher-Multiplicity Whitney Trick} 
\label{sec_whitney_trick}

In this section, we present the proof of Theorem~\ref{thm_whitney_trick_extended}. The proof is by induction on $r$. The base case $r=2$ is the PL version of the Whitney Trick (see, e.g., Weber~\cite{Weber}). 

Thus, inductively, we may assume that $r\geq 3$ and that the theorem holds for $r-1$. We proceed in three steps,
each of which is explained in detail in the corresponding subsection.

\begin{enumerate}
\item[\ref{subsec_reduction}]
We show how we can restrict ourselves to a standard local situation, in which $m_i$-dimensional balls $\sigma_i$ properly contained in a $d$-ball $B^d$, $1\leq i\leq r$, intersect in precisely two $r$-intersection points $x$ and $y$ of opposite signs.

\item[\ref{subsec_piping}]
If we restrict ourselves to the sub-ball $\sigma_1\subseteq B^d$, then $x$ and $y$, seen as $(r-1)$-intersection points between $\sigma_1 \cap \sigma_2, \ldots, \sigma_1 \cap \sigma_r$ inside the $m_i$-ball $\sigma_1$, still have opposite signs. 
Moreover, we show that we can modify each $\sigma_1\cap \sigma_i$, $2\leq r\leq r$, by an ambient isotopy of $B^d$ (which corresponds to performing a pair of complementary ambient surgeries on $\sigma_i$) so that the pairwise intersections 
$\sigma_1\cap \sigma_i$ become connected.
\item[\ref{subsec_proof_whitney_extended}]
Inductively, we remove the $(r-1)$-intersection points between $\sigma_1 \cap \sigma_2, \ldots, \sigma_1 \cap \sigma_r \subseteq \sigma_1$ by ambient isotopies of $\sigma_1$ and then extend these to ambient isotopies of $B^d$, using that $\sigma_1$ is unknotted in $B^d$, so that $B^d \cong \sigma_1 * S^{d-n_1-1}$.
\end{enumerate}

\subsection{Reduction to a Standard Local Situation} 
\label{subsec_reduction}
The first step of the proof of Theorem~\ref{thm_whitney_trick_extended} is to reduce the problem to the following local situation:


\begin{definition} 
\label{def:standard-local-situation}
We say that $B\subset \R^d$ and $\sigma_1,\ldots, \sigma_r \subset B$ form a \define{standard local situation} around two
$r$-fold points $x,y$ if the following properties are satisfied:
\begin{enumerate}
\item $B \subset \R^d$ is a $d$-dimensional PL ball, with $x,y$ in the interior $\interior B$.
\item For $1\leq i\leq r$, $\sigma_i $ is an $m_i$-dimensional PL ball properly embedded (see Section~\ref{sec:PLbackground}) into $B$, with
\begin{equation}
\tag{\ref{eq:dim-critical}}
\sum_{i=1}^r m_i  = d(r-1).
\end{equation}
\item $\sigma_1,\ldots, \sigma_r$ are mutually transverse (see Section~\ref{sec:PLbackground}), $\sigma_1\cap \ldots \cap \sigma_r=\{x,y\}$, and for each index set $J\subseteq \{1,\ldots,r\}$ with $|J|\geq 2$, $\bigcap_{j\in J}\sigma_j$ is the disjoint union of two PL balls $B_{J,x} \ni x$ and $B_{J,y} \ni y$ (each properly embedded in $B$ and of dimension $d-\sum_{j\in J}(d-m_j)$, by transversality).
\end{enumerate} 
\end{definition}

\begin{lemma}[\textbf{Reduction to a standard local situation}]
\label{lem:localization}
Let $M_1,\ldots,M_r$ be connected PL manifolds\footnote{For the proof of the generalized Van Kampen--Shapiro--Wu theorem, we would only need the two cases that $M_i\cong \simplex^{m_i}$ is a PL ball, or that $M_i\cong \partial\simplex^{m_i}\times [0,1]$ is a PL cylinder.} (possibly with boundary) of respective dimensions $\dim M_i=m_i$, $1\leq i \leq r$, such that $\sum_{i=1}^r m_i  = d(r-1)$ and
\begin{equation}
\tag{\ref{eq:codimension-3}}
d-m_i\geq 3, \qquad 1\leq i\leq r. 
\end{equation}
Suppose that $f\colon M_1 \sqcup \ldots \sqcup M_r \to \R^d$ is a PL map in general position defined on the disjoint union of the $M_i$, and let $$x,y \in f(M_1)\cap \ldots \cap f(M_r)$$ 
be two $r$-fold points of $f$.

Then there exists a $d$-dimensional PL ball $B \subset \R^d$ such that $B$ and 
$\sigma_i:=f(M_i)\cap B$, $1\leq i \leq r$, form a standard local situation around $x$ and $y$.

Moreover if $L \subseteq \R^d$ is any compact polyhedron of dimension at most $d-3$ and disjoint from $x$ and $y$ then we can choose $B$ to be disjoint from $L$.

Furthermore, if $B'$ is a $d$-dimensional $PL$ ball 
such that $x,y\in \interior{B}'$ and $x$ and $y$ lie in the same connected component of 
$f(M_i)\cap \interior B'$, $1\leq i\leq r$, 
then we can choose $B$ to be contained in $\interior B'$.
\end{lemma}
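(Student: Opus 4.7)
The plan is to construct $B$ as a small regular neighborhood in $\R^d$ of a carefully chosen collapsible $2$-dimensional subpolyhedron $W$, built from $r$ arcs in the sheets $f(M_i)$ joining $x$ to $y$, together with $r-1$ Whitney-type $2$-disks filling in between them. This generalizes the single Whitney disk that appears in the classical $r=2$ case. For each $i$, pick preimages $x_i, y_i \in \interior M_i$ of $x$ and $y$, and join them by a PL arc $\alpha_i \subset \interior M_i$. A general-position perturbation inside $M_i$ lets $\alpha_i$ avoid, away from its endpoints, the codimension-$\geq 3$ subsets $S_2(f|_{M_i})$, $f^{-1}(f(M_j)) \cap M_i$ (for $j \neq i$), and $f^{-1}(L) \cap M_i$, each of which has dimension at most $m_i - 3$ under the hypothesis $d - m_i \geq 3$. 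The image $\gamma_i := f(\alpha_i)$ is then an embedded PL arc in $f(M_i)$ from $x$ to $y$, the arcs $\gamma_1, \ldots, \gamma_r$ are pairwise disjoint except at $\{x, y\}$, and each is disjoint from $L$. If a containing ball $B'$ is prescribed, the hypothesis that $x$ and $y$ share a connected component of $f(M_i) \cap \interior{B'}$ lets us choose $\alpha_i$ so that $\gamma_i$ lies inside $\interior{B'}$.

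For each $i = 2, \ldots, r$, the loop $\gamma_1 \cup \gamma_i$ is a PL circle in the simply connected ball $\interior{B'}$, hence bounds a PL $2$-disk $D_i \subset \interior{B'}$. The codimension inequalities $2 + m_j < d$, $2 + \dim L < d$, and $2 + 2 < d$ (all consequences of $d \geq 3r \geq 6$) allow us to isotope the $D_i$ one by one, keeping boundaries fixed, so that $\interior{D_i}$ misses every $f(M_j)$, misses $L$, and $D_i \cap D_{i'} = \gamma_1$ for $i \neq i'$. Setting $W := \gamma_1 \cup D_2 \cup \cdots \cup D_r$, it follows that $W \cap f(M_j) = \gamma_j$ for every $j$, $W \cap L = \emptyset$, and $W \subset \interior{B'}$. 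Moreover, for each $i \geq 2$ the arc $\gamma_i$ is a free face of $D_i$ in $W$, so successive elementary collapses give $W \searrow \gamma_1 \searrow \{x\}$; in particular $W$ is collapsible.

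Finally, take $B := N(W; \R^d)$ a regular neighborhood small enough to lie in $\interior{B'}$ and to be disjoint from $L$, which is possible since $W$ is a compact subpolyhedron with these containment properties. As a regular neighborhood of a collapsible polyhedron in the PL manifold $\R^d$, $B$ is a PL $d$-ball containing $\{x, y\}$ in its interior. Since $W$ and $f(M_j)$ are in general position and $f(M_j)$ is a PL $m_j$-manifold in a neighborhood of $\gamma_j = W \cap f(M_j)$, the relative regular neighborhood theorem identifies $\sigma_j := B \cap f(M_j)$ with a regular neighborhood of $\gamma_j$ in $f(M_j)$, which is a properly embedded PL $m_j$-ball in $B$. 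Mutual transversality of the $\sigma_j$ near $x$ and $y$ is inherited from the general position of $f$ at those isolated $r$-fold points, while away from $\{x, y\}$ the $\sigma_j$ are pairwise disjoint since the $\gamma_j$ are; hence for every $J \subseteq \{1, \ldots, r\}$ with $|J| \geq 2$ the intersection $\bigcap_{j \in J} \sigma_j$ splits as the disjoint union of two PL balls, one near $x$ and one near $y$. The main technical hurdle is this last identification $\sigma_j = N(\gamma_j; f(M_j))$, which relies on a careful application of the relative regular neighborhood theorem together with the general position arranged in the previous two steps; the codimension hypothesis $d - m_j \geq 3$ enters critically both in being able to push the interiors of the $D_i$ off the sheets $f(M_j)$ and in ensuring that PL regular neighborhood theory behaves cleanly.
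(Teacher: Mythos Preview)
Your proof is correct and follows essentially the same strategy as the paper: construct arcs $\gamma_i$ in each sheet, span them by Whitney-type $2$-disks, and take $B$ to be a regular neighborhood of the resulting collapsible $2$-polyhedron. The only difference is organizational: the paper uses a \emph{chain} of disks $D_{12}, D_{23}, \ldots, D_{(r-1)r}$ spanning consecutive arcs $\lambda_i \cup \lambda_{i+1}$, whose union is itself a single $2$-disk, whereas you use a \emph{fan} $D_2, \ldots, D_r$ all hinged along $\gamma_1$, yielding a collapsible ``book''; both choices work, and the remaining regular-neighborhood argument is identical.
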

\begin{proof}
For each $i$, let us use the abbreviation $\singset_{M_i}$ for the closed singular set of $f|_{M_i}$ (see Section~\ref{sec:PLbackground}), so that $f(S_{M_i})$ is the closure of the set of double points of $f|_{M_i}$. Since $f$ is 
in general position, the images $f(M_i)$ intersect transversely at $x$ and at $y$, each pairwise intersection $f(M_i)\cap f(M_j)$ 
has dimension $m_i+m_j-d$, and $f(\singset_{M_i})$ has dimension at most $2m_i-d$ and is at positive distance from $x$ and 
$y$.

For each $i$, we choose a PL path $\lambda_i \subseteq f(M_i)$ connecting $x$ and $y$. By choosing $\lambda_i$ to be in general position within $f(M_i)$, we can guarantee that $\lambda_i$ intersects the other $f(M_j)$, $j\neq i$, only in $x$ and $y$, and that $\lambda_i$ is disjoint from 
$f(\singset_{M_i})$, see Figure~\ref{fig_intersection_in_sigma_1}; here, we use that,  by \eqref{eq:codimension-3}, both $f(M_i)\cap f(M_j)$ and $f(\singset_{M_i})$ have codimension at least $3$ within $f(M_i)$ (in fact, codimension $2$ would be enough).
\begin{figure}[h]
\begin{center}
\includegraphics[scale=.8]{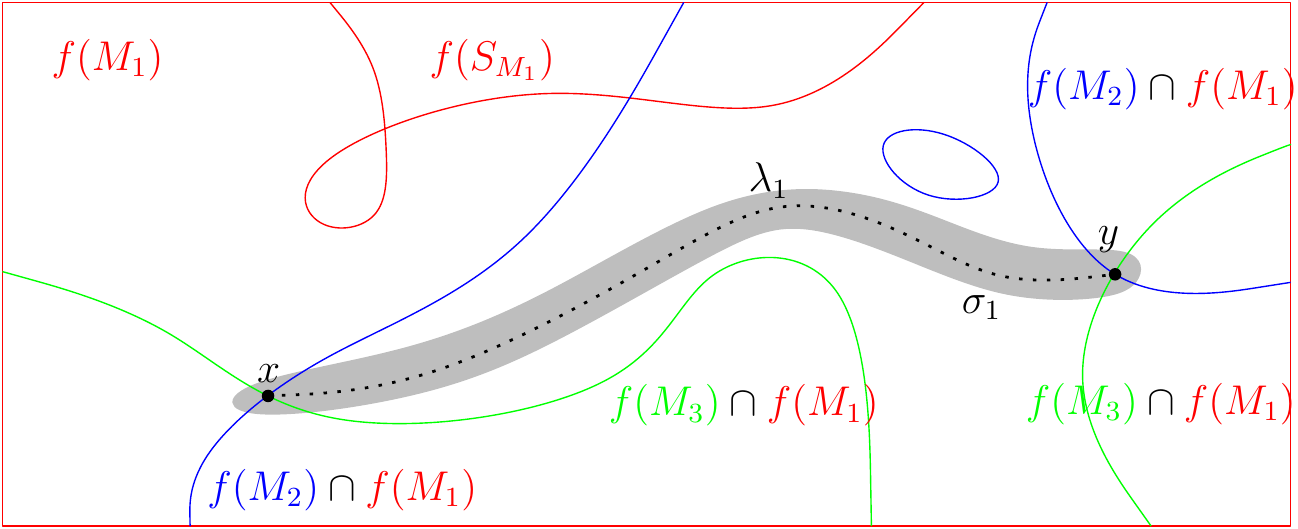}
\caption{On $f(M_1)$, the path $\lambda_1$ joins $x$ and $y$. Any sufficiently small regular neighborhood $\sigma_1$ of $\lambda_1$ in $f(M_1)$ is an $m_1$-dimensional PL ball.}
\label{fig_intersection_in_sigma_1}
\end{center}
\end{figure}

The union $\lambda_1 \cup \lambda_2$ is an embedded circle in $\R^d$, and, again using general position,\footnote{Indeed, we can take $D_{12}$ to be the cone over $\lambda_1\cup \lambda_2$ with an apex in general position.} we can fill it with an embedded $2$-dimensional PL disk $D_{12}$ that intersects $f(M_1)$ and $f(M_2)$ precisely in $\lambda_1$ and $\lambda_2$, respectively, that intersects all other $f(M_i)$, $i\neq 1,2$ precisely in $\{x,y\}$, and that is disjoint from all $f(S_i)$ (see Figure~\ref{fig_lambda_1_lambda_2}); here, we require codimension at least $3$.
\begin{figure}[h]
\begin{center}
\includegraphics[scale=1]{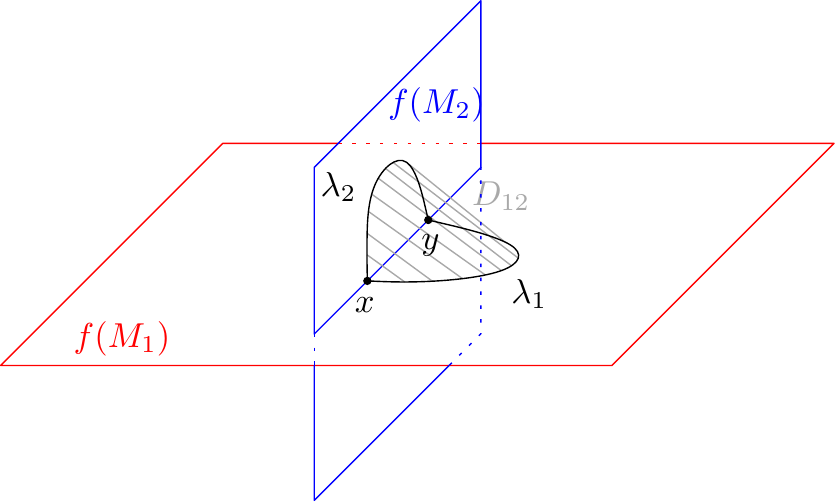}
\caption{The disk $D_{12}$ fills the circle $\lambda_1 \cup \lambda_2$.}
\label{fig_lambda_1_lambda_2}
\end{center}
\end{figure}

Repeating the same construction on each successive circle $\lambda_i \cup \lambda_{i+1}$, $1\leq i\leq r-1$, we get the sequence of filling disks
\[
D_{12}, D_{23}, \ldots, D_{(r-1)r}.
\]
By \eqref{eq:codimension-3}, we have $d\geq 3r \geq 6$, so by general position, the these filling disks are internally disjoint and their union is a disk $D$ with boundary $\lambda_1 \cup \lambda_r$.

We pick a regular neighborhood $B$ of $D$; this neighborhood is a $d$-dimensional PL ball. If we pick this neighborhood sufficiently small then $B$ intersects each image $f(M_i)$ in an $m_i$-dimensional PL ball $\sigma_i$ that is a regular neighborhood of $\lambda_i$, and we get Property~3 of the standard local situation since the images $f(M_i)$ intersect transversely at $x$ and at $y$.

Furthermore, if $L$ and $B'$ are as in the statement of the lemma, then we can choose the paths $\lambda_i$ and the disks $D_{i(i+1)}$ to be contained in $\interior B'$ and to avoid $L$, and hence the same holds for any sufficiently small regular neighborhood $B$ of $D$.
\end{proof}
\begin{remark}
If we apply the preceding lemma to a finite collection of pairwise disjoint pairs $\{x,y\}$ of $r$-fold points, then by general position, we can choose the resulting disks $D$, and hence the corresponding regular neighborhoods $B$ to be pairwise disjoint.
\end{remark}

Using Lemma~\ref{lem:localization}, Theorem~\ref{thm_whitney_trick_extended} reduces to the following:
\begin{proposition} 
\label{prop_reduced_whitney_tric}
Suppose that $B \subset \R^d$ and $\simplex_1,\ldots,\simplex_r \subset B$ form a standard local situation around two $r$-fold points 
$x,y \in \interior B$, and that the codimension condition \eqref{eq:codimension-3} is satisfied.

Suppose furthermore that $x$ and $y$ have opposite intersection sign, i.e., for some (and then every) choice of orientations of $\R^d$ and of the $\sigma_i$,
$$\sign_x(\sigma_1,\ldots,\sigma_r)=-\sign_y(\sigma_1,\ldots,\sigma_r).$$
Then there exist $r-1$ PL ambient isotopies 
$$H^2 , \ldots , H^r \colon B \times [0,1]\to B\times [0,1],$$
each  fixing $\boundary B$ pointwise, 
such that 
\[
\sigma_1 \cap H^2_1  (\sigma_2) \cap \cdots \cap H^r_1  (\sigma_r) = \emptyset.
\]
\end{proposition}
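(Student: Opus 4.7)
The plan is to argue by induction on $r$, with the base case $r=2$ being the classical PL Whitney trick (Theorem~\ref{thm:Whitney}) applied inside the ball $B$; note that in the standard local situation, $\sigma_1,\sigma_2$ are connected orientable PL balls, so the hypotheses of Theorem~\ref{thm:Whitney} are met. For the inductive step, I assume the statement holds for $r-1$ and follow the three-stage structure outlined in the paper.

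The first stage (piping, Section~\ref{subsec_piping}) rearranges the configuration so that each pairwise intersection $\sigma_1\cap\sigma_i$, $2\leq i\leq r$, becomes \emph{connected}. By Property~3 of the standard local situation, each $\sigma_1\cap\sigma_i$ consists of exactly two components, the properly embedded balls $B_{\{1,i\},x}$ and $B_{\{1,i\},y}$, of dimension $m_1+m_i-d$. To connect them, I would choose a PL arc $\gamma_i\subset\sigma_i$ running from $B_{\{1,i\},x}$ to $B_{\{1,i\},y}$ and, by general position in $\sigma_i$, make $\gamma_i$ disjoint from $\sigma_j\cap\sigma_i$ for all $j\neq 1,i$ (possible because each $\sigma_j\cap\sigma_i$ has codimension $d-m_j\geq 3$ in $\sigma_i$). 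Thickening $\gamma_i$ to a regular neighborhood inside $\sigma_i$ yields a ``pipe'' along which an ambient isotopy of $B$, supported in a small neighborhood of $\gamma_i$ disjoint from the other $\sigma_j$, performs a pair of complementary surgeries on $\sigma_i$ that merges $B_{\{1,i\},x}$ and $B_{\{1,i\},y}$ into a single connected PL ball, while leaving the other $\sigma_j$ (and the $r$-fold points $x,y$) untouched up to ambient isotopy.

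Once all the $\sigma_1\cap\sigma_i$ are connected, the second stage (Section~\ref{subsec_proof_whitney_extended}) restricts attention to the $m_1$-ball $\sigma_1$, viewed as the ambient manifold, and to the $r-1$ properly embedded connected PL submanifolds $\sigma_1\cap\sigma_2,\dots,\sigma_1\cap\sigma_r$. A short dimension count using $\sum_{i=1}^rm_i=d(r-1)$ shows that $\sum_{i=2}^r\dim(\sigma_1\cap\sigma_i)=(r-2)m_1$, which is exactly the critical dimension for the $(r-1)$-fold Whitney trick in $\sigma_1$, and the codimensions $m_1-(m_1+m_i-d)=d-m_i$ are all $\geq 3$. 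Moreover, by the restriction property of oriented intersections (Lemma~\ref{lem:prop-inters-prod}\,(c)), the $(r-1)$-fold intersection signs of $x$ and $y$ computed inside $\sigma_1$ coincide with their original $r$-fold signs in $\R^d$ and are therefore still opposite. Thus the inductive hypothesis applies inside $\sigma_1$ and yields $r-1$ PL ambient isotopies $\widetilde H^2,\dots,\widetilde H^r$ of $\sigma_1$, fixing $\partial\sigma_1$, whose combined effect removes $x$ and $y$ from the $(r-1)$-fold intersection in $\sigma_1$.

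It remains to lift each $\widetilde H^i$ to a PL ambient isotopy $H^i$ of $B$ that fixes $\partial B$ and realizes $\widetilde H^i_1$ on $\sigma_1$. Since $d-m_1\geq 3$, the PL ball pair $(B,\sigma_1)$ is unknotted (Zeeman), so $B$ is PL homeomorphic to the join $\sigma_1\ast S^{d-m_1-1}$; extending $\widetilde H^i$ by the identity on the $S^{d-m_1-1}$-factor and coning gives the desired ambient isotopy $H^i$ of $B$ fixing $\partial B$, leaving each $\sigma_j$ for $j\neq i$ ambient isotopic in such a way that $\sigma_1$ is preserved setwise. Combining all $H^i$ then gives $\sigma_1\cap H^2_1(\sigma_2)\cap\cdots\cap H^r_1(\sigma_r)=\emptyset$, as required. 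The main obstacle I expect is the piping stage: it needs to be verified carefully that the ambient isotopy performing the surgery on $\sigma_i$ neither creates new $r$-fold points with the other $\sigma_j$ nor destroys the opposite-sign condition at $x$ and $y$, and this is exactly where the codimension hypothesis $d-m_j\geq 3$ is essential to push the pipe through the complement of the other submanifolds.
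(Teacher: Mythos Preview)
Your overall plan matches the paper's: induct on $r$, make the pairwise intersections $\sigma_1\cap\sigma_i$ connected, restrict to $\sigma_1$ and use Lemma~\ref{lem:prop-inters-prod}(c) to preserve opposite signs, apply the case $r-1$, then extend the isotopies of $\sigma_1$ to $B$ via unknotting of $(B,\sigma_1)$. However, your account of the piping stage has a genuine gap.

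The arc along which one pipes must lie in $\sigma_1$, not in $\sigma_i$. In the paper one picks $\lambda_i\subset\sigma_1$ joining the two components of $\sigma_1\cap\sigma_i$ and runs a piping tube \emph{for $\sigma_i$} inside $B$ along $\lambda_i$, taking the tube transverse to $\sigma_1$; the resulting $\sigma_i^*$ then meets $\sigma_1$ in the connected sum $B_{\{1,i\},x}\# B_{\{1,i\},y}$, which is a cylinder $S^{m_1+m_i-d-1}\times[0,1]$, not a ball. If instead you take $\gamma_i\subset\sigma_i$ and thicken it inside $\sigma_i$, you have not changed $\sigma_i$ as a subset of $B$ at all, and an ambient isotopy of $B$ supported near $\gamma_i$ cannot connect the two components of $\sigma_1\cap\sigma_i$: away from its endpoints $\gamma_i$ need not be anywhere near $\sigma_1$. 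Relatedly, an ambient isotopy of $B$ cannot ``perform a pair of complementary surgeries on $\sigma_i$'', since each surgery changes the homeomorphism type. The correct logic is: pipe along $\lambda_i\subset\sigma_1$ to get $\sigma_i^*$ (no longer a ball, but $\sigma_1\cap\sigma_i^*$ connected), then perform the complementary $2$-surgery \emph{away from} $\sigma_1$ to get a ball $\sigma_i^{**}$ with $\sigma_1\cap\sigma_i^{**}=\sigma_1\cap\sigma_i^*$ unchanged, and only then invoke Zeeman's unknotting (Corollary~\ref{thm_Zeeman_proper_embeddings}) to realize $\sigma_i\mapsto\sigma_i^{**}$ by an ambient isotopy of $B$ rel $\partial B$. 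The asymmetry---piping along $\sigma_1$, unpiping off $\sigma_1$---is precisely what makes the intersection connected while keeping $\sigma_i^{**}$ a ball, and it is absent from your description.

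One smaller point: since after all this the $\sigma_1\cap\sigma_i^{**}$ are cylinders rather than balls, they do not directly form a standard local situation inside $\sigma_1$. Before invoking the inductive hypothesis (which is Proposition~\ref{prop_reduced_whitney_tric} for $r-1$), one must re-apply Lemma~\ref{lem:localization} inside $\sigma_1$ to pass to a smaller $m_1$-ball in which the restricted intersections are balls.
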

\begin{proof}[Proof of Theorem~\ref{thm_whitney_trick_extended} using Proposition~\ref{prop_reduced_whitney_tric}]

Using Lemma~\ref{lem:localization}, we show that if Proposition~\ref{prop_reduced_whitney_tric} holds for a given multiplicity $r\geq 2$, 
then so does Theorem~\ref{thm_whitney_trick_extended}.

Suppose the hypotheses of Theorem~\ref{thm_whitney_trick_extended} are satisfied. Apply Lemma~\ref{lem:localization} 
to get a PL $d$-ball $B$ disjoint from $L$ and such that $B$ and $\sigma_i:=B\cap f(M_i)$ form a standard local situation around
the pair $x,y$ of $r$-fold points in question. By assumption, these points have opposite signs (here, we use that intersection signs are determined locally, so that it does not matter whether we restrict $f(M_i)$ to its intersection with $B$). Let $H^2_t,\ldots,H^r_t\colon B \to B$ be the isotopies guaranteed by Proposition~\ref{prop_reduced_whitney_tric}. Since they are fixed pointwise on $\partial B$, we can extend each $H^i_t$ to an isotopy of $\R^d$ by letting it fix every point outside of $B$; slightly abusing notation, we denote the resulting isotopies by the same symbol. Then the intersection $f(M_1)\cap H^2_1(f(M_2))\cap \ldots \cap H^r_1(f(M_r))$ does not contain any points from $B$ (in particular, it does not contain $x$ or $y$), and it coincides with $f(M_1)\cap \ldots \cap f(M_r)$ outside of $\interior B$.
\end{proof}

\subsection{Restriction to \texorpdfstring{$\sigma_1$}{sigma1}, Piping and Unpiping} %
\label{subsec_piping}
\noindent To prove Proposition~\ref{prop_reduced_whitney_tric}, the idea is to restrict ourselves to  $\sigma_1$, and to consider 
$x$ and $y$ as $(r-1)$-fold intersection points of the pairwise intersections $\sigma_1\cap \sigma_2, \ldots, \sigma_1\cap \sigma_r$ 
inside the $m_1$-dimensional ball $\sigma_1$). The plan is to solve the situation inductively inside $\sigma_1$, and then to extend the solution, i.e., the resulting isotopies of $\sigma_1$ fixing $\partial \sigma_1$, to isotopies of $B$, using that $\sigma_1$ is unknotted in $B$.

Each $\sigma_1\cap \sigma_i$ is a PL manifold with boundary properly embedded in $\sigma_1$, of codimension 
$$m_1-\dim \sigma_1\cap \sigma_i =d-m_i\geq 3,\qquad 2\leq i\leq r.$$
We now fix orientations of $\sigma_1,\ldots,\sigma_r$ and of $B$ and consider the induced orientations on $ \sigma_1\cap \sigma_i$, $2\leq r$.
By Lemma~\ref{lem:prop-inters-prod}, 
$$\sign_x^{\sigma_1}( \sigma_1\cap \sigma_2,\ldots, \sigma_1\cap \sigma_r)=\sign_x^{B}(\sigma_1,\ldots, \sigma_r),$$
and likewise for $y$. Thus, with respect to the induced orientations, 
$x$ and $y$ have opposite intersection signs as $(r-1)$-fold intersection points of $\sigma_1\cap \sigma_2,\ldots, \sigma_1\cap \sigma_r$ in $\sigma_1$.

However, there is a caveat that prevents us from directly proceeding by induction:  The pairwise intersections  are \emph{not connected}; indeed, by the hypotheses of Proposition~\ref{prop_reduced_whitney_tric}, each $ \sigma_1\cap \sigma_i$ is the disjoint union of two PL balls
$B_{i,x} \ni x$ and $B_{i,y}\ni y$ of dimension $m_1+m_i-d$, $2\leq i\leq r$, see Figure~\ref{fig_intersection_on_sigma_1_before_piping}.
\begin{figure}[h]
\begin{center}
\includegraphics[scale=0.7]{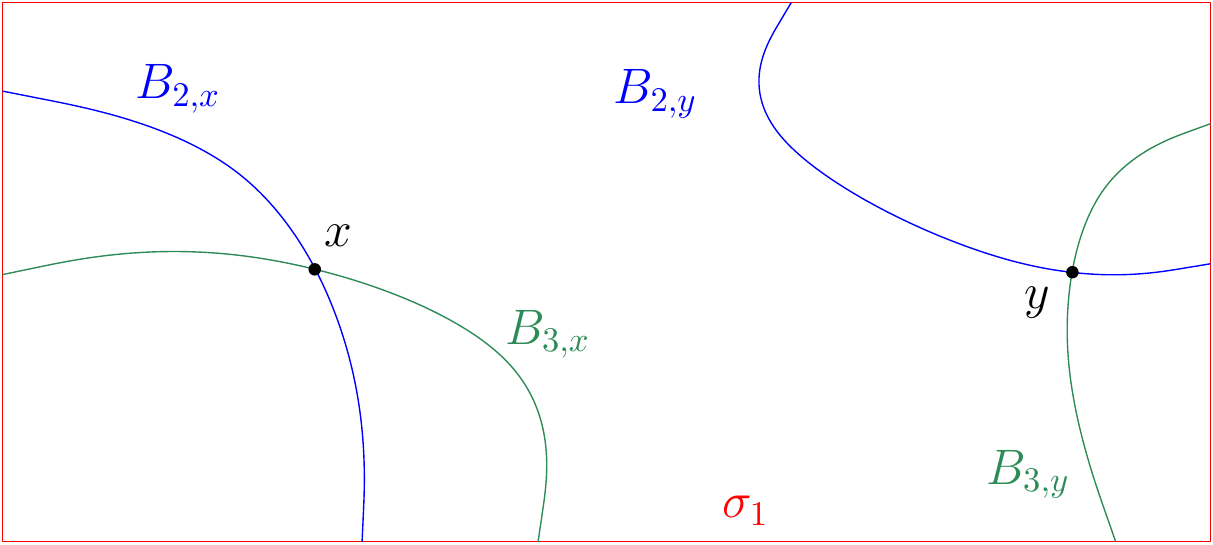}
\caption{The pairwise intersections $\sigma_1\cap \sigma_i$, $2\leq i\leq r$ are not connected.}
\label{fig_intersection_on_sigma_1_before_piping}
\end{center}
\end{figure}

Thus, the fact that $x$ and $y$ have opposite signs is no longer independent of the choice of orientations; indeed, 
if we revert the orientation on one of the components of $\sigma_1\cap \sigma_2$, say, then the signs become the same.
More importantly, in this situation there are simply no ambient isotopies $H^3_t,\ldots,H^r_t \colon \sigma_1\to \sigma$
fixing $\boundary \sigma_1$ that eliminate the intersection points. For example, in the case $r=3$ depicted in Figure~\ref{fig_intersection_on_sigma_1_before_piping}, the ball $B_{2,x}$ and the boundary $\boundary B_{3,x}$ are \emph{linked} in $\sigma_1$, i.e., for any homeomorphism fixing $\boundary \sigma_1$, we have $B_{2,x}\cap h(B_{3,x})\neq \emptyset$.

To remedy this shortcoming, we apply two operations, \emph{piping} and \emph{unpiping}, to be described presently, 
to the simplices $\sigma_2,\ldots,\sigma_r$ to force connectivity of the intersections $\sigma_1\cap \sigma_i$, $2\leq i\leq r$.
These operations correspond to a pair of \emph{complementary surgeries} (see below) performed on each $\sigma_i$, $2\leq i\leq r$. 
First, we perform a $1$-surgery on $\sigma_i$ to produce a manifold $\sigma_i^*$, and then we perform a complementary $2$-surgery on $\sigma^*$ to obtain a manifold $\sigma_i^{**}$ that is again an $m_i$-dimensional ball. Moreover, these surgeries are performed in an ambient way inside $B^d$, keeping the boundaries of the $\sigma_i$ and of $B^d$ fixed and not affecting the intersection points $x$ and $y$, such that $\sigma_1\cap \sigma_i^* = \sigma_1\cap \sigma_i^{**}$ is connected. We now describe this in more detail.

\paragraph{Surgeries and Handles.} Let $M$ be an $m$-dimensional PL manifold (possibly with boundary). Suppose that we have a
 PL embedding of $\alpha \colon S^{p-1} \hookrightarrow \interior M$, and that $\alpha$ can be extended to an embedding 
$\psi \colon  S^{p-1}\times B^{m-p+1} \hookrightarrow \interior M$, where we identify $S^{p-1}$ with $S^{p-1}\times 0 \subset S^{p-1}\times B^{m-p+1}$. Then we can use the fact that $\boundary (S^{p-1}\times B^{m-p+1}) = \boundary(B^p \times S^{m-p})=S^{p-1}\times S^{m-p}$,
remove the interior of the image $\psi(S^{p-1}\times B^{m-p+1})$ from $M$, and patch the resulting ``hole'' by attaching $B^p \times S^{m-p}$ via the attaching map $\psi|_{S^{p-1}\times S^{m-p}}\colon S^{p-1}\times S^{m-p} \to \interior M$, i.e., form the new manifold
\begin{equation*}
M':=M\setminus \textrm{int}\, \psi(S^{p-1}\times B^{m-p+1}) \cup_{\psi|_{S^{p-1}\times S^{m-p}}}  B^p \times S^{m-p}.
\end{equation*}
We refer to this operation as \define{attaching a hollow $p$-handle} $B^p \times S^{m-p}$ to $M$ or 
performing a \define{$p$-surgery} on $M$ along $\alpha$. (Note that this does not affect the boundary $\partial M$.)

If $M \subset \partial W$ is PL embedded on the boundary of an $(m+1)$-dimensional PL manifold $W$, then the operation just described corresponds to attaching a \define{solid $p$-handle}  $B^p\times B^{m-p+1}$ to $W$ to obtain a new $(m+1)$-manifold $W'$, as described in \cite[Chapter~6, p.74]{Rourke:Introduction-to-piecewise-linear-topology-1982} (where the embedded sphere $\alpha(S^{p-1})$ is called the \emph{$a$-sphere} of the solid 
$p$-handle). The $p$-surgery describes how $M$ and $\partial W$ change when attaching the $p$-handle to $W$.
We remark that our use of the adjectives \emph{hollow} and \emph{solid} is slightly nonstandard (in \cite[Chapter~6]{Rourke:Introduction-to-piecewise-linear-topology-1982}, solid handles are simply called handles).

Suppose now that after obtaining $M'$ from $M$ by a $p$-surgery along $\alpha$ as described above, we perform a $(p+1)$-surgery 
on $M'$ along an embedding $\beta\colon S^p \hookrightarrow \interior M'$ to obtain another manifold $M''$. 
We say that these two surgeries are \define{complementary} if the embedded spheres $\beta(S^p)$ and $\{0\} \times S^{m-p}$ in $M'$ are in general position and have algebraic intersection number $\pm 1$ (with respect to some arbitrarily chosen orientations); we call the sphere 
$\{0\} \times S^{m-p}$ the \define{cocore sphere} of the $p$-surgery. (This corresponds to complementarity of the solid $p$-handle attached to $W$ and the solid $(p+1)$-handle attached to $W'$, as described in \cite[Chapter~6, pp.~76--80]{Rourke:Introduction-to-piecewise-linear-topology-1982}, where the cocore sphere $\{0\} \times S^{m-p}$ is called the \emph{$b$-sphere}; it is the boundary of the cocore ball $\{0\} \times B^{m-p+1}$ of the solid $p$-handle attached to $W$.)

The main fact we will need is the following: 
\begin{lemma}
\label{lem:cancellation}
If $M''$ is obtained from $M$ by performing a $p$-surgery followed by a complementary $(p+1)$-surgery, then $M''$ and $M$ are PL homeomorphic.
\end{lemma}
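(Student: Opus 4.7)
The plan is to reinterpret the two surgeries as attachments of solid handles to the product $W := M \times [0,1]$. The $p$-surgery on $M \times \{1\}$ along $\psi$ produces $M'$ as one component of the boundary of the $(m+1)$-manifold $W_1 := W \cup_\psi (B^p \times B^{m-p+1})$ (with the other boundary component being $M \times \{0\}$), and similarly the complementary $(p+1)$-surgery along $\beta$ yields $M''$ as a boundary component of $W_2 := W_1 \cup_\beta (B^{p+1} \times B^{m-p})$. Thus the statement $M'' \cong M$ will follow from showing that $W_2$ is PL homeomorphic to $W$ keeping $M \times \{0\}$ fixed.

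This reduction brings us into the realm of the standard PL \emph{handle cancellation lemma} (see, e.g., \cite[Ch.~6]{Rourke:Introduction-to-piecewise-linear-topology-1982}), which asserts that a solid $p$-handle and a solid $(p+1)$-handle attached in sequence cancel, i.e., their combined attachment is PL homeomorphic to a collar, provided the $b$-sphere $\{0\} \times S^{m-p}$ of the $p$-handle and the $a$-sphere $\beta(S^p)$ of the $(p+1)$-handle meet transversely in $M'$ in \emph{exactly one geometric point}. Granting this, the proof is complete: the explicit local model for the cancelling pair of handles provides a PL homeomorphism of a regular neighborhood of $\beta(S^p) \cup (\{0\} \times S^{m-p})$ onto a standard piece that trivially collars, and this homeomorphism extends by the identity on the rest of $W_2$ to give $W_2 \cong W$.

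The remaining step, which I expect to be the main technical point, is to upgrade the hypothesis of algebraic intersection number $\pm 1$ to a single transverse geometric intersection by a preliminary ambient isotopy of $\beta$ inside the $m$-manifold $M'$. By general position we may assume $\beta(S^p)$ meets $\{0\} \times S^{m-p}$ transversely in a finite set of signed points whose algebraic sum is $\pm 1$, so they can be paired up into one unpaired point plus pairs of opposite-sign points. Each such opposite-sign pair can be eliminated by isotoping $\beta$: join the two points by an arc $\alpha$ along $\beta(S^p)$ and by an arc $\alpha'$ along $\{0\} \times S^{m-p}$ (both of these spheres being connected since $1 \leq p \leq m-1$), span $\alpha \cup \alpha'$ by a small PL disk $D$ in general position in $M'$, and use $D$ to guide a finger-move/Whitney-type isotopy of $\beta$ across $D$ that removes the two intersections.

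Once these cancellations have been carried out, the $a$-sphere and the $b$-sphere meet in a single transverse point, the handle cancellation lemma applies, and we conclude $W_2 \cong W$ rel $M \times \{0\}$, hence $M'' \cong M$.
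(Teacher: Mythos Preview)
Your overall strategy—viewing the two surgeries as handle attachments to $W = M \times [0,1]$ and invoking the handle cancellation lemma of \cite[Lemma~6.4]{Rourke:Introduction-to-piecewise-linear-topology-1982}—is exactly what the paper does; the paper's entire argument is this reduction together with the citation.

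Where your proof goes beyond the paper is in attempting to upgrade the hypothesis of \emph{algebraic} intersection number $\pm 1$ to a single \emph{geometric} transverse intersection via a Whitney-disk argument, and this step has a genuine gap. For the disk $D$ to exist at all you need the loop $\alpha \cup \alpha'$ to be null-homotopic in $M'$, which requires $\pi_1(M') = 0$; for $D$ to be embedded you need $m \geq 5$; and for $D$ to be internally disjoint from both $\beta(S^p)$ and $\{0\}\times S^{m-p}$ you need $p \geq 3$ and $m-p \geq 3$. None of these conditions are hypotheses of the lemma, and indeed the paper's only application is the case $p=1$ (piping/unpiping), where the Whitney trick as you describe it does not apply. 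The paper does not address this reduction either: \cite[Lemma~6.4]{Rourke:Introduction-to-piecewise-linear-topology-1982} is stated for handles whose $a$- and $b$-spheres meet in a single point, and in the Unpiping Lemma the circle $\beta_i$ is constructed explicitly so as to meet the cocore sphere exactly once, so the stronger geometric hypothesis is available directly and no Whitney reduction is needed.
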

This is essentially the cancellation lemma for handle theory \cite[Lemma~6.4]{Rourke:Introduction-to-piecewise-linear-topology-1982}, which states that if $W''$ is obtained from $W$ by attaching a $p$-handle and then a complementary $(p+1)$-handle, then there is a PL homeomorphism $W \cong W''$ that is the identity outside of a neighborhood of the two handles (so that it restricts to a PL homeomorphism $M\cong M''$).

\paragraph{Piping \cite[pp.~67--68]{Rourke:Introduction-to-piecewise-linear-topology-1982}.} Let $M_1$ and $M_2$ be two 
disjoint $m$-dimensional submanifolds of $B^d$, with $d-m \geq 3$. The \emph{piping} technique consists of forming a new submanifold $M_3$ homeomomorphic to the connected sum $M_1 \# M_2$ as follows \cite[p.~46]{Rourke:Introduction-to-piecewise-linear-topology-1982}: Pick two points $p_i\in M_i$, $i=1,2$, and choose a path $\lambda$ in $B^d$ that connects $p_1$ and $p_2$; by general position, we can assume that $\lambda$ is disjoint from the $M_i$ except at its endpoints and that $\lambda$ avoids any given obstacle (closed polyhedron) of codimension at least $2$. Remove the interiors of two small $m$-dimensional balls $B_1$ and $B_2$ around $p_1\in M_1$ and $p_2\in M_2$ and patch the resulting holes by a an embedded cylinder $Z\cong S^{m-1} \times [-1,+1]$ along $\lambda$, the \define{piping tube}, see Figure~\ref{fig_piping_trick}. Thus, $Z$ intersects  $M_1\cup M_2$ precisely in $\partial T=\partial B_1\cup \partial B^1$, and $M_3=(M_1\cup M_2) \setminus (\interior B_1 \cup \interior B_2) \cup Z$.
The sphere $S^{m-1}\times \{0\}\subset Z$ is the \define{cocore sphere} of the piping. If both $M_1$ and $M_2$ are oriented, then the piping can be performed in such a way that $M_3$ is oriented compatibly with both given orientations.
\begin{figure}[h]
\begin{center}
\includegraphics[scale=0.9]{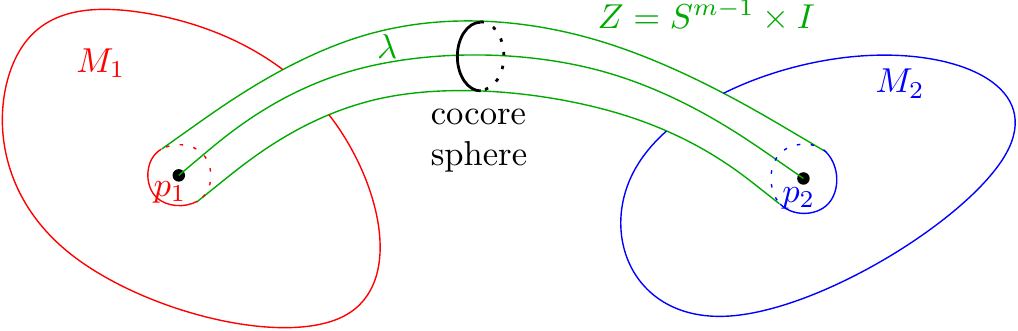}
\caption{Piping of two submanifolds.}
\label{fig_piping_trick}
\end{center}
\end{figure}

Somewhat more formally, the piping tube can be described as follows:
\begin{proposition}[{\cite[Proposition~5.10]{Rourke:Introduction-to-piecewise-linear-topology-1982}}]
\label{prop:piping}
Let $\lambda$ be as above. Let $(N,N_1,N_2)$ be a regular neighborhood of $\lambda$ in $(B^d,M_1,M_2)$.
Then there is a PL homeomorphism 
$$h\colon (N, N_1, N_2)\cong ([-1,+1]^{d-1}\times [-2,2], [-1,1]^{m} \times 0^{d-1-m} \times \{-1\} ,[-1,1]^m \times 0^{d-1-m} \times \{1\}),$$
and $h$ can be chosen to preserve any given orientations (for this, $d-m\geq 2$ would suffice). 
The piping tube can be taken to be $Z=\partial[-1,1]^m \times 0^{d-1-m} \times [-1,1]$.
\end{proposition}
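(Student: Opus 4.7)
My approach is to apply the regular neighborhood uniqueness theorem to normalize $N$, and then Zeeman's unknotting theorem for ball pairs to position $N_1$ and $N_2$ correctly inside $N$.

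First, since $\lambda$ is a PL arc in $B^d$ meeting $M_1 \cup M_2$ only at its two endpoints, it is collapsible, and by the regular neighborhood theorem $N$ is a PL $d$-ball. Uniqueness of regular neighborhoods of a collapsible subpolyhedron lets me fix a PL homeomorphism carrying $N$ to the standard cylinder $[-1,1]^{d-1} \times [-2,2]$ and $\lambda$ to $\{0\}^{d-1} \times [-1,1]$, so that $p_1$ and $p_2$ land at $(0,\dots,0,\mp 1)$. The slack regions $[-2,-1]$ and $[1,2]$ in the last coordinate ensure that the $p_i$ sit strictly in the interior of $N$ and, more crucially, that $\partial N_i$ lies in the interior of $N$ rather than on its boundary.

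Next I would bring each $N_i := M_i \cap N$ into the ``horizontal'' slice $\Pi_i := [-1,1]^{d-1} \times \{\mp 1\}$. Because $\lambda$ meets $M_i$ transversely at $p_i$ and $M_i$ is a locally flat PL submanifold, a local straightening argument lets me choose the cylinder identification so that $M_i$ is contained in $\Pi_i$ in a neighborhood of $p_i$; after shrinking $N$ if necessary this gives $N_i \subset \Pi_i$. Now $N_i$ is a PL $m$-ball properly embedded in the $(d-1)$-ball $\Pi_i$, a ball pair of codimension $d-1-m \geq 2$, and Zeeman's unknotting theorem for ball pairs (cited earlier in the excerpt) provides a PL homeomorphism of $\Pi_i$ taking $N_i$ to the standard disk $[-1,1]^m \times \{0\}^{d-1-m}$. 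Combining the two slice-level homeomorphisms with the identity on the complementary portion of the last coordinate, and smoothing across the collars, produces the desired triple homeomorphism $h$.

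For the orientation claim, if the $h$ obtained above reverses orientation on $N$ or on one of the $N_i$, I would postcompose with a reflection in one of the coordinates transverse to both $N_i$ within the slices $\Pi_i$; such a coordinate exists as soon as $d-1-m \geq 1$, i.e.\ $d-m\geq 2$, and the reflection preserves each $N_i$ setwise so the standard form is maintained. The main obstacle is the straightening step---arranging $N_i \subset \Pi_i$ rather than merely tangent to $\Pi_i$ at $p_i$. This requires aligning the cylinder direction $[-2,2]$ with the direction of $\lambda$ through $M_i$ at $p_i$, which uses PL local flatness of $M_i$ together with the freedom in choosing the regular neighborhood $N$; once this alignment is achieved, the remainder of the argument is a routine assembly of the two ends via the cylinder's product structure.
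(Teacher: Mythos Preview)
The paper does not give its own proof of this proposition; it is cited directly from Rourke--Sanderson. So there is no paper proof to compare against, only the question of whether your argument stands on its own.

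Your overall strategy is reasonable, but there is a genuine gap at the unknotting step. You invoke Zeeman's unknotting theorem for the ball pair $(\Pi_i, N_i)$, which has codimension $(d-1)-m = d-m-1$. Under the standing hypothesis $d-m\geq 3$ this codimension is only $\geq 2$, and Zeeman's theorem (as stated in the paper's preliminaries) requires codimension at least $3$. In codimension~$2$ there \emph{are} knotted PL ball pairs, so this step does not go through as written. Worse, the proposition is meant to hold (at least for the orientation clause) already when $d-m\geq 2$, where your slice codimension drops to~$1$.

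The fix is to avoid reducing to a slice-level unknotting problem altogether. The point is that $(N,N_i)$ is a regular neighborhood of the pair $(\lambda,\{p_i\})$ in $(B^d,M_i)$, and $M_i$ is locally flat at $p_i$. Local flatness gives a chart near $p_i$ in which $(B^d,M_i)$ is the standard pair $(\R^d,\R^m)$; computing a regular neighborhood of $\lambda$ in that standard chart exhibits the model triple directly, and uniqueness of regular neighborhoods (for pairs) transports this back to the original situation. Your ``local straightening'' paragraph is already heading in this direction; if carried out fully it \emph{produces} the standard $N_i$ inside $\Pi_i$ rather than leaving a residual unknotting problem, so the subsequent appeal to Zeeman is both unnecessary and, in the critical codimension, invalid.
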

If $M_1$ and $M_2$ are submanifolds of an $m$-manifold $M$, then piping corresponds to performing a $1$-surgery on $M$, in an ambient way inside $B^d$, with the hollow $1$-handle embedded as the piping tube. If $M$ is oriented, we use that the piping tube can be given an orientation compatible with that of $M$ at both ends, so that the resulting manifold $M'$ is again orientable.

Moreover, the piping tube is \emph{unique} up to ambient isotopy of $B^d$ fixed on $M \cup \partial B^d$, in the following sense \cite[Exercise, p.~68]{Rourke:Introduction-to-piecewise-linear-topology-1982}: Consider two PL paths $\lambda$ and $\lambda'$ in general position with endpoints $p_1$ and $p_2$ (and otherwise disjoint from $M$). By general position, using $d-m\geq 3$, there is an isotopy $F$ between $\lambda \cup M \subset B^d$ and $\lambda' \cup M \subset B^d$, fixed on $M$ and such that $F^{-1}(\partial Q\times [0,1])=\partial M\times [0,1]$ (so $F$ is allowable, see Section~\ref{sec:PLbackground}). By Proposition~\ref{prop:Hudson-polyhedra}, there is an ambient isotopy $H$ of $B^d$, fixed on $M\cup \partial Q$, such that $H_1(\lambda)=\lambda'$. Thus, by the uniqueness of regular neighborhoods up to ambient isotopy, any piping tube along $\lambda$ is ambient isotopic to any piping tube along $\lambda'$.

\paragraph{Piping simultaneously in $\boldsymbol{\sigma_1}$ and in $\boldsymbol{B^d}$.} We now apply this to each $\sigma_i$, $2\leq i \leq r$ to make 
the pairwise intersections
\[
\sigma_1 \cap \sigma_i \iso B_{i,x} \sqcup B_{i,y}.
\]
connected: For each $i$, $2\leq i\leq r$, we pick two points $b_{i,x} \in B_{i,x}$ and $b_{i,y} \in B_{i,y}$ and not contained in any
other $\sigma_j$, $j\not\in \{1,i\}$. We connect $b_{i,x}$ and $b_{i,y}$ by a path $\lambda_i$ in $\sigma_1$; by general position, we may assume that $\lambda_i$ avoids $\sigma_1 \cap \sigma_j$, $j \not \in\{1, i\}$. We now perform an ambient $1$-surgery on $\sigma_1$, i.e., we run a piping tube from $\sigma_i$ to itself along $\lambda_i$, in an orientation-compatible way, as described above. We denote the resulting piped $m_i$-manifold by $\sigma_i^*$, see Figure~\ref{fig_sigma_i_piped}. 
\begin{figure}[h]
\begin{center}
\includegraphics[scale=1]{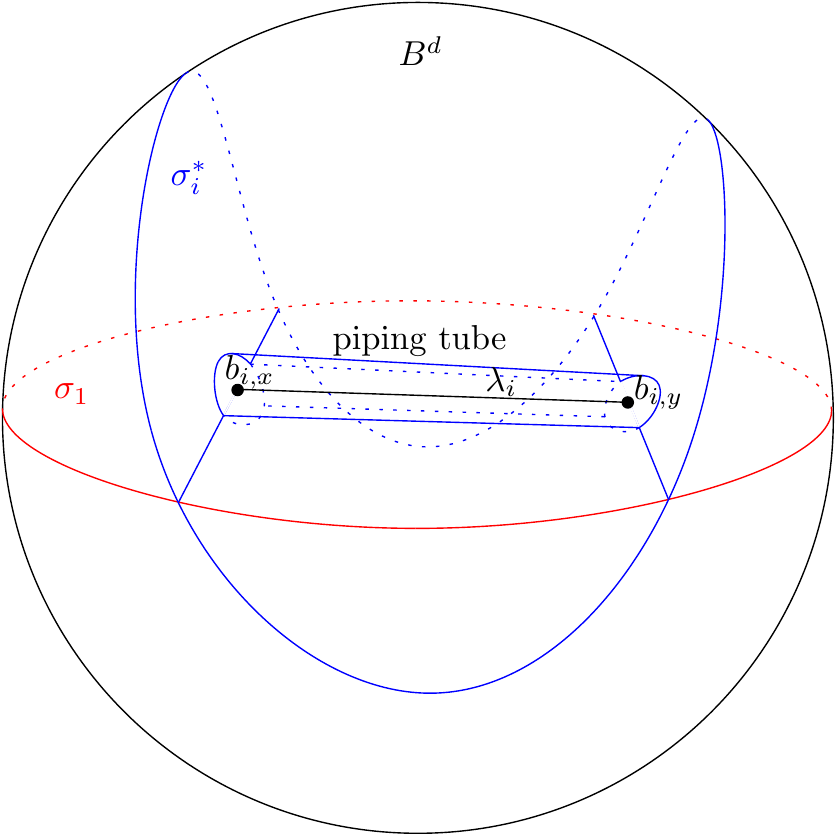}
\caption{$\sigma_i$ is piped along $\lambda_i \subset \sigma_1$, forming $\sigma_i^*$.}
\label{fig_sigma_i_piped}
\end{center}
\end{figure}

Moreover, $\sigma_1$ is unknotted in $B^d$, i.e., up to a homeomorphism of $B^d$, $\sigma_1$ is embedded as a coordinate $m_1$-ball. 
Therefore, we can take the piping tube to be transverse to $\sigma_1$. Then $\sigma_i^\ast$ is still transverse to $\sigma_1$, and 
the intersection $\sigma_1\cap \sigma_i^*$ is a piping of the two components $B_{i,x}$ and $B_{i,y}$ of $\sigma_1\cap \sigma_i$,
see Figure~\ref{fig_intersection_on_sigma_1_after_piping}). 
\begin{figure}
\begin{center}
\includegraphics[scale=0.7]{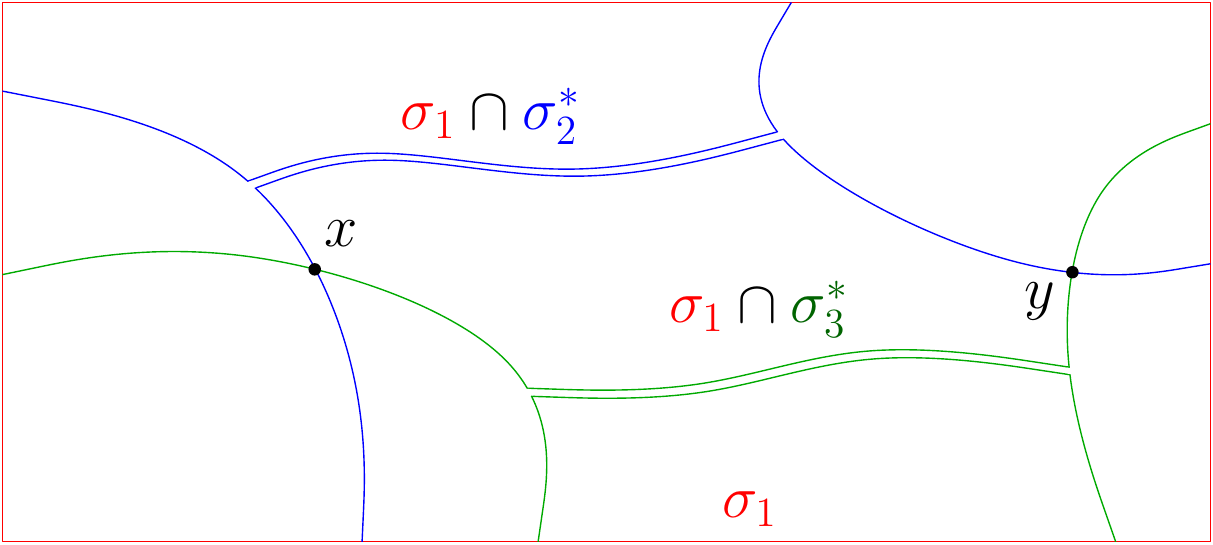}
\caption{The ``piped'' surfaces $\sigma_2^*$ and $\sigma_3^*$ intersected with  $\sigma_1$.}
\label{fig_intersection_on_sigma_1_after_piping}
\end{center}
\end{figure}
Since orientations are preserved by the pipings, $x$ and $y$ have opposite signs as $(r-1)$-fold intersections points of the connected oriented
manifolds $\sigma_1 \cap \sigma_2^*, \ldots , \sigma_1 \cap \sigma_r^*$ inside $\sigma_1$.


\paragraph{Unpiping in $\boldsymbol{B^d}$.} As explained above, piping $\sigma_i$ corresponds to performing a $1$-surgery on $\sigma_1$,
in an ambient way inside $B^d$. In this way, we obtained a submanifold $\sigma_i^*$, with the same boundary as $\sigma_i$, such that 
$\sigma_i^* \cap \sigma_1$ is connected. However, $\sigma_i^*$ is not homeomorphic to an $m_i$-ball, so in particular, there is no isotopy of $B^d$ that transforms $\sigma_i$ into $\sigma_i^*$. 

We now describe how to amend this by performing a complementary ambient $2$-surgery on $\sigma_i^*$, which we call \define{unpiping}, 
such that the resulting manifold $\sigma_i^{**}$ is again an $m_i$-ball and such that $\sigma_1 \cap \sigma_i^{**} =\sigma_1\cap \sigma_i^*$ does not change (hence stays connected). The basic idea is shown in Figure~\ref{fig_sigma_i_unpiped}.
\begin{figure}[h]
\begin{center}
\includegraphics[scale=1]{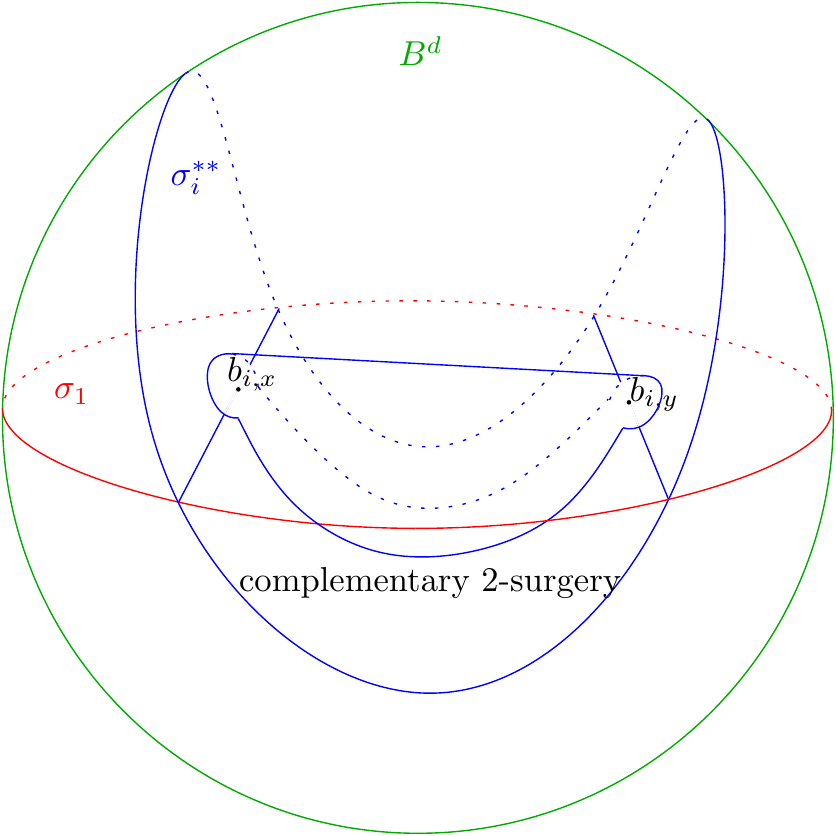}
\caption{A $1$-surgery can be cancelled by a complementary $2$-surgery, both ambient.}
\label{fig_sigma_i_unpiped}
\end{center}
\end{figure}

\begin{lemma}[\textbf{Unpiping Lemma}]
\label{lem:unpiping}
For each $i$, $2\leq i\leq r$, there is an ambient isotopy $\t{H}^i$ of $B^d$ fixed on $\partial B^d$ such that 
$\sigma_i^{**}:=\t{H}^i_1(\sigma_i)$ satisfies $\sigma_1 \cap \sigma_i^{**} =\sigma_1\cap \sigma_i^*$ and
$\sigma_i^{**} \cap \sigma_j^{**} = \sigma_i\cap \sigma_j$, $2\leq i<j\leq r$.
\end{lemma}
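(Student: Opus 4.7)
The plan is to construct $\sigma_i^{**}$ by performing an ambient complementary $2$-surgery on $\sigma_i^*$ inside a small regular neighborhood $V_i$ of the piping tube $Z_i$ and a suitable cocore disk $D_i$, invoking the cancellation lemma (Lemma~\ref{lem:cancellation}) to ensure the result is an $m_i$-ball, and then using unknotting of proper PL balls in codimension at least three (Corollary~\ref{thm_Zeeman_proper_embeddings}) to realize the transition from $\sigma_i$ to $\sigma_i^{**}$ as the desired ambient isotopy $\widetilde{H}^i$ of $B^d$.

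To construct the cocore, I would first build an attaching circle $\beta_i \subset \sigma_i^*$ as the concatenation of a \emph{longitudinal} arc $\alpha_i \subset Z_i$ running from $\partial M_1$ to $\partial M_2$ along a single generator of the product structure $Z_i \cong S^{m_i-1} \times [-1,1]$ (so that $\alpha_i$ meets the cocore sphere $S^{m_i-1} \times \{0\}$ of the piping $1$-surgery transversely in exactly one point, ensuring complementarity) with a \emph{return} arc $\mu_i \subset \sigma_i \setminus (\interior{M_1} \cup \interior{M_2})$ joining the endpoints of $\alpha_i$; the return arc exists since $\sigma_i$ is a connected ball and removing two open balls preserves connectedness. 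Using the codimension hypothesis $d-m_j \geq 3$, general position allows both arcs to be chosen disjoint from $\sigma_1$ (picking the endpoints of $\alpha_i$ on $\partial M_1 \cup \partial M_2$ outside $\sigma_1$, and using $\dim \mu_i + \dim(\sigma_i\cap\sigma_1) - m_i \le -1$) and from every $\sigma_j$ with $j \neq i$. Next, I would find an embedded PL $2$-disk $D_i \subset B^d$ with $\partial D_i = \beta_i$ whose interior is disjoint from $\sigma_1$, from each $\sigma_j$ ($j \neq i$), and from $\sigma_i^* \setminus \beta_i$: such a disk exists because $B^d$ is simply connected and a dimension count shows $2 + m_j - d \leq -1$ whenever $m_j \leq d-3$, so general position makes the filling disk avoid each $\sigma_j$. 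For different $i$, the tubes and disks can be chosen to have pairwise disjoint regular neighborhoods $V_i$, each avoiding $\sigma_j$ for $j \notin \{1,i\}$.

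Inside each $V_i$, I would perform the ambient $2$-surgery on $\sigma_i^*$ guided by $D_i$: remove a tubular neighborhood of $\beta_i$ in $\sigma_i^*$ and attach a hollow $2$-handle $B^2 \times S^{m_i-2}$ with core $D_i$, producing a submanifold $\widehat{\sigma}_i^{**} \subset B^d$. By Lemma~\ref{lem:cancellation} applied to the complementary $1$- and $2$-surgeries, $\widehat{\sigma}_i^{**}$ is PL homeomorphic to $\sigma_i$, hence a properly embedded PL $m_i$-ball sharing its boundary $\partial \sigma_i \subset \partial B^d$ with $\sigma_i$. Since $d-m_i \geq 3$, Corollary~\ref{thm_Zeeman_proper_embeddings} provides an ambient PL isotopy $\widetilde{H}^i$ of $B^d$ fixed on $\partial B^d$ with $\widetilde{H}^i_1(\sigma_i) = \widehat{\sigma}_i^{**}$; set $\sigma_i^{**} := \widehat{\sigma}_i^{**}$. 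The intersection conditions then hold by construction: $\sigma_1 \cap \sigma_i^{**} = \sigma_1 \cap \sigma_i^*$ because $D_i$ and its tubular neighborhood are disjoint from $\sigma_1$, so the $2$-surgery preserves the (connected, piped) intersection with $\sigma_1$; and $\sigma_i^{**} \cap \sigma_j^{**} = \sigma_i \cap \sigma_j$ for $2 \leq i < j \leq r$ because $\widehat{\sigma}_i^{**}$ can be arranged to agree with $\sigma_i$ outside $V_i$, $\widehat{\sigma}_j^{**}$ agrees with $\sigma_j$ outside the disjoint $V_j$, and each $V_\ell$ is disjoint from $\sigma_m$ for $m \notin \{1,\ell\}$. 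The main technical obstacle is coordinating the general-position requirements simultaneously for all $i$ and verifying carefully that the ambient $2$-surgery preserves the intersection pattern with $\sigma_1$ and with the other $\sigma_j$ as claimed.
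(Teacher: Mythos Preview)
Your overall strategy matches the paper's: build $\sigma_i^{**}$ from $\sigma_i^*$ by an ambient complementary $2$-surgery along a circle $\beta_i$ meeting the cocore sphere once, use Lemma~\ref{lem:cancellation} to conclude $\sigma_i^{**}$ is again an $m_i$-ball, and then invoke Corollary~\ref{thm_Zeeman_proper_embeddings} to obtain the ambient isotopy $\widetilde H^i$. Your general-position arguments for keeping $\beta_i$, $D_i$, and the neighborhoods $V_i$ disjoint from $\sigma_1$ and the other $\sigma_j$ are also along the right lines.

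The gap is in the step ``perform the ambient $2$-surgery guided by $D_i$.'' Having an embedded $2$-disk $D_i$ with $\partial D_i=\beta_i$ and $\interior D_i\cap\sigma_i^*=\emptyset$ is not by itself enough: you must produce an \emph{embedded} hollow $2$-handle $B^2\times S^{m_i-2}\subset B^d$ whose core is $D_i$ and whose attaching region $\partial B^2\times B^{m_i-1}$ matches a product neighborhood of $\beta_i$ in $\sigma_i^*$. This requires a normal $B^{d-2}$-bundle structure for $D_i$ in $B^d$ compatible with the framing of $\beta_i$ in $\sigma_i^*$, and in the PL category that is not automatic. The paper handles this by an elementary ``standard position'' trick: since $\sigma_i$ is unknotted in $B^d$ and all pipings of $\sigma_i$ are ambient isotopic keeping $\sigma_i$ fixed, one may assume $\sigma_i^*$ is a standard piped coordinate ball; in that model the circle $\beta_i$, the disk $D_i$, and the embedded hollow handle can be written down explicitly in coordinates, and then $\beta_i$ and $D_i$ are perturbed (parallel translation inside the product neighborhood, coning for $D_i$) to avoid $\sigma_1$ and the other $\sigma_j^*,D_j$. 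The paper explicitly remarks that your more direct route---choose $D_i$ in general position and then thicken---does work, but it relies on the existence of a PL normal disk bundle for $D_i$ (Haefliger--Wall), which the authors deliberately avoid. So either supply that citation, or reproduce the standard-position argument to make the handle construction rigorous.
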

\begin{proof}
We need to achieve three things:
\begin{enumerate}
\item First, if we think of $\sigma_i$ and $\sigma_i^*$ as abstract (non-embedded) PL-manifolds, with $\sigma_i^*$ obtained from $\sigma_i$ by a $1$-surgery, then in order to be able to perform a complementary $2$-surgery on $\sigma_i^*$ and obtain an $m_i$-ball $\sigma_i^{**}$, we need an embedded circle $\beta_i$ in $\sigma_i^*$ that intersects the cocore circle of the $1$-surgery exactly once and such that that a small neighborhood of $\beta_i$ in $\sigma_i^*$ is PL homeomorphic to $S^1 \times B^{m_i-1}$. 
\item Moreover, in our situation, $\sigma_i^*$ is an embedded submanifold of $B^d$ and we want to perform the $2$-surgery \emph{ambiently} in $B^d$, i.e., we want to attach a hollow $2$-handle \emph{embedded} in $B^d$ and internally disjoint from $\sigma_i^*$ to get 
$\sigma_i^{**}$ embedded as well.
\item Furthermore, we want to avoid introducing new intersections, so we want the embedded hollow $2$-handle for $\sigma_i^*$ to be disjoint from $\sigma_1$ and $\sigma_j^*$, $j\neq i$, and the handles to be disjoint from each other. In order to do this, we will show that, for each $i=2,\ldots,r$, there is a $2$-dimensional disk $D_i$ in general position with boundary $\beta_i$ such that we can choose the hollow $2$-handle for $\sigma_i^*$ to lie in a small regular neighborhood of $D_i$ in $B^d$. Then, by general position, $D_i$ is disjoint from $\sigma_1$, and from $\sigma_j^*$ and $D_j$, $j\neq i$, so the same holds for any sufficiently small neighborhood of $D_i$, and hence for the hollow $2$-handles.
\end{enumerate}

We now make this more precise.

Let us first see that we can achieve the first two goals. We use the fact that $\sigma_i$ is unknotted in $B^d$, i.e., 
up to a PL self-homeomorphism of $B^d$, $\sigma_i$ is a standard coordinate $m_i$-ball embedded in $B^d$. Next, 
all possible pipings of $\sigma_i$ are ambient isotopic keeping $\sigma_i$ 
fixed. Thus, we may assume that $\sigma_i^*$ is a ``standard'' piped $m_i$-ball in $B^d$, see Figure~\ref{fig_standard_piping}.
In this ``standard'' situation, it is clear that we can find the desired $\beta$ and that the ambient  $2$-surgery can be performed
such that the hollow $2$-handle lies in a small neighborhood of a ``standard'' $2$-dimensional disk $D_i$ with $\partial D_i=\beta_i$.
\begin{figure}[h]
\begin{center}
\includegraphics[scale=1]{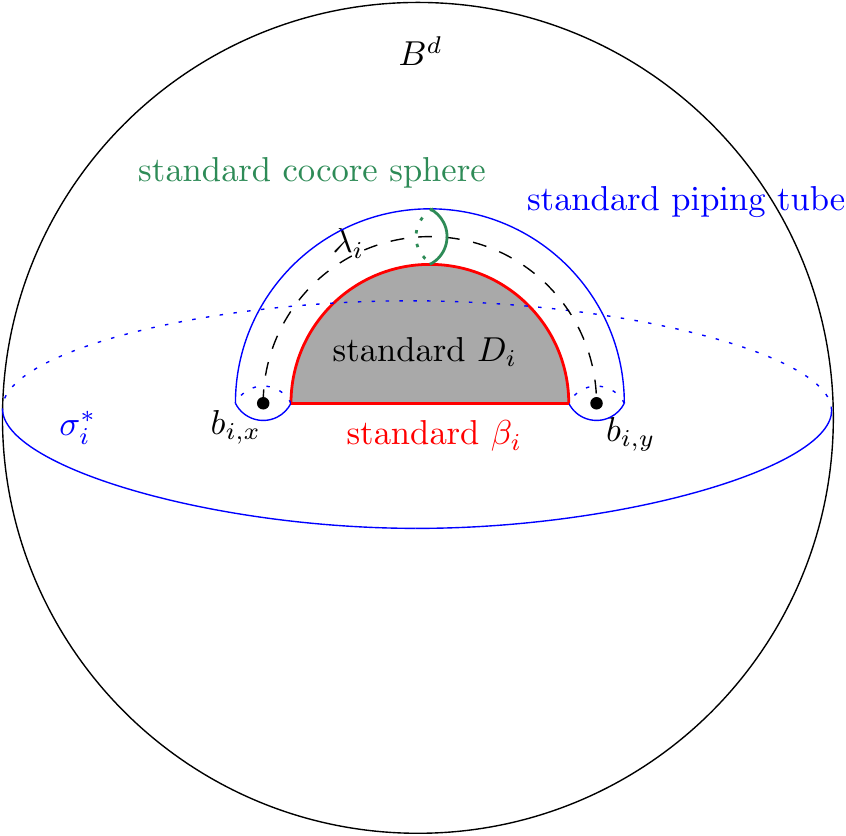}
\caption{A standard piped $\sigma_i^*$.}
\label{fig_standard_piping}
\end{center}
\end{figure}
More precisely, in this standard situation, we can find a small regular neighborhood $N$ of $D_i$ in $B^d$ and
a PL homeomorphism  
$$h\colon N \cong [-2,2]^2 \times [-1,1]^{d-2}$$ 
such that $h(D_i)=[-1,1]^2 \times 0^{d-2}$ and $h(N\cap \sigma_i^*)=\partial[-1,1]^2 \times [-1,1]^{m_i-1} \times 0^{d-m_i}$.

We do not control how the self-homeomorphism of $B^d$ and the ambient isotopy that we apply to get $\sigma_i^*$ into standard position affect $\sigma_1$ or the other $\sigma_j^*$ and $D_j$, $j\neq i$, and a priori they may intersect $N$. However, we know that each of them is of codimension at least $3$ in $B^d$ (and hence in $N$) and intersects $\sigma_i^*$ transversely in a submanifold of dimension at most $m_i-3$. Thus, up to a small ``parallel perturbation'' of $\beta_i$ in $\sigma_i^\ast$ corresponding to a parallel translation of $h(\beta_i)=\partial[-1,1]^2 \times 0^{d-2}$ by a random vector in $0^2 \times (-\delta,\delta)^{m_i-1} \times 0^{d-1-m_i}$ for some small $\delta>0$, we may assume that $\beta_i$ is disjoint from $\sigma_1\cap \sigma_i^*$ and from $\sigma_i^* \cap \sigma_j^*$, $1\leq j\leq r$, $j\neq i$. Similarly, up to a small perturbation of of $D_i$ inside $N$ and keeping $\beta_i$ fixed, we may assume that 
$D_i$ disjoint from $\sigma_1$ and $\sigma_j^*$ and $D_j$, $j\neq i$ (e.g., we can think of $D_i$ as a cone over $\beta_i$ and slightly perturb the apex of the cone, if necessary). Then we can take the hollow $2$-handle to be the preimage under $h$ of 
$$[-1,1]^2 \times \partial[-\varepsilon,\varepsilon]^{m_i-1} \times 0^{d-m_i},$$
which is disjoint from $\sigma_1$ as well as $\sigma_j^*$ and $D_j$, and hence from $\sigma_j^{**}$, $j\neq i$, for $\varepsilon>0$ sufficiently small.

Finally, $\sigma_i$ and $\sigma_i^{**}$ are $m_i$-dimensional PL balls properly embedded in $B^d$, $d-m_i\geq 3$, with $\partial \sigma_i =
\partial \sigma_i^{**}$, $2\leq i\leq r$. Thus, by the relative version of Zeeman's Unknotting Theorem (Corollary~\ref{thm_Zeeman_proper_embeddings}), for each $i$ there is an ambient isotopy $\t{H}^i$ of $B^d$ such that $\t{H}^i(\sigma_i)=\sigma_i^{**}$.
\end{proof}
\begin{remark}
Instead of using the above somewhat ad-hoc elementary argument to show that we can perform the ambient $2$-surgery, we could simply choose the disks $D_i$, $2\leq i\leq r$, in general position and then construct the required embedded hollow $2$-handles using the fact that each $D_i$ has a normal disk bundle in $B^d$ by \cite[Corollary~4.2]{Haefliger:Piecewise-linear-bundles-in-the-stable-range-1965}. However, we prefer to avoid using PL (micro)bundles in the present paper.
\end{remark}

\subsection{Proof of the Higher-Multiplicity Whitney Trick} 
\label{subsec_proof_whitney_extended}

As shown above, it suffices to prove Proposition~\ref{prop_reduced_whitney_tric}.

\begin{proof}[Proof of Proposition~\ref{prop_reduced_whitney_tric}]
As mentioned before, we proceed by induction on $r$, and the base case $r=2$ is the PL version of the Whitney Trick (see, e.g., Weber~\cite{Weber}). Thus, we may assume that $r\geq 3$ and that Proposition~\ref{prop_reduced_whitney_tric} holds for multiplicity
$r-1$. 

As described in Section~\ref{subsec_piping}, we pipe and then unpipe each of $\sigma_2, \ldots , \sigma_r$ to form $\sigma_2^{**}$, $\ldots$, $\sigma_r^{**}$. Each $\sigma_i^{**}$ is a PL ball of dimension $m_i$, each pairwise intersection 
$\sigma_1\cap \sigma_i^{**}=\sigma_1\cap \sigma_i^*$ is a PL cylinder $S^{m-1}\cong [0,1]$ properly embedded into $\sigma_1$ and of codimension $m_1-\dim(\sigma_1\cap \sigma_i^{**})=d-m_i\geq 3$. Moreover, these cylinders intersect 
inside $\sigma_1$ in two $(r-1)$-fold intersection points of opposite sign,
$$\{x,y\}= (\sigma_1\cap \sigma_2^{**}) \cap \ldots \cap (\sigma_1\cap \sigma_r^{**}).$$

Since each $\sigma_1\cap \sigma_i^{**}$ is connected, by Lemma~\ref{lem:localization}, there is an $m_i$-dimensional ball
$B^{m_1}\subseteq \interior \sigma_1$ such that $B^{m_1}$ and $\sigma_1\cap \sigma_i^{**}\cap B^{m_1}=\sigma_i^{**}\cap B^{m_1}$, $2\leq i\leq r$, form a standard local situation around $x$ and $y$. By induction, there are ambient isotopies $\widehat{H}^i$ of $B^{m_1}$, fixed on $\partial  B^{m_1}$, $3\leq i\leq r$, which we can view as ambient isotopies of $\sigma_1$ fixed outside of $\interior B^{m_1}$, such that 
$$\sigma_1\cap \sigma_2^{**} \cap \widehat{H}_1^3(\sigma_1\cap \sigma_3^{**})\cap \ldots \cap \widehat{H}_1^r(\sigma_1\cap \sigma_r^{**})=\emptyset.$$
Since $\sigma_1$ is unknotted in $B^d$, i.e., $B^d\cong \sigma_1\ast S^{d-1-m_1}$, we can extend the $\widehat{H}^r$ to ambient isotopies of $B^d$, fixed on $\partial B^d$, which by some abuse of notation, we will denote by the same symbol. These ambient isotopies of $B^d$ satisfy $\widehat{H}^i_t(\sigma_1)=\sigma_1$ for all $t\in [0,1]$ and hence 
$$\sigma_1\cap \sigma_2^{**} \cap  \widehat{H}_1^3(\sigma_3^{**})\cap \ldots \cap \widehat{H}_1^r(\sigma_1\cap \sigma_r^{**})=\emptyset.$$
Let $\t{H}^i$ be the ambient isotopy of $B^d$ constructed in Lemma~\ref{lem:unpiping}, i.e., $\t{H}_1(\sigma_i)=\sigma_i^{**}$, $2\leq i\leq r$. Let $H^i$ be the composition of $\widehat{H}^i$ and $\t{H}^i$, $3\leq i\leq r$, and set $H^2:=\t{H}^2$. Then each $H^i$ is an ambient isotopy of $B^d$ fixed on $\partial B^d$, and  
$$\sigma_1\cap H^2_1(\sigma_2) \cap  H_1^3(\sigma_3)\cap \ldots \cap H_1^r(\sigma_r)=\emptyset,$$
as desired.%
%
%
%
%
%
\end{proof}

To complete this section, we present the missing 
\begin{proof}[Proof of Proposition~\ref{prop_r_intersection_manifold}]
We adapt the proof that a $k$-manifolds can be embeeded into $\R^{2k}$  \cite[Thm~5.5 p.~63]{Rourke:Introduction-to-piecewise-linear-topology-1982}.
Let $f : M \rightarrow \R^{rk}$ be a general position PL map.  Let $x_1, \ldots, x_r$ be $r$ distinct points of $M$ mapped by $f$ to the same point $y\in \R^{rk}$. 

In $M$, we draw a path $\lambda$ passing through $x_1, \ldots, x_r$ and avoiding all other preimages of $r$-fold points. The image $f(\lambda)$ is a $1$-dimensional polyhedron in $\R^{rk}$. Let us pick a generic point $p\in R^{rk}$, and consider the cone $C_{f( \lambda)}$ obtained by joining $p$ to $f(\lambda)$. By general position this cone is a collapsible $2$-ployhedron intersecting $f(M)$ only in $f (\lambda)$. 

Next, we take a regular neighbordhood $N$ of $C_{f( \lambda)}$ in $\R^{rk}$.  Since $C_{f(\lambda)}$ is collapsible, $N$ is a ball. Furthermore, if $N$ is constructed on a sufficiently fine triangulation of $\R^{rk}$, then $f^{-1} (N)$ is a regular neighbordhood of $\lambda$, and hence an $m$-dimensional PL ball $B$ in $M$ containing $\lambda$ in its interior. 

Note that $f : \boundary B  \rightarrow \boundary N$ is a PL map without $r$-intersection point. We redefine $f$ on the interior of $B$ by using the cone construction \cite[Ex~1.6.(3) p.~5]{Rourke:Introduction-to-piecewise-linear-topology-1982}: $B$ can be represented as a cone  $\boundary B * v $, where $v$ is  an interior point of $B$, and the same is true for $N$. Hence, we can extend $f$ linearly from $\boundary B$:  we choose an image for $v$ in the interior of $N$ and extend linearly. By construction, redefining $f$ on the interior of $B$ in this way removes the $r$-intersection point $y$ without introducting any new ones. 
\end{proof}

\section{The Deleted Product Criterion for Tverberg Points}
\label{sec:DeletedProductCriterionTverberg}
\label{sec_tverberg_points}

In this section we prove Theorem~\ref{thm:VK-Tverberg-complete}. The proof is subdivided into three steps as follows (the necessary definitions will be given in the corresponding subsections):
\begin{enumerate}
\item[\ref{sec_equivariant_obs_theory}]
If $K$ is an $m$-dimensional simplicial complex, $m=(r-1)k$ and $d=rk$, $k\geq 1$ (more generally, if $\dim \delprod{K}{r}=d(r-1)$, then there exists a primary equivariant obstruction 
$\vko{K}{r} \in Z^{d(r-1)}_{\sym_r}(\delprod{K}{r};\calZ)$, the \define{generalized Van Kampen obstruction}, such that there exists an equivariant map $F\colon \delprod{K}{r}\to_{\sym_r} S^{d(r-1)-1}$ if and only if $\vko{K}{r}=0$. Moreover, if $f\colon K\to \R^d$ is any PL map in general position, then the obstruction can be represented by an \define{intersection number cocycle} $\vko{K}{r}=[\cocyc_f]$, where
$$\cocyc_{f}(\sigma_1\times \ldots \times \sigma_r)=\pm f(\sigma_1)\scap \ldots \scap f(\sigma_r).$$
\item[\ref{sec_van_kampen_fingers_move}] Starting with an arbitrary map $f\colon K \rightarrow \R^d$ with $\vko{K}{r}=[\cocyc_f] = 0$, one can construct a new PL map $g\colon K\to \R^d$ 
by using an $r$-fold generalization of the classical \emph{Van Kampen finger moves}. From $\cocyc_g=0$, we conclude that, for each $r$-tuple of pairwise disjoint $m$-simplices of $K$,
the $r$-Tverberg points $y\in g(\sigma_1)\cap \ldots\cap g(\sigma_r)$ appear in pairs of opposite sign.
\item[\ref{sec_proof_thm_purely_comb}] Having obtained such a map $g\colon K\to \R^d$, and assuming now $k\geq 3$, we can apply the $r$-fold Whitney trick (Theorem~\ref{thm_whitney_trick_extended}) to remove $r$-Tverberg points in pairs of opposite sign, without introducing new $r$-fold points in the process. Thus, we obtain a map PL map 
$h \colon K \rightarrow \R^d$ without $r$-Tverberg point.
\end{enumerate}

\subsection{Equivariant Obstruction Theory and Intersection Number Cocycles} 
\label{sec_equivariant_obs_theory}

Here, we briefly review some basic elements of equivariant obstruction theory. For short and very accessible introductions, see 
\cite{Blagojevic:Using-equivariant-obstruction-theory-in-combinatorial-geometry-2007} or \cite[Sec.~4.1]{Zivaljevic:UserGuideEquivariantTopologyCombinatorics2-98}; for a comprehensive and detailed treatment of the theory, 
the standard source is tom Dieck's monograph \cite[Sec.~II.3]{Dieck:Transformation-Groups-1987}. 

For the present section, fix parameters $r\geq 2$ and $d\geq 1$, and set $n:=d(r-1)$. Let $Y:=(\R^d)^r \setminus \thindiag{r}{\R^d} \simeq_{\sym_r} S^{n-1}$ be the complement of the thin diagonal in $(\R^d)^r$, with the natural action of the symmetric group $\sym_r$ by permuting components.

We will need the fact that $Y$ is \define{$(n-2)$-connected} (i.e., every map $S^{\ell-1}\to Y$ is nullhomotopic, $\ell< n$) and that, by the classical theorem of Hopf, the set $[S^{n-1},Y]$ of homotopy classes of maps $f\colon S^{n-1}\to Y$ can be identified with the integers via the mapping degree,
\begin{equation}
\label{eq:Hopf}
[S^{n-1},Y]\cong \Z, \qquad [f]\mapsto \deg(f).
\end{equation}
More precisely, the definition of the degree involves  the choice of an orientation of $S^{n-1}$ and of a generator $\zeta$ of $H_{n-1}(Y;\Z)\cong \Z$, and in what follows we will always specify these choices.\footnote{Choosing an orientation of 
 $S^{n-1}$ is equivalent to choosing a generator $\iota$ of $H_{n-1}(S^{n-1};\Z)\cong \Z$, and given $\iota$ and $\zeta$, the degree  $\deg(f)$ is, by definition, the unique integer such that $f_\ast(\iota)=\deg(f)\zeta$, where $f_\ast$ is the induced map in homology.}
 
The action of $\sym_r$ on $Y$ induces a natural action on $[S^{n-1},Y]$ and hence, via the bijection \eqref{eq:Hopf}, on the integers $\Z$ (it can be checked that the action of a permutation $\pi$ is given by multiplication by $(\sign \pi)^d$); we will use the notation $\calZ$ to denote the integers with this $\sym_r$-action.
\medskip

Let $X$ be an $n$-dimensional CW complex on which $\sym_r$ acts freely by cellular maps. The two cases that we will be interested in the present paper are $X=\delprod{K}{r}$, and $X=\join{\sym_r}{(n+1)}$.


An $\ell$-dimensional cellular cochain $\varphi\in C^\ell(X;\calZ)$ 
is \define{equivariant} if it commutes with the group action, i.e., $\varphi(\cell\cdot \pi)=\varphi(\cell)\cdot \pi$ for every oriented $\ell$-cell $\cell$ of $X$ and $\pi\in \sym_{r}$. The equivariant cochains form a subgroup $C^\ell_{\sym_r}(X;\calZ)$ of the usual (nonequivariant) cochains. Moreover, the coboundary operator sends equivariant cochains to equivariant cochains, so we get subgroups $B^\ell_{\sym_{r}}(X;\calZ)$ of \define{equivariant coboundaries} (coboundaries of equivariant $(\ell-1)$-cochains) and $Z^\ell_{\sym_{r}}(X;\calZ)$ of \define{equivariant cocycles} ($\ell$-cocycles that are equivariant), and the \define{equivariant cohomology groups} are defined by 
$$H^\ell_{\sym_{r}}(X;\calZ)=Z^\ell_{\sym_{r}}(X;\calZ)/B^\ell_{\sym_{r}}(X;\calZ).$$

The basic idea of (equivariant) obstruction theory is that we want to construct an (equivariant) map $F\colon X\to Y$  inductively over skeleta of $X$ of increasing dimension, and likewise for (equivariant) homotopies between such maps (which are maps $X\times [0,1]\to Y$). 
If $\cell$ is an $\ell$-cell of $X$ and if we inductively assume that $F$ is already defined on $\skel{\ell-1}{X}$, hence in particular on the boundary $\partial \cell \cong S^{\ell-1}$, then we can extend $F$ over $\cell$ if and only if $F|_{\partial \cell}$ is nullhomotopic.\footnote{Here, we tacitly use that $X$ is a \define{regular} CW complex, i.e., that all attaching maps are homeomorphisms, so that a closed $\ell$-cell $\cell$ of $X$ is a closed $\ell$-disk embedded in $X$; for more general CW complexes, the condition would be that $F\circ \alpha_\cell|_{S^{\ell-1}}$ needs to be nullhomotopic, where $\alpha_\cell\colon S^{\ell-1}\to X$ is the attaching map of the cell $\cell$.} If this is the case, then any choice of such an extension to $\cell$ 
yields a unique equivariant extension to all cells $\pi\cdot \cell$ in the orbit of $\cell$ (since the action of $\sym_r$ on $X$ is free).

Using the connectivity of $Y$, it is not hard to show \cite[Prop.~II.3.15]{Dieck:Transformation-Groups-1987} that there exists an equivariant map $G\colon \skel{n-1}{X}\to_{\sym_r}Y$, and that the restrictions of any two such maps to  $\skel{n-2}{X}$ are equivariantly homotopic.

In the next extension step to the $n$-skeleton of $X$ (which is the last since $\dim X=n$), however, we might get stuck, namely if there is an $n$-cell $\cell$ such that $\deg(G|_{\partial \cell}\colon \partial \cell\to Y)\neq 0$. If this is the case, we might try to modify the chosen $G$ on $\skel{n-1}{X}$ so as to make $G|_{\cell}$ nullhomotopic. Whether it is possible to achieve this for all $n$-cells $\cell$ simultaneously is governed by a single $n$-dimensional equivariant cohomology class
; see \cite[Section~II.3, pp.~119--120]{Dieck:Transformation-Groups-1987} for a proof:

\begin{theorem}
\label{thm:primary-obs}
Suppose that $X$ is an $n$-dimensional CW complex with a free cellular action of $\sym_r$.
Then there exists an equivariant cohomology class $\obs(X) \in H^n_{\sym_{r}}(X;\calZ)$, called 
the \define{primary equivariant obstruction}, such that the following properties are satisfied:
\begin{enumerate}[label=\textup{(\arabic*)}]
\item There exists an equivariant map $F\colon X \to_{\sym_r} Y=(\R^d)^r \setminus \thindiag{r}{\R^d}$ if and only if $\obs(X)=0$. 
\item Let $G \colon \skel{n-1}{X} \to_{\sym_r} Y$ be an arbitrary equivariant map, and let $\zeta_0$ be a fixed generator of $H_{n-1}(Y;\Z)\cong \calZ$. For every oriented $n$-cell $\cell$ of $X$, set
$$
\cocyc_{G}(\cell):=\deg(G|_{\partial \cell}\colon \partial \cell\to Y) \in \calZ,
$$
where the mapping degree is computed with respect to $\zeta_0$ and the orientation of $\partial \cell \cong S^{n-1}$ is induced by that of $\cell$.
This defines an equivariant \define{obstruction cocycle} 
$$\cocyc_G \in Z_{\sym_r}^n(X;\calZ)$$
which represents the primary obstruction, i.e., $\obs(X)=[\varphi_G]$.
\end{enumerate}
\end{theorem}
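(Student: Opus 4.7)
The plan is to follow the standard approach of equivariant obstruction theory, proceeding in four steps: construct an equivariant map on the $(n-1)$-skeleton, define the obstruction cocycle, verify that the cohomology class it represents is independent of choices, and finally establish the two equivalences in parts (1) and (2).

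First, I would construct an equivariant map $G\colon \skel{n-1}{X} \to_{\sym_r} Y$ by induction on skeleta. Since $Y$ is $(n-2)$-connected, every map of an $(\ell-1)$-sphere into $Y$ with $\ell\leq n-1$ is nullhomotopic. For the inductive step, choose one representative cell $\cell$ per $\sym_r$-orbit of $\ell$-cells, extend $G$ from $\partial\cell$ to $\cell$ using such a nullhomotopy, and propagate to every cell $\cell\cdot\pi$ in the orbit by $G(x\cdot\pi):=G(x)\cdot\pi$. Freeness of the $\sym_r$-action on $X$ makes this consistent. Given such $G$, we define $\cocyc_G$ by the mapping degree formula in the statement. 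Equivariance of $\cocyc_G$ follows from equivariance of $G$ together with the definition of the $\sym_r$-action on $\calZ$: a permutation $\pi$ acts on $H_{n-1}(Y;\Z)$ by the degree of the self-map of $Y$ that permutes its coordinates, and a direct calculation shows this degree equals $(\sign\pi)^d$. Since $\dim X=n$, there are no $(n+1)$-cells and the cocycle relation $\coboundary\cocyc_G=0$ is automatic.

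Next, I need to show that the class $[\cocyc_G]\in H^n_{\sym_r}(X;\calZ)$ is independent of $G$. Given two equivariant extensions $G_0,G_1$ on $\skel{n-1}{X}$, the same inductive obstruction-theoretic argument applied to maps $\skel{n-2}{X}\times [0,1]\to Y$ (using $(n-2)$-connectivity of $Y$) produces an equivariant homotopy between $G_0|_{\skel{n-2}{X}}$ and $G_1|_{\skel{n-2}{X}}$. Comparing $G_0$ and $G_1$ on each $(n-1)$-cell $\cell$ via this fixed homotopy then defines an equivariant \emph{difference cochain} $d(G_0,G_1)\in C^{n-1}_{\sym_r}(X;\calZ)$ whose value on $\cell$ is the degree of the resulting map $\partial(\cell\times [0,1])\cong S^{n-1}\to Y$. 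A standard boundary calculation yields $\cocyc_{G_1}-\cocyc_{G_0}=\coboundary d(G_0,G_1)$, so $[\cocyc_{G_0}]=[\cocyc_{G_1}]$; this common class is defined to be $\obs(X)$.

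Finally, for the equivalence in (1): if an equivariant $F\colon X\to_{\sym_r} Y$ exists, set $G:=F|_{\skel{n-1}{X}}$; then $G|_{\partial\cell}$ extends via $F|_\cell$ and hence is nullhomotopic, so $\cocyc_G(\cell)=0$ for every $n$-cell and $\obs(X)=0$. Conversely, if $\obs(X)=0$ then $\cocyc_G=\coboundary\psi$ for some equivariant $\psi\in C^{n-1}_{\sym_r}(X;\calZ)$, and the plan is to modify $G$ on the interiors of $(n-1)$-cells (again orbit-wise, one representative at a time, using the identification $\pi_{n-1}(Y)\cong\Z$ from Hopf's theorem) so that the difference cochain between the old and new map equals $\psi$; the modified equivariant map $G'$ then satisfies $\cocyc_{G'}=0$, and $G'$ extends equivariantly over $X$ by the same orbit-wise nullhomotopy argument used to build the initial $(n-1)$-skeleton map. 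Part (2) is built into the construction above. The main technical obstacle throughout is ensuring that every choice (of nullhomotopy, of cell modification, of sphere map realizing a prescribed degree) can be made on one cell per orbit and then propagated by the group action without introducing inconsistencies; this is precisely where freeness of the $\sym_r$-action is essential, and where one must carefully track how the degree transforms under the action in order to match the definition of $\calZ$ as a twisted coefficient system.
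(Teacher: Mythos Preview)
The paper does not give its own proof of this theorem; it explicitly defers to tom Dieck's monograph (\cite[Section~II.3, pp.~119--120]{Dieck:Transformation-Groups-1987}), and the paragraphs preceding the theorem statement only sketch the general idea (build the map inductively over skeleta, measure the failure to extend by degrees, etc.). Your outline is precisely the standard equivariant obstruction-theory argument one finds in that reference and is fully consistent with the paper's sketch, so there is nothing to compare.
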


In the special case that $X=\delprod{K}{r}$ for a finite simplicial complex $K$, we call $\vko{K}{r}$ the \define{$r$-fold Van Kampen obstruction}

\begin{lemma} 
\label{lem:r-intersection-cycle}
 \begin{enumerate}[label=\textup{(\alph*)}]
 \item Suppose the equivariant map $G \colon \skel{n-1}{X}\to_{\sym_r} (\R^d)^r \setminus \thindiag{r}{\R^d}$ in 
Theorem~\textup{\ref{thm:primary-obs}~(2)} is the restriction of an equivariant PL map in general position%
\footnote{Note that for every $G\colon \skel{n-1}{X}\to_{\sym_r} (\R^d)^r \setminus \thindiag{r}{\R^d}$ there is such an extension, since $(\R^d)^r$ is contractible; conversely, for every PL map $G\colon X \to_{\sym_r} (\R^d)^r$ in general position, its restriction to $\skel{n-1}{X}$ avoids the thin diagonal.}
 (denoted by the same symbol, by abuse of notation)
 $$G\colon X \to_{\sym_r} (\R^d)^r.$$
 
Then the value of the obstruction cocycle $\cocyc_G$ on each oriented $n$-cell $\cell$
of $X$ is given by the (pairwise) intersection number\footnote{Calculated with respect to the orientations of $(\R^d)^r$ and of $\thindiag{r}{\R^d}$ induced by that of $\R^d$ as described in Section~\ref{sec:intersection_signs}.}
\begin{equation}
\label{eq:cocyc-intersection-diagonal}
\cocyc_{G}(\cell):= G(\cell)\scap \thindiag{r}{\R^d}.
\end{equation}
\item Furthermore, suppose that $X=\delprod{K}{r}$ for a simplicial complex $K$ and that $f\colon K\to \R^d$ is a PL map in general position. In this case, we can take 
$$G=f^r\colon \delprod{K}{r} \to_{\sym_r} (\R^d)^r$$
as in the proof of Lemma~\ref{lem:delprod-necessary}, and represent $\vko{K}{r}=[\cocyc_f]$ by the following \define{intersection number cocycle} (denoted by $\cocyc_f$ instead of $\cocyc_{f^r}$ for simplicity) given by
\begin{equation}
\label{eq:r-intersection-cocycle}
\begin{array}{rcl}
\cocyc_f(\sigma_1\times\ldots\times \sigma_r) &= & \big(f(\sigma_1)\times \ldots \times f(\sigma_r)\big) \scap \thindiag{r}{\R^d} \\[2ex] 
& = & \varepsilon_{d,m_1,...,m_r} f(\sigma_1) \scap \ldots \scap f(\sigma_r) \\[2ex]
& = & \varepsilon_{d,m_1,...,m_r} \displaystyle \sum_{y\in f(\sigma_1)\cap\ldots \cap f(\sigma_r)} \isign{y}{f(\sigma_1)}{f(\sigma_r)}
\end{array}
\end{equation}
where $\varepsilon_{d,m_1,...,m_r}$ is the sign introduced in Lemma~\ref{lem:prop-inters-prod}~(d), 
and $m_i=\dim \sigma_i$, $1\leq i\leq r$.
\end{enumerate}
\end{lemma}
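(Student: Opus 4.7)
The plan is to prove part~(a) directly from the definition of the obstruction cocycle in Theorem~\ref{thm:primary-obs}(2), by identifying the mapping degree $\deg(G|_{\partial \sigma}\colon \partial \sigma \to Y)$ with the algebraic intersection number $G(\sigma) \scap \thindiag{r}{\R^d}$ for a suitable choice of generator $\zeta_0$ of $H_{n-1}(Y;\Z)$. Part~(b) will then be an immediate specialization to $G = f^r$, combined with the sign calculation \eqref{eq:r-sign-vs-product-diagonal} from Lemma~\ref{lem:prop-inters-prod}(d), which converts the product-vs-diagonal intersection number into an $r$-fold intersection number up to the sign $\varepsilon_{d,m_1,\ldots,m_r}$.

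For part~(a), first I would fix $\zeta_0 := [\partial D]$, where $D$ is a small PL $n$-disk in $(\R^d)^r$ transverse to $\thindiag{r}{\R^d}$ at a single point with intersection sign $+1$ (oriented by the chosen orientations of $(\R^d)^r$ and $\thindiag{r}{\R^d}$ as in Section~\ref{sec:intersection_signs}); since $\thindiag{r}{\R^d}$ has codimension $n$ in $(\R^d)^r$, this class generates $H_{n-1}(Y;\Z) \cong \Z$. Now for an oriented $n$-cell $\sigma$ of $X$ with $G|_\sigma$ in general position, $G(\sigma)$ meets $\thindiag{r}{\R^d}$ transversely in finitely many interior points $y_1,\ldots,y_s$ with intersection signs $\varepsilon_1,\ldots,\varepsilon_s$, so by definition $G(\sigma) \scap \thindiag{r}{\R^d} = \sum_i \varepsilon_i$. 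I would then excise pairwise disjoint closed PL disks $D_i \subset \interior \sigma$ around the preimages $G^{-1}(y_i)$, each mapping to a transverse disk of sign $\varepsilon_i$; the restriction of $G$ to $\sigma \setminus \bigcup_i \interior D_i$ maps into $Y$ and exhibits $G|_{\partial \sigma}$ as homologous in $Y$ to $\sum_i G|_{\partial D_i}$, with $[G|_{\partial D_i}] = \varepsilon_i \zeta_0 \in H_{n-1}(Y;\Z)$ by construction. Additivity of the mapping degree then yields $\cocyc_G(\sigma) = \deg(G|_{\partial \sigma}) = \sum_i \varepsilon_i = G(\sigma) \scap \thindiag{r}{\R^d}$, as required.

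For part~(b), set $G := f^r$; this is tautologically $\sym_r$-equivariant, and whenever $f$ is PL in general position on $K$ it yields a PL map in general position on each product cell $\sigma_1 \times \ldots \times \sigma_r$ of $\delprod{K}{r}$ satisfying $G(\sigma_1 \times \ldots \times \sigma_r) = f(\sigma_1) \times \ldots \times f(\sigma_r)$. Part~(a) immediately gives the first equality of \eqref{eq:r-intersection-cocycle}; the second equality follows by summing \eqref{eq:r-sign-vs-product-diagonal} over all intersection points $y \in f(\sigma_1) \cap \ldots \cap f(\sigma_r)$ and using $\varepsilon_{d,m_1,\ldots,m_r}^2 = 1$; and the third equality is just the definition of the algebraic intersection number as a signed count. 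The only real subtlety, and the place where a careless choice would flip a global sign, is pinning down the orientation conventions consistently across the transverse-disk definition of $\zeta_0$, the orientations of $(\R^d)^r$ and $\thindiag{r}{\R^d}$, and the product orientations of the cells $\sigma_1 \times \ldots \times \sigma_r$; once this bookkeeping is settled at the outset, the argument reduces to standard facts about the degree--intersection correspondence and to the elementary sign identity of Lemma~\ref{lem:prop-inters-prod}(d).
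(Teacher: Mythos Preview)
Your proposal is correct and follows essentially the same approach as the paper. The only minor technical variation is that the paper, instead of excising small disks around the intersection points, passes to a fine triangulation of $\sigma$ on which $G$ is simplexwise linear and sums the contributions $G_\ast([\partial\tau])\in\{+\zeta_0,-\zeta_0,0\}$ over all $n$-simplices $\tau$ of that triangulation; this is an equivalent packaging of the same degree--intersection correspondence, and part~(b) is derived identically from Lemma~\ref{lem:prop-inters-prod}(d).
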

\begin{proof}
A generator $\zeta_0$ of $H_n(Y;\Z)$ can be represented geometrically as the homology class $\zeta_0=[\partial \tau_0]$ of the boundary of an oriented linear $n$-simplex $\tau_0$ in $(\R^d)^r$ that intersects $\thindiag{r}{\R^d}$ in its relative interior. For concreteness, we choose $\tau_0$ so that it intersects $\thindiag{r}{\R^d}$ positively. Then\footnote{To see this, note that  the boundaries of any two oriented linear $n$-simplices that intersect the diagonal positively correspond to the same generator of $H_{n-1}(Y, \Z)$, and if we reverse the orientation of such a simplex $\tau$, so that its intersection sign with $\thindiag{r}{\R^d}$ becomes negative, then we also reverse the sign of $[\partial \tau]$ as a generator of the homology group. Furthermore, if $\tau$ is disjoint from $\thindiag{r}{\R^d}$ then $[\partial \tau]=0$ in the homology group. The first part of the lemma now follows by choosing a sufficiently fine triangulation of the cell $\sigma
$ on which $G$ is simplexwise linear: Then $G_\ast([\partial 
\sigma
])=\sum_\tau G_\ast([\partial \tau])$, where $\tau$ ranges over all $n$-simplices in the triangulation, and $[\partial \tau]$ equals $+[\partial \tau_0]$, $-[\partial \tau_0]$, or zero depending on whether $h(\tau)$ intersects $\thindiag{r}{\R^d}$ positively, negatively, or not at all.}
$$\deg(G|_{\partial \sigma
}\colon \partial \sigma
\to Y) = G(\sigma
)\scap \thindiag{r}{\R^d},$$
which shows \eqref{eq:cocyc-intersection-diagonal}. Furthermore, \eqref{eq:r-intersection-cocycle} follows immediately, by 
Lemma~\ref{lem:prop-inters-prod}~(d).
\end{proof}

\subsection{\texorpdfstring{$r$}{r}-Fold Van Kampen Finger Moves} 
\label{sec_van_kampen_fingers_move}

By Lemma~\ref{lem:r-intersection-cycle}, vanishing of the $r$-fold Van Kampen obstruction means that for every PL map $f\colon K\to \R^d$ 
in general position, the corresponding intersection number cocycle $\cocyc_f$ satisfies $\vko{K}{r}=[\cocyc_f]=0$ as a cohomology class, i.e.,
$\cocyc_f\in B^{d(r-1)}_{\sym_r}(\delprod{K}{r};\calZ)$ is an equivariant coboundary.
The goal of this section is to show that in this situation, we can find a map $g$ such that $\cocyc_g=0$ as a cocycle (see Lemma~\ref{lem_finger_moves_I} below).
To this end, we consider the following system of generators of the equivariant coboundaries.

\paragraph{Elementary coboundaries.} 
For any dimension $\ell$, we get a basis of the $\ell$-dimensional equivariant cochains $C^\ell_{\sym_{r}}(\delprod{K}{r};\calZ)$ as follows: Choose an $\ell$-dimensional oriented cell $\eta_1 \times \cdots \times \eta_r$ of $\delprod{K}{r}$ (\ie the product of pairwise disjoint simplices of $K$ with $\sum_{i=1}^r \dim \eta_i =\ell$). We define the cochain $\I_{(\eta_1 \times \cdots \times \eta_r)\cdot \sym_{r}}$ to take value $1$ on $\eta_1 \times \cdots \times \eta_r$ and then extend equivariantly over the $\sym_{r}$-orbit of the cell, i.e., $\I_{(\eta_1 \times \cdots \times \eta_r)\cdot \sym_{r}}$ takes value $(\sign \pi)^d$ on $\ell$-cells of the form $(\eta_1 \times \cdots \times \eta_r)\cdot \pi=\pm \eta_{\pi(1)}\times \ldots \times \eta_{\pi(r)}$, $\pi \in \sym_r$ (where the sign depends how the action of $\pi$ affects the orientation), and the cochain 
evaluates to zero on all other cells.

Thus, the equivariant coboundaries $B^{\ell+1}_{\sym_{r}}(\delprod{K}{r};\calZ)$ are generated by \define{elementary equivariant coboundaries} of the form $\delta \I_{(\eta_1 \times \cdots \times \eta_r)\cdot \sym_{r}}$, where $\eta_1 \times \cdots \times \eta_r$ is an oriented $\ell$-cells of $\delprod{K}{r}$. In particular, we have the following:
\begin{lemma}
If $f\colon K\to \R^d$ is a PL map in general position then $\vko{K}{r}=[\cocyc_{f}]=0$ if and only if $\cocyc_{f}$ can be written as a finite sum of elementary coboundaries,
\begin{equation} 
\label{eq_phi_f_coboundary}
\cocyc_f = \sum \pm \delta \I_{(\eta_1 \times \cdots \times \eta_r)\cdot \sym_{r}},
\end{equation}
where the sum is over a finite multiset of $(d(r-1)-1)-dimensional$-cells of $\delprod{K}{r}$.
\end{lemma}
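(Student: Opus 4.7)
The plan is to reduce the statement to a direct consequence of the definition of the primary equivariant obstruction together with a description of a natural generating set for equivariant cochains on $\delprod{K}{r}$.

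First, by Lemma~\ref{lem:r-intersection-cycle}, the intersection-number cochain $\cocyc_f$ is an equivariant cocycle representing the primary equivariant obstruction $\vko{K}{r}$ in the group $H^{d(r-1)}_{\sym_r}(\delprod{K}{r};\calZ)$. By definition, $\vko{K}{r}=[\cocyc_f]=0$ if and only if $\cocyc_f$ lies in the subgroup $B^{d(r-1)}_{\sym_r}(\delprod{K}{r};\calZ)$ of equivariant coboundaries, i.e., $\cocyc_f=\coboundary \psi$ for some $\psi \in C^{d(r-1)-1}_{\sym_r}(\delprod{K}{r};\calZ)$. So the task reduces to showing that every equivariant $(d(r-1)-1)$-cochain $\psi$ can be written as a finite $\Z$-linear combination of elementary equivariant cochains.

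Second, I would verify the following structural fact: for each $\ell\geq 0$, the elementary equivariant cochains $\I_{(\eta_1 \times \cdots \times \eta_r)\cdot \sym_{r}}$ form a $\Z$-basis of $C^{\ell}_{\sym_r}(\delprod{K}{r};\calZ)$, where we pick one oriented cell from each $\sym_r$-orbit of $\ell$-cells. The key observation is that the $\sym_r$-action on $\delprod{K}{r}$ is \emph{free}, so each orbit has exactly $r!$ cells; consequently, an equivariant cochain is uniquely determined by its values on a chosen set of orbit representatives, and conversely, any choice of integers on these representatives extends uniquely to an equivariant cochain by the formula $\psi(\cell\cdot \pi)=(\sign \pi)^{d} \psi(\cell)$ dictated by the $\sym_r$-action on $\calZ$. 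Since $K$ is finite, $\delprod{K}{r}$ has only finitely many cells in each dimension, so this basis is finite.

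Combining these two steps gives both directions of the equivalence: if $\cocyc_f=\coboundary \psi$, expanding $\psi=\sum_i a_i \I_{(\eta_1 \times \cdots \times \eta_r)\cdot \sym_{r}}$ in the elementary basis and applying $\coboundary$ yields the required expression \eqref{eq_phi_f_coboundary}; conversely, any finite sum of the form in \eqref{eq_phi_f_coboundary} is manifestly an equivariant coboundary, so $[\cocyc_f]=0$. The main bookkeeping point to check carefully is the sign convention encoded in $(\sign \pi)^d$ coming from the $\sym_r$-action on $\calZ$, so that the elementary cochains are genuinely equivariant and form a basis (not merely a generating set with relations); once this is in place the argument is essentially formal, and no further topological input is needed beyond Theorem~\ref{thm:primary-obs} and Lemma~\ref{lem:r-intersection-cycle}.
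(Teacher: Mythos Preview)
Your proposal is correct and follows essentially the same approach as the paper: the paper first observes (in the paragraph immediately preceding the lemma) that the elementary equivariant cochains $\I_{(\eta_1\times\cdots\times\eta_r)\cdot\sym_r}$ form a $\Z$-basis of $C^\ell_{\sym_r}(\delprod{K}{r};\calZ)$, hence the equivariant coboundaries are generated by the $\delta\I_{(\eta_1\times\cdots\times\eta_r)\cdot\sym_r}$, and then states the lemma as an immediate consequence together with Lemma~\ref{lem:r-intersection-cycle}. Your argument makes exactly these two points explicit and in the same way.
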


Suppose now that $\dim K=m=(r-1)k$, and $d=rk$. If $\eta_1 \times \cdots \times \eta_r$ is a cell of $\delprod{K}{r}$ of dimension $d(r-1)-1=rm-1$ then (up to a permutation $\pi \in \sym_r$ of the $\eta_i$), we may assume that
\begin{equation}
\label{eq:codim-1-cell}
\eta_1 \times \cdots \times \eta_r = \mu_1\times \sigma_2 \times \ldots \times \sigma_r,
\end{equation}
where $\mu_1$ is an $(m-1)$-simplex of $K$ and $\sigma_i$ is an $m$-simplex of $K$, $2\leq i\leq r$. Consequently,
\begin{equation}
\label{eq_coboundary_mu_sigma_i}
\delta  \I_{(\mu_1 \times \sigma_2 \times \cdots \times \sigma_r)\cdot \sym_{r}} = \sum_{\sigma_1} \I_{(\sigma_1 \times \sigma_2\times  \cdots \times  \sigma_r)\cdot \sym_{r}},
\end{equation}
where the sum is over all the $m$-simplices $\sigma_1$ of $K$ that contain $\mu_1$ in their boundary and that are disjoint from $\sigma_i$, $2\leq r\leq r$
(where the orientation of $\sigma_1$ is chosen such that $\mu_1$ appears positively oriented in $\partial\sigma_1$).

On the one hand, this immediately yields a proof that the condition $\vko{K}{r}=0$ is efficiently testable (see the end of this subsection).
More importantly, by the following lemma, addition of single elementary coboundary to $\cocyc_f$ can be emulated geometrically by a 
simple modification of the map $f$ (the case $r=2$ corresponds to the classical Van Kampen finger moves). 

\begin{lemma}[\textbf{$r$-Fold Finger Move}]
\label{lem:single-finger-move}
If $f\colon K\to \R^d$ is a PL map in general position and if $\delta \I_{(\mu_1 \times \sigma_2 \times \cdots \times  \sigma_r)\cdot\sym_{r}}$ is an elementary equivariant $mr$-dimensional coboundary then for any choice of a sign $\varepsilon\in \{-1,+1\}$, there exists a PL map $g\colon K\to \R^d$ such that $\cocyc_{g} = \cocyc_f +\varepsilon \cdot \delta \I_{(\mu_1 \times \sigma_2 \times \cdots \times \sigma_r)\cdot\sym_{r}}$.
\end{lemma}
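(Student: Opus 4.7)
The plan is to build $g$ by an $r$-fold generalization of the classical Van Kampen finger move, modifying $f$ in a small neighborhood of an interior point of $\mu_1$ by pushing along a PL arc that pierces the $(r-1)$-fold intersection chain $\tau:=f(\sigma_2)\cap\cdots\cap f(\sigma_r)$ exactly once. For the setup, note that by general position $\tau$ is a $k$-dimensional PL chain (where $k=d-m\ge 3$) transverse to every $f(\sigma)$, and a dimension count shows $f(\mu_1)\cap\tau=\emptyset$. Pick an interior point $p\in\mathrm{relint}(\mu_1)$ whose image avoids every simplex of $K$ outside the star of $\mu_1$, and an interior point $q\in\mathrm{relint}(\tau)$ disjoint from the image of every simplex of $K$ other than $\sigma_2,\ldots,\sigma_r$. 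Connect $f(p)$ to $q$ by a PL arc $\lambda$ in general position that meets $f(K)$ only at its endpoints, and extend $\lambda$ slightly past $q$ to a point $q'$ on a chosen side of $\tau$; this choice of side will encode the sign $\varepsilon$.

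To perform the move, take a narrow PL regular $d$-ball neighborhood $N\subset\R^d$ of $\lambda\cup\{q'\}$ so that $N\cap f(K)$ consists of a small disk neighborhood of $f(p)$ in $f(\mu_1)\cup\bigcup_{\sigma\supset\mu_1}f(\sigma)$ together with small disk neighborhoods of $q$ in each $f(\sigma_i)$ for $i\ge 2$. Let $W:=f^{-1}(N)$, a small neighborhood of $p$ in $K$ meeting only $\mu_1$ and the $m$-simplices containing $\mu_1$. Choose a PL ambient isotopy $\Phi_t$ of $\R^d$ supported in $N$ with $\Phi_1(f(p))=q'$ along $\lambda$, and a PL bump function $\psi\colon K\to[0,1]$ with $\psi\equiv 1$ on a smaller neighborhood $W'\subset W$ of $p$ and $\psi\equiv 0$ outside $W$. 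Define
\[
g(x):=\Phi_{\psi(x)}\bigl(f(x)\bigr);
\]
then $g=f$ outside $W$ and $g=\Phi_1\circ f$ on $W'$, and a small generic perturbation puts $g$ in general position without changing intersection counts.

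Now analyze the intersection cocycle. On every $m$-simplex of $K$ not containing $\mu_1$, $g=f$; in particular $g(\sigma_i)=f(\sigma_i)$ for all $i\ge 2$. For each $m$-simplex $\sigma$ of $K$ containing $\mu_1$, the chain $g(\sigma)$ differs from $f(\sigma)$ by a thin $m$-dimensional ``finger'' that runs from near $p$ along $\lambda$ past $q$ to $q'$; since $m+k=d$, this finger meets $\tau$ transversely in exactly one new point $y_\sigma$ near $q$. Because $\lambda\setminus\{p,q\}$ is disjoint from $f(K)$ and $q$ lies off every image simplex other than $\sigma_2,\ldots,\sigma_r$, no other $r$-fold intersections of $g$ are created or destroyed. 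Hence $\cocyc_g-\cocyc_f$ is supported precisely on the $\sym_r$-orbit of cells $(\sigma\times\sigma_2\times\cdots\times\sigma_r)$ with $\sigma\supset\mu_1$ and $\sigma$ disjoint from $\sigma_2,\ldots,\sigma_r$, in agreement with~\eqref{eq_coboundary_mu_sigma_i}.

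Finally, the sign of the new intersection $y_\sigma$ is determined by (i) the orientation induced on $\mu_1$ as a face of $\partial\sigma$ (transferred to the finger via Lemmas~\ref{lem:prop-inters-prod}(c) and~\ref{lem:intersection-bd}) and (ii) the side of $\tau$ on which $q'$ lies, which is the same for every $\sigma$. Flipping the side of $q'$ flips all the signs simultaneously, so either $\varepsilon\in\{-1,+1\}$ is realizable. The main obstacle will be this sign bookkeeping: one must verify, uniformly in $\sigma$, that the sign attached to $y_\sigma$ matches the coefficient of $(\sigma\times\sigma_2\times\cdots\times\sigma_r)$ in the expansion~\eqref{eq_coboundary_mu_sigma_i} of $\delta\I_{(\mu_1\times\sigma_2\times\cdots\times\sigma_r)\cdot\sym_r}$, so that the total contribution equals $\varepsilon\cdot\delta\I_{(\mu_1\times\sigma_2\times\cdots\times\sigma_r)\cdot\sym_r}$. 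This reduces to a local computation in a standard ``book of $m$-planes'' model at $p$ together with a transverse crossing model at $q$, both straightforward but somewhat fiddly.
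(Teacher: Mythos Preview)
Your approach is correct but takes a genuinely different route from the paper. You modify $f$ on the \emph{star of $\mu_1$}: you push a thin $m$-dimensional finger out from near $f(\mu_1)$ along an arc $\lambda$ so that it pierces the $(r-1)$-fold intersection chain $\tau=f(\sigma_2)\cap\cdots\cap f(\sigma_r)$ once. The paper does the ``dual'' construction: it leaves the star of $\mu_1$ untouched and instead modifies $\sigma_2,\dots,\sigma_r$, piping each $f(\sigma_i)$ (in an orientation-compatible way) with a small $m$-sphere $S_i$ placed near a point $x\in f(\mu_1)$, arranged so that $S_2\cap\cdots\cap S_r$ is a $(d-m)$-sphere bounding a flat ball $B^{d-m+1}$ that meets $f(\mu_1)$ once. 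Then for every $m$-simplex $\sigma_1\supset\mu_1$, the change $g(\sigma_1)\scap g(\sigma_2)\scap\cdots\scap g(\sigma_r)-f(\sigma_1)\scap\cdots\scap f(\sigma_r)$ equals $f(\sigma_1)\scap S^{d-m}$, which by Lemma~\ref{lem:intersection-bd} is $(-1)^m\,\partial f(\sigma_1)\scap B^{d-m+1}$, giving the incidence coefficient of $\mu_1$ in $\partial\sigma_1$ with a single global sign that one tunes to $\varepsilon$.

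What each buys: the paper's construction is entirely local near $f(\mu_1)$ until the final piping step, and the sign bookkeeping is cleaner---it reduces directly to one application of Lemma~\ref{lem:intersection-bd}, with no need to analyze a ``book of $m$-planes'' model or worry about how the interpolation region of the bump function might create spurious intersections. Your construction is closer in spirit to the classical $r=2$ finger move (push the star of $\mu_1$ through the other side), and has the conceptual advantage that only one arc $\lambda$ is needed rather than $r-1$ pipe paths; but, as you note, the verification that the new intersection sign at $y_\sigma$ matches the incidence coefficient of $\mu_1$ in $\partial\sigma$ uniformly over all $\sigma\supset\mu_1$ requires more care. Both arguments need $d-m\geq 2$ to route an arc in general position past $f(K)$.
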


\begin{corollary} 
\label{lem_finger_moves_I}
Suppose $K$ is a finite simplicial complex, $\dim K=m\leq d-1$, $\dim\delprod{K}{r} =d(r-1)$ and 
$$\vko{K}{r}=0.$$
Then there exists a PL map $g \colon K\to \R^d$ such that 
$$\varphi_g=0$$ 
as a cocycle, i.e., $ g(\sigma_1) \scap \dots \scap g(\sigma_r)=0$ for every $d(r-1)$-cell $\sigma_1\times \dots \times \sigma_r$ 
of $\delprod{K}{r}$.
\end{corollary}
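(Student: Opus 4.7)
The plan is straightforward given the tools already developed, since Corollary~\ref{lem_finger_moves_I} is essentially a bookkeeping consequence of the $r$-fold Finger Move Lemma (Lemma~\ref{lem:single-finger-move}).

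First, I would start with an arbitrary PL map $f_0 \colon K \to \R^d$ in general position (this exists by the standard perturbation argument recalled in Remark~\ref{rem:wlog-PL-gen-pos}). By Lemma~\ref{lem:r-intersection-cycle}(b), the intersection number cocycle $\varphi_{f_0}$ represents the $r$-fold Van Kampen obstruction, so the hypothesis $\vko{K}{r} = 0$ gives $[\varphi_{f_0}] = 0$ in $H^{d(r-1)}_{\sym_r}(\delprod{K}{r}; \calZ)$. By the preceding lemma (the one giving the decomposition \eqref{eq_phi_f_coboundary}), we may therefore write
\[
\varphi_{f_0} = \sum_{j=1}^{N} \varepsilon_j \cdot \delta \I_{(\mu_1^{(j)} \times \sigma_2^{(j)} \times \cdots \times \sigma_r^{(j)}) \cdot \sym_r}
\]
for some signs $\varepsilon_j \in \{-1,+1\}$ and some finite list of oriented codimension-$1$ cells of $\delprod{K}{r}$ of the form~\eqref{eq:codim-1-cell}. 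Here I use the dimension hypothesis $\dim K \le d-1$ and $\dim \delprod{K}{r} = d(r-1)$, which forces every top-dimensional cell of $\delprod{K}{r}$ to be a product of $m$-simplices and every codimension-$1$ cell to have the form claimed in \eqref{eq:codim-1-cell}, up to reordering.

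Next, I would apply Lemma~\ref{lem:single-finger-move} iteratively. Namely, for each $j = 1, 2, \dots, N$, given a PL map $f_{j-1}$ in general position, choose the sign $-\varepsilon_j$ and produce $f_j \colon K \to \R^d$ satisfying
\[
\varphi_{f_j} = \varphi_{f_{j-1}} - \varepsilon_j \cdot \delta \I_{(\mu_1^{(j)} \times \sigma_2^{(j)} \times \cdots \times \sigma_r^{(j)}) \cdot \sym_r}.
\]
Since each finger move is a local PL modification, after a small generic perturbation $f_j$ can again be taken to be in general position without changing $\varphi_{f_j}$ (perturbations of a PL map in general position by a sufficiently small amount preserve the algebraic intersection numbers). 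Setting $g := f_N$, a telescoping sum gives $\varphi_{g} = \varphi_{f_0} - \sum_{j=1}^N \varepsilon_j \cdot \delta \I_{(\mu_1^{(j)} \times \sigma_2^{(j)} \times \cdots \times \sigma_r^{(j)}) \cdot \sym_r} = 0$, as required.

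The main obstacle has already been absorbed into the preceding finger-move lemma; the present corollary is merely an induction on the number of elementary coboundary terms in the decomposition of $\varphi_{f_0}$. The only subtlety to watch is preserving general position between successive finger moves, but this is handled by an arbitrarily small generic perturbation after each move, which does not alter the (integer-valued) intersection number cocycle.
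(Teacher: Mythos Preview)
Your proposal is correct and follows exactly the same approach as the paper's own proof: start with an arbitrary PL map in general position, use the vanishing of the obstruction to write $\varphi_{f_0}$ as a finite sum of elementary equivariant coboundaries, and then apply Lemma~\ref{lem:single-finger-move} once for each summand to cancel it. The paper's proof is just a one-sentence version of your argument.
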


\begin{remark}
\label{rem:equivariance-sym_r-finger-moves}
Lemma~\ref{lem:single-finger-move} and Corollary~\ref{lem_finger_moves_I} are where we need equivariance with respect to the full symmetric group $\sym_r$ and not just with respect to some subgroup $H\leq \sym_r$. If $H$ is some proper subgroup then we get a larger set of $H$-equivariant coboundaries $\delta \I_{(\mu_1 \times \sigma_2 \times \cdots \times \cdots \sigma_r)\cdot H}$ (hence the condition that $\varphi_f$ is a sum of $H$-equivariant coboundaries is more easily fulfilled), but we do not have a analogous geometric modification of a given map $f$ that would allow us to emulate the addition of $\delta \I_{(\mu_1 \times \sigma_2 \times \cdots \times \cdots \sigma_r)\cdot H}$ to $\varphi_f$.
\end{remark}

\begin{proof}[Proof of Corollary~\ref{lem_finger_moves_I}]
Let $f\colon K\to \R^d$ be an arbitrary PL map in general position. Then $\vko{K}{r}=[\cocyc_{f}]=0$, so, by \eqref{eq_phi_f_coboundary}, we get the desired map $g$ by a finite number of applications of 
Lemma~\ref{lem:single-finger-move}.\end{proof}

\begin{proof}[Proof of Lemma~\ref{lem:single-finger-move}] 
Fix $f\colon K\to \R^d$ and an oriented $(mr-1)$-cell $\mu_1 \times \sigma_2\times  \cdots \times  \sigma_r$ of $\delprod{K}{r}$. 

By \eqref{eq_coboundary_mu_sigma_i} and \eqref{eq:r-intersection-cocycle}, we need to construct $g\colon K\to \R^d$ that satisfies two conditions: First,
$$
g(\sigma_1)\scap g(\sigma_2) \scap \ldots \scap g(\sigma_r)=f(\sigma_1)\scap f(\sigma_2) \scap \ldots \scap f(\sigma_r) + \varepsilon
$$
whenever  $\sigma_1$ is an $m$-simplex of $K$ that is disjoint from  $\sigma_i$, $2\leq i\leq r$, and that contains $\mu_1$ in its boundary with positive orientation.
Second, $g(\tau_1)\scap \ldots \scap g(\tau_r)=f(\tau_1) \scap \ldots \scap f(\tau_r)$ for every $mr$-cell of $\delprod{K}{r}$ that is not incident to
any $(mr-1)$-cell of the form $\pi\cdot(\mu_1 \times \sigma_2\times  \cdots \times  \sigma_r)$, $\pi \in \sym_r$.

Consider a point $x$ in the relative interior of $f(\mu_1)$. Since $f$ is PL, in a sufficiently small neighborhood of $x$, $f$ looks like a simplexwise linear map, 
see Figure~\ref{fig_finger_moves}.

Choose $(r-1)$ PL spheres $S_2, \ldots , S_r$ in $\R^d$ in general position, each of dimension $m$, such that
\[
S_2 \cap \cdots \cap S_r = S^{d-m},
\]
is a PL sphere of dimension $(d-m)$ that bounds a flat PL ball $B^{d-m+1}$ (a convex polytope, say) such that $f(\mu_1)\cap B^{d-m+1}=\{x\}$ (i.e., $S^{d-m}$ is locally linked with $f(\mu_1)$).

\begin{figure}[h!]
\begin{center}
\includegraphics[scale=1]{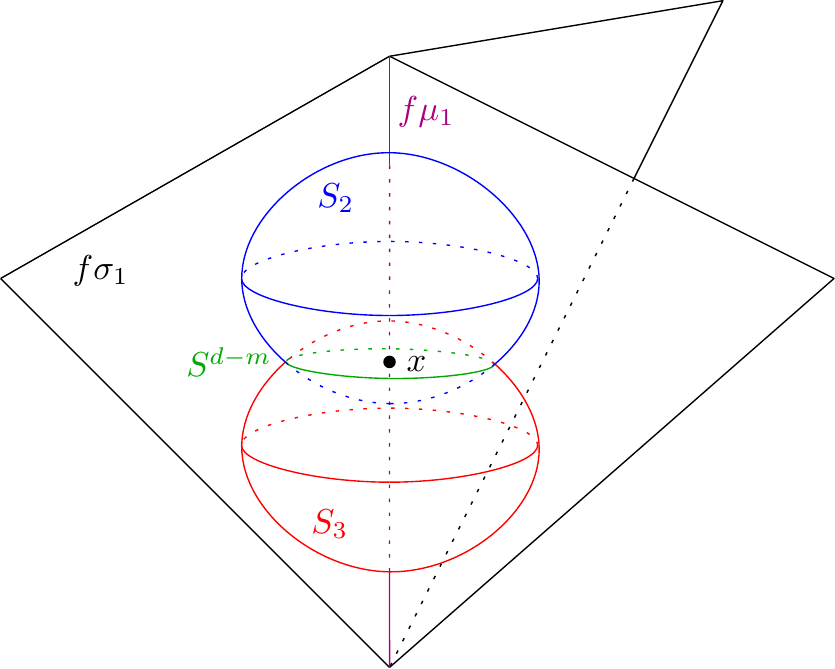}
\caption{For $r= 3$, $S_2$ and $S_3$ intersects in a sphere $S^{d-m}$.}
\label{fig_finger_moves}
\end{center}
\end{figure}

By choosing the spheres $S_i$ sufficiently small, we can make sure that $S^{d-m}$ is disjoint from the image $f(\tau)$ of any simplex $\tau$ of $K$ that does not contain $\mu_1$,
and that $S^{d-m}$ intersects the image $f(\sigma_1)$ of each $m$-simplex $\sigma_1$ containing $\mu_1$ in a single point.

Choose the orientation of $B^{d-m+1}$ such that $f(\mu_1)\scap B^{d-m+1}=(-1)^m \varepsilon $, and let $S^{d-m}=\partial B^{d-m+1}$ have the induced orientation.
Then, by Lemma~\ref{lem:intersection-bd}, we have
$$
f(\sigma_1) \scap S^{d-m} = (-1)^m \partial f(\sigma_1)\scap B^{d-m+1}=\varepsilon,
$$
if $\sigma_1$ contains $\mu_1$ on its boundary with positive orientation, and $f(\tau)\scap S^{d-m}=0$ if $\tau$ does not contain $\mu_1$.

By Lemma~\ref{lem:prop-inters-prod}~(a), we can choose orientations for the spheres $S_2, \ldots , S_r$ such that the induced orientation on their intersection $S^{d-m}$ agrees with
the orientation of $S^{d-m}$ as the boundary of $B^{d-m+1}$. Thus, we have
$$
f(\sigma_1)\scap S_2\scap \ldots \scap S_r=\varepsilon.
$$
and $f(\tau)\scap S_2\scap \ldots \scap S_r=0$ whenever $\tau$ does not contain $\mu_1$.

To conclude, we connect each $m$-sphere $S_i$, $2\leq i\leq r$ to $f(\sigma_i)$ by a pipe  that avoids $f(K)$ except at its boundary and that preserves orientations at both ends (see Section~\ref{subsec_piping}).
Piping with $S^m$ does not change the topology, so we can view the piped $f(\sigma_i)$ as the image $g(\sigma_i)$ of $\sigma_i$ under a PL map. We get the desired map $g\colon K\to \R^d$ by
setting $g=f$ outside of the interiors $\interior \sigma_i$, $2\leq i\leq r$.
\end{proof}

\begin{proof}[Proof of Corollary~\ref{cor:VKO-computable}]
Let $R$ be the number of $\sym_r$-orbits $(\sigma_1\times \ldots \times \sigma_r)\cdot \sym_r$ of $d(r-1)$-cells of $\delprod{K}{r}$, and let $S$ be the number of $\sym_r$-orbits $(\mu_1\times \ldots \times \sigma_r)\cdot \sym_r$ of cells of $\delprod{K}{r}$ of dimension $d(r-1)-1$. Then we can identify $C^{d(r-1)}_{\sym_r}(\delprod{K}{r};\calZ)$ 
with the free abelian group $\Z^R$, and we can identify the equivariant coboundaries $B^{\ell+1}_{\sym_{r}}(\delprod{K}{r};\calZ)$ with the subgroup $B \leq Z^R$ generated by (vectors corresponding 
to) the elementary coboundaries $\I_{(\mu_1\times \ldots \times \sigma_r)\cdot \sym_r}$. Let $A\in \{-1,0,1\}^{R\times S}$ be the matrix whose columns are these generators of $B$.

Choose an arbitrary simplexwise linear map $f\colon K\to \R^d$ in general position. Then the intersection number cocycle $\cocyc_f$ takes only values in $\{-1,0,+1\}$, so we can view $\cocyc_f$ as a vector $v \in \{-1,0,1\}^R$. Then $[\cocyc_f]=0$, or equivalently $\cocyc_f \in B^{d(r-1)}_{\sym_{r}}(\delprod{K}{r};\calZ)$ if and only if the inhomogeneous system of integer linear equations
$$Ax=v$$
has a solution $x\in \Z^S$. For fixed $r$, this system has size polynomial in the size (number of simplices) of $K$, and solvability of $Ax=v$ can be tested by bringing the matrix $A$ into Smith normal form.
For this, several polynomial-time algorithms are available in the literature, both deterministic (see e.g., \cite{Storjohann:NearOptimalAlgorithmsSmithNormalForm-1996}) and randomized ones (see, e.g., 
\cite{Giesbrecht:FastComputationSmithFormSparseIntegerMatrix-2001,DumasSaundersVillard:EfficientSparseIntegerMatrixSmithNormalForm-2001}).
\end{proof}

\subsection{Proof of Sufficiency of the Deleted Product Criterion}
\label{sec_proof_thm_purely_comb}

\begin{proof}[Proof of Theorem~\ref{thm:VK-Tverberg-complete}]
Suppose that there is a $\sym_r$-equivariant map $\delprod Kr \rightarrow_{\sym_r}  S^{mr-1}$, or equivalently, that $\vko{K}{r}=0$. By Corollary~\ref{lem_finger_moves_I},
there exists a PL map $f \colon K \to \R^d$ in general position such that $\cocyc_f=0$, or equivalently
$$0=f(\sigma_1)\scap \ldots \scap f(\sigma_r)$$
whenever $\sigma_1,\ldots,\sigma_r$ are pairwise-disjoint $m$-simplices of $K$. Thus, the $r$-Tverberg points  
$y\in f(\sigma_1)\cap \ldots \cap f(\sigma_r)$ occur in pairs of opposite signs (we match the pairs up arbitrarily). By the generalized Whitney Trick (Theorem~\ref{thm_whitney_trick_extended}), 
we can remove these pairs of $r$-intersection points, one pair at a time, by local ambient isotopies. Since we can choose the isotopies for each pair to have support in a PL ball that avoids any given obstacle $L$ of codimension at least $3$, we do not introduce any new $r$-intersection points in the process.
\end{proof}

\section{Counterexamples to the Topological Tverberg Conjecture in Dimension \texorpdfstring{$\boldsymbol{3r}$}{3r}}
\label{sec:counterexamples}

In this section, we prove Theorem~\ref{thm:counterexamples}, i.e., we show that for $r$ not a prime power and $N=(3r+1)(r-1)$ there exists a PL map 
$f\colon \simplex^N\to \R^{3r}$ without $r$-Tverberg points.

The idea of the proof is to consider a restricted family of maps, called \emph{prismatic maps}, whose special structure guarantees that in order to study the $r$-Tverberg 
points of a prismatic map, it suffices to consider the restriction of the map to a certain ``colorful'' $m$-dimensional subcomplex $C$ of $\simplex^N$, where $m=3(r-1)$.

Since the codimension $3r-m=3$ is large enough, the $r$-fold Whitney trick is applicable to maps $C\to \R^{3r}$. 

The main technical part of the proof consists in showing that there are variants of the $r$-fold finger moves and of the $r$-fold Van Kampen obstruction for the restricted, prismatic setting.

\subsection{Prismatic Maps}
\label{subsec:prismatic}

Fix parameters $r\geq 2$ and $k\geq 1$ and set
\begin{equation}
\label{eq:def-d-m}
N=(rk+1)(r-1),\qquad \textrm{and} \qquad m=(r-1)k. 
\end{equation}
We note that $N+1=r(m+1)$, and we fix a partition of the vertices of $\simplex^N$ into $m+1$ subsets
\begin{equation}
\label{eq:labeled-color-class}
C_j=\{v_{1,j},\dots, v_{r,j}\}, \qquad 0\leq j \leq m,
\end{equation}
consisting of $r$ vertices each; we choose and fix labeling of the vertices in each $C_j$ as indicated. We think of the vertex subsets $C_0,\ldots,C_m$ as \define{color classes}, and we call a simplex $\tau$ of $\simplex^N$ \define{colorful} if it contains at most one vertex from each color class $C_j$, $0\leq j\leq m$. The colorful simplices form a subcomplex
\begin{equation}
C=C_0\ast \dots\ast C_m \subset \simplex^N.
\end{equation}
Let us fix a labeling $u_0,\ldots,u_m$ of the vertices of $\simplex^m$. This yields a \define{projection map}
\begin{equation}
\label{eq:projection}
p\colon \sigma^N\to \sigma^m 
\end{equation}
by setting $p(v_{i,j})=u_j$ for $1\leq i\leq r$ and $0\leq j\leq m$ and extending linearly. We note that the colorful simplices 
are precisely those simplices $\tau$ of $\sigma^N$ such that $p|_\tau$ is injective.

We consider a particular kind of maps whose image is contained in the ``prism'' $\sigma^m \times \sigma^k \subset \R^d$, and which we call \emph{prismatic}; to motivate the general definition, we first consider the special case of affine maps; see Figure~\ref{fig_prismatic_map} for an illustration in the case $k=1,r=3$.

\begin{example}
\label{ex_prismatic_map} 
For the vertices $v_{i,j}$ of $\simplex^N$, we choose \emph{generic} image points\footnote{The notion of genericity used here is a bit different from the notion of general position as discussed in Section~\ref{sec:GP} and will be discussed in more detail in the proof of Lemma~\ref{lem:defining-properties-prismatic}~\ref{prismatic-GP} below.}
\begin{equation}
\label{eq:vertex-images}
f(v_{i,j}) \in \{u_{j}\} \times \interior \simplex^k ,\qquad 1\leq i\leq r,\quad 0\leq j\leq m,
\end{equation}
and then extend linearly on each face of $\sigma^N$ to obtain an affine map (called \define{affine prismatic map})
$$f\colon \simplex^N \to \simplex^m \times \interior \simplex^k \subset \R^{rk}.$$
\begin{figure}[!ht]
\begin{center}
\includegraphics[scale=1]{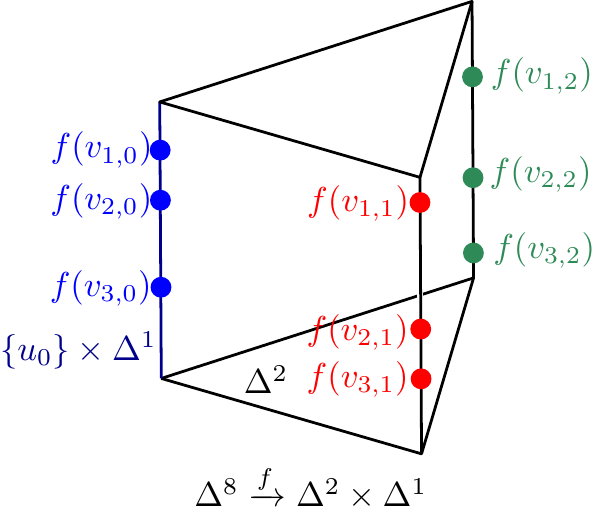}
\caption{For $k=1$ and $r=3$ (hence $m=2$), an affine prismatic map $f\colon \simplex^8\to \simplex^2 \times \interior \simplex^1\subset \R^3$ (with images of vertices in $C_0$, $C_1$, and $C_2$ colored blue, red, and green, respectively). The map is extended linearly on each face of $\simplex^8$. 
}
\label{fig_prismatic_map}
\end{center}
\end{figure}
\end{example}

The following lemma summarizes the basic properties of affine prismatic maps that we will use to define prismatic maps in general:

\begin{lemma}\label{lem:defining-properties-prismatic}
Let $f\colon \simplex^N \to \simplex^m \times \simplex^k \subset \R^{rk}$ be an affine prismatic map as defined in Example~\ref{ex_prismatic_map}. 
\begin{enumerate}[label=\textup{(\alph*)}]
\item \label{height-function}
There exists a map $h\colon \simplex^N\to \simplex^k$ such that
\begin{enumerate}[label=\textup{(REG)}]
\item \label{REG} $\hfill f(x)=(p(x),h(x)) \hfill$
\end{enumerate}
for $x\in \simplex^N$. We view $h(x)$ as the ``\emph{height}'' of $f(x)$ in the prism $\simplex^m\times \simplex^k$ with ``\emph{base}'' $\simplex^m$ and ``\emph{vertical component}'' $\simplex^k$.
\item \label{consequence-PR1-PR2}
As an immediate consequence of \ref{height-function}, $f$ has the following properties:
\begin{enumerate}[label=\textup{(PR\arabic*)}]
\item \label{PR1} For every simplex $\tau$ of $\simplex^N$, 
$$f(\interior \tau) \subseteq p(\interior \tau) \times \interior \simplex^k,$$
where $p$ is the projection map \eqref{eq:projection}, and $\interior \tau$ denotes the relative interior of $\tau$.
\item \label{PR2} If $\tau$ is colorful (i.e., if $p|_\tau $ is injective) then $f |_\tau$ is also injective.
\end{enumerate}
\item \label{prismatic-GP} Furthermore, apart from non-generic behavior forced by the property \ref{PR1},\footnote{For instance, the affine map in Figure~\ref{fig_prismatic_map} is not, strictly speaking, in general position
as a map into $\R^3$, since the three vertices in each color class have collinear images.} the restriction of the map $f$ to colorful simplices is in general position, in the following sense: 
\begin{enumerate}[label=\textup{(PR\arabic*)}]
\setcounter{enumii}{2}
\item \label{PR3}
Let $\omega$ be a $q$-dimensional face of $\simplex^m$, $0\leq q\leq m$. Then the restriction
$$f|_{p^{-1}(\interior{\omega}) \cap C} \colon p^{-1}(\interior{\omega}) \cap C \to \interior{\omega}\times \interior \simplex^k \cong \R^{q+k}$$
is in general position. In particular, if $\tau_1,\ldots,\tau_s$, $1\leq s\leq r$, are pairwise disjoint colorful simplices in $C\subset \sigma^N$ with $p(\tau_i)=\omega$ then 
\begin{equation}
\label{eq:intersections-prismatic-colorful}
\dim\big(f(\interior \tau_1)\cap\dots\cap f(\interior \tau_s)\big) =\max\{-1, \underbrace{sq-(s-1)(q+k)}_{=q-(s-1)k}\}.
\end{equation}
\end{enumerate}
\end{enumerate}
\end{lemma}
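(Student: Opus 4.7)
First, for part~\ref{height-function}, I would define $h\colon \simplex^N\to \simplex^k$ by setting $h(v_{i,j})$ equal to the second component of $f(v_{i,j})\in\{u_j\}\times \interior\simplex^k$ and extending affinely on each face of $\simplex^N$. Since $f$ is affine on each face and coincides with $(p,h)$ on every vertex, the identity $f(x)=(p(x),h(x))$ propagates to all of $\simplex^N$ by affinity. Part~\ref{consequence-PR1-PR2} is then immediate: for~\ref{PR1}, writing $x=\sum_i\lambda_i v_i\in\interior\tau$ with all $\lambda_i>0$ yields $p(x)\in p(\interior\tau)$ and $h(x)=\sum_i\lambda_i h(v_i)\in \interior\simplex^k$ (a strictly positive convex combination of points in the relative interior of a simplex remains in that relative interior); for~\ref{PR2}, a colorful $\tau$ has its vertices projecting under $p$ to distinct vertices of $\simplex^m$, so $p|_\tau$ is an affine isomorphism onto $p(\tau)$, and therefore $f|_\tau=(p|_\tau,h|_\tau)$ is injective.

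Turning to part~\ref{prismatic-GP}, the image inclusion $f(p^{-1}(\interior\omega)\cap C)\subseteq \interior\omega\times\interior\simplex^k$ follows directly from~\ref{PR1}, since every point of $p^{-1}(\interior\omega)\cap C$ lies in the relative interior of a unique colorful simplex $\tau\subset C$ with $p(\tau)=\omega$. The key observation for the general position claim is that, for such $\tau$, the image $f(\tau)$ is the \emph{graph} over $\omega$ of the affine height function
\[
h_\tau:=h\circ (p|_\tau)^{-1}\colon \omega\to \simplex^k,
\]
using that $p|_\tau$ is an affine isomorphism onto $\omega$ by~\ref{PR2}. Consequently, for any pairwise disjoint colorful simplices $\tau_1,\ldots,\tau_s$ all projecting to $\omega$, projection onto the first coordinate identifies the intersection $f(\interior\tau_1)\cap\dots\cap f(\interior\tau_s)$ with the coincidence locus
\[
Z:=\bigl\{y\in \interior\omega\bigm| h_{\tau_1}(y)=h_{\tau_2}(y)=\cdots=h_{\tau_s}(y)\bigr\},
\]
which is the zero set in $\interior\omega$ of the single affine map $\Phi\colon \omega\to (\R^k)^{s-1}$ given by $y\mapsto \bigl(h_{\tau_a}(y)-h_{\tau_1}(y)\bigr)_{a=2}^{s}$.

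To conclude, I would argue that for a generic choice of the vertex heights $h(v_{i,j})\in \interior\simplex^k$ (this being the precise meaning of ``generic image points'' in the definition of an affine prismatic map), every such $\Phi$ has maximal possible rank, so that $Z$ is either empty or an affine subspace of dimension $q-(s-1)k$, in agreement with \eqref{eq:intersections-prismatic-colorful}; the analogous statement applied to intersections with fewer factors yields the full general position property~\ref{PR3}. The essential point---and the only real content of the genericity step---is that because the simplices $\tau_1,\ldots,\tau_s$ are pairwise disjoint, the affine maps $h_{\tau_1},\ldots,h_{\tau_s}$ are determined by \emph{disjoint} subsets of the vertex heights and can thus be varied independently; as the heights range over the open parameter space $(\interior\simplex^k)^{r(m+1)}$, the values of $\Phi$ at the vertices of $\omega$ sweep out open subsets of $(\R^k)^{s-1}$ independently of one another, so $\Phi$ may be perturbed to any sufficiently close affine map. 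Transversality of $\Phi$ to $0$ is then an open and nonempty, hence open and dense, condition on the parameters, and intersecting the finitely many such conditions (indexed by the face $\omega$ of $\simplex^m$ and the tuple $(\tau_1,\ldots,\tau_s)$) preserves open density. The main technical obstacle lies precisely in this simultaneous genericity verification: the individual transversality calculations are elementary, but packaging the finitely many combinatorial cases into a clean ``generic choice'' statement requires careful bookkeeping.
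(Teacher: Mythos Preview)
Your proof is correct. Parts~(a) and~(b) match the paper's treatment exactly (both dispatch them as immediate from the definition). For part~(c), however, you take a genuinely different route from the paper.

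The paper argues as follows: it works directly with the affine spans $A_i=\aff(f(\tau_i))$ and observes that if one were allowed to perturb each vertex image $f(v_{i,j})$ freely in a $(q+k)$-dimensional neighborhood inside $\aff(\omega)\times\R^k$, general position would be automatic. Since prismatic maps only permit perturbing in the $k$-dimensional vertical fiber $\{u_j\}\times\R^k$, the paper supplements this with an \emph{imaginary} $q$-dimensional perturbation within $A_i$ itself; the second perturbation does not move $A_i$, so the vertical perturbation alone already suffices to bring the $A_i$ into general position.

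Your approach is more concrete: you recognize each $f(\tau_i)$ as the graph over $\omega$ of the affine height function $h_{\tau_i}=h\circ(p|_{\tau_i})^{-1}$, identify the intersection with the coincidence locus $Z=\Phi^{-1}(0)$ for the affine map $\Phi(y)=(h_{\tau_a}(y)-h_{\tau_1}(y))_{a=2}^s$, and then argue directly that transversality of $\Phi$ to $0$ is open and dense because disjointness of the $\tau_i$ means the $h_{\tau_i}$ depend on disjoint sets of vertex heights and hence vary independently. This makes the meaning of ``generic'' completely explicit and handles the bookkeeping over all faces $\omega$ and all tuples $(\tau_1,\ldots,\tau_s)$ by a routine finite intersection of open dense conditions. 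The paper's argument is terser, but your graph-and-transversality framing is arguably cleaner and more transparent about exactly which parameter choices must be avoided.
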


\begin{proof}
Part~\ref{height-function} (and therefore also \ref{consequence-PR1-PR2}) follows immediately from the definition of an affine prismatic map. 
The proof of \ref{prismatic-GP} is by induction on the dimension $q$. For $q=0$, the requirement is simply that we choose the image points $f(v_{i,j}$ to be pairwise distinct. More generally, given $q$-simplices $\tau_i$, $1\leq i\leq s$ as in \ref{prismatic-GP}, we observe that for each $i$ and each vertex $u_j$ of $\omega$, the affine subspaces $A_i:=\aff(f(\tau_i)$ and $\{u_j\} \times \R^k$ of $\aff(\omega)\times \R^k \subset \R^m \times \R^k =\R^d$ intersect transversely, at an angle bounded away from zero. Moreover, it is clear that we could 
achieve general position if we could perturb each image $f(v_{i,j})$ inside a small $(q+k)$-dimensional open set $U_{i,j}$ in $\aff(\omega) \times \R^k$ containing $f(v_{i,j})$. Since we want to keep the map $f$ prismatic, we are only allowed to perturb each $f(v_{i,j})$ inside a small $k$-dimensional open set $O_{i,j}$ in $\{u_j\}\times \R^k$. However, in order to analyze the intersections of the $f(\tau_i)$, we can imagine that we first perform this perturbation within $O_{i,j}$ and then further perturb each $f(v_{i,j})$ inside a small $q$-dimensional open set $Q_{i,j}$ inside $A_i$. Together these two perturbations would amount to perturbing $f(v_{i,j})$ in a $(q+k)$-dimensional open set, as desired. However, since the second perturbation does not affect $A_i$, the first one alone is sufficient to bring the subspaces $A_i$ into general position.
\end{proof}

\begin{definition}[\textbf{Prismatic Maps}]
\label{def:prismatic} Let $K$ denote either $\simplex^N$ or the colorful subcomplex $C$.

A PL map $f \colon K \rightarrow \simplex^{m} \times \interior \simplex^k$ is \define{prismatic} if it satisfies 
Conditions~\ref{PR1} (for all simplices $\tau$ in $K$), \ref{PR2}, and \ref{PR3}. 

A prismatic map is called \define{regular} if, in addition, it is of the special form \ref{REG}.
\end{definition}

Thus, a non-regular prismatic map does not need to respect the projection onto the base $\simplex^m$ (see Figure~\ref{fig_prismatic_non-regular} for an example), and this additional flexibility will be convenient for some techncial arguments in what follows.

\begin{figure}[!ht]
\begin{center}
\includegraphics[scale=1]{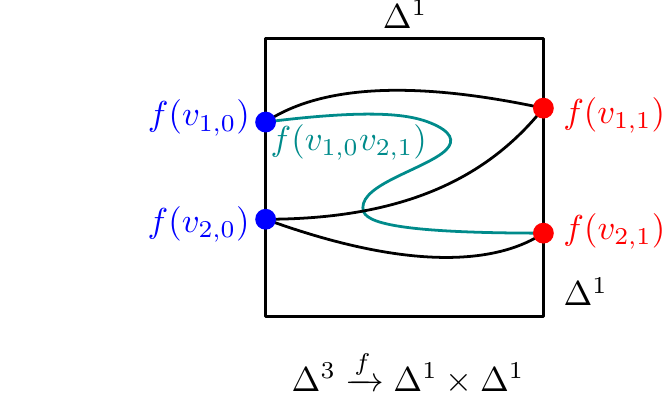}
\caption{For $k=1$, $r=2$, a prismatic map $C \to \simplex^1 \times \simplex^1$ that is non-regular; regularity is violated for the image of the edge $v_{1,0}v_{2,1}$.}
\label{fig_prismatic_non-regular}
\end{center}
\end{figure}

The following lemmas capture two key properties of prismatic maps.

\begin{lemma}
\label{lem_prismatic_map} 
Let $f\colon \sigma^N\to \sigma^m\times \interior \sigma^k \subset \R^{rk}$ be a prismatic map.
If $y\in f(\tau_1)\cap \dots \cap f(\tau_r)$ is an $r$-Tverberg point of $f$ then each simplex $\tau_i$ is colorful and of dimension $m$.
\end{lemma}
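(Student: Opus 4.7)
My plan is to combine the three defining properties \ref{PR1}, \ref{PR2}, \ref{PR3} of a prismatic map in sequence, pulling $y$ back to a common face of the base simplex and then counting vertices. For each $i \in \{1,\ldots,r\}$, I pick a preimage $x_i \in \interior{\tau_i}$ of $y$. By \ref{PR1}, writing $y = (a_i, b_i) \in \simplex^m \times \interior{\simplex^k}$, we have $a_i \in \interior{p(\tau_i)}$; since the first factor $a_i$ of $y$ is a single point and the relative interiors of distinct faces of $\simplex^m$ are pairwise disjoint, this forces
\[
\omega := p(\tau_1) = p(\tau_2) = \cdots = p(\tau_r).
\]
Let $q := \dim\omega$ and let $J \subseteq \{0,\ldots,m\}$ be the vertex-index set of $\omega$, so $|J| = q+1$.

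Next I run a color-class pigeonhole to force colorfulness. Let $n_{i,j}$ denote the number of vertices of $\tau_i$ lying in the color class $C_j$. Since $p(\tau_i) = \omega$ and $p(v_{s,j}) = u_j$, every vertex of $\tau_i$ lies in some $C_j$ with $j \in J$, and $n_{i,j} \geq 1$ for every such $j$ (else $u_j$ would be missing from $p(\tau_i)$). Pairwise disjointness of the $\tau_i$ gives $\sum_{i=1}^{r} n_{i,j} \leq |C_j| = r$ for each $j \in J$, and combined with $n_{i,j} \geq 1$ this forces $n_{i,j} = 1$ throughout. Hence each $\tau_i$ has exactly one vertex in each $C_j$ with $j \in J$ and no other vertices, so $\tau_i$ is colorful with $\dim \tau_i = q$.

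Finally I invoke the restricted general-position condition \ref{PR3}. Specialized to the face $\omega$ and $s = r$ (applicable precisely because the previous step has just certified that the $\tau_i$ are pairwise disjoint colorful simplices with $p(\tau_i) = \omega$), equation \eqref{eq:intersections-prismatic-colorful} reads
\[
\dim\big(f(\interior{\tau_1}) \cap \cdots \cap f(\interior{\tau_r})\big) = \max\{-1,\, q - (r-1)k\} = \max\{-1,\, q - m\}.
\]
This intersection contains $y$, so it is nonempty and hence $q \geq m$; combined with $q \leq m$ we obtain $q = m$, as required. The only nontrivial input is the general-position clause \ref{PR3}, and I anticipate no real obstacle: the argument is tight precisely because of two matching numerical coincidences built into the setup, namely $|C_j| = r$ (making the pigeonhole sharp) and $(r-1)k = m$ (making the codimension count yield the equality $q = m$).
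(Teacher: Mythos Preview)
Your proof is correct and follows essentially the same approach as the paper's: use \ref{PR1} to force all $p(\tau_i)$ to coincide with a common face $\omega$, a pigeonhole on color classes to make each $\tau_i$ colorful of dimension $q$, and then \ref{PR3} to force $q=m$. Your write-up is in fact slightly cleaner than the paper's, since you work directly with $y\in f(\interior{\tau_1})\cap\cdots\cap f(\interior{\tau_r})$ and thereby avoid the (unnecessary) appeal to ``induction on $q$'' that the paper mentions for the closed intersection.
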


\begin{proof}
Let $\omega$ be the unique face of $\simplex^m$ such that $y\in \interior\omega\times \interior \simplex^k$, and let $q=\dim\omega$. Without loss of generality (up to relabeling), we may assume that
the vertex set of $\omega$ is $\{u_0,\ldots,u_q\}$. 

By \ref{PR1}, all simplices $\tau_1,\dots,\tau_r$ must be contained in $p^{-1}(\omega)$, so their vertices are contained in $C_0\cup \dots \cup C_q$, which is a set of size $(q+1)r$. Moreover, every simplex $\tau_i$ must contain at least one vertex from each of $C_j$, $0\leq j\leq q$, otherwise (by \ref{PR1} again), the image $f(\tau_i)$ and hence $y$ would be contained in $\boundary \omega \times \interior \simplex^k$, contradicting the choice  of $\omega$. By straightforward counting, it follows that every $\tau_i$ contains exactly one vertex from each $C_j$, $0\leq j\leq q$, i.e., every $\tau_i$ is colorful. 

Therefore, by Condition~\ref{PR3}, we have $q=m$, since for $q<m$, \eqref{eq:intersections-prismatic-colorful} and induction on $q$ would imply that $f(\tau_1)\cap \dots \cap f(\tau_r)=\emptyset$.
\end{proof}

\begin{lemma}
\label{lem:extend-prismatic}
Every prismatic map $g\colon C\to \simplex^m\times \interior{\simplex}^k$ can be extended to a prismatic map $f\colon \simplex^N \to \simplex^m\times \interior{\simplex}^k$.
\end{lemma}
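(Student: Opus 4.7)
The plan is to extend $g$ inductively over the non-colorful simplices of $\simplex^N$ in order of increasing dimension. The crucial observation is that Conditions~\ref{PR2} and~\ref{PR3} in Definition~\ref{def:prismatic} only involve colorful simplices, all of which already lie in $C$; therefore \emph{any} PL extension of $g$ to the non-colorful simplices automatically inherits \ref{PR2} and \ref{PR3}, and the only property one actively has to maintain throughout the induction is the pointwise condition~\ref{PR1}. Since distinct $\ell$-simplices meet only on their common boundary (which has already been treated), one may process all non-colorful $\ell$-simplices in parallel at each stage.

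For the inductive step, suppose $g$ has been extended to a PL map defined on the union of $C$ with all non-colorful simplices of dimension less than $\ell$, satisfying~\ref{PR1} throughout. Let $\tau$ be a non-colorful $\ell$-simplex, and set $\omega := p(\tau)$; since $\tau$ is non-colorful, $p|_\tau$ is not injective, so $\dim \omega < \ell$ and in particular $\interior \omega \neq \emptyset$. I then pick an arbitrary point $q \in \interior\omega \times \interior\simplex^k$ and extend $g$ to $\tau$ by a cone construction: choose a PL subdivision $K'$ of $\partial\tau$ on which $g|_{\partial\tau}$ is simplexwise linear, form the cone subdivision $b_\tau \ast K'$ of $\tau$ with apex the barycenter $b_\tau$ of $\tau$, set $g(b_\tau) := q$, and extend linearly on each simplex of the cone subdivision.

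To verify~\ref{PR1} on $\tau$: every $x \in \interior\tau$ may be written as $\alpha b_\tau + (1-\alpha) y$ with $\alpha \in (0,1]$ and $y \in \partial\tau$, so the extension sends $x$ to $\alpha q + (1-\alpha) g(y)$. Writing $\pi_1\colon\simplex^m\times\simplex^k\to\simplex^m$ for the first-factor projection, the resulting first coordinate is $\alpha \pi_1(q) + (1-\alpha)\pi_1(g(y))$, a convex combination in which $\pi_1(q)\in \interior\omega$ carries strictly positive weight and $\pi_1(g(y)) \in \omega$ by the inductive hypothesis applied to whichever proper face of $\tau$ contains $y$; hence the first coordinate lies in $\interior\omega = p(\interior\tau)$. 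The vertical coordinate is a convex combination of two points of $\interior\simplex^k$, which is itself convex, so it remains in $\interior\simplex^k$.

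There is no real obstacle: the entire argument rests on the elementary fact that for a non-colorful simplex $\tau$ the projection $p(\tau)$ has strictly smaller dimension than $\tau$, which leaves room to place the apex image $q$ in the relative interior of $p(\tau)$ and lets the convexity of $\interior p(\tau)\times\interior\simplex^k$ carry the day. The only bookkeeping point is choosing the PL subdivisions $K'$ of the various boundaries $\partial\tau$ compatibly; this can be avoided altogether by fixing from the outset a single PL triangulation of $\simplex^N$ on which $g$ is simplexwise linear, relative to which all cone subdivisions are simultaneously PL.
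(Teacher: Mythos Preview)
Your proof is correct and follows essentially the same approach as the paper: induct over the non-colorful simplices by dimension and extend by coning from a point in $\interior\omega\times\interior\simplex^k$, using convexity. You supply more detail than the paper does---in particular an explicit verification of \ref{PR1} via the convex-combination argument---but the underlying construction is identical. (One small remark: the observation that $\dim\omega<\ell$ for non-colorful $\tau$ is not actually needed; the argument only uses that $\interior\omega\neq\emptyset$, which holds for any nonempty simplex.)
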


\begin{proof}
We can construct the extension by induction on the dimension of the faces $\tau$ of $\simplex^N\setminus C$: Suppose $f$ is already been defined on $\boundary \tau$.
Let $\omega=p(\tau)$. We can extend $f$ to $\interior \tau$ by coning, using that $\omega\times \simplex^k$ is convex. More precisely, fix a point $b\in \interior \tau$,
choose an arbitrary image $f(b)\in \interior \omega \times \interior \simplex^k$ and extend $f$ linearly.
\end{proof}

Using these two lemmas, the proof of Theorem~\ref{thm:counterexamples} reduces to the following:

\begin{proposition}
\label{prop:prismatic_map_C_no_Tverberg}
Suppose $r\geq 6$ is not a prime power and $k\geq 3$.
Then there exists a prismatic map $g\colon C\to \simplex^m \times \interior \simplex^k$ without $r$-Tverberg points.
\end{proposition}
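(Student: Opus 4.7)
The plan is to parallel the proof of Theorem~\ref{thm:VK-Tverberg-complete} in the prismatic setting, in four steps. Starting from any regular prismatic map $g_0\colon C\to \simplex^m\times\interior\simplex^k$ (for instance, the affine one of Example~\ref{ex_prismatic_map}), Lemma~\ref{lem_prismatic_map} restricts every $r$-Tverberg point of a prismatic map to arise from an ordered $r$-tuple of pairwise disjoint colorful $m$-simplices. Such tuples are naturally parametrized by $(\sym_r)^{m+1}$ (one permutation per color class, recording the assignment of its $r$ vertices to the $r$ simplices), and the diagonal $\sym_r$-action by simultaneous left multiplication is free. I define the \emph{prismatic intersection cocycle}
\[
\cocyc^{\textup{PRIS}}_g(\tau_1,\ldots,\tau_r) := g(\tau_1)\scap\cdots\scap g(\tau_r)\in\Z,
\]
extended $\sym_r$-equivariantly, playing the role of the intersection number cocycle of Lemma~\ref{lem:r-intersection-cycle}.

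Next I will interpret $\cocyc^{\textup{PRIS}}_g$ as a cocycle representative of the primary equivariant obstruction on a suitable free $\sym_r$-CW complex $X$ of dimension $d(r-1)=rm$, built from the products of pairwise disjoint colorful simplices inside $\delprod{C}{r}$. The crucial structural point, which I will verify by exploiting the join structure $C=C_0*\cdots*C_m$ with each $C_j$ a free $\sym_r$-set of size $r$, is that $X$ is $(d(r-1)-1)$-connected and therefore of type $E^{d(r-1)}_{\sym_r}$. Since $r$ is not a prime power, \"Ozaydin's Theorem~\ref{thm:ozaydin} yields an equivariant map $X\to_{\sym_r} S^{d(r-1)-1}$, and Theorem~\ref{thm:primary-obs} forces $[\cocyc^{\textup{PRIS}}_g]=0$. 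Consequently, $\cocyc^{\textup{PRIS}}_g$ is a finite $\Z$-linear combination of elementary equivariant coboundaries $\coboundary \I_{(\mu_1\times\sigma_2\times\cdots\times\sigma_r)\cdot\sym_r}$ with $\mu_1$ a colorful $(m-1)$-simplex and the $\sigma_i$ colorful $m$-simplices, pairwise disjoint from $\mu_1$ and from one another.

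I then kill $\cocyc^{\textup{PRIS}}_g$ one elementary summand at a time via \emph{prismatic finger moves}. The construction mirrors Lemma~\ref{lem:single-finger-move}, but is carried out entirely inside the vertical fibre $\{p(x)\}\times\interior\simplex^k\cong\interior\simplex^k$ over an interior point $x$ of $g(\mu_1)$: the linking spheres $S_2,\ldots,S_r$ and the connecting pipes are all chosen inside this slice, so the resulting map remains prismatic. This is possible because the vertical codimension is exactly $d-m=k\geq 3$, the codimension required by the original finger-move argument. After a finite sequence of such moves I obtain a prismatic map $\tilde g$ with $\cocyc^{\textup{PRIS}}_{\tilde g}=0$, so that for each ordered tuple of pairwise disjoint colorful $m$-simplices the Tverberg points in their images come in pairs of opposite signs.

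Finally I eliminate these pairs by the higher-multiplicity Whitney trick (Theorem~\ref{thm_whitney_trick_extended}) applied \emph{fibrewise}: the standard local situation of Lemma~\ref{lem:localization} is arranged inside a product neighborhood $U\times\interior\simplex^k$ with $U\subset\interior\simplex^m$ a small PL ball, and the ambient isotopies are taken of the form $\textup{id}_U\times\widehat H^i_t$, so that prismaticity is preserved and no new Tverberg points are introduced. The codimension hypothesis~\eqref{eq:codimension-3} is again satisfied by $k\geq 3$. The main obstacle is Step~2: verifying that the chosen subcomplex $X\subseteq\delprod{C}{r}$ is indeed of type $E^{d(r-1)}_{\sym_r}$, so that \"Ozaydin's theorem applies directly to the prismatic cocycle; this should boil down to a connectivity computation exploiting that $C$ is already the $(m{+}1)$-fold join $C_0*\cdots*C_m=E^m_{\sym_r}$. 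The fibred versions of the finger moves and of the Whitney trick in Steps~3--4 are, by contrast, essentially formal consequences of the product structure of the target and the vertical codimension $k\geq 3$.
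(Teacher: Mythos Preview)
Your overall strategy (obstruction theory $\Rightarrow$ finger moves $\Rightarrow$ Whitney trick, adapted to preserve prismaticity) is exactly the paper's strategy, but the implementation you sketch has genuine gaps in Steps~2--4, and the paper's argument differs from yours in a crucial way.

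\textbf{The configuration space is $m$-dimensional, not $d(r-1)$-dimensional.} The paper does \emph{not} work in a top-dimensional subcomplex of $\delprod{C}{r}$ with target $S^{d(r-1)-1}$. Instead it introduces the \emph{deleted fibre product}
\[
X=\{(x_1,\dots,x_r)\in\delprod{C}{r}\mid p(x_1)=\dots=p(x_r)\},
\]
which is only $m$-dimensional and is $\sym_r$-equivariantly homeomorphic to $(\sym_r)^{\ast(m+1)}$, hence already of type $E^m_{\sym_r}$. The target is $(\R^k)^r\setminus\thindiag{r}{\R^k}\simeq S^{m-1}$: for a \emph{regular} prismatic map $f=(p,h)$, only the height $h\colon C\to\simplex^k$ matters, and an $r$-Tverberg point of $f$ corresponds exactly to an intersection of $\gauss{h}(\tau)$ with $\thindiag{r}{\R^k}$ (Lemma~\ref{lem:nec-regular-prismatic-DPC}). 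So \"Ozaydin's theorem is applied at dimension $m$, not $rm$, and the connectivity statement you flag as ``the main obstacle'' is replaced by the trivial identification $X\cong(\sym_r)^{\ast(m+1)}$.

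\textbf{Your ``fibrewise'' finger moves and Whitney trick do not work as described.} For the finger move you propose to place the linking spheres $S_2,\dots,S_r$ inside a single vertical fibre $\{p(x)\}\times\interior\simplex^k\cong\R^k$. But the spheres in Lemma~\ref{lem:single-finger-move} are $m$-dimensional, and for $r\geq 3$ one has $m=(r-1)k>k$, so they simply do not fit. The paper's prismatic finger moves (Lemma~\ref{lem:prismatic_finger_moves}) instead work over a facet $\omega$ of the base: the $(m-1)$-spheres $S_1,\dots,S_{r-1}$ are placed in $\omega\times\interior\simplex^k$, and the effect on the intersection cocycle is controlled via the $r$-fold \emph{linking number} in $\partial(\simplex^m\times\simplex^k)$ (Lemma~\ref{lem_hom_linking_number}), which equals the $r$-fold intersection number inside the prism regardless of how one fills in the $m$-simplices. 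Likewise, ambient isotopies of the product form $\textup{id}_U\times\widehat H^i_t$ cannot remove a pair of Tverberg points: such an isotopy replaces the height function $h_i$ by $\widehat H^i_1\circ h_i$ \emph{uniformly} over $U$, so it applies the same vertical displacement at every base point and cannot separate the graphs locally at $w_x$ while keeping them together elsewhere. The paper instead applies the $r$-fold Whitney trick in the full $\R^{rk}$, inside a small ball in $\interior\simplex^m\times\interior\simplex^k$; the resulting map still satisfies \ref{PR1}--\ref{PR3} (hence is prismatic) but is generally \emph{not} of the form $(p,h)$ anymore---this is precisely why Definition~\ref{def:prismatic} allows non-regular prismatic maps.
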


\begin{proof}[Proof of  Theorem~\ref{thm:counterexamples} using Proposition~\ref{prop:prismatic_map_C_no_Tverberg}]
Let $r\geq 6$ is not a prime power, $k=3$, and let $g$ be the prismatic map whose existence is guaranteed by the proposition. 

By Lemma~\ref{lem:extend-prismatic}, we can extend $g$ to a prismatic map $f\colon \simplex^N\to \simplex^m \times \interior \simplex^3$, and by Lemma~\ref{lem_prismatic_map}, the map $f$ has no $r$-Tverberg points since  $g=f|_C$ does not have any, which proves the theorem.
\end{proof}

\begin{proof}[Proof of Corollary~\ref{cor:type-m}]
The corollary follows directly from Lemma~\ref{lem_prismatic_map} and the affine prismatic maps constructed in Example~\ref{ex_prismatic_map}.
\end{proof}

\subsection{A Deleted Product Criterion For Prismatic Maps}
\label{sec:primatic-obstruction}

Thus, it remains to prove Proposition~\ref{prop:prismatic_map_C_no_Tverberg}. 
For this purpose, we will need analogues, for the restricted class of prismatic maps, of the Deleted Product Criterion, of the $r$-fold Van Kampen obstruction, and of $r$-fold finger moves. We begin by defining a suitable configuration space. 

\paragraph{The prismatic configuration space \boldmath{$X\cong_{\sym_r} (\sym_r)^{\ast (m+1)}$}.} 

By Lemma~\ref{lem_prismatic_map}, the preimages of $r$-Tverberg points of a prismatic map 
are supported on $r$ pairwise disjoint colorful $m$-simplices $\tau_1,\dots,\tau_r$ in 
$C \subset \simplex^N$. Using the fixed labeling $C_j=\{v_{1,j},\dots,v_{r,j}\}$ of the $r$ vertices in each color class, we can encode such an $r$-tuple of simplices using an $(m+1)$-tuple of permutations $\pi_j \in \sym_r$.
Slightly more generally, we have the following:

\begin{observation}
\label{obs:faces-permutations}
Suppose that $J=\{j_0,\ldots, j_q\}$ is a $(q+1)$-element subset of $\{0,\dots,m\}$, $0\leq q\leq m$, and that 
$$
(\tau_1,\ldots,\tau_r)
$$
is an $r$-tuple of pairwise disjoint $q$-simplices in $C_{j_0}\ast \dots \ast C_{j_q}$. Such an $r$-tuple of simplices corresponds bijectively to a $(q+1)$-tuple
\begin{equation}
\label{eq:q-tuple-perm}
\boldpi=(\pi_{j_0},\ldots, \pi_{j_q})
\end{equation}
of permutations $\pi_j \in \sym_r$ given by
\begin{equation}
\label{eq:permutations-colorful_simplices}
\tau_i \cap C_j = v_{\pi_j(i),j}
\end{equation}
for $1\leq i\leq r$ and $j\in J$.
\end{observation}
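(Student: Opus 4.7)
The plan is a direct combinatorial verification: I will exhibit mutually inverse maps between the two sets described and check that they are well-defined bijections. Because this is essentially bookkeeping with colored vertex sets, the steps are short, but there is one structural point that has to be made first.

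First I would argue that for the simplices $\tau_i$ appearing on the left-hand side, the intersection $\tau_i \cap C_j$ is a \emph{single} vertex for each $j \in J$. Indeed, $\tau_i$ is a $q$-simplex (so has $q+1$ vertices) lying in $C_{j_0}\ast\cdots\ast C_{j_q}$; since the simplices of a join contain at most one vertex from each join factor, each $\tau_i$ has at most one vertex in each $C_{j_\ell}$, and a straightforward count (``$q+1$ vertices distributed among $q+1$ factors, at most one per factor'') forces exactly one vertex from each $C_{j_\ell}$. Thus the formula $\tau_i\cap C_j = v_{\pi_j(i),j}$ unambiguously defines a function $\pi_j\colon\{1,\ldots,r\}\to\{1,\ldots,r\}$ for every $j\in J$.

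Next I would verify that each $\pi_j$ is a permutation. This uses the pairwise disjointness hypothesis: if $i\neq i'$ and $\pi_j(i)=\pi_j(i')$, then $\tau_i$ and $\tau_{i'}$ would share the vertex $v_{\pi_j(i),j}$, contradicting disjointness. Hence $\pi_j$ is injective on a finite set of size $r$, therefore a bijection, i.e.\ $\pi_j\in\sym_r$. This produces the $(q+1)$-tuple $\boldsymbol{\pi}=(\pi_{j_0},\ldots,\pi_{j_q})$ of~\eqref{eq:q-tuple-perm} from $(\tau_1,\ldots,\tau_r)$.

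Conversely, I would define the inverse map by the explicit formula
\[
(\pi_{j_0},\ldots,\pi_{j_q})\ \longmapsto\ \bigl(\tau_1,\ldots,\tau_r\bigr),\quad \tau_i:=\{v_{\pi_{j_\ell}(i),j_\ell}:0\leq \ell\leq q\}.
\]
Each $\tau_i$ is then a colorful $q$-simplex in $C_{j_0}\ast\cdots\ast C_{j_q}$ by construction, and the fact that the $\pi_{j_\ell}$ are bijections guarantees pairwise disjointness: for $i\neq i'$ and any $\ell$, $\pi_{j_\ell}(i)\neq \pi_{j_\ell}(i')$ so $\tau_i$ and $\tau_{i'}$ share no vertex in $C_{j_\ell}$, and they share no vertex outside $\bigcup_{\ell}C_{j_\ell}$ at all. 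Finally, checking that the two assignments are mutual inverses is immediate from the defining identity $\tau_i\cap C_j = v_{\pi_j(i),j}$. This establishes the claimed bijection. The only ``obstacle'' is the initial observation about $q$-simplices having one vertex per color class; everything else is bookkeeping.
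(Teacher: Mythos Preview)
Your argument is correct. The paper does not actually supply a proof of this statement: it is labeled an \emph{Observation} and left to the reader as immediate bookkeeping. Your write-up spells out exactly the verification the reader is meant to do (the counting argument forcing one vertex per color class, injectivity of each $\pi_j$ from disjointness, and the explicit inverse), so there is nothing to compare.
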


\begin{observation}
\label{obs:faces_join}
Consider the $(m+1)$-fold join
\begin{equation*}
\label{eq:conf-space_prismatic}
(\sym_r)^{\ast (m+1)}
\end{equation*} 
(where we view the symmetric group $\sym_r$ as a zero-dimensional complex). 
Every point in $(\sym_r)^{\ast (m+1)}$ can be written as a formal convex combination
\begin{equation}
\label{eq:point-in-join}
\lambda_0 \pi_0 + \dots + \lambda_m \pi_m,
\end{equation}
with $\pi_j\in \sym_r$ and $\lambda_j \in [0,1]$, $\sum_{j=1}^m \lambda_j =1$.

For $0\leq q\leq m$, a $q$-dimensional face of $(\sym_r)^{\ast (m+1)}$ is uniquely described by a 
pair
\begin{equation}
\label{eq:face_join}
(J,\boldpi)
\end{equation} 
where $J=\{j_0,\dots,j_q\}\subseteq \{0,\dots,m\}$ and $\boldpi=(\pi_{j_0},\ldots, \pi_{j_q})$ as in \eqref{eq:q-tuple-perm}; the corresponding face consists of all formal convex combinations of the form $\sum_{j\in J} \lambda_j \pi_j$, $0\leq \lambda_j \leq 1$.
\end{observation}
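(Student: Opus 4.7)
The plan is to unpack the observation by appealing to the standard combinatorial description of iterated joins of simplicial complexes. Recall that if $K_0,\ldots,K_m$ are (nonempty) simplicial complexes, then the nonempty faces of the iterated join $K_0 \ast \cdots \ast K_m$ are in natural bijection with pairs $(J,(\sigma_j)_{j\in J})$, where $J \subseteq \{0,\ldots,m\}$ is a nonempty subset and each $\sigma_j$ is a nonempty face of $K_j$; the dimension of the corresponding face is $\sum_{j\in J}(\dim\sigma_j + 1) - 1$. This is a direct consequence of the definition of the join as a quotient of $K_0 \times \cdots \times K_m \times \Delta^m$ by the relations that collapse the $j$-th factor whenever the $j$-th barycentric coordinate vanishes.

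In our situation each $K_j = \sym_r$ is viewed as a $0$-dimensional complex, so its only nonempty faces are its vertices, \emph{i.e.}\ the individual permutations $\pi_j \in \sym_r$. Plugging this into the general description, a $q$-dimensional face of $(\sym_r)^{\ast(m+1)}$ corresponds uniquely to a choice of a $(q+1)$-element subset $J = \{j_0,\ldots,j_q\} \subseteq \{0,\ldots,m\}$ together with a single vertex $\pi_{j_\ell} \in \sym_r$ for each $j_\ell \in J$, \emph{i.e.}, to a pair $(J,\boldpi)$ with $\boldpi = (\pi_{j_0},\ldots,\pi_{j_q})$. The dimension count $\sum_{\ell=0}^{q}(0+1) - 1 = q$ matches.

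Finally, to identify the points of this face concretely, recall that a point of the join is represented by a formal convex combination $\lambda_0 x_0 + \cdots + \lambda_m x_m$ with $x_j \in K_j$ and $\sum_j \lambda_j = 1$, modulo the equivalence that the value of $x_j$ is immaterial when $\lambda_j = 0$. Restricting to the face $(J,\boldpi)$ forces $\lambda_j = 0$ for $j \notin J$ and $x_j = \pi_j$ for $j \in J$; hence every point of the face has a unique representation $\sum_{j\in J}\lambda_j \pi_j$ with $\lambda_j \in [0,1]$ and $\sum_{j\in J}\lambda_j = 1$, which is exactly the description in the observation. There is no serious obstacle here; the only thing to be careful about is bookkeeping of the equivalence relation in the join, to ensure that distinct pairs $(J,\boldpi)$ really do give distinct (and disjoint-interior) faces.
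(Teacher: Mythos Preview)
Your proposal is correct; it is simply a careful unpacking of the standard definition of an iterated join of discrete (zero-dimensional) complexes, and there are no gaps. The paper itself treats this statement as an observation and gives no proof, so your argument supplies more detail than the paper does rather than taking a different route.
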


\paragraph{The group action.} As remarked in Section~\ref{sec:introduction} (see the discussion preceding Theorem~\ref{thm:ozaydin}), the join $(\sym_r)^{\ast (m+1)}$ is an $E_{\sym_r}^m$-space, i.e., it is an $m$-dimensional and $(m-1)$-connected space on which the group $\sym_r$ acts freely, by multiplication on the right,
\begin{equation}
\label{eq:right-action-sym_r-join}
(\lambda_0 \pi_0 + \dots + \lambda_m \pi_m)\cdot \pi= \lambda_0 (\pi_0 \pi) + \dots + \lambda_m (\pi_m \pi),
\end{equation}
for $\pi,\pi_0,\dots,\pi_m\in \sym_r$ and $\lambda_0,\dots,\lambda_m\in [0,1]$.
\medskip

There is an alternative way of looking at this space: Consider the space
\begin{equation}
\label{eq:fiber-prod}
X:=\{\boldx=(x_1,\ldots,x_r)\in \delprod{C}{r} \mid p(x_1)=\dots =p(x_r)\},
\end{equation}
on which $\sym_r$ acts by permuting components.\footnote{The definition of $X$ is closely related to the standard \emph{pullback} or \emph{fiber product}  of $r$ copies of $C$ over the common base space $\simplex_m$, except for the additional condition that we only take $r$-tuples of points supported in pairwise disjoint simplices; one might call $X$ the \emph{deleted $r$-fold fiber product} of $C$.}  The space $X$ is a simplicial complex, whose faces can be described as follows: For $0\leq q\leq m$, a $q$-dimensional simplex of $X$ is of the form
\begin{equation}
\label{eq:face-X}
\tau=\tau_1\times_p \ldots \times_p \tau_r :=\{\boldx=(x_1,\ldots,x_r) \in \tau_1\times \dots \times \tau_r \mid p(x_1)=\dots =p(x_r)\},
\end{equation}
where 
$
(\tau_1,\ldots,\tau_r)
$
is an $r$-tuple of pairwise disjoint $q$-simplices of $C$, each of which projects via $p$ onto the same $q$-dimensional face $\omega$ of the base space $\simplex^m$. 

\paragraph{Orientations.} In what follows, unless indicated otherwise, we consider the simplices $\tau_i$ and $\tau=\tau_1\times_p \ldots \times_p \tau_r$ to be \emph{oriented compatibly}, via the projection $p$ (which restricts to an isomorphism on each of these simplices) with a given orientation of the corresponding face $\omega$ of the base $\simplex^m$; such an orientation can be described in terms of an ordering of the set $J$ indexing the vertices of $\omega$ and the corresponding color classes $C_j$, $j\in J$. 

\begin{lemma} 
\label{lem:fiber-prod}
There is a canonical equivariant simplicial homeomorphism
\[
\Phi\colon (\sym_r)^{\ast (m+1)} \cong_{\sym_r} X
\]
which sends 
$\lambda_0 \pi_0 + \dots + \lambda_m \pi_m$ to $\boldx=(x_1,\ldots,x_r)$ given by $x_i=\sum_{j=0}^m \lambda_j v_{\pi_j(i),j}$.
\end{lemma}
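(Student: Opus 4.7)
The plan is to check that the formula for $\Phi$ is well-defined on all of $(\sym_r)^{\ast(m+1)}$, exhibits the claimed face correspondence between the two simplicial structures (so that it is automatically simplicial), and is $\sym_r$-equivariant; bijectivity and the fact that $\Phi$ is a homeomorphism then follow formally.

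First I would verify that $\Phi$ sends the point $\lambda_0 \pi_0 + \dots + \lambda_m \pi_m$ into $X$. Setting $x_i := \sum_{j=0}^m \lambda_j v_{\pi_j(i),j}$, the projection satisfies $p(x_i)=\sum_j \lambda_j p(v_{\pi_j(i),j})=\sum_j \lambda_j u_j$, which is independent of $i$; so the equality $p(x_1)=\dots=p(x_r)$ holds. For the deleted-product condition, let $J=\{j\mid \lambda_j>0\}$; then $\supp(x_i)=\{v_{\pi_j(i),j}\mid j\in J\}$, and for $i\neq i'$ the vertices $v_{\pi_j(i),j}$ and $v_{\pi_j(i'),j}$ are distinct (since $\pi_j$ is a permutation), so the supports of $x_1,\ldots,x_r$ are pairwise disjoint. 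Hence $\boldx=(x_1,\ldots,x_r)\in \delprod{C}{r}$, and $\Phi$ takes values in $X$.

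Next I would match the two face structures. A $q$-face of the join is given by a pair $(J,\boldpi)$ as in \eqref{eq:face_join}, parametrizing the points with $\lambda_j=0$ for $j\notin J$. Under $\Phi$, these points map precisely to the $q$-simplex $\tau_1\times_p \ldots \times_p \tau_r$ of $X$, where the $r$-tuple $(\tau_1,\ldots,\tau_r)$ of pairwise disjoint colorful $q$-simplices in $C_{j_0}\ast\dots\ast C_{j_q}$ is the one associated to $(J,\boldpi)$ via \eqref{eq:permutations-colorful_simplices}. On this face the formula for $\Phi$ is affine in the barycentric coordinates $(\lambda_j)_{j\in J}$, and the correspondence $(J,\boldpi)\mapsto (\tau_1,\ldots,\tau_r)$ is bijective by Observation~\ref{obs:faces-permutations} combined with Observation~\ref{obs:faces_join}. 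Therefore $\Phi$ restricts to an affine isomorphism on each face of $(\sym_r)^{\ast(m+1)}$ onto the corresponding face of $X$, and is a simplicial bijection between the two complexes; in particular it is a homeomorphism.

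Finally I would check equivariance. For $\pi\in\sym_r$, the right action on the join gives $(\lambda_0\pi_0+\dots+\lambda_m\pi_m)\cdot\pi=\lambda_0(\pi_0\pi)+\dots+\lambda_m(\pi_m\pi)$, whose image under $\Phi$ has $i$-th component
\[
\sum_{j=0}^m \lambda_j v_{(\pi_j\pi)(i),j}=\sum_{j=0}^m \lambda_j v_{\pi_j(\pi(i)),j}=x_{\pi(i)},
\]
which is exactly the $i$-th component of $\boldx\cdot\pi$ under the coordinate-permuting action on $X$. Hence $\Phi((\lambda_0\pi_0+\dots)\cdot\pi)=\Phi(\lambda_0\pi_0+\dots)\cdot\pi$, so $\Phi$ is $\sym_r$-equivariant. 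No step in this argument is really difficult; the only point that requires a little care is tracking the bijection between faces $(J,\boldpi)$ of the join and faces $\tau_1\times_p\ldots\times_p\tau_r$ of $X$ consistently with the chosen labelings $C_j=\{v_{1,j},\ldots,v_{r,j}\}$.
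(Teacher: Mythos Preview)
Your proof is correct and essentially follows the same approach as the paper: the paper writes down the inverse $\Phi^{-1}$ explicitly (reading off the barycentric coordinates $\lambda_j$ and the permutations $\pi_j$ from a point $\boldx\in X$) and checks continuity and equivariance via the identical computation $x_{\pi(i)}=\sum_j \lambda_j v_{\pi_j(\pi(i)),j}$, while you verify well-definedness of $\Phi$ directly and obtain bijectivity from the face correspondence. The arguments are two sides of the same routine verification.
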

\begin{proof}
For $\boldx=(x_1,\ldots,x_r) \in X$, consider the face $\omega$ of $\simplex^m$ that supports the projections $p(x_i)$,
and let $\{u_{j}\mid j\in J\}$ be the vertex set of $\omega$. We can write $p(x_1)=\dots=p(x_r)=\sum_{j\in J}\lambda_j u_j$, where 
$\lambda_j  \in (0,1)$ for $j\in J$ and $\sum_j\lambda_j$. Then each $x_i$ is supported on a $(|J|-1)$-dimensional colorful simplex $\tau_i$ with $\tau_i \cap C_j=1$ for $j\in J$; since the $x_i$ have disjoint supports, there are permutations $\pi_j \in \sym_r$, $j\in J$, defined by Equation~\eqref{eq:permutations-colorful_simplices}, such that $x_i=\sum_{j\in J} \lambda_j v_{\pi_j(i),j}$. This defines
$\Phi^{-1}(\boldx)=(J,\boldpi)$, where $\boldpi=(\pi_j\mid j\in J)$. 

It is straightforward to verify that $\Phi^{-1}$ is continuous (the $\lambda_j$ are the barycentric coordinates of each $x_i$), and 
$\Phi$ is equivariant since $x_{\pi(i)}=\sum_{j=0}^m \lambda_j v_{\pi_j(\pi(i)),j}$.
\end{proof}
\medskip
Using this configuration space, we obtain, as an analogue of Lemma~\ref{lem:delprod-necessary}, 
the following necessary condition for the existence of \emph{regular} prismatic maps without Tverberg points:

\begin{lemma}
\label{lem:nec-regular-prismatic-DPC}
Suppose $f\colon C \to \simplex^m\times \interior \simplex^k \subset \R^{rk}$ is a regular prismatic map and $h\colon C\to \interior \simplex^k\subset \R^k$ is the corresponding height function, i.e., $f(x)=(p(x),h(x))$ for $x\in C$. Consider the induced map
$$\gauss{h} \colon X\to (\R^k)^r,\qquad \gauss{h}(x_1,\ldots,x_r)=\big(h(x_1),\dots,h(x_r)\big).$$
\begin{enumerate}[label=\textup{(\alph*)}]
\item Suppose that $y \in f(\tau_1)\cap \dots \cap f(\tau_r) \subset \simplex^m \times \interior \simplex^k$ is an $r$-Tverberg
point of $f$, and that $z$ is the projection of $y$ onto $\simplex^k$ (i.e., $y=(w,z)$ for some $w\in \simplex^m$). Then
the $r$-fold intersection point $y$ corresponds to the pairwise intersection point $(z,\dots,z)$ of $\gauss{h}(\tau)$ 
with the thin diagonal $\thindiag{r}{\R^k}$, where $\tau=\tau_1\times_p \ldots \times_p \tau_r$ is the $m$-simplex of $X$ corresponding to the $\tau_i$. 

\item Moreover, up to a universal sign $\primaticsign$ depending only on $r$ and $k$, the intersection signs at these points agree,
 i.e.,
\begin{equation}
\label{eq:r-sign=pair-sign-prismatic}
\sign_{(z,\dots,z)}\big(\gauss{h}(\tau),\thindiag{r}{\R^k}\big) =\primaticsign \cdot \isign{y}{f(\tau_1)}{f(\tau_r)}.
\end{equation}

\item In particular, if $f$ has no $r$-Tverberg point, then there is an equivariant map
\begin{equation}
\gauss{h} \colon X \to_{\sym_r} (\R^k)^r \setminus \thindiag{r}{\R^k} \simeq_{\sym_r} S^{m-1}.
\end{equation}
\end{enumerate}
\end{lemma}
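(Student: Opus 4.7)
The plan is to exploit the regularity $f = (p, h)$ to reduce $r$-fold intersections of $f$ in $\R^{rk}$ to ``pairwise'' intersections of $\tilde h$ with the thin diagonal $\thindiag{r}{\R^k}$ in $(\R^k)^r$. Part (a) is essentially immediate from this: given $\boldx = (x_1,\ldots,x_r) \in \tau = \tau_1 \times_p \cdots \times_p \tau_r$, the definition of the fiber product forces $p(x_1) = \cdots = p(x_r) = w$ for some $w$, and regularity then gives $f(x_i) = (w, h(x_i))$, so $f(x_1) = \cdots = f(x_r)$ iff $h(x_1) = \cdots = h(x_r)$. This establishes the bijection $y = (w,z) \leftrightarrow (z,\ldots,z)$ between $r$-Tverberg points of $f$ in $f(\tau_1) \cap \cdots \cap f(\tau_r)$ and points of $\tilde h(\tau) \cap \thindiag{r}{\R^k}$.

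For part (b), I would first use Lemma~\ref{lem_prismatic_map} together with the colorfulness of each $\tau_i$ to observe that $\omega := p(\tau_1) = \cdots = p(\tau_r)$ equals $\simplex^m$ and that $p|_{\tau_i}\colon \tau_i \to \omega$ is a PL bijection; writing $H_i := h \circ (p|_{\tau_i})^{-1}\colon \omega \to \R^k$, the image $f(\tau_i)$ is the graph of $H_i$ over $\omega$, and $\tilde h(\tau) = \{(H_1(a),\ldots,H_r(a)) : a \in \omega\}$. Invoking Lemma~\ref{lem:prop-inters-prod}~(d) with $d = rk$ and all $m_i = (r-1)k$ yields
\[
\sign_y^{\R^{rk}}(f(\tau_1),\ldots,f(\tau_r)) = \varepsilon_{r,k} \cdot \sign_{(y,\ldots,y)}^{(\R^{rk})^r}\bigl(f(\tau_1) \times \cdots \times f(\tau_r),\; \thindiag{r}{\R^{rk}}\bigr).
\]
The right-hand side is then converted to $\sign_{(z,\ldots,z)}^{(\R^k)^r}(\tilde h(\tau), \thindiag{r}{\R^k})$ by a block-matrix determinant calculation: under the shuffle isomorphism $(\R^{rk})^r \cong (\R^m)^r \times (\R^k)^r$ (which itself introduces a sign depending only on $r, m, k$), the thin diagonal decomposes as $\thindiag{r}{\R^{rk}} \cong \thindiag{r}{\R^m} \times \thindiag{r}{\R^k}$, and the graph bases provided by the $H_i$ bring the defining determinant into block-triangular form whose ``base'' block contributes only a universal sign while the ``fiber'' block is precisely the determinant that computes $\sign_{(z,\ldots,z)}(\tilde h(\tau), \thindiag{r}{\R^k})$. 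I would absorb all universal signs into $\primaticsign$ to obtain \eqref{eq:r-sign=pair-sign-prismatic}.

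For part (c), the argument from (a), but applied to cells $\tau_1 \times_p \cdots \times_p \tau_r$ of $X$ of arbitrary dimension, shows that $(x_1,\ldots,x_r) \in \tilde h^{-1}(\thindiag{r}{\R^k})$ iff $f(x_1) = \cdots = f(x_r)$; since the supports of the $x_i$ are pairwise disjoint by definition of $\delprod{C}{r}$, this common image is an $r$-Tverberg point of $f$. Hence if $f$ has no $r$-Tverberg points then $\tilde h$ takes $X$ into $(\R^k)^r \setminus \thindiag{r}{\R^k}$. Equivariance of $\tilde h$ under the coordinate-permutation actions is immediate from its componentwise definition, and the homotopy equivalence $(\R^k)^r \setminus \thindiag{r}{\R^k} \simeq_{\sym_r} S^{k(r-1)-1} = S^{m-1}$ is the same one recalled in the proof of Lemma~\ref{lem:delprod-necessary} (with $d$ there replaced by $k$).

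The main obstacle I expect is the sign bookkeeping in part (b): although it is purely a linear-algebra determinant calculation, verifying that the residual sign depends only on $r$ and $k$ (and not on $f$ or on the particular simplices) requires tracking orientations carefully through the shuffle isomorphism and the factorization of the thin diagonals, together with some permutation accounting analogous to (but simpler than) the one in the proof of Lemma~\ref{lem:prop-inters-prod}~(d). Parts (a) and (c), by contrast, are essentially formal consequences of the defining identity $f = (p,h)$ of regular prismatic maps.
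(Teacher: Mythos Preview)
Your proposal is correct and follows essentially the same approach as the paper: parts~(a) and~(c) are handled identically (via the identity $f=(p,h)$ and the definition of the fiber product $X$), and for part~(b) both you and the paper first invoke Lemma~\ref{lem:prop-inters-prod}~(d) to pass to the pairwise intersection sign in $(\R^{rk})^r$ and then reduce this to the sign in $(\R^k)^r$ by a block-determinant calculation using the graph structure $f(\tau_i)=\mathrm{graph}(H_i)$. The only difference is presentational: where you describe the reduction abstractly via a shuffle isomorphism $(\R^{rk})^r\cong (\R^m)^r\times(\R^k)^r$ and block-triangular structure, the paper writes out the $(rd\times rd)$-matrix explicitly and performs concrete row/column operations to transform it into $\mathrm{diag}(I_{rm},B)$, tracking the resulting universal sign as $\primaticsign=\varepsilon_{r,k}\cdot(-1)^{k^2(r-1)\binom{r+1}{2}+k(r-1)}$.
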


\begin{proof}
This is analogous to the proof of Lemma~\ref{lem:delprod-necessary}. It is clear that the map $\gauss{h}$ is equivariant.
Since $h$ is a prismatic map, any $r$-Tverberg point of $f$ occurs as an $r$-intersection point of pairwise disjoint $m$-simplices 
$\tau_1,\dots,\tau_r$. Moreover, since $f=(p,h)$ is regular, we have $y=f(x_1)=\dots=f(x_r)$ for $x_i \in \tau_i$, $1\leq i\leq r$,
if and only if $p(x_1)=\dots=p(x_r)$ and $z=h(x_1)=\dots=h(x_r)$, or equivalently  $(x_1,\dots,x_r)\in \tau=\tau_1\times_p \dots \times_p \tau_r \subset X$ and $(z,\dots,z)\in \gauss{h}(\tau)\cap \thindiag{r}{\R^k}$. This proves (a) and hence (c), since, as before, we have an equivariant homotopy equivalence $\rho\colon (\R^k)^r \setminus \thindiag{r}{\R^k} \simeq_{\sym_r} S^{(r-1)k-1}=S^{m-1}$.

It remains to prove (b). Since intersections signs are completed locally, it suffices to consider the case that the height function $h$ and hence $f=(p,h)$ are simplexwise affine maps, and that the intersection $f(\tau_1)\cap \dots \cap f(\tau_r)$ consists of a single point $y=(w,z)$. 
We may assume that the base $\simplex^m$ has the standard orientation given by the identity matrix $I_m $, and that the orientation 
of each affine simplex $f(\tau_i)$ is given by $\smash{\big[{A_i \atop I_m}\big]}$, where $A_i\in \R^{k\times m}$ is the matrix describing the 
linear part of the affine function $h|_{\tau_i}$. Thus, the orientation of $\tilde{h}(\tau)$ is given by the matrix $[A_1|\dots | A_r]^{\top} \in \R^{rk\times m}$, and the pairwise intersection sign of $\tilde{h}(\tau)$ and  $\thindiag{r}{\R^k}$ equals the determinant of the matrix
\begin{equation*}
\label{eq:pairwise-prismatic-sign}
B:= \left[
\begin{array}{c}
A_1 \quad I_k\\
A_2 \quad I_k\\
\vdots \\
A_r \quad I_k
\end{array}
\right] \in  \R^{rk\times rk}
\end{equation*}

Moreover, by Lemma~\ref{lem:prop-inters-prod}~(d), we have the identity
\begin{equation}
\label{eq:primatic-sign-intermediate-1}
\isign{y}{f(\tau_1)}{f(\tau_r)}=\epsilon_{r,k}\cdot \sign_{(y,\dots,y)}(f(\tau_1)\times\dots \times f(\tau_r),
\thindiag{r}{\R^d})
\end{equation}
between the $r$-fold intersection sign in $\R^d$ and the pairwise intersection sign with the thin diagonal in $(\R^d)^r$, where
$\epsilon_{r,k}$ is the universal sign introduced in \eqref{eq_epsilon}. Furthermore, the pairwise intersection sign on the right-hand side of 
\eqref{eq:primatic-sign-intermediate-1} is equal to the determinant of the matrix
\begin{equation*}
\label{eq:primatic-sign-intermediate-2}
A:=\left[
\begin{array}{cccccc}
A_1 &&&&I_k& \\
I_m &&&&& I_m \\
& A_2 &&&I_k & \\
& I_m &&&&I_m \\
&& \ddots &&&\\
&&& A_r & I_k & \\
&&& I_m & & I_m
\end{array}
\right] \in \R^{rd\times rd}
\end{equation*}
We can modify this matrix $A$, without changing its determinant, to obtain the matrices $A'$ and $A''$ described below, as follows:
First we get $A'$ by successively subtracting the columns of $A$ corresponding to each submatrix $A_i$ from the last $m$ columns.
Next, we eliminate the copies of the $A_i$ appearing in the left part of $A'$ by subtracting suitable linear combinations of the rows corresponding to the remaining copies of $I_m$. In this way, we obtain $A''$, where
\begin{equation*}
\label{eq:primatic-sign-intermediate-3}
A'=\left[
\begin{array}{cccccc}
A_1 &&&&I_k& -A_1\\
I_m &&&&& 0 \\
& A_2 &&&I_k & -A_2\\
& I_m &&&& 0  \\
&& \ddots &&&\\
&&& A_r & I_k & -A_r\\
&&& I_m & & 0
\end{array}
\right], \quad \textrm{and} \quad 
A''=\left[
\begin{array}{cccccc}
0 &&&&I_k& -A_1\\
I_m &&&&& 0 \\
& 0 &&&I_k & -A_2\\
& I_m &&&& 0  \\
&& \ddots &&&\\
&&& 0 & I_k & -A_r\\
&&& I_m & & 0
\end{array}
\right]  
\end{equation*}
Finally, by multiplying the last $m=k(r-1)$ columns of $A''$ by $-1$ and by a total of $km\binom{r+1}{2}$ row transpositions, we can transform $A''$ into
\begin{equation*}
\label{eq:primatic-sign-intermediate-4}
A'''=\left[
\begin{array}{cccccc}
I_m &&&&& \\
 &I_m &&&& \\
&& \ddots &&&\\
&&& I_m && \\
&&&& I_k & A_1\\
&&&& I_k & A_2\\
&&&& \vdots & \\
&&&& I_k & A_r\\
\end{array}
\right]  
= 
\left[
\begin{array}{cc}
I_{rm} & \\
& B
\end{array}
\right]  
\end{equation*}
Thus, 
$$\isign{y}{f(\tau_1)}{f(\tau_r)}=\epsilon_{r,k} \det A = \primaticsign \det A'''= \primaticsign \det B= \primaticsign \sign_{(z,\dots,z)}\big(\gauss{h}(\tau),\thindiag{r}{\R^k}\big),$$ 
as we set out to show, where
\begin{equation}
\label{eq:prismatic-sign}
\primaticsign := \epsilon_{r,k}\cdot (-1)^{k^2(r-1)\binom{r+1}{2}+k(r-1)}.
\end{equation} 
\end{proof}

Moreover, for codimension $k\geq 3$, we will prove the following partial converse of Lemma~\ref{lem:nec-regular-prismatic-DPC}: 

\begin{theorem}[\textbf{Sufficiency of the Prismatic Deleted Product Criterion}]
\label{thm_prismatic}
Let $r\geq2$, $N=(rk+1)(r-1)$ and $m=(r-1)k$. 

If $k\geq 3$ and if there exists a $\sym_r$-equivariant map
\begin{equation}
\label{eq:equivariant_map_prismatic}
X 
\rightarrow_{\sym_r} (\R^k)^r \setminus \thindiag{r}{\R^k} \simeq_{\sym_r} S^{m-1} 
\end{equation}
then there exists a prismatic map 
\[
C\to \simplex^m \times \interior \simplex^k
\]
without $r$-Tverberg point.
\end{theorem}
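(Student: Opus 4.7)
The plan is to mimic, in the prismatic setting, the three-step proof of Theorem~\ref{thm:VK-Tverberg-complete}. Specifically, \textbf{(i)} reformulate the hypothesis as the vanishing of a primary equivariant obstruction class on $X$; \textbf{(ii)} starting from an arbitrary regular prismatic map, use ``prismatic finger moves'' to reduce the associated intersection cocycle to zero; and \textbf{(iii)} apply the higher-multiplicity Whitney trick to cancel the remaining $r$-Tverberg points, which by step~(ii) occur in pairs of opposite sign.

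For step~(i), the space $X\cong (\sym_r)^{\ast (m+1)}$ is an $m$-dimensional, $(m-1)$-connected free $\sym_r$-CW complex, and the target $(\R^k)^r\setminus\thindiag{r}{\R^k}\simeq_{\sym_r}S^{m-1}$ is $(m-2)$-connected. Equivariant obstruction theory (Theorem~\ref{thm:primary-obs}) produces a primary class $\obs(X)\in H^m_{\sym_r}(X;\calZ)$ whose vanishing is equivalent to the existence of the equivariant map~\eqref{eq:equivariant_map_prismatic}. Pick any regular prismatic map $g_0=(p,h_0)\colon C\to \simplex^m\times\interior\simplex^k$ in general position (e.g., the affine map of Example~\ref{ex_prismatic_map}). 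By property~\ref{PR3}, the induced map $\gauss{h}_0\colon X\to(\R^k)^r$ avoids $\thindiag{r}{\R^k}$ on $\skel{m-1}{X}$, so it represents $\obs(X)$, and by the argument of Lemma~\ref{lem:r-intersection-cycle} together with Lemma~\ref{lem:nec-regular-prismatic-DPC}(b), one obtains an \emph{intersection number cocycle} $\cocyc_{g_0}\in Z^m_{\sym_r}(X;\calZ)$ whose value on each oriented $m$-cell $\tau=\tau_1\times_p\cdots\times_p\tau_r$ equals, up to the universal sign $\primaticsign$, the signed count of $r$-Tverberg points of $g_0$ supported on $(\tau_1,\ldots,\tau_r)$.

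For step~(ii), the equivariant coboundaries $B^m_{\sym_r}(X;\calZ)$ are generated by elementary ones $\delta\I_{\mu\cdot\sym_r}$, where $\mu=\mu_1\times_p\cdots\times_p\mu_r$ ranges over orbit representatives of oriented $(m-1)$-cells of $X$. Such a $\mu$ projects to an $(m-1)$-face $\omega=\simplex^m\setminus\{u_{j_0}\}$, and the $m$-cells of $X$ containing $\mu$ are parameterized by a permutation $\pi\in\sym_r$ determining how the vertices of $C_{j_0}$ are distributed among $\mu_1,\ldots,\mu_r$. The required \emph{prismatic finger move} is analogous to the classical $r$-fold finger move of Lemma~\ref{lem:single-finger-move}, but carried out entirely in the height direction: choose an interior point $w\in\omega$, let $y_i\in\mu_i$ be the unique points with $p(y_i)=w$, and perform a localized piping of $h$ near $y_2,\ldots,y_r$ along small $k$-spheres in $\interior\simplex^k$ linked with $h_0(\mu_1)$, exactly mirroring the cone-and-sphere construction in the proof of Lemma~\ref{lem:single-finger-move} but within a single fiber of $p$. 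One verifies that the resulting regular prismatic map $g$ satisfies $\cocyc_g=\cocyc_{g_0}\pm\delta\I_{\mu\cdot\sym_r}$. Since $\obs(X)=[\cocyc_{g_0}]=0$, finitely many such moves yield a regular prismatic map $g$ with $\cocyc_g=0$ as a cocycle, meaning that the $r$-Tverberg points supported on each orbit of $m$-cells come in pairs of opposite sign.

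For step~(iii), we cancel these pairs using Theorem~\ref{thm_whitney_trick_extended}. The codimension requirement is met, since the images $g(\sigma_i)$ of colorful $m$-simplices in $\R^{rk}$ satisfy $rk-m=k\geq 3$. To preserve prismaticity, one localizes the Whitney trick in a small PL ball contained in $\interior\simplex^m\times\interior\simplex^k$ around the canceling pair, and chooses the ambient isotopies $H^2,\ldots,H^r$ to act only along the height coordinate (i.e., along fibers of $p$), so that the piping and unpiping constructions of Section~\ref{sec_whitney_trick} never violate Condition~\ref{PR1}. Equivalently, each canceling pair corresponds via Lemma~\ref{lem:nec-regular-prismatic-DPC} to a pair of opposite-sign transverse intersections of $\gauss{h}(\tau)$ with $\thindiag{r}{\R^k}$ in $(\R^k)^r$ with codimensions $k$ and $(r-1)k$, both $\geq 3$, so one may apply the classical two-fold PL Whitney trick there and pull back the resulting isotopy to a prismatic modification of $h$. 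I expect the main obstacle to be step~(ii): the elementary coboundary $\delta\I_{\mu\cdot\sym_r}$ simultaneously touches all $r!$ cells $\tau_\pi$ containing $\mu$, so one must check that a single local modification of $h$ near $w$ produces the correct sign on every $\tau_\pi$ and interacts correctly with the full $\sym_r$-action. Ensuring this combinatorial bookkeeping matches the coboundary sign-for-sign is the heart of the argument.
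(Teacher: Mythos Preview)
Your three-step outline matches the paper's strategy, and step~(i) is correct. In step~(iii), localizing the $r$-fold Whitney trick in a small ball inside $\interior\simplex^m\times\interior\simplex^k$ is exactly what the paper does; this alone preserves Conditions~\ref{PR1}--\ref{PR3}, so your additional requirement that the isotopies act ``only along fibers of $p$'' is unnecessary (and not obviously achievable). This is also why the theorem only claims a prismatic map, not a regular one. Your alternative of applying the $2$-fold Whitney trick to $\gauss{h}(\tau)$ and $\thindiag{r}{\R^k}$ in $(\R^k)^r$ does not work as stated: an ambient isotopy of $(\R^k)^r$ will in general destroy the product structure of $\gauss{h}$ and hence cannot be pulled back to a modification of $h$.

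The genuine gap is step~(ii). Your proposed move---piping $h$ near single points $y_i$ with ``small $k$-spheres in $\interior\simplex^k$'' inside one fiber of $p$---does not make sense dimensionally (in a single fiber the relevant pieces are $0$-dimensional) and does not realize the elementary coboundary. The paper's prismatic finger move (Lemma~\ref{lem:prismatic_finger_moves}) is structurally different from the non-prismatic one: given the $(m-1)$-cell $\eta=\eta_1\times_p\cdots\times_p\eta_r$ over an $(m-1)$-face $\omega$ of $\simplex^m$, one works in the entire slab $\omega\times\interior\simplex^k$ (dimension $rk-1$), not a single fiber. There one embeds $(m-1)$-spheres $S_1,\dots,S_{r-1}$ whose oriented intersection is a $(k-1)$-sphere linking $f(\eta_r)$ once with a prescribed sign, pipes $f(\eta_i)$ to $S_i$ for $1\le i\le r-1$, and then re-extends $f$ over every affected colorful $m$-simplex by coning. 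The modification thus happens on the $(m-1)$-skeleton, not on the $m$-simplices.

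The verification that this changes $\cocyc_f$ by exactly $\delta\I_{\eta\cdot\sym_r}$ relies on a new ingredient you are missing: the \emph{$r$-fold linking number} and Lemma~\ref{lem_hom_linking_number}, which identifies $f'(\tau_1)\scap\cdots\scap f'(\tau_r)$ in the ball $\simplex^m\times\simplex^k$ with the linking number $\linking(f'(\partial\tau_1),\dots,f'(\partial\tau_r))$ on its boundary sphere. This is what resolves the bookkeeping you anticipate: the linking number changes by $\pm 1$ precisely when, up to a permutation, each $\eta_i$ lies on $\partial\tau_i$, which exactly matches the support of $\delta\I_{\eta\cdot\sym_r}$ on the $\sym_r$-orbit.
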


We believe that it should be possible to strengthen the conclusion of the theorem and obtain a regular prismatic map. 
However, the current form of the theorem serves our purposes and, together with \"Ozaydin's Theorem~\ref{thm:ozaydin},
implies Proposition~\ref{prop:prismatic_map_C_no_Tverberg},
and hence the existence of counterexamples to the topological Tverberg conjecture in dimension $3r$ (Theorem~\ref{thm:counterexamples}):

\begin{proof}[Proof of Proposition~\ref{prop:prismatic_map_C_no_Tverberg} using Theorem~\ref{thm_prismatic}]
Suppose $r\geq 6$ is not a prime power and $k \geq 3$. Then Theorem~\ref{thm:ozaydin} implies that there exists an
an equivariant map $X
\rightarrow_{\sym_r} S^{m-1}$. Consequently, by  Theorem~\ref{thm_prismatic},
there exists a prismatic map  $C\to \simplex^m \times \interior \simplex^k$ without $r$-Tverberg point.
\end{proof}

The proof of Theorem~\ref{thm_prismatic} is structured along similar lines as the proof of Theorem~\ref{thm:VK-Tverberg-complete}.

In a first step, by Theorem~\ref{thm:primary-obs}, there is a primary obstruction $\obs(X) \in H^m_{\sym_{r}}(X;\calZ)$
such that there exists an equivariant map $X \rightarrow_{\sym_r} (\R^k)^r \setminus \thindiag{r}{\R^k}$ if and only if
$\obs(X)=0$. Moreover, by Lemma~\ref{lem:nec-regular-prismatic-DPC}, any regular prismatic map $f=(p,h) \colon C\to  \simplex^m\times \interior \simplex^k$ induces an equivariant map $\gauss{h}\colon X\to (\R^k)^r $ in general position, and thus, by Lemma~\ref{lem:r-intersection-cycle}, the obstruction $\obs(X)=[\cocyc_f]$ is represented by the \emph{prismatic intersection number cocycle} $\cocyc_f$ defined 
on $m$-cells $\tau=\tau_1\times_p\dots \times_p \tau_r$ of $X$ by 
\begin{equation}
\label{eq:prismatic-intersection-cocycle}
\cocyc_f(\tau)= \tilde{h}(\tau)\scap \thindiag{r}{\R^k} = \primaticsign f(\tau_1)\scap \dots \scap f(\tau_r),
\end{equation}
where the last equality follows from \eqref{eq:r-sign=pair-sign-prismatic}. Note that, while the middle term of this equation makes sense only for \emph{regular} prismatic maps, the right-hand side is defined for arbitrary prismatic maps, and we will use this as the definition of the intersection cocycle for arbitrary prismatic maps $f$.

The main technical lemma to prove Theorem~\ref{thm_prismatic} is the following:
\begin{lemma}[\textbf{Prismatic Finger Moves}]
\label{lem:prismatic_finger_moves}
Suppose $r\geq 2$, $k\geq 1$, $m=(r-1)k$ and $N=(kr+1)(r-1)$. Suppose furthermore that $f\colon C\to  \simplex^m \times \interior \simplex^k$ is a prismatic map, that $\eta$ is an oriented $(m-1)$-simplex of $X$, and that 
$\delta \I_{\eta\cdot \sym_{r}}$ is the corresponding equivariant $m$-dimensional coboundary (see Section~\textup{\ref{sec_van_kampen_fingers_move}}). 

Then there exists a prismatic map $f'\colon C\to  \simplex^m \times \interior \simplex^k$ such that 
$$
\cocyc_{f'}= \cocyc{f}- \delta \I_{\eta\cdot \sym_{r}}.
$$
\end{lemma}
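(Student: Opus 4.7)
The plan is to adapt the construction in the proof of the $r$-fold Van Kampen finger move (Lemma~\ref{lem:single-finger-move}) to the prismatic setting, performing the local sphere-and-pipe modification inside a small ball in the prism $\simplex^m\times\interior{\simplex}^k$. Write $\eta=\eta_1\times_p\cdots\times_p\eta_r$, where the $\eta_i$ are pairwise disjoint colorful $(m-1)$-simplices of $C$ projecting to a common facet $\omega=p(\eta_i)$ of $\simplex^m$ missing some vertex $u_{j^\ast}$. The $m$-cells of $X$ incident to $\eta$ are precisely $\tau^\beta=\tau_1^\beta\times_p\cdots\times_p\tau_r^\beta$, where $\tau_i^\beta=\eta_i\cup\{v_{\beta(i),j^\ast}\}$ and $\beta$ ranges over permutations of $\{1,\dots,r\}$; $\eta$ is the unique facet of $\tau^\beta$ in the $\sym_r$-orbit of $\eta$, so $\delta\I_{\eta\cdot\sym_r}(\tau^\beta)=\epsilon_{j^\ast}\in\{\pm1\}$ is a sign independent of $\beta$.

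First I would pick a generic interior point $w_0\in\interior\omega$, set $y_i=(p|_{\eta_i})^{-1}(w_0)\in\interior\eta_i$, and choose a small PL $d$-ball $B\subset\interior\omega\times\interior{\simplex}^k$ containing $f(y_1)$ in its interior. By shrinking $B$, we can arrange that $B\cap f(C)$ consists only of small $(m-1)$-disks $\Delta_i\subset f(\eta_i)$ for $i=1,\dots,r$, together with short slivers of $f(\tau_i^s)$ extending the $\Delta_i$. Inside $B$, construct, exactly as in the proof of Lemma~\ref{lem:single-finger-move}, $(r-1)$ PL $m$-spheres $S_2,\dots,S_r$ in general position whose common intersection $S^k=\bigcap_{i=2}^{r}S_i$ bounds a PL $(k+1)$-ball $D\subset B$ meeting $f(\eta_1)$ transversely in a single point $x$ and disjoint from all other slivers and from each $\Delta_j$, $j\geq 2$. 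Since $B$ is just an open subset of $\R^d$, this linking configuration is the same as in the standard case.

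Next, I would define $f'$ by modifying $f$ on a small neighborhood of $\eta_i$ in $C$, for each $i=2,\dots,r$, so that the sphere $S_i$ is piped onto $f$ near $\eta_i$ in a prismatic fashion, i.e.\ such that $S_i$ becomes part of $f'(\tau_i^s)$ for \emph{every} $s=1,\dots,r$. Because $\eta_i$ lies in the common face of all $r$ simplices $\tau_i^s$, this can be achieved by a ``shared'' piping (equivalently, by $r$ parallel coordinated pipings, one into each sliver through $\Delta_i$, arranged so their contributions coincide) in the sense of Proposition~\ref{prop:piping}, then extended by the identity outside $B$. Using the codimension assumption $d-m=k\geq 3$ and standard general-position arguments, one checks that $f'$ is PL, takes values in the prism (so \ref{PR1} is preserved), is injective on each colorful simplex (\ref{PR2}), and is in general position (\ref{PR3}); hence $f'$ is again prismatic.

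Finally, the sign bookkeeping is identical to that of Lemma~\ref{lem:single-finger-move}. For every $\beta\in\sym_r$, the intersection $f'(\tau_1^\beta)\cap\cdots\cap f'(\tau_r^\beta)$ inside $B$ gains exactly one new transverse $r$-fold point, namely $f(\tau_1^\beta)\cap S^k$; by Lemma~\ref{lem:intersection-bd} applied to the chains $f(\tau_1^\beta)$ and $D$, its intersection sign equals $(-1)^m\,f(\eta_1)\scap D$, a universal scalar $\varepsilon\in\{\pm1\}$ independent of $\beta$ (because $\tau_1^\beta$ is oriented compatibly with $\simplex^m$ via $p$). Outside $B$ we have $f'=f$, so the cocycle values on $m$-cells not incident to $\eta$ are unchanged. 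Choosing the orientations of $D$ and of the $S_i$ so that $\primaticsign\cdot\varepsilon=-\epsilon_{j^\ast}$, the definition \eqref{eq:prismatic-intersection-cocycle} of the prismatic intersection cocycle yields $\cocyc_{f'}=\cocyc_f-\delta\I_{\eta\cdot\sym_r}$, as required. The main technical obstacle is realizing the ``prismatic piping'' of $S_i$ simultaneously onto all $r$ simplices $\tau_i^s$ through $\eta_i$ while preserving prismaticity; this is precisely where the codimension $k\geq 3$ is used, providing the room needed for all pipes to be embedded, disjoint, and compatible with the prismatic structure.
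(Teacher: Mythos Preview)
Your approach has a genuine gap, and the paper's proof avoids it by working one dimension down.

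The central problem is your ``prismatic piping'': you want the $m$-sphere $S_i$ to become part of $f'(\tau_i^s)$ for \emph{all} $r$ choices of $s$. This is not a standard operation, and you do not explain how to realize it. Piping, as set up in Section~\ref{subsec_piping}, connects one $m$-sphere to one $m$-ball; piping $S_i$ simultaneously into $r$ different $m$-balls $f(\tau_i^1),\dots,f(\tau_i^r)$ (or making $r$ parallel near-copies $S_i^1,\dots,S_i^r$ and piping each) would require controlling all the cross-intersections $S_2^{s_2}\cap\cdots\cap S_r^{s_r}$ for every choice of indices, and you give no argument for this. Relatedly, your local setup is inconsistent: a small ball $B$ around $f(y_1)\in f(\eta_1)$ will, by \ref{PR3}, contain only a disk of $f(\eta_1)$ and slivers of the $f(\tau_1^s)$; it will \emph{not} contain disks $\Delta_i\subset f(\eta_i)$ for $i\geq 2$, since the $f(\eta_i)$ are pairwise disjoint and in general position in $\omega\times\interior\simplex^k$. (Also note that $\interior\omega\times\interior\simplex^k$ has dimension $d-1$, so it cannot contain a $d$-ball.) Finally, you invoke $k\ge 3$ as essential, but the lemma is stated for $k\ge 1$.

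The paper sidesteps all of this by modifying $f$ on the $(m-1)$-skeleton rather than on the $m$-simplices directly. It places $(r-1)$ small $(m-1)$-spheres $S_1,\dots,S_{r-1}$ inside the facet $\omega\times\interior\simplex^k$ of the prism, chosen so that their common intersection is a $(k-1)$-sphere linking $f(\eta_r)$ once with a prescribed sign, and then pipes each $S_i$ to $f(\eta_i)$ (both $(m-1)$-dimensional). This single modification of $f|_{\eta_i}$ automatically propagates to every $m$-simplex $\tau$ containing $\eta_i$; the interiors of those $m$-simplices are then refilled arbitrarily (e.g., by coning). The key point is that the intersection number $f'(\tau_1)\scap\cdots\scap f'(\tau_r)$ is determined by the $r$-fold \emph{linking number} $\ell\big(f'(\partial\tau_1),\dots,f'(\partial\tau_r)\big)$ in $\partial(\simplex^m\times\simplex^k)$, via Lemma~\ref{lem_hom_linking_number}; since only the boundary data on $\omega\times\simplex^k$ changed (by the piping), the linking number changes by exactly $\pm 1$ whenever each $\partial\tau_i$ contains one of the $\eta_j$, and is unchanged otherwise. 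This linking-number reformulation is the idea missing from your argument, and it is what makes the ``simultaneously for all $s$'' issue disappear.
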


\begin{proof}[Proof of Theorem~\ref{thm_prismatic} using Lemma~\ref{lem:prismatic_finger_moves}]
We start by choosing and fixing an arbitrary \emph{regular} prismatic map $f=(p,h)\colon C\to  \simplex^m \times \interior \simplex^k$ (e.g., an affine prismatic map as described in Example~\ref{ex_prismatic_map}). By assumption, there exists an 
an equivariant map $X\rightarrow_{\sym_r} S^{m-1}$. This is equivalent to the vanishing of the primary obstruction, $\obs{X}=[\cocyc_f]=0$, which means that the prismatic intersection number cocycle $\cocyc_f$ can be written as a finite sum of elementary equivariant coboundaries. 
By repeatedly applying Lemma~\ref{lem:prismatic_finger_moves}, once for each elementary coboundary in the sum, we thus arrive at a prismatic map $f'$ such that $\cocyc_{f'}=0$ as a cocycle, i.e.,
$$f'(\tau_1)\scap \dots \scap f'(\tau_r)=0$$
for every $r$-tuple of pairwise disjoint $m$-simplices of $C$. Thus, we can arbitrarily pair up the $r$-Tverberg points in
$f'(\tau_1)\cap \dots \cap f'(\tau_r)$ into pairs of opposite sign. To conclude, we eliminate each pair by applying the $r$-fold Whitney trick,
without introducing new $r$-Tverberg points; this is possible since the codimension $d-\dim C=k$ is at least $3$.

More precisely, suppose $x,y\in f'(\tau_1)\cap \dots \cap f'(\tau_r)$ is a pair of $r$-Tverberg points of $f'$ of opposite sign. 
By the $r$-fold Whitney trick  there are are ambient isotopies $H^2,\ldots,H^r$ of $\R^d$ such that 
$$f'(\tau_1)\cap H_1^2(f'(\tau_2)\cap \dots \cap  H_1^r(f'(\tau_r) = f'(\tau_1)\cap f'(\tau_2)\cap\dots\cap f'(\tau_r)\setminus \{x,y\}.$$ 
Moreover, we can choose these isotopies to be fixed outside an open $d$-ball $B$ that avoids all other faces of $C$ and is contained in $\interior \sigma^m\times \interior \sigma^k$;
in particular, each isotopy fixes the boundary of the prism $\sigma^m\times \sigma^k$. Thus, if we define a new PL map $f''\colon C\to \sigma^m\times \interior \sigma^k$
by setting $f''(x)=H^i(f'(x))$ for $x\in \interior \tau_i$, $2\leq i\leq r$, and $f''(x)=f'(x)$ otherwise, then $f''$ is again a prismatic map and has the same $r$-Tverberg points as $f'$, except for $\{x,y\}$.
By applying this procedure a finite number of times, we arrive at a prismatic map $g\colon C\to \sigma^m\times \interior \sigma^k$ that has no $r$-Tverberg points at all. 
\end{proof}

It remains to prove Lemma~\ref{lem:prismatic_finger_moves}. This is done in the following subsection.

\subsection{\texorpdfstring{$\boldsymbol{r}$}{r}-Fold Linking Numbers and Prismatic Finger Moves}
\label{subsec:prismatic-finger-moves}

Throughout this subsection, let $r\geq 2$, $k\geq 1$, and let $m=(r-1)k$.

Suppose that $\Sigma_1,\ldots,\Sigma_r$ are $r$ PL spheres of dimension $m-1$ contained in a PL sphere $S^{rk-1}$ and in general position with respect to one another. Suppose furthermore that we have chosen orientations for each of the $\Sigma_i$ and for $S^{rk-1}$.

By Alexander duality (see, e.g., \cite[Theorem~3.44]{Hatcher:Algebraic-topology-2002}), 
$$H_{k-1}(S^{rk-1}\setminus \Sigma_r)\cong H^{m-1}(\Sigma_r)\cong \Z.$$ 
In order to fix a specific isomorphism with the integers, we fix a generator $\zeta$ of $H_{k-1}(S^{rk-1} \setminus \Sigma_r)$ as follows: 
Choose a small $k$-dimensional PL disk $D$ in $S^{rk-1}$ that intersects $\Sigma_r$ transversely in a single point, and orient $D$ 
such that this pairwise intersection point has positive sign; then $\zeta$ is represented by $\boundary D$.

By the general position assumption, $\Sigma_1\cap\dots \cap \Sigma_r=\emptyset$. The orientations of the $\Sigma_i$ induce an orientation 
of the intersection $\Sigma_1 \cap \dots \cap \Sigma_{r-1}$, as described in Section~\ref{sec:intersection_signs}. Moreover, this oriented intersection 
is a $(k-1)$-cycle (in fact, a closed $(k-1)$-dimensional PL manifold) and thus defines a homology class
\[
[\Sigma_1 \cap \dots \cap \Sigma_{r-1}]  
\in H_{k-1} (S^{rk-1} \setminus \Sigma_r) \iso \Z.
\]
\begin{definition}
Via the choice of the generator $\zeta$, we can write $[\Sigma_1 \cap \dots \cap \Sigma_{r-1}] =\linking \cdot \zeta$ for a uniquely defined integer $\linking=\linking(\Sigma_1,\dots,\Sigma_r) \in \Z$, which we call the \define{$r$-fold linking number} of 
$\Sigma_1, \dots , \Sigma_r$ in $S^{rk-1}$.
\end{definition}
We remark that the $r$-fold linking number depends on the order of the $\Sigma_i$ and on the choice of the orientations.

Next, suppose that $\sigma_1, \dots , \sigma_r$ are $r$ PL-balls of dimension $m=(r-1)k$ properly embedded in a PL ball $B^{rk}$.
Then we can apply the previous definition to the $(m-1)$-dimensional PL spheres $\Sigma_i=\boundary \sigma_i$ in $S^{rk-1}=\boundary B^{rk}$
(with the induced orientations on the boundaries).

\begin{lemma}\label{lem_hom_linking_number}
In the setting described above, the $r$-fold linking number $\linking(\boundary \sigma_1, \dots , \boundary \sigma_r)$ of the $\boundary \sigma_i$ in $S^{rk-1}=\boundary B^{rk}$ 
is equal to the algebraic $r$-fold intersection number $\sigma_1\scap \dots \scap \sigma_r$ of the $\sigma_i$ in $B^{rk}$.
\end{lemma}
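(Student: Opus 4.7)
The strategy is to reduce the $r$-fold statement to the classical $r=2$ version of the lemma by grouping the first $r-1$ balls into a single intersection chain. After a small general-position perturbation we may assume the $\sigma_i$ intersect mutually transversely in $B^{rk}$. Set
\[
N := \sigma_1 \cap \sigma_2 \cap \ldots \cap \sigma_{r-1},
\]
oriented via iterated application of Definition~\ref{def:induced_orientation_polytopes}. By iterated transversality, $N$ is a properly embedded oriented PL submanifold of $B^{rk}$ of dimension $(r-1)m - (r-2)rk = k$, and as subsets
\[
\partial N = N \cap \partial B^{rk} = \Sigma_1 \cap \ldots \cap \Sigma_{r-1}.
\]

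The first substantive step is to verify that the boundary orientation on $\partial N$ (inherited from the PL $k$-manifold $N$ via the outward-normal convention) agrees with the oriented-intersection orientation on $\Sigma_1 \cap \ldots \cap \Sigma_{r-1}$ computed inside the oriented sphere $S^{rk-1}$. This is a dimension-bookkeeping exercise: at any point $p \in \partial N$, transversality of each $\sigma_i$ to $\partial B^{rk}$ forces the outward normal $\nu$ to $B^{rk}$ to have a nonzero component in each $T_p\sigma_i$, and in fact a single vector proportional to $\nu$ serves simultaneously as an outward normal to $B^{rk}$, to each $\sigma_i$ (viewed as a manifold with boundary), and to $N$. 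Writing out Definition~\ref{def:induced_orientation_polytopes} in the two possible orders --- (i) intersect first in $B^{rk}$ and then take the manifold boundary, (ii) take the $\sigma_i$-boundaries first and intersect in $S^{rk-1}$ --- reduces both prescriptions to the same wedge product of the form $\nu \wedge \gamma_{r-1}\wedge \cdots\wedge \gamma_1$. This is essentially Lemma~\ref{lem:intersection-bd} iterated $r-2$ times. Once the orientations match, associativity of the oriented intersection (Lemma~\ref{lem:associativity} together with Lemma~\ref{lem:prop-inters-prod}\textup{(c)}, by induction on $r$) gives
\[
\sigma_1 \scap \sigma_2 \scap \ldots \scap \sigma_r = N \scap \sigma_r.
\]

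Finally, I would invoke the classical pairwise linking--intersection formula ($r=2$ case of the present lemma): for two oriented, properly embedded, transverse PL submanifolds $N^k$ and $\sigma_r^{m}$ of $B^{rk}$ with complementary dimensions, the algebraic intersection number $N \scap \sigma_r$ in $B^{rk}$ equals the linking number $\linking(\partial N, \Sigma_r)$ in $S^{rk-1}$. This is standard from Alexander--Lefschetz duality: since $\sigma_r$ is a relative $m$-chain in $(B^{rk}, S^{rk-1})$ bounding $\Sigma_r$, its relative homology class is Lefschetz-dual to the Alexander-duality generator $\zeta \in H_{k-1}(S^{rk-1} \setminus \Sigma_r)$, and the duality pairing is precisely the transverse intersection count. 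Combining this with the orientation identification of the previous paragraph yields $\linking(\Sigma_1 \cap \ldots \cap \Sigma_{r-1}, \Sigma_r) = \sigma_1 \scap \cdots \scap \sigma_r$, which is the content of the lemma.

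The main obstacle is the orientation bookkeeping in the middle step, where two different conventions (manifold-boundary vs.\ oriented intersection inside a submanifold) must be reconciled without introducing a dimension-dependent sign. Once one observes that the outward normal to $B^{rk}$ plays a uniform role for every $\sigma_i$ and for $N$ at a common boundary point, all signs collapse and the remaining ingredients (associativity and the classical $r=2$ formula) are routine.
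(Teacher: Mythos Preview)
Your proposal is correct and follows essentially the same route as the paper: both define the $k$-chain $N=\omega=\sigma_1\cap\cdots\cap\sigma_{r-1}$, use associativity of the oriented intersection to rewrite $\sigma_1\scap\cdots\scap\sigma_r$ as $N\scap\sigma_r$, and then appeal to the pairwise linking--intersection identity. The only cosmetic difference is that the paper spells out the $r=2$ step explicitly (by excising small disks $D_y\subset\omega$ around each intersection point and using the punctured $\omega$ as a homology between $\partial\omega$ and $\sum_y\partial D_y$ in $B^{rk}\setminus\sigma_r$), whereas you invoke the classical formula directly; conversely, you are more explicit than the paper about why the boundary orientation on $\partial N$ matches the oriented intersection $\Sigma_1\cap\cdots\cap\Sigma_{r-1}$ in $S^{rk-1}$, which the paper simply asserts.
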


\begin{proof}
The argument is similar to the one for the standard $2$-fold intersection and linking numbers (see, e.g.,  \cite[Lemma 5.15]{Rourke:Introduction-to-piecewise-linear-topology-1982}).

First, we note that the inclusion map $\iota \colon \boundary B^{rk}\setminus \boundary \sigma_r\hookrightarrow B^{rk}\setminus \sigma_r$ 
induces an isomorphism $\iota_\ast \colon H_{k-1}(\boundary B^{rk}\setminus \boundary \sigma_r) \iso H_{k-1}(B^{rk}\setminus \sigma_r)$; in particular,
$\iota_\ast(\zeta)$ is a generator of $H_{k-1}(B^{rk}\setminus \sigma_r)$. Thus, $r$-fold linking number $\linking=\linking(\boundary \sigma_1, \dots , \boundary \sigma_r)$
can be equivalently defined as the unique integer such that $[\boundary \sigma_1\cap \dots \cap \boundary \sigma_r]=\linking \cdot \iota_\ast(\zeta) \in H_{k-1}(B^{rk}\setminus \sigma_r)$.

The generator $\iota_\ast(\zeta)$ is represented by the boundary $\boundary D$ of the $k$-dimensional disk $D \subset S^{rk-1}$ used above. Alternatively, we can slightly translate this disk into the interior to obtain a small $k$-dimensional PL disk $D'$ in $\interior B^{rk}$ that intersects $\sigma_r$ transversely in a single point and that is oriented so that this pairwise intersection has positive sign; then $\iota_\ast (\zeta)=[\boundary D'] \in H_{k-1}(B^{rk}\setminus \sigma_r)$.

By Lemma~\ref{lem:associativity}, the $r$-fold intersection number $\sigma_1 \scap \dots \scap \sigma_r$ equals the $2$-fold intersection number
$\omega \scap \sigma_r$, where
$$
\omega :=\sigma_1 \cap \dots \cap \sigma_{r-1}
$$
is the oriented intersection of the first $(r-1)$ terms, which is an oriented $k$-dimensional PL manifold with boundary $\boundary \omega = \boundary \sigma_1 \cap \dots \cap \boundary \sigma_{r-1}$, properly embedded in $B^{rk}$. 

Consider an intersection point
$$y \in \omega \cap \sigma_r=\sigma_1\cap\dots \cap \sigma_r$$
with $2$-fold intersection sign $\sign_y(\omega,\sigma_r) \in \{-1,+1\}$. Choose a small $k$-dimensional disk $D_y \subset \omega$ containing $y$ in its interior,
with the orientation induced from $\omega$. Then $\sign_y(\omega,\sigma_r)=\sign_y(D_y,\sigma_r)$, and the sphere $\boundary D_y$ (with the induced orientation) represents the element  
$$\sign_y(\omega,\sigma_r) \cdot \iota_\ast(\zeta) \in H_{k-1} (B^{rk} \setminus  \sigma_r).$$

Choosing such a $k$-ball $D_y$ for each $y \in \omega \cap \sigma_r$, we can consider
\[
\omega \setminus \Big(\bigcup_{y \in \omega \cap \sigma_r} \interior D_y\Big).
\]
This is an oriented $k$-dimensional manifold with boundary and hence a $k$-dimensional chain in $B^{rk} \setminus  \sigma_r$ witnessing that the two $(k-1)$-cycles
$$
\boundary \omega= \boundary \sigma_1 \cap \cdots \cap \boundary \sigma_{r-1}
$$
and 
$$
\bigcup_{y \in \sigma_1 \cap \dots \cap \sigma_r } \boundary D_y.
$$
are homologous in $B^{rk} \setminus  \sigma_r$. Thus, they define the same homology class
$$
[\boundary \sigma_1 \cap \dots \cap \boundary \sigma_{r-1}]=\sum_{y \in \sigma_1 \cap \dots \cap \sigma_r} \sign_y(\omega,\sigma_r) \cdot \iota_\ast(\zeta) \in H_{k-1} (B^{rk} \setminus  \sigma_r).
$$
Therefore, the linking number $\linking(\boundary \sigma_1,\dots,\boundary \sigma_r)$ is equal to the intersection number $\sigma_1\scap \dots \scap \sigma_r=
\sum_{y \in \sigma_1 \cap \dots \cap \sigma_r} \sign_y(\omega,\sigma_r)$, as we set out to show.
\end{proof}

\paragraph{Modifying the $r$-fold linking number}
As before, let $\Sigma_1,\ldots,\Sigma_r$ be $r$ PL spheres of dimension $m-1$ in general position in a PL sphere $S^{rk-1}$.
We describe a down-to-earth way of changing their $r$-fold linking number by $\pm 1$.

Let $\varepsilon \in \{-1,+1\}$. Choose $(r-1)$ small PL spheres $S_1, .., S_{r-1}$ of dimension $m-1$ in embedded in general position in $S^{rk-1}$. We 
arrange the spheres and orient them in such a way that their oriented intersection
\[
S_1 \cap  \dots \cap S_{r-1} 
\]
is an oriented $(k-1)$-sphere $S$ that links precisely once with $\Sigma_r$, with the chosen sign $\varepsilon$, i.e., 
\[
[S_1 \cap \dots \cap S_{r-1}] =\epsilon \zeta \in H_{k-1} ( S^{rk-1} \setminus \Sigma_r).
\]

This embedding can be performed in a small neighbourhood of an affine piece of $\Sigma_r$ in $S^{rk-1}$. 
In particular, we chose the spheres $S_i$ so that they are disjoint from all $\Sigma_j$, $1\leq i,j\leq r-1$.

Finally, for $1\leq i\leq r-1$, we connect $\Sigma_i$ to $S_i$ by an orientation-preserving pipe (see Section~\ref{subsec_piping}), 
as in the proof of Lemma~\ref{lem:single-finger-move} to obtain a new $(m-1)$-dimensional PL sphere $\Sigma_i'=\Sigma_i\# S_i$.
By construction, this has the effect of modifying the $r$-fold linking number by $\varepsilon$, i.e.,
$$\linking(\Sigma_1',\dots,\Sigma_r')= \linking(\Sigma_1,\dots,\Sigma_r)+\varepsilon.$$

In particular, suppose that $\sigma_1,\dots,\sigma_r$ are $m$-dimensional PL balls properly containd in $B^{rk}$, and that we modify the spheres 
$\Sigma_i=\partial \sigma_i$ in $\boundary B^{rk}$ as just described. Suppose furthermore that we arbitrarily choose $m$-dimensional PL balls $\sigma_i'$ 
in $B^{rk}$ with $\boundary \sigma_i'=\Sigma_i'$ (this is always possible, e.g., by coning over $\Sigma_i'$ from the center of $B^{rk}$). Then, by Lemma~\ref{lem_hom_linking_number} the $r$-fold intersection number of the balls in $B^{rk}$ also changes by $\varepsilon$, i.e.,
\begin{equation}
\label{eq:change-intersection-number}
\sigma_1'\scap \dots \scap \sigma_r'= \sigma_1\scap \dots \scap \sigma_r + \varepsilon.
\end{equation}

We are now ready to prove the last remaining lemma.

\begin{proof}[Proof of Lemma~\ref{lem:prismatic_finger_moves}]
Let $f\colon C\to  \simplex^m \times \interior \simplex^k$ be a prismatic map, and let $\eta$ be an oriented $(m-1)$-simplex of $X$.
We know that $\eta=\eta_1\times_p \dots \times_p \eta_r$ for $r$ pairwise disjoint $(m-1)$-simplices of $C$ that project onto the same
$(m-1)$-simplex $\omega=p(\eta_1)=\dots=p(\eta_r)$ of the base $\simplex^m$ of the prism.

In analogy with the previously described way of changing linking numbers, we modify $f$ to obtain a new 
new prismatic map $f'\colon \colon C\to  \simplex^m \times \interior \simplex^k$ as follows:
\begin{itemize}
\item We select $r-1$ small oriented PL spheres $S_1, ..., S_{r-1}$ of dimension $m-1$ in general position in $\omega \times \interior \simplex^k$;
we choose these sphere so that their intersection $S_1 \cap \dots \cap S_{r-1}$ is a flat $(k-1)$-dimensional PL sphere $S$ ``linking'' with
$f(\eta_r)$ exactly once and with negative sign, i.e., if we fill this sphere with $k$-dimensional PL ball then this ball intersects $f(\eta_r)$ exactly once, 
with negative intersection sign.
\item For $1\leq i\leq r-1$, we connect $f(\eta_i)$ to $S_i$ by an orientation-preserving pipe to create a new $(m-1)$-dimensional ball in $\omega \times \interior \simplex^k$ with the same boundary as $f(\eta_i)$. 
\item We define $f'$ to agree with $f$ on all faces of $C$ of dimension less than $m-1$ and on all $(m-1)$-simplices of $C$ except for $\eta_1,\dots,\eta_{r-1}$. 
On $\eta_i$, $1\leq i\leq r-1$, we define $f'$ so that $f'(\eta_i)$ equals the result of piping $f(\eta_i)$ with $S_i$ (this possible, since $f(\eta_i)$ and the result of the piping are two PL balls in $\omega \times \simplex^k$ with the same boundary).\footnote{For $k\geq 3$, there even exists an ambient homotopy $H^i$ of $\omega \times \simplex^k$, fixed on the boundary, such that we can take $f'|_{\eta_i}=H^i_1\circ f|_{\eta_i}$, but we will not need this.} 
\item Finally, let $\tau$ be an $m$-dimensional simplex of $C$. If $\tau$ does not contain any one of the simplices $\eta_1,\ldots,\eta_{r-1}$ on its boundary,
then we define $f'|_{\tau}=f|_\tau$. Otherwise, we redefine $f'$ on $\tau$ so that $f'(\tau)$ is an $m$-dimensional ball properly contained in $\sigma^m\times \sigma_k$;
this is always possible, for instance by coning over $f'(\boundary \tau)$ from a point in general position in the interior of $\simplex^m \times \simplex^k$.
\end{itemize}

It is clear that the resulting map $f'$ is prismatic. We claim that its prismatic intersection number cocycle satisfies
$$
\cocyc_{f'}= \cocyc_{f}- \delta \I_{\eta\cdot \sym_{r}}.
$$
To see this, consider an $m$-simplex $\tau_1\times_p\dots \times_p\tau_r$ of $X$ corresponding to an
$r$-tuple of pairwise disjoint $m$-simplices $\tau_1,\ldots,\tau_r$ of $C$. 
Up to the universal sign $\primaticsign$, the value of $\cocyc_{f'}(\tau_1\times_p\dots \times_p\tau_r)$ euqls the 
intersection number $f'(\tau_1)\scap \dots \scap f'(\tau_r)$ in the $rk$-ball $\simplex^m\times \sigma^k$, or equivalently, the linking
number $\linking(f'(\boundary \tau_1),\dots, f'(\boundary\tau_r))$ in $\boundary (\simplex^m\times \sigma^k)$. 

If there is one $\tau_j$ that contains none of the $\eta_i$ in its boundary, then
$$\linking(f'(\boundary \tau_1),\dots, f'(\boundary\tau_r))=\linking(f(\boundary \tau_1),\dots, f(\boundary\tau_r))$$
is unchanged.

Otherwise, up to a permutation of the indices, we may assume that $\eta_i$ is contained in the boundary of $\tau_i$, $1\leq i\leq r$. 
In this case, as discussed above, the piping of $\eta_i$, $1\leq i\leq r-1$ has the effect that
$$\linking(f'(\boundary \tau_1),\dots, f'(\boundary\tau_r))=\linking(f(\boundary \tau_1),\dots, f(\boundary\tau_r))-1,$$
i.e., $\cocyc_{f'}(\tau_1\times_p\dots \times_p\tau_r) - \delta \I_{\eta\cdot \sym_{r}}(\tau_1\times_p\dots \times_p\tau_r)$.
By equivariance, the same is true if $\eta_i$ is contained in the boundary of $\tau_{\pi(i)}$, $1\leq i\leq r$. This proves the claim and hence the lemma.
\end{proof}
This also completes the proofs of Theorems~\ref{thm_prismatic} and \ref{thm:counterexamples}.

\bibliographystyle{abbrv}
\bibliography{EliminatingMultiplePoints}

\end{document}